
\documentclass[makeidx,12pt]{book} 
\usepackage{graphicx}
\usepackage{mathrsfs, amsthm, amsmath, amssymb}
\usepackage{appendix}
\usepackage{bm, hyperref}
\usepackage{cite}
\usepackage{url}
\usepackage{epsfig} 
\usepackage{euscript}
\usepackage{amsfonts}
\usepackage{dsfont}
\usepackage{pict2e}
\usepackage{multicol}
\usepackage{lscape}
\usepackage{cancel}
 
\usepackage{calligra}
\usepackage[T1]{fontenc}

\usepackage{pgfplots}
\pgfplotsset{compat=1.17}

\usepackage{mathtools} 

\usepackage[margin=4cm]{geometry}

\usepackage[margin=1.5cm]{caption} 
\usepackage[labelfont=bf]{caption} 

\setcounter{chapter}{-1} 

\numberwithin{figure}{section}

\usepackage{tikz}
\usetikzlibrary{arrows.meta}


\usepackage{xcolor}
\definecolor{dark-red}{rgb}{0.65,0.15,0.15}
\definecolor{dark-blue}{rgb}{0.15,0.15,0.65}
\definecolor{medium-blue}{rgb}{0.1,0.1,0.9}
\hypersetup{
    colorlinks, linkcolor={medium-blue},
    citecolor={medium-blue}, urlcolor={medium-blue}
}






\newtheorem{theorem}{Theorem}[section] 

\newtheorem{lemma}[theorem]{Lemma}
\newtheorem{corollary}[theorem]{Corollary}

\newtheorem{proposition}[theorem]{Proposition}


\theoremstyle{definition}
\newtheorem{definition}[theorem]{Definition}

\newtheorem{example}[theorem]{Example}

\newtheorem{axiom}[theorem]{Axiom}
\newtheorem{prob}[theorem]{Problem}
 


\theoremstyle{remark}
\newtheorem{remark}[theorem]{Remark}

\newtheorem{notation}[theorem]{Notation}

\newtheorem{scratch}[theorem]{Scratch Work}


\setcounter{secnumdepth}{3} 
\numberwithin{equation}{section} 
\numberwithin{section}{chapter} 
\numberwithin{subsection}{section} 


\newcommand{\vs}{\vspace{1cm}}
\newcommand{\s}{\vspace{3mm}}


\newcommand{\N}{\mathbb{N}}
\newcommand{\Q}{\mathbb{Q}}
\newcommand{\R}{\mathbb{R}}
\newcommand{\Z}{\mathbb{Z}}

\newcommand{\bfa}{\mathbf{a}} 
\newcommand{\bfb}{\mathbf{b}}
\newcommand{\bfc}{\mathbf{c}}

\newcommand{\bfe}{\mathbf{e}}

\newcommand{\bfp}{\mathbf{p}}

\newcommand{\bft}{\mathbf{t}}
\newcommand{\bfu}{\mathbf{u}} 
\newcommand{\bfv}{\mathbf{v}} 
\newcommand{\bfw}{\mathbf{w}} 
\newcommand{\bfx}{\mathbf{x}}
\newcommand{\bfy}{\mathbf{y}}
\newcommand{\bfz}{\mathbf{z}}




\DeclareMathOperator{\acl}{acl} 
\DeclareMathOperator{\awf}{awf} 
\DeclareMathOperator*{\Coda}{Coda} 
\DeclareMathOperator*{\Slim}{Slim} 

\makeindex


\begin{document}

\begin{titlepage}
\begin{center}
 {\huge\bfseries Arbitrarily Close\\}
 \vspace{1.5cm}
 {\Large John~A.~Rock}\\[5pt]
 jarock@cpp.edu\\[14pt]
 \vspace{1.5cm}
\begin{tikzpicture} 
\draw[dashed, fill=blue!15] (-3,-1.41) rectangle (1.41,1.41);
	\draw (-0.8,0) node {$B$};
	\draw (-3.02,1.42) node {$\bullet$};
	\draw (-3.02,-1.42) node {$\bullet$};
	\draw (1.44,-1.42) node {$\bullet$};		
\begin{scope}[thick, dashed, red] 
\draw (1.45,1.43) circle (2.1cm); 
\draw (1.45,1.43) circle (1.4cm); 
\draw (1.45,1.43) circle (0.7cm); 
\end{scope}	
\draw[semithick] (-3,-1.41) -- (-3,1.41);
\draw[semithick] (-3,-1.41) -- (1.41,-1.41);
	\draw[fill=white] (1.41,1.41) circle (0.08cm);
	\draw (1.8,1.41) node {$\bfy$};
\end{tikzpicture}
\vfill
\end{center}
\end{titlepage}













\mainmatter

\tableofcontents   


\setcounter{page}{5}

\chapter{Preface}
\label{ch:preface}

Who are mathematicians? First and foremost, you are. In my opinion, anyone who is willing to read a book on real analysis is automatically a mathematician.

But your opinion matters more than mine.

\section{Communities and resources}
\label{sec:communitiesandresources}

Mathematicians come from all over and from minoritized groups of all kinds. Please take a look at some of the following resources to help you find mathematicians whose lives and experiences might reflect your own.

\begin{itemize}
	\item \href{https://www.maa.org/press/ebooks/living-proof-stories-of-resilience-along-the-mathematical-journey-2}{Living Proof: Stories of Resilience Along the Mathematical Journey}:\\
	``The stories in Living Proof are intended to provide support and inspiration for mathematics students experiencing struggle and despair. If students keep working, if they keep seeking, they’ll be rewarded by serendipity, which is really just, as these stories remind us, the habit of mind to be engaged and to notice when something good has happened.''
	\item \href{https://www.mathad.com/home}{Mathematicians of the African Diaspora (MATHAD)}:\\
	``MATHAD is dedicated to promoting and highlighting the contributions of members of the African diaspora to mathematics, especially contributions to current mathematical research.''
	\item \href{https://www.meetamathematician.com/home}{M$\Sigma\Sigma$T a Mathematician}:\\
	``The mission of M$\Sigma\Sigma$T a Mathematician is to share stories of mathematicians from different backgrounds, especially from historically excluded groups, with the aim of introducing students to role models and fostering a sense of community.''
	\item \href{https://www.meetamathematician.com/words-of-wisdom}{M$\Sigma\Sigma$T a Mathematician - Words of Wisdom}: (Short videos)
	\item \href{https://mathematicallygiftedandblack.com/}{Mathematically Gifted and Black}: \\
	`` `To be young, gifted and black'; a common phrase used in the Black community. It comes from a song famously sung by Ms. Nina Simone and co-written by Weldon Irvine. To me, the first few lines of the song always felt like a love song written to those of us in the Black community: `To be young, gifted and black/Oh what a lovely precious dream/To be young, gifted and black,/Open your heart to what I mean.' ''
	\item \href{https://www.lathisms.org/}{Lathisms}:\\ 
	``A vibrant, inclusive, and diverse Mathematical community where the Latinx and Hispanic culture is valued, cultivated, and celebrated.''
	\item \href{https://indigenousmathematicians.org/}{Indigenous Mathematicians}: \\
	``Our digital hale (house) is our space to connect, network, and inspire the next generation of Indigenous Mathematicians! Together we can share, narrate, and tell our stories on our journeys in math.''	
	\item \href{https://www.hiddennorms.com/}{Hidden NORMS}: (Monthly webinar) \\
	``Brings together a star-studded lineup of speakers and panelists for academic and professional success for you the student doing this `math thing'.''
	\item \href{https://awm-math.org/}{Association for Women in Mathematics}: \\
	``Since its founding in 1971 by a small but passionate group of women mathematicians, the Association for Women in Mathematics (AWM) has grown into a leading society for women in the mathematical sciences, and is one of the societies comprising the Conference Board of the Mathematical Sciences.''
	\item \href{http://lgbtmath.org/}{Spectra: the Association for LGBTQ+ Mathematicians}: \\
	``Welcome to the website for LGBTQ+ mathematicians and their allies. This arose from a need for recognition and community for Gender and Sexual Minority mathematicians, and we hope that this will be a resource for our community. Check back as our organization grows and evolves!''
	\item \href{https://gradsubgroups.org/}{SUBgroups}:\\ 
	``Online peer groups for first-year math grad students.''
\end{itemize}

\section{Target audiences}
\label{sec:targetaudiences}

This is not a novel, it's a resource designed with lots of potential readers in mind. Please make use of it as you see fit. To be clear, I assume you have some familiarity with inequalities, basic algebra, trigonometry, some set theory, and writing mathematical proofs.

For readers who have never taken a course on real analysis, I suggest you take your time, read as much as you can, and take notes as you go. Lots of details and figures are included to help you along the way. 

For readers with more experience including instructors, please skip around as you see fit. However, I recommend you do not skip Sections \ref{sec:acl}, \ref{sec:completeorderedfield}, and \ref{sec:arbitrarilycloseineuclideanspaces}. These are vital to the development of the entire book since they are where the kernel of analysis---{\em arbitrarily close}---is defined for the real line $\R$ and for Euclidean spaces $\R^m$, respectively.

In my experience, there is more content provided here than can be squeezed into a single semester. So again, please make use of this textbook as you see fit. 

To all my readers, please let me know what you think! If you spot any errors or have recommendations for ways to improve any aspect of what you find here, please reach out to me via email: jarock@cpp.edu.

\section{Walkthroughs}
\label{sec:walkthroughs}

You are encouraged to do {\em walkthroughs} for the statements and proofs of every Definition, Theorem, Corollary, etc., found in each section. Consider some or all of these activities, in no particular order: 
\begin{itemize}
\item Rewrite statements in your own words.
\item Come up with examples and nonexamples.
\item Draw figures to accompany or supplement the given material.
\item Create activities using Desmos, GeoGebra, WolframAlpha, or other freely available apps and websites.
\item Come up with scratch work that may or may not fit a given proof.
\item Write proofs that are more thorough.
\item Write proofs that are more concise.
\item Do any activity you can think of that will help you understand what's going on.
\end{itemize}

Ultimately, walkthroughs are what you make of them. Do what makes sense for you. I believe the more you supplement your reading with walkthroughs, whatever a walkthrough means to you, the more the mathematics will make sense and the more beautiful it will become.

\section{Motivation and goals}
\label{sec:motivationandgoals}

A goal of this book is to provide a development of the fundamental results in calculus and analysis based on a concrete notion of what it means for a point to be {\em arbitrarily close} to a set. See Definitions \ref{def:aclreal} and \ref{def:acl}. Along the way, I strive to include lots of details and motivation for the way definitions are made and the way results are proven. 

The simple question that drove the development of this book is: 
\begin{quote}
How close can a point be to a set?
\end{quote}
The answer I've come up with---which has redefined my perspective on analysis---is: 
\begin{quote}
So close there's no distance between them:\\
\textit{arbitrarily close.}
\end{quote}
Once I defined {\em arbitrarily close} in terms of {\em comparing a point to a set}, as opposed to comparing one point and another, I felt like I was really onto to something.

As an overview, here are some of the concepts explored in this book:

\begin{itemize}
	\item The supremum of a set of real numbers is the upper bound arbitrarily close to the set (Definition \ref{def:supremumacl}).
	\item Zero is the only real number arbitrarily close to the sets of positive and negative real numbers (Lemma \ref{lem:zeroacl}).
	\item A limit of a sequence is arbitrarily close to its sequence (Theorem \ref{thm:exercise0}). In fact, the precise connection between the concepts of limits for sequences and points arbitrarily close to the range of a sequence is quite deep and forms a foundation for many of the results presented in the book .
	\item A closed set comprises the points arbitrarily close to the set. See Definition \ref{def:closureclosed} and Theorem \ref{thm:closedopen}.
	\item A function preserves closeness at a point  if and only if it is continuous at the point. See Definitions \ref{def:preservecloseness} and \ref{def:continuity} as well as Theorem \ref{thm:continuityequivalence}.
\end{itemize}

Further connections to arbitrarily close that are currently under development include:
\begin{itemize}
	\item A limit of a function is arbitrarily close to the range of the function. 
	\item A derivative is arbitrarily close to the set of difference quotients. 
	\item An integral is arbitrarily close to sets of sums of areas of rectangles. 
	\item In certain settings, a continuous function is arbitrarily close the set of polynomials. 
	\item The sum of a series is arbitrarily close to the set of partial sums. 
\end{itemize}

The setting is limited to Euclidean spaces $\R^m$ in general, and the real line $\R=\R^1$ when the restriction is warranted somehow. Results stated in terms of $\R^m$ hold for $\R$ (as well as complete metric spaces in most cases). The intuition of distance between points provided by metric spaces is at the heart of the concept of arbitrarily close, but the definition itself is topological in nature. See Definitions \ref{def:aclreal} and \ref{def:acl}.

Chapter \ref{ch:kernelofanalysis} develops the main topic: A formal definition for the phrase {\em arbitrarily close}.


\section{Scratch Work}
\label{sec:scratchwork}

Throughout the book, scratch work accompanies examples, theorems, and other results that come with proofs. I don't really have a specific structure in mind when it comes to scratch work, and that's the point. Mathematics can be messy before we figure out nice ways of explaining what we have in mind. I think we should embrace the mess with a playful attitude and allow ourselves to explore ideas before trying to write the most beautiful and elegant proofs we can. 

To me, we should strive to write proofs that are as clear and concise as we can make them, but as with all writing endeavors we should expect to write multiple drafts before settling on a finished product. This is where scratch work comes in.

Scratch work provides a place for me to hash out ideas before delving into proofs. Sometimes this amounts to gathering details before reorganizing them into a proof, sometimes I explore things I know will {\em not} work out, and sometimes I simply provide a little foreshadowing of the proofs that follow. 

What does your scratch work look like? Does it help you find your own perspective on the mathematics you're exploring?

\section{Mistakes, play, and learning}
\label{sec:mistakesplayandlearning}

The exercises are there to play with! Do scratch work,  draw stuff, and make mistakes---make {\em lots} of mistakes---before worrying about writing proofs. Mistakes are an unavoidable and essential part  of learning and writing mathematics. I encourage you to read Dr.~Francis Su's wonderful book titled ``Mathematics for Human Flourishing''. 

Do you have any idea how much material in this book started off poorly formed, haphazard, and sometimes silly? Pretty much all of it. You will undoubtedly find some typos and even mathematical mistakes as you read this book. And you know what, {\em that's a good thing}. 

If you find mistakes I've made, you're learning. If you try something that ends up not working, you're learning. If an idea works for a bit and you get stuck, put the work aside and do something else. You're still learning. Your brain is amazing, it'll keep churning on the math even if you're not actively thinking about it.

For people who play video games, do you ever read the entire manual before playing a new game? Probably not. Even if you do, do you find that reading the manual made you into an expert right away? I seriously doubt it. My guess is you just grabbed a controller and went for it. Why not take the same approach with mathematics?

{\em Play with the ideas!} Playing and learning go hand-in-hand. Grab the controller, crash your character into a wall, get a ``Game Over'', and try again. When you figure something out, share it with your friends, classmates, and instructors. When you get stuck, ask them what they did or would do. Communicating your ideas (poorly formed, haphazard, and sometimes silly) and thinking through the ideas shared by your friends, classmates, and instructors will get you through.

And how about driving a car? You can watch me do it, I can explain to you how I go about the way I drive, you can read about it. But none of that compares to how much you will learn by getting behind the wheel yourself.

I suppose a point I'm trying to make is that I can explain how I see the mathematics of real analysis, but it matters more for you to discover your own perspective on the material and to find ways to explain it to the mathematicians around you. 

Most importantly, {\em have fun!}


\chapter[Kernel of Analysis]{Kernel of Analysis}
\label{ch:kernelofanalysis}

What's the kernel of analysis? A fundamental goal of this book is to help you find your own answer to this question. If you already have one, I hope you find a fresh perspective on some classic mathematics.

To me, the kernel of analysis is this: 
\begin{quote}
A point is {\em arbitrarily close} to a set if\\ every neighborhood of the point intersects the set.
\end{quote}
Defining the phrase {\em arbitrarily close} in this way provides a foundation for classic results in real analysis dealing with closure, limits and convergence, connectedness,  continuity, differentiation and integration. These ideas and more are explored throughout the textbook.\footnote{{\em Arbitrarily  close} is actually topological in nature since we can take {\em neighborhoods} to be open sets containing the point in question.}

\vs
\section{Arbitrarily close}
\label{sec:acl}

Let's start with a couple of intervals and see if either has a largest element. Note that the set of real numbers is denoted by $\R$ throughout the book, and the notation ``$x\in\R$'' means $x$ is a real number.

\begin{prob}\label{prob:intervals}
Consider the closed interval $F$ and open interval $G$ in Figure \ref{fig:intervals} given by 
\begin{align}
	F&=[0,3140]=\{x\in\R:0\leq x\leq 3140\} \quad\textnormal{and}\label{eqn:intervalF}\\ 
	G&=(0,3140)=\{x\in\R:0<x<3140\}.\label{eqn:intervalG}
\end{align}

\begin{figure}
\centering
\begin{tikzpicture}
\draw (-2,0) node {$F$};
	\draw[-,semithick] (0,0) -- (4,0);
	\draw (0,0) node {$[$};
	\draw (4,0) node {$]$};
	\draw (0,-0.5) node {$0$};
	\draw (4,-0.5) node {$3140$};
\draw (-2,-1.5) node {$G$};
	\draw[-,semithick] (0,-1.5) -- (4,-1.5);
	\draw (0.02,-1.5) node {$($};
	\draw (3.98,-1.5) node {$)$};
	\draw (0,-2) node {$0$};
	\draw (4,-2) node {$3140$};
\end{tikzpicture}
\caption{The closed interval $F=[0,3140]$ and open interval $G=(0,3140)$ from Problem \ref{prob:intervals}.}
\label{fig:intervals}
\end{figure}

\noindent Which interval has a largest element? That is, which has a {\em maximum}?
\end{prob}

To guide our reasoning more concretely, we could use a formal definition for maximum to gives us properties to check against.  For instance, do you know what I mean by ``largest element'', exactly? How am using ``large'' in this context? 

To codify our intuition and prove what we believe to be true in a rigorous way, consider the following definitions of {\em upper bound}, {\em maximum}, {\em lower bound}, and {\em minimum}. Please keep in mind, this book is about finding rigorous ways to define, justify, understand, and expand results from calculus.

\begin{definition}\label{def:max}
A real number $b$ is an {\em upper bound} for a set of real numbers $S$ if for every $x$ in $S$ we have $x\leq b$. In this case, we say $S$ is {\em bounded above}. A real number $q$ is the {\em maximum} of $S$ if $q$ is an upper bound for $S$ and $q$ is an element of $S$. That is,
\begin{itemize}
	\item[(i)] for every $x$ in $S$ we have $x\leq q$, and 
	\item[(ii)] $q$ is in $S$.
\end{itemize} 
In this case, we write $q=\max{S}$. 

Similarly, a real number $a$ is a {\em lower bound} for a set of real numbers $S$ if for every $x$ in $S$ we have $a \leq x$. In this case, we say $S$ is {\em bounded below}. A real number $v$ is the {\em minimum} of $S$ if $v$ is a lower bound for $S$ and $v$ is an element of $S$. That is,
\begin{itemize}
	\item[(iii)] for every $x$ in $S$ we have $v\leq x$, and 
	\item[(iv)] $v$ is in $S$.
\end{itemize} 
In this case, we write $v=\min{S}$. 
\end{definition}

Let's see how we can use Definition \ref{def:max} to prove a couple of facts about $F$ and $G$.

\begin{example}\label{eg:closedinterval}
The closed interval $F=[0,3140]$ has a largest element. In other words, $\max{F}$ exists.
\end{example}

\begin{proof}[Proof for Example \ref{eg:closedinterval}]
By checking the properties in Definition \ref{def:max}, since (i) $x\leq 3140$ for every $x$ in $F$ (thus 3140 is an upper bound for $F$) and (ii) 3140 is in $F$, the interval $F$ has a maximum---its largest element---given by $\max{F}=3140$.
\end{proof}
 
What about an open interval like $G=(0,3140)$? My intuition suggests the maximum might be $3140$. The problem is no real number is both (i) an upper bound for $G$ and (ii) an element of $G$. 

\begin{example}\label{eg:openinterval}
The open interval $G=(0,3140)$ has no largest element. In other words, $\max{G}$ does not exist.
\end{example}

\begin{proof}[Proof for Example \ref{eg:openinterval}]
Every real number greater than or equal to 3140 is not in $G$, so (ii) fails in this case. Also,  any real number in $G$ is strictly less than 3140, so there's always a larger number in $G$. For instance, the midpoint between 3140 and the given real number in $G$ is also in $G$. Thus, no element of $G$ is an upper bound for $G$, so (i) fails in this case. 

Since no real number satisfies both parts (i) and (ii) of Definition \ref{def:max} with respect to $G$, the open interval $G$ has no maximum. In other words, $G$ {\em has no largest element.} 
\end{proof}

Even though 3140 is not the maximum of $G$, it plays a special role: The point 3140 is an upper bound for the set $G$ which is as close to $G$ as possible without actually being in $G$. In other words, 3140 is both an upper bound for $G$ and so close to $G$ there is no distance between them. But what does that mean, exactly? And how can we prove it?

In order to prove the results we can visualize and believe to be true, mathematical definitions capture our intuition and allow us to control the context of the problems we're trying to solve. Definitions allow us to be specific in addressing intuitive ideas like how close a point can be to a set or having no distance between a point and a set.

Ultimately, it is useful to put our intuition into a more concrete mathematical framework via definitions. I believe this is harder than it sounds. After all, once we create a definition, how do we check that it really aligns with our intuition? And how do we go about proving results based on our definitions?

\begin{remark}\label{rmk:firstsectionquestion}
For this first section of the book, I'm trying to drive us to the following question: What does it mean for a point to be {\em arbitrarily close} to a set? To ensure we have a sound mathematical foundation, let's define a notion for distance between points in the real line so we can eventually be precise about how close points and sets can be to each other. The distance we use relies on a formal definition for the absolute value of a real number.
\end{remark}

\begin{definition}\label{def:absolutevalue}
For every $x$ in $\R$, the {\em absolute value} of $x$, denoted by $|x|$, is given by
\begin{align}\label{eqn:absolutevalue}
	|x|=
	\begin{cases}
	\hspace{8pt}x, & \textnormal{ if } x\geq0,\\
	-x, &  \textnormal{ if } x<0.
	\end{cases}
\end{align} 
For every $x$ and $y$ in $\R$, the {\em distance} between $x$ and $y$, denoted by $d_\R(x,y)$, is defined by 
\begin{align}\label{eqn:realdistance}
d_\R(x,y)=|x-y|.
\end{align}
\end{definition}

\begin{remark}\label{rmk:absolutevalue}
It may be odd to see absolute value defined piecewise as it is in Definition \ref{def:absolutevalue}, but doing so gives a rigorous way to work with absolute values when writing proofs. Still, the ``$-x$'' part may look weird. I think of it as multiplying a negative real number $x$ by $-1$ rather than just ``dropping the negative sign'' since the latter sometimes fails to capture the process of finding the absolute value of a negative number. For instance, $x=2-\sqrt{5}$ is a negative real number with no clear negative sign to ``drop''. On the other hand, 
\begin{align*}
x=2-\sqrt{5}<0 \quad\Longrightarrow\quad |x|=|2-\sqrt{5}|=(-1)(2-\sqrt{5})=\sqrt{5}-2,
\end{align*}
where the symbol ``$\Longrightarrow$'' is read as ``implies''.

Again, by thinking of taking the absolute value of a negative real as multiplying the negative number by $-1$, we have a rigorous mathematical process we can rely on for proofs involving absolute value.
\end{remark}

The definition for distance in Definition \ref{def:absolutevalue} is based on a comparison between two points in the real line, not a point and a set as mentioned in Remark \ref{rmk:firstsectionquestion}. Still, it gives us something to work with: Given a point and a set, we can consider {\em any} amount of distance around the point and see if there are points from the set within that distance. 

Here is a definition for {\em arbitrarily close} in the real line, what I consider to be the kernel of analysis and the fundamental theme of the book. See Figure \ref{fig:aclreal}. (A more general definition in the context of Euclidean spaces is presented in Definition \ref{def:acl}.)

\begin{definition}\label{def:aclreal}
Let $y$ be a real number and let $B$ be a set of real numbers. The point $y$ is said to be \textit{arbitrarily close} to the set $B$, and we write $y \acl B$, if for every $\varepsilon > 0$ there is some $x_\varepsilon$ in $B$ such that
\begin{align}\label{eqn:aclrealinequality}
d_\R(x_\varepsilon,y)=|x_\varepsilon-y|<\varepsilon.
\end{align}
The phrase ``$B$ is {\em arbitrarily close} to $y$'' is also defined and denoted in the same way. That is, $B\acl{y}$ is taken to mean the same thing as $y\acl{B}$.\footnote{Thanks to Berit Givens for suggesting the notation ``$y\acl{B}$'' to represent the phrase ``$y$ is arbitrarily close to $B$''.} 

If some real number $z$ is not arbitrarily close to $B$, then there is some $\varepsilon_z>0$
such that for every $x$ in $B$ we have
\begin{align}\label{eqn:awfrealinequality}
d_\R(x,z)=|x-z|\geq\varepsilon_z>0.
\end{align}
In this case, we say $z$ is \textit{away from} $B$ and write $z \awf B$. The phrase ``$B$ is {\em away from} $z$'' is also defined and denoted in the same way.
\end{definition}

\begin{figure}
\centering
\begin{tikzpicture}
\draw (0,0) node {$B$};
	\draw[-,semithick] (2,0) -- (4,0);
	\draw (2,0) node {$[$};
	\draw (3.98,0) node {$)$};
	\draw[blue] (8,0) node {$\circ$};	
	\draw (2.15,0) node {$\bullet$};
	\draw (2.7,0) node {$\bullet$};
	\draw (3.5,0) node {$\bullet$};			
	\draw (8,-0.5) node {$z$};
	\draw (4.05,-0.47) node {$y$};	
	\draw (3.55,-0.5) node {$x_\varepsilon$};		
\draw[-,semithick, blue] (8.05,0.05) -- (8.71,0.71);
\draw[blue] (8.6,0.2) node {$\varepsilon_z$};
\begin{scope}[thick, dashed, blue] 
\draw (8,0) circle (1cm); 
\end{scope}	
\begin{scope}[thick, dashed, red] 
\draw (4,0) circle (1cm); 
\draw (4,0) circle (1.6cm); 
\draw (4,0) circle (2.2cm);
\end{scope}	
	\draw[-,semithick, red] (4,0) -- (4.71,0.71);
	\draw[red] (4.5,0.2) node {$\varepsilon$};
\end{tikzpicture}
\caption{Examples of real numbers $y$ and $z$ along with a set $B$ where $y\acl{B}$ and $z\awf{B}$ as in Definition \ref{def:aclreal}.}
\label{fig:aclreal}
\end{figure}

In Figure \ref{fig:aclreal}, the set $B$ is an interval that does not contain its right endpoint $y$. The point $z$ is away from $B$ since the blue dashed circle centered at $z$ has a positive radius $\varepsilon_z$ with no points of $B$ inside. On the other hand, $y$ is arbitrarily close to $B$ since circles centered at $y$ of any positive radius $\varepsilon$ have a point $x_\varepsilon$ from the set $B$ within them. Three such circles are dashed in red, and each has at least one such $x_\varepsilon$ within its radius indicated by one of the three $\bullet$. In order to keep things from getting too cluttered, only one red $\varepsilon$ appears in the figure. 

\begin{remark}\label{rmk:smallaswelike}
In Definition \ref{def:aclreal}, it may help to think of the positive real number $\varepsilon$ as the amount of error or ``wiggle room'' we'd like to allow, the idea being that we can allow \textit{any} amount of error, no matter how small. In this way, $y \acl B$ means $B$ gets as close to $y$ as we like, no matter how close that may be. So, $y \acl B$ is exactly what it means when there is no distance between a point $y$ and a set $B$. 
\end{remark}

\begin{remark}\label{rmk:subscriptornot}
How can we prove $y,z$, and $B$ satisfy these definitions in specific cases? To prove $y \acl B$, we should respond to {\em each} distance $\varepsilon>0$ with a point $x_\varepsilon$ in $B$ that is within $\varepsilon$ of $y$. To prove $z \awf B$, we need just {\em one} distance $\varepsilon_z>0$ that separates $z$ from all the points in $B$ by a distance of $\varepsilon_z$ or more. 
\end{remark}

Also, the notation for the variables in Definition \ref{def:aclreal} play subtle roles. The $x_\varepsilon$ and $\varepsilon_z$ both have a subscript indicating something special is going on. Specifically, $x_\varepsilon$ is a particular real number in $B$ that is found in response to $\varepsilon>0$, and $\varepsilon_z>0$ is a particular positive number found in response to the real number $z$ and its relationship to $B$. On the other hand, $\varepsilon$ represents {\em any} positive real number and $x$ represents {\em any} real number in $B$. This may not make much sense or seem too important yet, but I am planting seeds for concepts we will come across throughout the book. Hang in there.

\begin{figure}
\centering
\begin{tikzpicture}
\draw (-2,0) node {$G$};
	\draw[-,semithick] (0,0) -- (4,0);
	\draw (0.02,0) node {$($};
	\draw (3.98,0) node {$)$};
	\draw (2.75,0) node {$\bullet$};
	\draw (3.25,0) node {$\bullet$};
	\draw (3.75,0) node {$\bullet$};			
	\draw (0,-0.5) node {$0$};
	\draw (4,-0.5) node {$3140$};
	\draw (6,-0.5) node {$4710$};		
	\draw[-,semithick, blue] (6.05,0.05) -- (6.71,0.71);
	\draw[blue] (6,0) node {$\circ$};
\begin{scope}[thick, dashed, blue] 
\draw (6,0) circle (1cm); 
\end{scope}	
	\draw[blue] (6.2,0.65) node {$\varepsilon_z$};
\end{tikzpicture}
\caption{The open interval $G=(0,3140)$ along with the real number $z=4710$ and distance $\varepsilon_z=785$ from Example \ref{eg:openinterval2}.}
\label{fig:openinterval}
\end{figure}

\begin{example}\label{eg:openinterval2}
For the open interval $G=(0,3140)$, I claim the real number $y=3140$ is arbitrarily close to $G$ while $z=4710$ is away from $G$. It's easier to prove $4710\awf{G}$ since we need just {\em one} positive distance $\varepsilon_z$ to separate $4710$ from all the points in $G$. Let's do that first. 

To align with Figure \ref{fig:openinterval}, the blue point represents 4710 and the radius of the little blue circle is half the distance between 4710 and 3140, which looks good enough\footnote{The real number $z=4710$ was chosen so the figure would be to scale.}. So we have
\begin{align}
\varepsilon_z=\frac{|4710-3140|}{2}=\frac{1570}{2}=785>0.
\end{align}
\end{example}

\begin{proof}[Proof of $4710\awf{G}$ in Example \ref{eg:openinterval2}]
Consider the real number $z=4710$ and let $x$ be any real number in $G=(0,3140)$. Then we have
\begin{align}\label{eqn:x31404710}
	 0<x<3140<4710.
\end{align} Now let $\varepsilon_z=785>0$. Since $x-4710<0$ and $x>3140$ according to \eqref{eqn:x31404710}, by the definition of absolute value (Definition \ref{def:absolutevalue}) we have
\begin{align}
	|x-4710|&=4710-x 
	>4710-3140
	=1570
	>785=\varepsilon_z.
\end{align}
Therefore, every point in $G$ is more than $\varepsilon_z=785>0$ away from $4710$, and so $4710\awf{G}$. 
\end{proof}

Proving $3140$ is arbitrarily close to $G$ takes more effort. It's not good enough to consider just one distance. To prove $3140\acl{G}$, we should respond to {\em every} positive distance or ``error'' $\varepsilon>0$ with its own point $x_\varepsilon$ that's both in $G$ and within $\varepsilon$ of $3140$.  Scratch work will give us the pieces we need.

\begin{scratch}
\label{scr:openintervalscratch}
Always consider drawing figures when doing scratch work. Students often request I draw more figures!

\begin{figure}
\centering
\begin{tikzpicture}
\draw (-2,-1.5) node {$G$};
	\draw[-,semithick] (0,-1.5) -- (4,-1.5);
	\draw (0.02,-1.5) node {$($};
	\draw (3.98,-1.5) node {$)$};
	\draw (3.25,-1.5) node {$\bullet$};		
	\draw (0,-2) node {$0$};
	\draw (4,-2) node {$y$};
	\draw (3.25,-2) node {$x_\varepsilon$};	
	\draw[-,semithick, red] (4,-1.5) -- (5.02,-0.38);
	\draw[red] (4.7,-1.1) node {$\varepsilon$};
\begin{scope}[thick, dashed, red] 
\draw (4,-1.5) circle (1.5cm); 
\end{scope}	
\end{tikzpicture}
\caption{The open interval $G=(0,3140)$ along with a distance $\varepsilon>0$ (in red) and a real number $x_\varepsilon$ which is both in $G$ and within $\varepsilon$ of $y=3140$. See Scratch Work \ref{scr:openintervalscratch}.}
\label{fig:openintervalscratch1}
\end{figure}

In Figure \ref{fig:openintervalscratch1}, the red radius $\varepsilon$ is small enough so the real number $x_\varepsilon=3140-(\varepsilon/2)$ is both in $G$ and within $\varepsilon$ of 3140. Only one red $\varepsilon$ appears in the figure, but to prove $3140\acl{G}$, we need to show {\em every} distance $\varepsilon>0$ comes with its own $x_\varepsilon$ from the set $G$. Considering more values for $\varepsilon$ looks something like Figure \ref{fig:openintervalscratch2}.

\begin{figure}
\centering
\begin{tikzpicture}
\draw (-2,-1.5) node {$G$};
	\draw[-,semithick] (0,-1.5) -- (4,-1.5);
	\draw (0.02,-1.5) node {$($};
	\draw (3.98,-1.5) node {$)$};
	\draw (2,-1.5) node {$\bullet$};	
	\draw (2.75,-1.5) node {$\bullet$};
	\draw (3.4,-1.5) node {$\bullet$};		
	\draw (0,-2) node {$0$};
	\draw (2,-2) node {$1570$};	
	\draw (4,-2) node {$3140$};
	\draw[-,semithick, red] (4,-1.5) -- (7.43,1.43);
	\draw[red] (6,0.75) node {$\varepsilon$};
\begin{scope}[thick, dashed, red] 
\draw (4,-1.5) circle (1cm); 
\draw (4,-1.5) circle (1.5cm); 
\draw (4,-1.5) circle (4.5cm);
\end{scope}	
\end{tikzpicture}
\caption{The open interval $G$ along with multiple distances (in red) and corresponding points (the $\bullet$) as in Scratch Work \ref{scr:openintervalscratch}. The real number 1570 is within the largest circle whose radius is $\varepsilon$ (in red), but not the smaller circles.}
\label{fig:openintervalscratch2}
\end{figure}

The largest red circle in Figure \ref{fig:openintervalscratch2} has a radius $\varepsilon$ greater than 3140. So for that $\varepsilon$, I responded with $x_\varepsilon=1570$, which is good enough since 1570 is in $G$ and within $\varepsilon$ of 3140. The other two circles have smaller radii, but each has at least one point (one of the $\bullet$) that is in $G$ and also within the corresponding distance of 3140. 
\end{scratch}

On to the proof. Again, we need to verify {\em every} $\varepsilon>0$ comes with at least one point $x_\varepsilon$ that's both in $G$ and within $\varepsilon$ of 3140. 

\begin{proof}[Proof of $3140\acl{G}$ in Example \ref{eg:openinterval2}]
Let $\varepsilon>0$. (By not specifying a particular value of $\varepsilon$, we are accounting for {\em all} positive distances, or ``errors'', at the same time!) Consider the piecewise definition for the real number $x_\varepsilon$ given by
\begin{align}
	x_\varepsilon
	&=\label{eqn:choiceofx}
	\begin{cases}
	\displaystyle 3140-\frac{\varepsilon}{2}, & \textnormal{ if } 0<\varepsilon<3140,\\
	1570, &  \textnormal{ if } \varepsilon \geq 3140.
	\end{cases}
\end{align} 
So when $0<\varepsilon<3140$ and in Figure \ref{fig:openintervalscratch1}, we have $x_\varepsilon=3140-(\varepsilon/2)
$ which is in $G$ and 
\begin{align}
d_\R(x_\varepsilon,3140)&=|x_\varepsilon-3140|=3140-\left(3140-\frac{\varepsilon}{2}\right)= \frac{\varepsilon}{2}<\varepsilon.
\end{align}
When $\varepsilon \geq 3140$ as in Figure \ref{fig:openintervalscratch2}, we have $x_\varepsilon=1570$ which is in $G$ and
\begin{align}
d_\R(x_\varepsilon,3140)&=|x_\varepsilon-3140|=3140-1570=1570\leq \frac{\varepsilon}{2}<\varepsilon.
\end{align}
Therefore, $3140 \acl G$. 
\end{proof}

\begin{remark}\label{rmk:scratch}
Hang on. What just happened? Did every step make sense? Does Scratch Work \ref{scr:openintervalscratch} help you see how I came up with this proof?  Take your time reasoning through my proof and consider writing up a walkthrough to help you find your own understanding.
\end{remark}

After laying out my Scratch Work \ref{scr:openintervalscratch}, I reorganized and rewrote stuff to produce the proof showing 3140 is arbitrarily close to $G$ in Example \ref{eg:openinterval}. How would you have done the scratch work and proof? 

Moving forward, scratch work will typically appear before a corresponding proof. The amount of detail will vary, but scratch work will usually include motivation for the steps in my proofs. For you, scratch work should entail anything that helps you figure stuff out, no matter what form it may take.

With the definition for arbitrarily close, we now have enough mathematics to define {\em supremum}. Roughly speaking, a supremum is a lot like a maximum in that both are upper bounds for a given set, but the supremum is not necessarily an element of the set like the maximum has to be. An analogous statement holds for the definition of {\em infimum} which is defined in terms of a suitable lower bound and is a lot like minimum.

\begin{definition}\label{def:supremumacl}
A real number $u$ is the {\em supremum} of a nonempty set of real numbers $S$, and we write $u=\sup S$, if $u$ is an upper bound for $S$ and arbitrarily close to $S$. That is, $u=\sup S$ if
\begin{itemize}
	\item[(i)] for every $x\in S$ we have $x\leq u$, and
	\item[(ii)] $u \acl S$.
\end{itemize}

Similarly, a real number $\ell$ is the {\em infimum} of $S$, and we write $\ell=\inf S$, if $\ell$ is a lower bound for $S$ and arbitrarily close to $S$. That is, $\ell=\inf S$ if 
\begin{itemize}
	\item[(iii)] for every $x\in S$ we have $\ell \leq x$, and
	\item[(iv)] $\ell \acl S$.
\end{itemize}
\end{definition}

Have you noticed how similar the definitions for maximum and supremum are to one another? See Definitions \ref{def:max} and \ref{def:supremumacl}.

\begin{example}\label{eg:openintervalrevisit}
As seen in Example \ref{eg:openinterval}, the real number $3140$ is arbitrarily close to the open interval $G=(0,3140)$. Since $x<3140$ for each $x$ in $G$, we have $3140$ is also an upper bound for $G$. Therefore, $\sup G=3140$ and yet $\max G$ does not exist (as pointed out Example \ref{eg:closedinterval}). Similarly, $\min G$ does not exist but $\inf{G}=0$. 
\end{example}

Let's consider one more example to see what our results give us regarding the open interval $G$ from
Problem \ref{prob:intervals} as well as Examples \ref{eg:openinterval} and \ref{eg:openintervalrevisit}, this time with specific values of $\varepsilon>0$ in mind.

\begin{example}\label{eg:sanitycheck}
For the interval $G=(0,3140)$, we proved $3140\acl{G}$ with a key step of choosing
\begin{align}
	x_\varepsilon
	&=\label{eqn:choiceofxagain}
	\begin{cases}
	\displaystyle 3140-\frac{\varepsilon}{2}, & \textnormal{ if } 0<\varepsilon<3140,\\
	1570, &  \textnormal{ if } \varepsilon \geq 3140.
	\end{cases}
\end{align} 
in response to any given $\varepsilon>0$. What does this formula give us for the specific values $\varepsilon_1=1$, $\varepsilon_2=1/2$, and $\varepsilon_3=1/100$? Are the values of $x_\varepsilon$ really in $G$?

First, we can show $x_1,x_2$, and $x_3$ are in $G$ by verifying they satisfy the inequalities defining $G$ in line \eqref{eqn:intervalG}. Respectively, we have\footnote{I changed the notation to $x_1$ instead of $x_{\varepsilon_1}$ since smaller subscript is so tiny.}
\begin{align}
	0< x_1&=3140-\frac{\varepsilon_1}{2}=3140-\frac{1}{2}<3140,\\
	0< x_2&=3140-\frac{\varepsilon_2}{2}=3140-\frac{1}{4}<3140,\quad\textnormal{and}\\
	0< x_3&=3140-\frac{\varepsilon_3}{2}=3140-\frac{1}{200}<3140.
\end{align}
Hence, $x_1, x_2$ and $x_3$ are in $G=(0,3140)$.

Now, are these $x_k$ for $k=1,2,3$ close enough to 3140? Yes, but each one only needs to be within their corresponding $\varepsilon_k$ to get their jobs done.  Since $x_k<3140$ for each $k=1,2,3$, by the definition of absolute value (Definition \ref{def:absolutevalue}) we have
\begin{align}
	|x_1-3140|&=\left|-\frac{1}{2}\right|=\frac{1}{2}<1=\varepsilon_1,\\
	|x_2-3140|&=\left|-\frac{1}{4}\right|=\frac{1}{4}<\frac{1}{2}=\varepsilon_2,\quad\textnormal{and}\\
	|x_3-3140|&=\left|-\frac{1}{200}\right|=\frac{1}{200}<\frac{1}{100}=\varepsilon_3.
\end{align}
We're good!
\end{example}

Let's play around with another pair of sets that are {\em not} intervals.

\begin{example}\label{eg:twocountablesets}
Consider the following sets of real numbers $A$ and $B$ where $\N$ denotes the set of positive integers:
\begin{align}
A&=\left\{a_n=2-\frac{1}{\sqrt{n}}:n\in\N\right\} \qquad\textnormal{and}\\
B&=\left\{b_n=\left(2-\frac{1}{\sqrt{n}}\right)(-1)^n:n\in\N\right\}.
\end{align}
I claim $2\acl{A}, 2\acl{B}$, and $-2\acl{B}$ while $-2\awf{A}$. See Figure \ref{fig:twocountablesets}. 
\end{example}

\begin{figure}
\centering
\begin{tikzpicture}
\draw (-2,0) node {$A$};
\draw (4,0) node {$\circ$};
\draw (3.76,0) node {$...$};
\draw (3,-0.5) node {$1$};
\draw (4,-0.5) node {$2$};
\foreach \Point in {(3,0), (3.29,0), (3.42,0), (3.5,0)}
{
    \node at \Point {\textbullet};
}
\draw (-2,-1.5) node {$B$};
\draw (0,-1.5) node {$\circ$};
\draw (4,-1.5) node {$\circ$};
\draw (3.76,-1.5) node {...};
\draw (0.28,-1.5) node {...};
\draw (0,-2) node {$-2$};
\draw (1,-2) node {$-1$};
\draw (4,-2) node {$2$};
\foreach \Point in {(1,-1.5), (3.29,-1.5), (0.58,-1.5), (3.5,-1.5)}
{
    \node at \Point {\textbullet};
}
\end{tikzpicture}
\caption{The sets $A$ and $B$ from Example \ref{eg:twocountablesets}. Neither set contains 2, indicated by one of the $\circ$ in the figure. But how close do they get? What about $-2$? Can you prove your answers?}
\label{fig:twocountablesets}
\end{figure}

Before attempting proofs, consider some scratch work for the set $A$ along with Figure \ref{fig:twocountablesetsscratch}.

\begin{scratch}\label{scr:twocountablesets}
\begin{figure}
\centering
\begin{tikzpicture}
\draw (-2,0) node {$A$};
\draw (4,0) node {$\circ$};
\draw (4,-0.4) node {$2$};
\draw (3.76,0) node {$...$};
\draw (3,-0.4) node {$a_1$};
\draw (0,-0.4) node {$-2$};
\draw (0,0) node {$\circ$};
\draw[-,semithick, blue] (0.05,0.05) -- (0.71,0.71);
\draw[blue] (0.6,0.2) node {$1$};
\foreach \Point in {(3,0), (3.29,0), (3.42,0), (3.5,0)}
{
    \node at \Point {\textbullet};
}
\draw[thick, dashed, blue] (0,0) circle (1.0cm);
\begin{scope}[thick, dashed, red] 
\draw (4,0) circle (0.85cm); 
\draw (4,0) circle (1.4cm); 
\end{scope}	
\draw[-,semithick, red] (4.05,0.05) -- (5,1);
\draw[red] (4.97,0.6) node {$\varepsilon_1$};
\end{tikzpicture}
\caption{A figure to accompany Scratch Work \ref{scr:twocountablesets} featuring the set $A$ along with a distance $1$ around $-2$ and a distance $\varepsilon_1>1$ around $-2$.}
\label{fig:twocountablesetsscratch}
\end{figure}

Yet again, the ``away from'' part looks like it might  be easiest to prove since a radius of $\varepsilon_z=1$ is enough distance to keep $z=-2$ away from all the points in $A$. 

The ``arbitrarily close'' part---$2\acl{A}$---deserves more scrutiny. See Figure \ref{fig:twocountablesetsscratch}. For the largest red circle around $2$ (the center $\circ$ on the right) whose radius $\varepsilon_1$ is greater than $1$, the real number $x_{\varepsilon_1}=a_1$ is in $A$ and close enough to $2$. But $a_1$ is not close enough for the smaller circle whose radius is less than 1. That's okay, though. For each radius $\varepsilon>0$, we need just one point $x_\varepsilon$ that's in $A$ and within $\varepsilon$ of 2, and each $\varepsilon$ can have its own $x_\varepsilon$. 

So, if we can find a way to take a given but arbitrary $\varepsilon>0$ and respond to it with a suitably defined $x_\varepsilon$, we're good.
\end{scratch}

\begin{remark}\label{rmk:carefulchoice}
Careful! Setting 
\begin{align}
	x_\varepsilon
	&=\label{eqn:choiceofxagain}
	\begin{cases}
	\displaystyle 2-\frac{\varepsilon}{2}, & \textnormal{ if } 0<\varepsilon<2,\\
	1, &  \textnormal{ if } \varepsilon \geq 2
	\end{cases}
\end{align} 
will not be good enough to show $2\acl{A}$, even though a similar choice was made for $x_\varepsilon$ and the interval $G$ in Example \ref{eg:openinterval}. To see why, temporarily set $\varepsilon=3/2$. Since $0<\varepsilon=3/2<2$, we have
\begin{align}
	x_\varepsilon&=2-\frac{3/2}{2}=2-\frac{3}{4}=\frac{5}{4}.
\end{align} 
Since $5/4<2$, the definition of absolute value (Definition \ref{def:absolutevalue}) yields
\begin{align}
	d_\R(x_\varepsilon,2)&=|x_\varepsilon-2|= 2-\frac{5}{4}=\frac{3}{4}<\frac{3}{2}=\varepsilon,
\end{align} 
which means $x_\varepsilon=5/4$ is close enough to 2. The problem is, $5/4$ {\em is not in} $A$. 

For $x_\varepsilon=5/4$ to be an element of $A$, there must be a positive integer $n_\varepsilon$ where
\begin{align}
x_\varepsilon&=2-\left(\frac{1}{\sqrt{n_\varepsilon}}\right)=5/4.
\end{align}
But solving for $n_\varepsilon$ yields $n_\varepsilon=16/9$, which is not a positive integer. So, we need to be more careful when choosing $x_\varepsilon$ in response to $\varepsilon$.
\end{remark}

Let's start over with some new scratch work.

\begin{scratch}\label{scr:startatend}
With any scratch work, consider starting at the end. For each $\varepsilon>0$, we want to end up with $x_\varepsilon=2-(1/\sqrt{n_\varepsilon})$---which is definitely  in the set $A$---where
\begin{align}
|x_\varepsilon-2|=\left|2-\left(\frac{1}{\sqrt{n_\varepsilon}}\right)-2\right|=\frac{1}{\sqrt{n_\varepsilon}}<\varepsilon.
\end{align}
This amounts to solving the inequality for $n_\varepsilon$ and making sure we choose $n_\varepsilon$ to be a positive integer. So let's do that. We get 
\begin{align}
\frac{1}{\sqrt{n_\varepsilon}}<\varepsilon\quad \Longrightarrow\quad n_\varepsilon>\frac{1}{\varepsilon^2}.
\end{align}
Choosing a {\em positive integer} $n_\varepsilon$ large enough to satisfy $n_\varepsilon>1/\varepsilon^2$ allows us to set $x_\varepsilon=a_{n_\varepsilon}=2-(1/\sqrt{n_\varepsilon})$.
\end{scratch}

Time for a proof. While Scratch Work \ref{scr:startatend} is where I really figured things out, the proof amounts to a careful reorganization of the scratch work where the details are put in an appropriate order.

\begin{proof}[Proof of $-2\awf{A}$ and $2\acl{A}$ in Example \ref{eg:twocountablesets}] 
First, to show $-2\awf{A}$, note that for every $n$ in $\N$ we have
\begin{align}
|a_n-(-2)|&=\left|2-\left(\frac{1}{\sqrt{n}}\right)-(-2)\right|=4-\frac{1}{\sqrt{n}}\geq 3> 1>0.
\end{align}
Hence, every real number $a_n$ in $A$ is a distance of at least 1 away from $-2$.

Next, to show $2\acl{A}$, let $\varepsilon>0$. (By not specifying a particular value of $\varepsilon$, we are accounting for {\em all} positive distances, or radii or ``errors'', at the same time.) Choose a positive integer $n_\varepsilon$ large enough to satisfy $n_\varepsilon>1/\varepsilon^2$. We have 
\begin{align}
n_\varepsilon>\frac{1}{\varepsilon^2} \quad\Longrightarrow\quad \frac{1}{\sqrt{n_\varepsilon}}<\varepsilon.
\end{align}
Now consider $a_{n_\varepsilon}=2-(1/\sqrt{n_\varepsilon})$. Then $a_{n_\varepsilon}$ is in $A$ and 
\begin{align}
d_\R(a_{n_\varepsilon},2)&=\left|a_{n_\varepsilon}-2\right|=\left|2-\frac{1}{\sqrt{n_\varepsilon}}-2\right|=\frac{1}{\sqrt{n_\varepsilon}}<\varepsilon.
\end{align}
Therefore, $2\acl{A}$.
\end{proof}

\begin{remark}\label{rmk:differentn}
In the proofs of $-2\awf{A}$ and $2\acl{A}$ as stated in Example \ref{eg:twocountablesets}, I used the variables $n$ and $n_\varepsilon$ in subtly different ways, much like the differences between $\varepsilon$ versus $\varepsilon_z$ and $x$ versus $x_\varepsilon$. Both $n$ and $n_\varepsilon$ represent positive integers, but $n$ represents {\em any} positive integer while $n_\varepsilon$ is a particular positive integer chosen in response to $\varepsilon>0$ (hence the subscript) and how the scratch work turned out based on the definition of the set $A$.
\end{remark}

Proving $-2\acl{B}$ and $2\acl{B}$ from Example \ref{eg:twocountablesets} can be very similar to the proving $2\acl{A}$, but there is a significant difference. When proving $2\acl{A}$, it was enough to find a positive integer $n_\varepsilon$ in response to $\varepsilon$. But when proving $-2\acl{B}$ and $2\acl{B}$, the parity of $n_\varepsilon$---whether $n_\varepsilon$ is even or odd---changes whether the corresponding point in $B$ is close to $2$ or $-2$. Such a point in $B$ might be close enough to either $2$ or $-2$, but maybe not both.

For instance, Figure \ref{fig:Bcloseto2-2}, $b_1=-1$ is in $B$ and within the largest red circle on the left, so it is within the corresponding radius $\varepsilon$ of $-2$. Hence,  $b_1=-1$ is close enough to $-2$ for that particular $\varepsilon$. But $-1$ is not close enough to $2$ since it is outside the largest red circle on the right which has the same radius $\varepsilon$. Hence, for the $\varepsilon>0$ that gives the radius of the two largest red circles, $n_\varepsilon=1$ produces $b_1=-1$ which is close enough to $-2$ but not close enough to $2$. On the other hand, $n_\varepsilon+1=2$ produces $b_2=2-(1/\sqrt{2})$, which is close enough to $2$. 

\begin{figure}
\centering
\begin{tikzpicture}
\draw (-2,0) node {$B$};
\draw (0,0) node {$\circ$};
\draw (4,0) node {$\circ$};
\draw (3.76,0) node {...};
\draw (0.28,0) node {...};
\draw (0,-0.6) node {$-2$};
\draw (1,-0.5) node {$-1$};
\draw (4,-0.6) node {$2$};
\foreach \Point in {(1,0), (3.29,0), (0.58,0), (3.5,0)}
{
    \node at \Point {\textbullet};
}
\begin{scope}[thick, dashed, red] 
\draw (0,0) circle (0.38cm); 
\draw (0,0) circle (0.85cm); 
\draw (0,0) circle (1.4cm); 
\end{scope}
\begin{scope}[thick, dashed, red] 
\draw (4,0) circle (0.38cm); 
\draw (4,0) circle (0.85cm); 
\draw (4,0) circle (1.4cm); 
\end{scope}
\draw[-,semithick, red] (0.05,0.05) -- (1,1);
\draw[red] (.95,0.65) node {$\varepsilon$};
\draw[-,semithick, red] (4.05,0.05) -- (5,1);
\draw[red] (4.95,0.65) node {$\varepsilon$};
\end{tikzpicture}
\caption{The set $B$ from Example \ref{eg:twocountablesets} along with various distances around $2$ and $-2$.}
\label{fig:Bcloseto2-2}
\end{figure}

Loosely speaking, the $b_n$ with even indices are near $2$ while those with odd indices are near $-2$. In turn, parity of positive integers affects the proof. In the proof, by not specifying a particular value of $\varepsilon$, we are accounting for {\em all} positive distances, or ``errors'', around both $-2$ and $2$ simultaneously.

\begin{proof}[Proof of $-2\acl{B}$ and $2\acl{B}$  in Example \ref{eg:twocountablesets}]
Let $\varepsilon>0$.  Choose an {\em odd} positive integer $j_\varepsilon$ large enough to satisfy $j_\varepsilon>1/\varepsilon^2$. We have 
\begin{align}
j_\varepsilon>\frac{1}{\varepsilon^2} \quad\Longrightarrow\quad \frac{1}{\sqrt{j_\varepsilon}}<\varepsilon.
\end{align}
Now consider $b_{j_\varepsilon}=-2+(1/\sqrt{j_\varepsilon})$. Then $b_{j_\varepsilon}$ is in $B$ and 
\begin{align}
d_\R(b_{j_\varepsilon},-2)&=\left|-2+\frac{1}{\sqrt{j_\varepsilon}}-(-2)\right|=\frac{1}{\sqrt{j_\varepsilon}}<\varepsilon.
\end{align}
Hence $-2\acl{B}$.

Next, choose an {\em even} positive integer $k_\varepsilon$ large enough to satisfy $k_\varepsilon>1/\varepsilon^2$. We have 
\begin{align}
k_\varepsilon>\frac{1}{\varepsilon^2} \quad\Longrightarrow\quad \frac{1}{\sqrt{k_\varepsilon}}<\varepsilon.
\end{align}
Now consider $b_{k_\varepsilon}=2-(1/\sqrt{k_\varepsilon})$. Then $b_{k_\varepsilon}$ is in $B$ and 
\begin{align}
d_\R(b_{k_\varepsilon},2)&=\left|2-\frac{1}{\sqrt{k_\varepsilon}}-2\right|=\frac{1}{\sqrt{k_\varepsilon}}<\varepsilon.
\end{align}
Hence $2\acl{B}$ as well.
\end{proof}

Before going any deeper, Section \ref{sec:preliminary} lays out some of the assumptions I've made about the mathematical knowledge and experiences readers are expected to have. In some ways, it may have been better to put these assumptions first, but I wanted to start with a notion for arbitrarily close in the real line that serves as the foundation for the entire book. This comes at the cost of using some facts before proving them; a small price to pay for a first section on such a difficult topic. Section \ref{sec:preliminary} also establishes more notation and background material.

\vs
\section*{Exercises}
\setcounter{theorem}{0}

Exercises are for play: Do scratch work, draw stuff, and make mistakes---make {\em lots} of mistakes---before worrying about writing proofs. {\em Have fun!}

\vs
Once again, here are the sets discussed throughout Section \ref{sec:acl}:
\begin{align}
	F&=[0,3140]=\{x\in\R:0\leq x\leq 3140\},\\ 
	G&=(0,3140)=\{x\in\R:0<x<3140\},\\
	A&=\{2-(1/\sqrt{n}):n\in\N\} \quad\textnormal{and}\\
B&=\{[2-(1/\sqrt{n})](-1)^n:n\in\N\}.
\end{align}

\begin{figure}
\centering
\begin{tikzpicture}
\draw (-2,0) node {$F$};
	\draw[-,semithick] (0,0) -- (4,0);
	\draw (0,0) node {$[$};
	\draw (4,0) node {$]$};
	\draw (0,-0.5) node {$0$};
	\draw (4,-0.5) node {$3140$};
\draw (-2,-1.5) node {$G$};
	\draw[-,semithick] (0,-1.5) -- (4,-1.5);
	\draw (0.02,-1.5) node {$($};
	\draw (3.98,-1.5) node {$)$};
	\draw (0,-2) node {$0$};
	\draw (4,-2) node {$3140$};
\draw (-2,-3) node {$A$};
\draw (4,-3) node {$\circ$};
\draw (3.76,-3) node {$...$};
\draw (3,-3.5) node {$1$};
\draw (4,-3.5) node {$2$};
\foreach \Point in {(3,-3), (3.29,-3), (3.42,-3), (3.5,-3)}
{
    \node at \Point {\textbullet};
}
\draw (-2,-4.5) node {$B$};
\draw (0,-4.5) node {$\circ$};
\draw (4,-4.5) node {$\circ$};
\draw (3.76,-4.5) node {...};
\draw (0.28,-4.5) node {...};
\draw (0,-5) node {$-2$};
\draw (1,-5) node {$-1$};
\draw (4,-5) node {$2$};
\foreach \Point in {(1,-4.5), (3.29,-4.5), (0.58,-4.5), (3.5,-4.5)}
{
    \node at \Point {\textbullet};
}	
\end{tikzpicture}
\caption{The sets $F,G,A$, and $B$ are explored further in the following exercises.}
\label{fig:FGAB}
\end{figure}

\xca For the sets $F,G,A$, and $B$ given above and visualized in Figure \ref{fig:FGAB}, determine whether each set has a maximum, minimum, supremum, and infimum and prove your results. (Some cases have already been discussed and proven in Section \ref{sec:acl}, so don't prove those again unless you want to. You can treat those proofs like templates, carefully adjusting what I've done to fit a similar situation accordingly.)

\xca Consider the set $J=[3140,3150)$. For each $n\in\N$, find $y_n\in J$ such that $|y_n- 3150| < 1/2^n$. Does this prove $3150\acl{J}$?

\xca Prove for every $\varepsilon>0$ we have
\begin{align}
	\max\left\{3140-\frac{\varepsilon}{2}, 1570\right\}
	&=\label{eqn:choiceofxexercise}
	\begin{cases}
	\displaystyle 3140-\frac{\varepsilon}{2}, & \textnormal{ if } 0<\varepsilon<3140,\\
	1570, &  \textnormal{ if } \varepsilon \geq 3140.
	\end{cases}
\end{align}
Thus, the choice of real number $x_\varepsilon$ in Example \ref{eg:openintervalrevisit} can be thought of as the maximum of a set of two real numbers that depends on $\varepsilon$. In other words, $x_\varepsilon$ is a function of $\varepsilon$.

\xca The following statement is {\em false}: For any set of real numbers $S$ that is bounded above we have 
\begin{align}\label{eqn:falsemaxsup}
u=\max{S} \qquad\textnormal{if and only if}\qquad u=\sup{S}.
\end{align}
\begin{enumerate}
	\item Find a {\em counterexample} for this statement and prove your result. That is, find, describe, and draw a set of real numbers $S$ that is bounded above but for which the ``if and only if'' part of line \eqref{eqn:falsemaxsup} does not hold.
	\item Revise the above false statement to create an implication which is true and prove your result.
	\item Write a similar implication involving minimum and infimum, and prove it too. 
\end{enumerate}

\xca\label{exer:neitherintervalnorcountable} Consider the set of real numbers $T$ given by 
\begin{align}\label{eqn:neitherintervalnorcountable}
	T&=[0,2)\cup\{4-(1/n):n\in\N\}.
\end{align}
See Figure \ref{fig:neitherintervalnorcountable}. That is, every real number $x$ in $T$ satisfies either $0\leq x<2$ or $x=4-(1/n)$ for some positive integer $n$. The symbol $\cup$ stands for {\em union} and is discussed in the next section.

\begin{figure}
\centering
\begin{tikzpicture}
\draw (-2,0) node {$T$};
	\draw[-,semithick] (0,0) -- (2,0);
	\draw (0,0) node {$[$};
	\draw (2,0) node {$)$};
	\draw (0,-0.5) node {$0$};
	\draw (2,-0.5) node {$2$};
\draw (4,0) node {$\circ$};
\draw (3.76,0) node {...};
\draw (3,-0.5) node {$3$};
\draw (4,-0.5) node {$4$};
\foreach \Point in {(3,0), (3.5,0)}
{
    \node at \Point {\textbullet};
}	
\end{tikzpicture}
\caption{The set of real numbers $T$ from Exercise \ref{exer:neitherintervalnorcountable}.}
\label{fig:neitherintervalnorcountable}
\end{figure}

\begin{enumerate}
	\item Prove $\min{T}=\inf{T}=0$, $\sup{T}=4$, and $\max{T}$ does not exist.
	\item What can you say and prove about the real numbers $2$ and $3$ here? Are either of them arbitrarily close to $T$? Away from $T$? Can you prove your answers?
\end{enumerate}

\xca Find examples of sets of real numbers with the following properties:
\begin{enumerate}
	\item A set $U$ where neither $\sup{U}$ nor $\inf{U}$ exist and $U\neq\R$.
	\item A set $V$ where $\min{V}$ and $\max{V}$ exists with $\min{V}<\max{V}$, but $V$ is not an interval.
	\item A set $W$ where $\inf{W}$ exists, $\sup{W}$ exists, and we have $\inf{W}=\sup{W}$.
\end{enumerate}
Be sure to draw figures for each of your sets and justify your results. What other properties do your examples have?

\vs
\section[Preliminary concepts]{Preliminary concepts and\\ background material}
\label{sec:preliminary}

To set the stage for the more technical aspects of the book, this section provides some notation and terminology. I am assuming a fair amount of familiarity with inequalities, basic algebra, trigonometry, set theory, and writing mathematical proofs.

\begin{notation}\label{not:notation}
Here are some conventions used throughout the book.
\begin{itemize}
	\item[(i)] Lower case letters like $a,b,c,f,g,x,y,z,\varepsilon$ and $\delta$ typically denote real numbers or functions.
	\item[(ii)] Capital letters like $B,S,$ and $V$ typically denote sets. 
	\item[(iii)] Boldface lower case letters like $\bfy$ and $\bfw$ typically denote points (or vectors) in some Euclidean space $\R^m$.
	\item[(iv)] Along these lines, notation that starts with lower case letters such as ``$\max$'', ``$\sup$'', and ``$\lim$'' represent real numbers or points while ``$\Coda$'' and ``$\Slim$'' represent sets.
	\item[(v)] The notation ``$\Longrightarrow$'' is read as ``implies''. 
	\item[(vi)] The notation ``$\Longleftrightarrow$'' is read as ``if and only if''.
\end{itemize}
\end{notation}

\begin{notation}[Sets and points]\label{not:settheory}
If $S$ is any set comprising any kind of elements we like, we write $x\in S$ when $x$ is in---or $x$ is an element of, or $x$ belongs to---the set $S$. If $z$ is not in $S$, we write $z\notin S$.  Throughout the book, elements are also referred to as {\em points}. The set with no elements is called the {\em empty set}, denoted by $\varnothing$. If a set has one or more elements, it is called {\em nonempty}. 
\end{notation}

\begin{notation}[Subsets and equality of sets]\label{not:moresettheory}
If $A$ and $B$ are sets where every element of $A$ is also an element of $B$, we say $A$ is a {\em subset} of $B$ or $B$ {\em contains} $A$ and write $A\subseteq B$.  If $A$ is a subset of $B$ but there is an element of $B$ that is not in $A$, we say $A$ is a {\em proper} subset of $B$ and write $A\subsetneq B$. In the case where we have both $A\subseteq B$ and $B\subseteq A$, we say $A$ and $B$ are {\em equal} and write $A=B$. Equivalently, $A=B$ if and only if $A$ and $B$ have the exact same elements.
\end{notation}

\begin{notation}\label{not:setbuilder}
Sets can be described in many different ways, all of which are useful and provide a variety of perspectives. For instance, with $\Z$ denoting the set of integers, consider:
\begin{itemize}
\item[(i)] Let $S=\{-3,-2,-1,0,1,2,3\}$.
\item[(ii)] Let $S$ be the set of integers between $-3.5$ and $3.999$.
\item[(iii)] Let $S=\{z\in\Z:|z|\leq 3\}$.
\end{itemize}
Items (i), (ii), and (iii) define the same exact set $S$ in three different ways, each of which is perfectly valid for the purposes of writing proofs. 
\end{notation}

Another way to describe a set is to draw a figure. Many figures are provided throughout the text because I think they can be incredibly helpful for building intuition and exploring technical ideas. In fact, I regularly encourage you to draw figures to supplement proofs, examples, and pretty much everything in the book. However, figures don't suffice for proofs (...but just barely).

Unions and intersections of sets play a significant role.

\begin{definition}\label{def:moresettheory}
For sets $A$ and $B$, the \textit{union} of $A$ and $B$ is denoted by $A\cup B$ and defined by
\begin{align}
A\cup B=\{x:x\in A \textnormal{ or } x\in B\}.
\end{align}
The ``or'' within the definition of $A\cup B$ is the ``inclusive or''.  That is, $A\cup B$ is the set of elements in $A$, in $B$, or in both $A$ and $B$. The \textit{intersection} of $A$ and $B$ is denoted by $A\cap B$ and defined by
\begin{align}
A\cap B=\{x:x\in A \textnormal{ and } x\in B\}.
\end{align}
The notation $A\backslash B$ denotes the set of points in $A$ that are not in $B$. That is, 
\begin{align}
A\backslash B=\{x:x\in A \textnormal{ and } x\notin B\}.
\end{align}
\end{definition}

The variables $x_\varepsilon$, $\varepsilon_z$, and $n_\varepsilon$ from Section \ref{sec:acl} show us that indexing variables can be helpful. Similarly, we sometimes want to consider intersections and unions of collections of sets. Generally speaking, variables and sets can be indexed by other sets like $\N$, $\Z$, or the set of positive real numbers.

\begin{definition}\label{def:indexed}
Given a set $B$, a nonempty set $\Lambda$ is an \textit{index set} for $B$ if for each $\lambda\in\Lambda$ there is a subset $S_\lambda$ of $B$. The collection of these sets is called an \textit{indexed family} of sets and is denoted by $\{S_\lambda\}_{\lambda\in\Lambda}$.
\end{definition}

\begin{definition}\label{def:indexedsetoperations}
Given an indexed family of sets $\{S_\lambda\}_{\lambda\in\Lambda}$, the \textit{union} and \textit{intersection} of all of the sets in the indexed family are defined by
\begin{align}
\bigcup_{\lambda\in\Lambda}S_\lambda&=\{x:x\in S_\lambda \textnormal{ for at least one } \lambda\in\Lambda\} \quad\textnormal{and}\\
\bigcap_{\lambda\in\Lambda}S_\lambda&=\{x:x\in S_\lambda \textnormal{ for all }\lambda\in\Lambda\},\quad\textnormal{respectively}.
\end{align}
\end{definition}
In other words and slightly different notation, we have $x\in\cup_{\lambda\in\Lambda}S_\lambda$ whenever $x$ is in some of the $S_\lambda$. We have $x\in\cap_{\lambda\in\Lambda}S_\lambda$ only when $x$ is in \textit{every} $S_\lambda$. In the special case where the index set is the set of positive integers $\N$, we write
\begin{align}\label{eqn:countableunionintersection}
\bigcup_{n=1}^\infty S_n & \quad\textnormal{and}\quad \bigcap_{n=1}^\infty S_n,\quad\textnormal{respectively}.
\end{align}

\begin{notation}[Important sets]\label{not:importantsets}
The following sets appear throughout the book.\footnote{The set of positive integers is also known as the set of natural numbers, among other things.} How do you like to describe them?
\begin{align*}
\textit{positive integers:}\quad \N&=\{1,2,3,\ldots\}.\\
\textit{integers:}\quad \Z&=\{\ldots, -2,-1,0,1,2,3,\ldots\}.\\
\textit{rational numbers:}\quad \Q&=\{m/n:m,n\in\Z \textnormal{ and }n\neq 0\}.\\
\textit{real numbers:}\quad \R&=\{x:x\in\Q \textnormal{ or } x \textnormal{ is a gap near } \Q\}. 
\end{align*}
\end{notation}

Intervals are used throughout as well. 
\begin{definition}[Intervals] \label{def:intervals}
For $a,b\in\R$ with $a<b$, each of the following sets is an {\em interval}.
\begin{multicols}{2}
\begin{enumerate}
	\item $(-\infty,\infty)=\R$
	\item $(-\infty,b)=\{x\in\R:x<b\}$
	\item $(a,\infty)=\{x\in\R:a<x\}$
	\item $(-\infty,b]=\{x\in\R:x\leq b\}$
	\item $[a,\infty)=\{x\in\R:a\leq x\}$
	\item $(a,b)=\{x\in\R:a<x<b\}$
	\item $[a,b]=\{x\in\R:a\leq x\leq b\}$
	\item $(a,b]=\{x\in\R:a<x\leq b\}$
	\item $[a,b)=\{x\in\R:a\leq x<b\}$
\end{enumerate}
\end{multicols}
\end{definition}
See Figure \ref{fig:intervalplots}. Furthermore, we have:
\begin{itemize}
\item $(a,b), (-\infty,b), (a,\infty),$ and $(-\infty,\infty)$ are \textit{open intervals};
\item $[a,b], (-\infty,b], [a,\infty),$ and $(-\infty,\infty)$ are \textit{closed intervals};
\item $(-\infty,b)$ and $(-\infty,b]$ are \textit{bounded above}, but \textit{unbounded};
\item $(a,\infty)$ and $(a,\infty]$  are \textit{bounded below}, but \textit{unbounded};
\item $(a,b), [a,b], (a,b]$, and $[a,b)$ are \textit{bounded}, meaning they are both bounded above and bounded below.
\end{itemize}

It strikes some people as odd that $\R=(-\infty,\infty)$ is both an open interval and a closed interval. If this doesn't seem right to you, you're not alone. However, this odd choice of terminology is conventional and so prevalent I will not try to replace it. The study of open, closed, and other types of sets---\textit{topology}---is the focus of Chapter \ref{ch:topologyofeuclideanspaces}.

\begin{figure}
\centering
\begin{tikzpicture}
\draw (-2,0) node {$(a,b)$};
	\draw[-,semithick] (1.98,0) -- (4.02,0);
	\draw (2,0) node {$($};
	\draw (4,0) node {$)$};
	\draw (2,-0.5) node {$a$};
	\draw (4,-0.5) node {$b$};
\draw (-2,-1.5) node {$[a,b]$};
	\draw[-,semithick] (2,-1.5) -- (4,-1.5);
	\draw (2,-1.5) node {$[$};
	\draw (4,-1.5) node {$]$};
	\draw (2,-2) node {$a$};
	\draw (4,-2) node {$b$};
\draw (-2,-3) node {$(a,b]$};	
	\draw[-,semithick] (1.98,-3) -- (4,-3);
	\draw (2,-3) node {$($};
	\draw (4,-3) node {$]$};
	\draw (2,-3.5) node {$a$};
	\draw (4,-3.5) node {$b$};
\draw (-2,-4.5) node {$[a,b)$};	
	\draw[-,semithick] (2,-4.5) -- (4.02,-4.5);
	\draw (2,-4.5) node {$[$};
	\draw (4,-4.5) node {$)$};
	\draw (2,-5) node {$a$};
	\draw (4,-5) node {$b$};
\draw (-2,-6) node {$(-\infty,b)$};	
	\draw[<-,semithick] (0,-6) -- (4,-6);
	\draw (2,-6) node {$|$};
	\draw (4,-6) node {$)$};
	\draw (0,-6.5) node {$-\infty$};
	\draw (2,-6.5) node {$a$};
	\draw (4,-6.5) node {$b$};
\draw (-2,-7.5) node {$(a,\infty)$};	
	\draw[->,semithick] (2,-7.5) -- (6,-7.5);
	\draw (2,-7.5) node {$($};
	\draw (4,-7.5) node {$|$};
	\draw (2,-8) node {$a$};
	\draw (4,-8) node {$b$};
	\draw (6,-8) node {$\infty$};
\draw (-2,-9) node {$(-\infty,b]$};	
	\draw[<-,semithick] (0,-9) -- (4,-9);
	\draw (2,-9) node {$|$};
	\draw (4,-9) node {$]$};
	\draw (0,-9.5) node {$-\infty$};
	\draw (2,-9.5) node {$a$};
	\draw (4,-9.5) node {$b$};
\draw (-2,-10.5) node {$[a,\infty)$};	
	\draw[->,semithick] (2,-10.5) -- (6,-10.5);
	\draw (2,-10.5) node {$[$};
	\draw (4,-10.5) node {$|$};
	\draw (2,-11) node {$a$};
	\draw (4,-11) node {$b$};
	\draw (6,-11) node {$\infty$};	
\draw (-2,-12) node {$(-\infty,\infty)$};
	\draw[<->,semithick] (0,-12) -- (6,-12);
	\draw (2,-12) node {$|$};
	\draw (4,-12) node {$|$};
	\draw (0,-12.5) node {$-\infty$};
	\draw (2,-12.5) node {$a$};
	\draw (4,-12.5) node {$b$};
	\draw (6,-12.5) node {$\infty$};
\end{tikzpicture}
\caption{Here are plots of all nine types of intervals. See Definition \ref{def:intervals}.}
\label{fig:intervalplots}
\end{figure}

Other sets playing a prominent role in the book are the set of {\em irrational numbers} denoted by $\R\backslash\Q$ and {\em Euclidean spaces} denoted by $\R^m$ where $m$ is a positive integer. We have
\begin{align*}
\R\backslash\Q&=\{x:x\in\R \textnormal{ and } x\notin\Q\} \quad\textnormal{and}\\
\R^m&=\left\{
\bfx=\left[
	\begin{array}{c}
	x_1\\
	x_2\\
	\vdots\\
	x_m
\end{array}
\right]
: m\in\N \textnormal{ and } x_1, x_2,\ldots,x_m\in \R
\right\},
\end{align*}
where the real numbers $x_1,x_2,\ldots,x_m$ are the {\em coordinates} (or {\em components} or {\em entries}) of the {\em point} (or {\em vector}) $\bfx\in\R^m$. Similarly, the coordinates of a point $\bfy\in\R^m$ are denoted by $y_1,y_2,\ldots,y_m$. In every Euclidean space $\R^m$, the special {\em zero vector} $\mathbf{0}$ is the vector whose coordinates all are $0$.

There are many deep relationships between the sets described so far. For instance we have,
\begin{align}
\N \subseteq \Z \subseteq \Q \subseteq \R,
\end{align}
and each of these relationships represents a proper subset. 

\begin{remark}\label{rmk:settings}
Many of the results presented throughout are in the setting of an arbitrary Euclidean space $\R^m$ where the positive integer $m$ is left unspecified. This allows use to discuss and prove results in all of these spaces simultaneously. In particular, for $m=1$ we have $\R^1=\R$ (the real line) and for $m=2$ we have $\R^2$ (the plane). 

With that said, there are some important differences between these sets that influence the way I choose to state theorems and write proofs. For instance, the set of rational numbers $\Q$ has addition, multiplication, and inequalities that all play nicely together, but the set has gaps. The real line $\R$ has addition, multiplication, and inequalities that all play nicely together while having no gaps. (We will look at this much more closely in the next section.) 
\end{remark}

\begin{remark}\label{rmk:inequalitiesusingnorms}
We will often use the fact that $\R$ and $\R^m$ for any $m\in\N$ are \textit{vector spaces}. Basically, this means their points can be multiplied by constants (called \textit{scalars}) and their points can added together, both processes creating new points as follows: For any {\em scalar} $\alpha\in\R$ and any $\bfx,\bfy\in\R^m$ we have
\begin{align}
\alpha\bfx&=\left[
	\begin{array}{c}
	\alpha x_1\\
	\alpha x_2\\
	\vdots\\
	\alpha x_m
\end{array}
\right]
\qquad\textnormal{and}\qquad
	\bfx+\bfy=\left[
	\begin{array}{c}
	x_1+y_1\\
	x_2+y_2\\
	\vdots\\
	x_m+y_m
\end{array}
\right].
\end{align}
\end{remark}

\begin{notation}\label{not:distance}
Distance is a big, recurring theme in analysis. The types of distance we consider are limited to the standard notions in the real line $\R$ and Euclidean spaces of the form $\R^m$. In all cases, these distances stem from the standard {\em norms} and {\em metrics} on Euclidean spaces $\R^m$, denoted by $\|\cdot\|_m$ and $d_m$, respectively.\footnote{The concept of arbitrarily close can be explored in the much more general settings of metric spaces (where a wide variety of distances are taken into consideration) and even topological spaces (where notions of distance are not necessarily in play).}

Given a point $\bfx\in\R^m$, we have the {\em norm} of $\bfx$ given by
\begin{align}\label{eqn:norm}
	\|\bfx\|_m &= \sqrt{x_1^2+x_2^2+\cdots+x_m^2}.
\end{align}
Given a pair of points $\bfx,\bfy\in\R^m$, the {\em distance} between $\bfx$ and $\bfy$ is given by the classic Pythagorean distance formula:
\begin{align}\label{eqn:distance}
	d_m(\bfx,\bfy) &= \|\bfx -\bfy\|_m \nonumber \\
	& = \sqrt{(x_1-y_1)^2+(x_2-y_2)^2+\cdots+(x_m-y_m)^2}.
\end{align}

The notation for points, norms, and distances in $\R^m$ represents \textit{any} Euclidean space, including the real line $\R^1=\R$ and the plane $\R^2$ as special cases. When working in the real line $\R$, we use absolute value of the difference for distance:
\begin{align}\label{eqn:distanceinline}
	d_\R(x,y)=|x-y|.
\end{align}
\end{notation}

The distances $d_\R$ and $d_m$ are \textit{metrics}: They satisfy a special set of properties which will be used throughout the book. 

\begin{definition}\label{def:metric}
Suppose $S$ is a nonempty set. A function $d$ is a \textit{metric} on $S$ if it satisfies all of the following properties for any $x,y,$ and $z$ in $S$:
\begin{align}
	 &d(x,y) \geq 0;\label{eqn:metricnonnegative}\\
	 &d(x,y) = 0 \quad \Longleftrightarrow \quad x=y;\label{eqn:metriczero}\\
	 &d(x,y) = d(y,x);\label{eqn:metricsymmetric} \quad\textnormal{and}\\
	 &d(x,y) \leq  d(x,z) + d(z,y).\label{eqn:triangleinequality} 
\end{align}
Inequality \eqref{eqn:triangleinequality} is called the \textit{triangle inequality}.
\end{definition}


\begin{remark}\label{rmk:metricrecap}
To rephrase the defining properties of a metric:
\begin{itemize}
	\item \eqref{eqn:metricnonnegative} Distances given by a metric are nonnegative.
	\item \eqref{eqn:metriczero} The distance between two points is zero if and only if the points are the same.
	\item \eqref{eqn:metricsymmetric} Metrics are symmetric: The order does not matter.
	\item \eqref{eqn:triangleinequality} The distance between two points is the same or shorter than the total distance when an additional point is considered.
\end{itemize}
\end{remark}

Our notions of distance are metrics, but the proof of this fact is omitted.

\begin{theorem}\label{thm:distancesaremetrics}
The functions $d_\R$ and $d_m$ (for any positive integer $m$) are metrics on $\R$ and $\R^m$, respectively.
\end{theorem}

The following corollary stems from Theorem \ref{thm:distancesaremetrics} and, likewise, the proof is omitted.

\begin{corollary}\label{cor:distancesaremetrics}
For any scalar $\alpha\in\R$ and any three points $\bfx,\bfy,\bfc\in\R^m$, the triangle inequality \eqref{eqn:triangleinequality} combined with properties of vector spaces, norms, and metrics yield:
\begin{align}
\|\alpha\bfx\|_m &=|\alpha|\|\bfx\|_m;\label{eqn:scalarfactor}\\
\|\bfx-\bfc\|_m &=\|\bfx\underbrace{-\bfy+\bfy}_{\textnormal{add }\mathbf{0}}-\bfc\|_m\leq\|\bfx-\bfy\|_m+\|\bfy-\bfc\|_m;\label{eqn:addzero}\\
\|\bfx+\bfc\|_m &\leq\|\bfx\|_m+\|\bfc\|_m;
\label{eqn:triangleinequality2}\\
\|\bfx-\bfc\|_m &\leq\|\bfx\|_m+\|\bfc\|_m;
\label{eqn:triangleinequality3}
\quad\textnormal{and both}\\
\|\bfx\|_m-\|\bfc\|_m &\leq\|\bfx-\bfc\|_m
\quad\textnormal{and}\quad 
\|\bfc\|_m-\|\bfx\|_m\leq\|\bfx-\bfc\|_m
.\label{eqn:reversetriangleinequality}
\end{align} 
\end{corollary}

\begin{figure}
\centering
\begin{tikzpicture} 
	\draw (0,0) node {$\bullet$};
	\draw (-0.25,-0.3) node {$\bfx$};
	\draw (3,0) node {$\bullet$};
	\draw (3.25,-0.3) node {$\bfc$};
	\draw (2,1.5) node {$\bullet$};
	\draw (2,1.8) node {$\bfy$};	
	\draw[semithick] (0,0) --(3,0);
	\draw (1.5,-0.5) node {$\|\bfx-\bfc\|_m$};	
	\draw[semithick] (0,0) --(2,1.5);  
	\draw (0,1) node {$\|\bfx-\bfy\|_m$};
	\draw[semithick] (3,0) --(2,1.5);	
	\draw (3.5,1) node {$\|\bfy-\bfc\|_m$};	
\end{tikzpicture}
\caption{The version of the triangle inequality found in line \eqref{eqn:addzero}.}
\label{fig:addzero}
\end{figure}

\begin{remark}\label{rmk:triangleinequalities}
Inequality \eqref{eqn:addzero} is another version of the triangle inequality \eqref{eqn:triangleinequality} which, in my opinion, is appropriately named. The intermediate step in \eqref{eqn:addzero} amounts to adding $\mathbf{0}$ (the vector whose coordinates are all $0$) inside the norm before applying the triangle inequality \eqref{eqn:triangleinequality}. We'll use little techniques like this often when writing proofs. Adding a nice version of $\mathbf{0}$ in conjunction with the triangle inequality is particularly common. 

Inequalities \eqref{eqn:triangleinequality2} and \eqref{eqn:triangleinequality3} are also particular instances of the triangle inequality \eqref{eqn:triangleinequality}.

Inequalities \eqref{eqn:reversetriangleinequality} are each a version of the {\em reverse triangle inequality}. Combined, they are equivalent to 
\begin{align}
	|\|\bfx\|_m-\|\bfx\|_m|\leq\|\bfx-\bfc\|_m
.\label{eqn:reversetriangleinequalityabsolute}
\end{align}
\end{remark}

The following section focuses on introducing properties of the real line $\R$, especially the subtle notion of {\em completeness}.

\vs
\section*{Exercises}
\setcounter{theorem}{0}

Exercises are for play: Do scratch work, draw stuff, and make mistakes---make {\em lots} of mistakes---before worrying about writing proofs. {\em Have fun!}
%

\xca Prove Theorem \ref{thm:distancesaremetrics}. Draw figures for each statement, too.

\xca Prove Corollary \ref{cor:distancesaremetrics}. Draw figures for each statement, too.

\vs
\section{The real line $\R$ is a complete ordered field}
\label{sec:completeorderedfield}

To help motivate a rigorous investigation into real analysis, we will work with the following underlying assumption:

\begin{quote}
The real line $\R$ is a {\em complete ordered field}.
\end{quote}

But what does this mean? 

The goal of this section is to define each part of the phrase ``complete ordered field''. Loosely speaking: a {\em field} is a set of mathematical objects where both addition and muliplication are defined and play nicely together; a set is {\em ordered} if inequalities make sense in a concrete and (hopefully) familiar way; and a set is {\em complete} if it knows its limits in some specific sense. The assumption above amounts to the existence of a set with these three properties, and we call this set the real line $\R$. 

A formal description of what makes $\R$ an ordered field is provided by Axiom \ref{ax:orderedfield} below. I am assuming properties therein are familiar and will work them without explicitly citing them.

But what about completeness? Among the concepts of fields, order, and completeness, the latter is very much at the heart of real analysis but it is possibly the least familiar. We'll get to it in a bit.

\begin{axiom}[$\R$ is an ordered field]\label{ax:orderedfield}
There exists a set called the \textit{real line} or the \textit{set of real numbers} which is denoted by $\R$ and has the following properties involving addition, multiplication, and inequalities for any $x,y,z\in\R$: 
\begin{enumerate}
\item \textit{Commutativity}:\\
\indent $x+y=y+x$ and $xy=yx$.
\item \textit{Associativity}:\\
\indent $(x+y)+z=x+(y+z)$ and $(xy)z=x(yz)$.
\item \textit{Distributive property}:\\
\indent $x(y+z)=xy+xz$.
\item \textit{Additive identity}:\\
\indent There is a unique $0\in\R$ such that\\ 
\indent for every $x\in\R$ we have $x+0=x$ .
\item \textit{Additive inverse}:\\
\indent For each $x\in\R$, there is a unique $y\in\R$ where $x+y=0$.\\ \indent (We write $y=-x$.)
\item \textit{Multiplicative identity}:\\
\indent There is a unique $1\in\R$ such that $1\neq 0$ and\\
\indent for every $x\in\R$ we have $x(1)=x$.
\item \textit{Multiplicative inverse}:\\
\indent For each $x\in\R$ where $x\neq 0$, there is a unique $y\in\R$ where $xy=1$.\\ \indent  (We write $y=1/x$ or $y=x^{-1}$.)
\item \textit{Translation invariance}:\\
\indent If $x<y$, then $x+z<y+z$.
\item \textit{Transitivity}:\\
\indent If $x<y$ and $y<z$, then $x<z$.
\item \textit{Trichotomy}:\\
\indent For any $x,y\in\R$, exactly one of the following is true:\\
\indent $x=y$, $x<y$, or $x>y$. 
\item \textit{Multiplication inequality}:\\
\indent If $x<y$ and $z>0$, then $xz<yz$.
\end{enumerate}
\end{axiom}

Here are some consequences of the assumption that $\R$ is an ordered field (Axiom \ref{ax:orderedfield}). I believe these ideas are familiar from calculus and other courses, so the proof is left as an exercise for those who would like to explore the details. 

\begin{theorem}\label{thm:inequalityproperties}
For any $x,y,z\in\R$ we have:
\begin{enumerate}
\item If $x<y$, then $-y<-x$.
\item $0<1$.
\item If $0<x<y$, then $0<1/y<1/x$. 
\item If $x<y$ and $z<0$, then $xz>yz$.
\item $x^2\geq 0$.
\end{enumerate}
\end{theorem}

To get into what it means to say the real line $\R$ is complete, first consider my perspective on an important and subtle feature of the set of rational numbers $\Q$. I hope it helps you find your own perspective:

\begin{quote}
The set of rational numbers $\Q$ has arbitrarily small gaps.
\end{quote}

For instance, no rational number is the square root of 2. And yet there are rational numbers that are as close together as we like whose squares are less than and greater than 2, respectively. To make this pair of assertions more concrete, consider the following theorem and problem.

\begin{theorem}\label{thm:squarerootoftwo}
If $r$ is a rational number, then $r^2\neq2$.
\end{theorem}

Let's prove this with a classic contradiction argument. 

\begin{proof}
First, if $r=0$, then $r^2=0\neq 2$.
Now, to set up a contradiction, suppose $r$ is a nonzero rational number where $r^2=2$. We can write $r=m/n$ where $m$ and $n$ are integers that are not both even and neither is zero. We then have $r^2=m^2/n^2=2$ and so  $m^2=2n^2$, thus $m^2$ is even. This means $m$ itself is even as well since the square of an odd integer is odd.

As an even number, we can write $m=2k$ for some integer $k$. So now we have
\begin{align}
m^2=2n^2=4k^2.
\end{align}
The right hand side simplifies to
\begin{align}
n^2=2k^2.
\end{align}
This means $n^2$ is even, which implies $n$ is even, too. 

We have arrived at our contradiction: We assumed $m$ and $n$ are not both be even, yet both must be even when $r^2=m^2/n^2=2$. Therefore, 2 is not the square of any rational number.
\end{proof}

Loosely speaking, Theorem \ref{thm:squarerootoftwo} says that the set  of rational numbers $\Q$ has a gap at the square root of 2. And yet, the size of that gap is arbitrarily small. 

\begin{prob}\label{prob:closetoroottwo}
Consider the pair of rational numbers 
\begin{align}
a_1=1.5=15/10 \quad\textnormal{and}\quad b_1=1.4=14/10.
\end{align} We have 
\begin{align}
|a_1-b_1|=|1.5-1.4|=0.1=1/10
\end{align}
while 
\begin{align}
b_1^2=1.96<2<2.25=a_1^2.
\end{align}
Next, consider $a_2=1.42=142/100$ and $b_2=1.41=141/100$. Then
\begin{align}
|a_2-b_2|=|1.42-1.41|=0.01=1/100
\end{align}
while
\begin{align}
b_2^2=1.9881<2<2.0164=a_2^2.
\end{align}
The process results in something like Figure \ref{fig:closetoroottwo}.

\begin{figure}
\centering
\begin{tikzpicture}
\draw (-3,0) node {$\bullet$};
\draw (-3.1,-0.5) node {$b_1$};
\draw (-2.4,0) node {$\bullet$};
\draw (-2.5,-0.5) node {$b_2$};
\draw (3,0) node {$\bullet$};
\draw (3,-0.5) node {$a_1$};
\draw (-1.8,0) node {$\bullet$};
\draw (-1.7,-0.5) node {$a_2$};
\draw (-2.1,0) node {$\circ$};
\draw (-2.1,-0.5) node {?};
\end{tikzpicture}
\caption{The rational numbers $a_1,a_2,b_1$, and $b_2$ from Problem \ref{prob:closetoroottwo} giving initial approximations to the square root of two.}
\label{fig:closetoroottwo}
\end{figure}

Try to describe how to continue finding pairs of rational numbers $a_n$ and $b_n$ where, for each positive integer $n$,  we have 
\begin{align}
|a_n-b_n|=1/10^n \quad \textnormal{and}\quad b_n^2<2<a_n^2.
\end{align}
There's no need to find formulas for $a_n$ and $b_n$, but you should try to describe any process you use. 
\end{prob}

\begin{remark}\label{rmk:arbitrarilysmall}
I believe that, intuitively, $1/10^n$ can be made as small as we like by taking a positive integer $n$ to be as large as we need. So the inclusion of the bound $1/10^n$ in Problem \ref{prob:closetoroottwo} is my way of indicating the size of the gap in the rationals at the square root of 2 is indeed arbitrarily small. We can and will prove this concretely once we have more mathematical tools---definitions, theorems, etc.---at our disposal.
\end{remark}

Between any two rational numbers there is another, despite gaps like the square root of 2.

\begin{lemma}\label{lem:rationalbetweenrationals}
Suppose $p$ and $q$ are rational numbers where $p<q$. Then there is some rational number $r$ where $p<r<q$.
\end{lemma}

Taking $r$ to be the average of $p$ and $q$ gives us the result.

\begin{proof} 
Suppose $p=m/n$ and $q=s/t$ where $m,n,s,$ and $t$ are integers where $n$ and $t$ are nonzero. Let 
\begin{align}
r=\frac{p+q}{2}.
\end{align}
With $p<q$ and after applying some algebra we have
\begin{align}
p=\frac{p+p}{2}<\frac{p+q}{2}<\frac{q+q}{2}=q,
\end{align} 
and so $p<r<q$. Also, by finding a common denominator we get
\begin{align}
r=\frac{p+q}{2}=\frac{mt+ns}{2nt}.
\end{align}
Since $mt+ns$ and $2nt$ are integers and $2nt$ is nonzero, we have $r$ is rational.
\end{proof}

To recap, the set of rational numbers $\Q$ has arbitrarily small gaps and yet between any two there are always more. So what about the set of real numbers $\R$?

\begin{remark}\label{rmk:reallineisrationalswithgaps}
The assumption that the real line $\R$ is complete is a way to ensure, from the start, that it has no gaps. In my opinion: 
\begin{quote}
The real line $\R$ is the set of rational numbers and the arbitrarily small gaps between them.
\end{quote} 
There are a number of interesting, robust, and beautiful ways to construct the real line $\R$ from the set of rational numbers $\Q$. But to me they all amount to starting with the rational numbers then identifying the gaps from different perspectives.\footnote{Dedekind cuts use sets to partition $\Q$ in nice ways to identify the gaps. Equivalence classes of certain sequences (called Cauchy sequences) of rational numbers can be used to identify the gaps in a very different way.} Identifying an arbitrarily small gap in the rational numbers amounts to identifying an {\em irrational} number. 
\end{remark}

Finally, here is a formal description of what it means for the real line $\R$ to be complete. 

\begin{axiom}[Axiom of Completeness]
\label{ax:axiomofcompleteness}
Every nonempty subset of the real line that is bounded above has a supremum.
\end{axiom}

The Axiom of Completeness \ref{ax:axiomofcompleteness} formally describes what it means that the real line $\R$ has no arbitrarily small gaps (unlike the set of rational numbers $\Q$, see Problem \ref{prob:closetoroottwo}). However, you're not expected  to see why this means the real line has no gaps just yet. This perspective is a goal to be approached gradually as we explore the structure of the real line. 

Also, the Axiom of Completeness \ref{ax:axiomofcompleteness} ensures every rational number and every arbitrarily small gap in the rational numbers can be identified as a real number: Each is the supremum of a set of rational numbers that is bounded above. 

A more conventional way to set up an Axiom of Completeness is to use the notion of a {\em least upper bound} to define supremum (unlike, but equivalent to, Definition \ref{def:supremumacl}). Convention is often and deliberately broken throughout the book.

\begin{definition}\label{def:leastupperbound}
Suppose $S$ is a nonempty subset of the real line $\R$. A real number $b$ is the {\em least upper bound} of $S$ if 
\begin{itemize}
	\item[(i)] for every $x\in S$ we have $x\leq b$, and
	\item[(ii)] if $y<b$, then $y$ is not an upper bound for $S$.
\end{itemize}
\end{definition}
Statement (i) says $b$ is an upper bound for $S$ while (ii) says no real number smaller than $b$ is an upper bound for $S$. When a real number $y$ is not an upper bound for $S$, there is some $x_y$ in $S$ where $y$ is less than $x_y$. See Figure \ref{fig:leastupperbound}.

\begin{figure}
\centering
\begin{tikzpicture} 
\draw (-2,0) node {$S$};
	\draw[-,semithick] (0,0) -- (2,0);
	\draw (0,0) node {$[$};
	\draw (2,0) node {$)$};
\draw (4,0) node {$\circ$};
\draw (4,-0.45) node {$b$};
\draw (3.76,0) node {...};
\foreach \Point in {(3,0), (3.5,0)}
{
    \node at \Point {\textbullet};
}	
\draw (3.5,-0.55) node {$x_y$};
\draw (2.5,0) node {$\circ$};
\draw (2.4,-0.5) node {$y$};
\end{tikzpicture}
\caption{As in Definition \ref{def:leastupperbound}, when $b$ is the least upper bound of a set $S$ and $y<b$, $y$ is not an upper bound for $S$. So, there must be a point $x_y$ in $S$ where $y<x_y$.}
\label{fig:leastupperbound}
\end{figure}

On the other hand, any real number that's greater than $b$ is another upper bound for $S$. Thus, (i) and (ii) combine to identify $b$ as the \textit{unique} least upper bound for $S$. {\em Try drawing stuff!}

The concept of supremum as presented in Definition \ref{def:supremumacl} is equivalent to that of least upper bound in Definition \ref{def:leastupperbound}, so the Axiom of Completeness \ref{ax:axiomofcompleteness} is equivalent to more conventional notions found in other texts such as \cite{Abbott} and \cite{Rudin}. The following theorem codifies this equivalence.

\begin{theorem}\label{thm:equivalentaxiomofcompleteness}
Suppose $u\in\R$ is an upper bound for a set $S\subseteq\R$. Then $u=\sup{S}$ if and only if $u$ is the least upper bound of $S$. 
\end{theorem}

See Figure \ref{fig:equivalentaxiomcompleteness} for a set of real numbers $S$, which is neither an interval nor a sequence, along with its supremum $u$.

\begin{figure}
\centering
\begin{tikzpicture}
\draw (-2,0) node {$S$};
	\draw[-,semithick] (0,0) -- (2,0);
	\draw (0,0) node {$[$};
	\draw (2,0) node {$)$};
\draw (4,0) node {$\circ$};
\draw (4,-0.5) node {$u$};
\draw (3.76,0) node {...};
\foreach \Point in {(3,0), (3.5,0)}
{
    \node at \Point {\textbullet};
}	
\end{tikzpicture}
\caption{A set of real numbers $S$ along with its supremum $u$  which is also its least upper bound. See Theorem \ref{thm:equivalentaxiomofcompleteness}.}
\label{fig:equivalentaxiomcompleteness}
\end{figure}

The proof of Theorem \ref{thm:equivalentaxiomofcompleteness} uses the definitions of absolute value, arbitrarily close, supremum, and least upper bound (Definitions \ref{def:absolutevalue}, \ref{def:aclreal}, \ref{def:supremumacl}, and \ref{def:leastupperbound}, respectively). For this particular proof, my goal is to be thorough and indicate where some definitions are used as clearly as I can, though this makes the proof longer than it would otherwise need to be. That's okay, especially this early in the book.

\begin{proof} First, assume $u$ is an upper bound for $S$ and $u=\sup{S}$. Suppose $y<u$ with the goal to show that $y$ is not an upper bound for $S$. Then $u-y>0$ and we let $\varepsilon_y=u-y$, thinking of this as a specific distance or ``error'' we want to allow. See Figure \ref{fig:supanddistance}.

\begin{figure}
\centering
\begin{tikzpicture} 
\draw (-2,0) node {$S$};
	\draw[-,semithick] (0,0) -- (2,0);
	\draw (0,0) node {$[$};
	\draw (2,0) node {$)$};
\draw (4,0) node {$\circ$};
\draw (4,-0.5) node {$u$};
\draw (3.76,0) node {...};
\foreach \Point in {(3,0), (3.5,0)}
{
    \node at \Point {\textbullet};
}	
\draw (2.5,0) node {$\circ$};
\draw (2.4,-0.5) node {$y$};
\draw[thick,  dashed,  red] (4,0) circle (1.5cm); 
\draw[-,semithick, red] (4.05,0.05) -- (5.05,1.05);
\draw[red] (4.8,0.3) node {$\varepsilon_y$};
\end{tikzpicture}
\caption{As in the proof of Theorem \ref{thm:equivalentaxiomofcompleteness}, any real number $y$ strictly less than the supremum $u$ of a set $S$ creates a positive distance $\varepsilon_y=u-y$.}
\label{fig:supanddistance}
\end{figure}

Since $u\acl{S}$ ((ii) in Definition \ref{def:supremumacl}), by the definition of arbitrarily close (Definition \ref{def:aclreal}) there must be some $x_y$ in $S$ where
\begin{align}
d_\R(x_y,u)=|x_y-u|<\varepsilon_y=u-y.
\end{align}
See Figure \ref{fig:supdistanceandpoint}.

\begin{figure}
\centering
\begin{tikzpicture} 
\draw (-2,0) node {$S$};
	\draw[-,semithick] (0,0) -- (2,0);
	\draw (0,0) node {$[$};
	\draw (2,0) node {$)$};
\draw (4,0) node {$\circ$};
\draw (4,-0.5) node {$u$};
\draw (3.76,0) node {...};
\foreach \Point in {(3,0), (3.5,0)}
{
    \node at \Point {\textbullet};
}	
\draw (3.5,-0.55) node {$x_y$};
\draw (2.5,0) node {$\circ$};
\draw (2.4,-0.5) node {$y$};
\draw[thick,  dashed,  red] (4,0) circle (1.5cm); 
\draw[-,semithick, red] (4.05,0.05) -- (5.05,1.05);
\draw[red] (4.8,0.3) node {$\varepsilon_y$};
\end{tikzpicture}
\caption{As in the proof of Theorem \ref{thm:equivalentaxiomofcompleteness}, since $u\acl{S}$, there must be a point $x_y$ in $S$ within $\varepsilon_y=u-y$ of $u=\sup{S}$.}
\label{fig:supdistanceandpoint}
\end{figure}

Since $u$ is also an upper bound for $S$, we have $x\leq u$ for every $x$ in $S$. So $x_y-u\leq 0$, and by the definition of absolute value (Definition \ref{def:absolutevalue}) we have
\begin{align}
|x_y-u|=-(x_y-u)=u-x_y<u-y.
\end{align}
By subtracting $u$ from the right hand side of the inequality above then multiplying the result by $-1$, we get $y<x_y$. (This might seem clear from Figure \ref{fig:supdistanceandpoint}, but the figure falls just short for the purposes of this proof.) Thus, as we wanted to show, $y$ is not an upper bound for $S$. That is, no real number smaller than $u$ is an upper bound for $S$. So, $u$ satisfies both parts of Definition \ref{def:leastupperbound} and is the least upper bound of $S$.

Next, for the other direction of the proof, assume $u$ is an upper bound for $S$ that is also the least upper bound. Now let $\varepsilon>0$. (We let $\varepsilon>0$ to allow for any amount of ``error''  and set up a verification of the definition of arbitrarily close, Definition \ref{def:aclreal}). Then we have $u-\varepsilon < u$. See Figure \ref{fig:lubanddistances}.

\begin{figure}
\centering
\begin{tikzpicture}
\draw (-2,0) node {$S$};
	\draw[-,semithick] (0,0) -- (2,0);
	\draw (0,0) node {$[$};
	\draw (2,0) node {$)$};
\draw (4,0) node {$\circ$};
\draw (4,-0.5) node {$u$};
\draw (3.76,0) node {...};
\foreach \Point in {(3,0), (3.5,0)}
{
    \node at \Point {\textbullet};
}	
\draw (2.5,0) node {$\circ$};
\draw (2.05,-0.5) node {$u-\varepsilon$};
\draw[thick,  dashed,  red] (4,0) circle (1.5cm); 
\draw[-,semithick, red] (4.05,0.05) -- (5.05,1.05);
\draw[red] (4.95,0.6) node {$\varepsilon$};
\draw[thick,  dashed,  red] (4,0) circle (0.7cm); 
\draw[thick,  dashed,  red] (4,0) circle (0.3cm); 
\end{tikzpicture}
\caption{As in the proof of Theorem \ref{thm:equivalentaxiomofcompleteness}, the least upper bound $u$ is within every positive distance $\varepsilon$ of the set $S$. Only one $\varepsilon$ is drawn to keep things from getting too cluttered.}
\label{fig:lubanddistances}
\end{figure}

Since $u$ is the {\em least} upper bound of $S$, $u-\varepsilon$ is not an upper for $S$. This means there is a real number $x_\varepsilon$ in $S$ where 
\begin{align}\label{eqn:pointwithinepsilonofsup}
u-\varepsilon<x_\varepsilon.
\end{align}
See Figure \ref{fig:pointwithinepsilonoflub}. 

\begin{figure}
\centering
\begin{tikzpicture}
\draw (-2,0) node {$S$};
	\draw[-,semithick] (0,0) -- (2,0);
	\draw (0,0) node {$[$};
	\draw (2,0) node {$)$};
\draw (4,0) node {$\circ$};
\draw (4,-0.5) node {$u$};
\draw (3.76,0) node {...};
\foreach \Point in {(3,0), (3.5,0)}
{
    \node at \Point {\textbullet};
}	
\draw (3,-0.5) node {$x_\varepsilon$};
\draw (2.5,0) node {$\circ$};
\draw (2.05,-0.5) node {$u-\varepsilon$};
\draw[thick,  dashed,  red] (4,0) circle (1.5cm); 
\draw[-,semithick, red] (4.05,0.05) -- (5.05,1.05);
\draw[red] (4.95,0.6) node {$\varepsilon$};
\draw[thick,  dashed,  red] (4,0) circle (0.7cm); 
\draw[thick,  dashed,  red] (4,0) circle (0.3cm); 
\end{tikzpicture}
\caption{As in the proof of Theorem \ref{thm:equivalentaxiomofcompleteness}, $x_\varepsilon$ is only close enough to the least upper bound $u$ for the largest value of $\varepsilon$, as shown. It is not close enough for smaller values of $\varepsilon$ (thus the smaller radii), but the proof only requires us to find one $x_\varepsilon$ in the set $S$ for each $\varepsilon$ distance, separately.}
\label{fig:pointwithinepsilonoflub}
\end{figure}

Rearranging inequality \eqref{eqn:pointwithinepsilonofsup} slightly yields
\begin{align}
u-x_\varepsilon<\varepsilon.
\end{align}
Since $u$ is an upper bound for $S$ and $x_\varepsilon$ is in $S$ we have $x_\varepsilon\leq u$, and so $u-x_\varepsilon\geq 0$. By the definition of absolute value (Definition \ref{def:absolutevalue}), we have 
\begin{align}
|x_\varepsilon-u|=-(x_\varepsilon-u)=u-x_\varepsilon<\varepsilon.
\end{align}
We have shown $u$ is an upper bound for $S$ and  arbitrarily close to $S$. Therefore, $u=\sup{S}$ according to Definition \ref{def:supremumacl}.
\end{proof}

To summarize the above proof: When $u$ is the supremum of $S$, no smaller number can be an upper bound for $S$, so $u$ is the least upper bound. Also, when $u$ is the least upper bound for $S$, it must be arbitrarily close to $S$ since otherwise some smaller number would be an upper bound for $S$.

\begin{remark}\label{rmk:walkthrough}
Did the proof of Theorem \ref{thm:equivalentaxiomofcompleteness} make sense to you? \textit{Every line of it?} At first, reading and understanding proofs in analysis can take \textit{a long time}. Please be patient. There are often so many details to parse and double-check, or so many steps left out, that it can be difficult to get a feeling for what's going on. But this is okay, and to me learning how to deal with parsing and understanding proofs is part of the development of every mathematician.

When you're confronted with a proof you find hard to follow, I suggest you write up a {\em walkthrough}. This can and should be whatever you want it to be, but basically a walkthrough should reflect your own thoughts and perspectives on the proof you've been given. Maybe drawing a figure will help, or writing out some algebraic steps that were left out, or writing out the definition or statement of a theorem when it is cited in the proof, or just rewriting the proof in your own words. If you can't see why a certain step works, don't hesitate to ask a friend, a professor, or even me what's going. If you email me about analysis, I'll email you back! 
\end{remark}

\vs
\section*{Exercises}
\setcounter{theorem}{0}

Exercises are for play: Do scratch work, draw stuff, and make mistakes---make {\em lots} of mistakes---before worrying about writing proofs. {\em Have fun!}


\xca Prove that for nonempty bounded sets of real numbers $A$ and $B$ we have
\[
A\subseteq B \qquad\Longrightarrow\qquad  \sup A\leq \sup B \quad and \quad \inf B\leq \inf A.
\]
Also, find various examples where the inequalities are strict and others where equality holds.

\xca Prove Theorem \ref{thm:inequalityproperties}. Hint: Make heavy use of Axiom \ref{ax:orderedfield}.

\vs
\section{Implications of completeness}
\label{sec:implicationsofcompleteness}

What does completeness do for us? Generally speaking, it provides a way for us to ensure the \textit{existence} of real numbers: They could rational numbers, positive integers, or something else, but the big idea is for us to be able to assert their existence when the time is right. 

This section explores a collection of results stemming from the Axiom of Completeness \ref{ax:axiomofcompleteness}, including its mirror image in terms of infimum.

\begin{theorem}\label{thm:infimumexists}
Every nonempty set of real numbers that is bounded below has an infimum.
\end{theorem}

An idea behind the proof of Theorem \ref{thm:infimumexists} is how multiplying real numbers by $-1$ flips them around 0 and reverses the order. So, we can identify an infimum of one set as the negative of the supremum of another.

\begin{proof}
Suppose $S$ is a set of real numbers that is bounded below by $a$. Let  $T=\{-s:s\in S\}$. Loosely speaking, $T$ is the ``mirror image'' of $S$ comprising the negatives of every point in $S$. See Figure \ref{fig:infimumexists}. 

\begin{figure}
\centering
\begin{tikzpicture}
\draw (-2,0) node {$S$};
	\draw (-0.5,0) node {$\circ$};
	\draw (-0.5,-0.5) node {$a$};
	\draw[-,semithick] (0,0) -- (2,0);
	\draw (0,0) node {$[$};
	\draw (2,0) node {$)$};
	\draw (2,-0.5) node {$0$};
\draw (4,0) node {$\circ$};
\draw (3.76,0) node {...};
\foreach \Point in {(3,0), (3.5,0)}
{
    \node at \Point {\textbullet};
}	
\draw (-2,-1.5) node {$T$};
	\draw (4.5,-1.5) node {$\circ$};
	\draw (4.5,-2) node {$-a$};
	\draw[-,semithick] (2,-1.5) -- (4,-1.5);
	\draw (2.02,-1.5) node {$($};
	\draw (2,-2) node {$0$};	
	\draw (4,-1.5) node {$]$};
\draw (0,-1.5) node {$\circ$};
\draw (0.24,-1.5) node {...};
\foreach \Point in {(1,-1.5), (0.5,-1.5)}
{
    \node at \Point {\textbullet};
}
\end{tikzpicture}
\caption{A set of real numbers $S$ along with a lower bound $a$ and their ``mirror images'' $T$ and $-a$. See the proof of Theorem \ref{thm:infimumexists}.}
\label{fig:infimumexists}
\end{figure}

Since $a\leq s$ for every $s\in S$, by part (i) of Theorem \ref{thm:inequalityproperties} we have $-s\leq -a$, so $-a$ is an upper bound for $T$. By the Axiom of Completeness \ref{ax:axiomofcompleteness}, $T$ has a supremum, so let $u=\sup{T}$.\s

\begin{figure}
\centering
\begin{tikzpicture}	
\draw (-2,0) node {$T$};
	\draw[-,semithick] (2,0) -- (4,0);
	\draw (2.02,0) node {$($};
	\draw (2,-0.5) node {$0$};	
	\draw (4,0) node {$]$};
	\draw (4,-0.5) node {$u$};	
\draw (0,0) node {$\circ$};
\draw (0.24,0) node {...};
\foreach \Point in {(1,0), (0.5,0)}
{
    \node at \Point {\textbullet};
}
\draw (-2,-1.5) node {$S$};
	\draw[-,semithick] (0,-1.5) -- (2,-1.5);
	\draw (0,-1.5) node {$[$};
	\draw (0,-2) node {$-u$};	
	\draw (2,-1.5) node {$)$};
	\draw (2,-2) node {$0$};
\draw (4,-1.5) node {$\circ$};
\draw (3.76,-1.5) node {...};
\foreach \Point in {(3,-1.5), (3.5,-1.5)}
{
    \node at \Point {\textbullet};
}
\end{tikzpicture}
\caption{The set of real numbers $T$ along with its supremum $u$ and their ``mirror images'' $S$ and $-u$, where $-u$ is the infimum of $S$. See the proof of Theorem \ref{thm:infimumexists}.}
\label{fig:infimumfromsupremum}
\end{figure}

For the final step we show $-u=\inf{S}$. Since $u$ is an upper bound for $T$, we have $-s\leq u$ for every $s$ in $S$. Hence, $-u\leq s$ for every $s$ in $S$, so $-u$ is a lower bound for $S$. See Figure \ref{fig:infimumfromsupremum}. We also have $u\acl{T}$, so for every $\varepsilon>0$ there is some $x_\varepsilon$ in $S$ where 
\begin{align}
|u-(-x_\varepsilon)|=|(-u)-x_\varepsilon|<\varepsilon.
\end{align}
Hence, $-u\acl{S}$ and we have $-u=\inf{S}$.
\end{proof}

Theorem \ref{thm:equivalentaxiomofcompleteness} has a mirror image as well, this one is in terms of infimum and {\em greatest lower bound}.

\begin{definition}\label{def:greatestlowerbound}
Suppose $S$ is a nonempty subset of the real line $\R$. A real number $w$ is the {\em greatest lower bound} of $S$ if 
\begin{itemize}
	\item[(i)] for every $x\in S$ we have $w\leq x$, and
	\item[(ii)] if $w<y$, then $y$ is not a lower bound for $S$.
\end{itemize}
\end{definition}

\begin{theorem}\label{thm:equivalentinfimum}
Suppose $\ell\in\R$ is a lower bound for a set $S\subseteq\R$. Then $\ell=\inf{S}$ if and only if $\ell$ is the greatest lower bound of $S$. 
\end{theorem}

Theorem \ref{thm:equivalentinfimum} is so similar to Theorem \ref{thm:equivalentaxiomofcompleteness} even their proofs are mirror images of each other. So the proof of Theorem \ref{thm:equivalentinfimum} is left as an exercise. A careful consideration and modification of one of the proofs can lead to a proof of the other, making for some good practice.

An intuitive idea is that given any real number, there is a larger positive integer. Actually, we already used this idea in the proofs for Example \ref{eg:twocountablesets}. 

\begin{theorem}[Archimedean Property]\label{thm:archimedeanproperty}
Given a real number $x$ there is some positive integer $n_x$ where $x<n_x$.
\end{theorem}

\begin{proof}
To set up a contradiction, let $x\in\R$ and assume the set of positive integers $\N$ is bounded above by $x$ so that $n\leq x$ for every $n$ in $\N$. By the Axiom of Completeness \ref{ax:axiomofcompleteness}, $\N$ has a supremum $u=\sup\N$.  Since $7>0$ and $u$ is both an upper bound for $\N$ and arbitrarily close to $\N$, there must be some $n\in\N$ where 
\begin{align}\label{eqn:bothupperboundandacl}
|u-n|=u-n<7.
\end{align}
Rearranging the inequality yields $u<n+7$, which implies $u$ is \textit{not} an upper bound for $\N$ since $n+7$ is also a positive integer. This contradicts the assertion that $u=\sup\N$, meaning $x$ cannot be an upper bound for $\N$. Therefore, there must be some $n_x$ in $\N$ where $x<n_x$.
\end{proof}

\begin{remark}\label{rmk:seven}
Why $7$? Did $7$ play a special role in the proof of the Archimedean Property \ref{thm:archimedeanproperty} somehow? No, not really. It was good enough and \textit{that's all I needed}. While preparing the scratch work for the previous proof, there was a choice I was free to make. I needed a positive number to play the role of $\varepsilon$ from the definition of arbitrarily close (Definition \ref{def:aclreal}) so the inequality in line \eqref{eqn:bothupperboundandacl} is valid, and I needed this positive number to be an integer so that $n+7$ is also a positive integer. So  $7$ was good enough, but any positive integer could have played the role just as well.
\end{remark}

\textit{There is no smallest positive real number}. This subtle and perhaps surprising fact follows immediately from the Archimedean Property \ref{thm:archimedeanproperty}. A formal statement is provided by the next corollary which also gives us a way to guarantee there is a positive integer large enough that its reciprocal (a rational number) is as small as we like. See Figure \ref{fig:archimedeanproperty}.

\begin{figure}
\centering
\begin{tikzpicture}	
\draw (0,0) node {$\circ$};
\draw (0,-0.3) node {$0$};
\draw (0.4,0.01) node {......};
\foreach \Point in {(5,0), (2.5,0), (1.67,0), (1.25, 0), (1,0), (0.83,0)}
{
    \node at \Point {\textbullet};
}
\draw (1.8,-0.5) node {$1/n_\varepsilon$};
\begin{scope}[thick, dashed, red] 
\draw (0,0) circle (2.3cm); 
\draw (0,0) circle (1.4cm); 
\draw (0,0) circle (0.7cm);
\end{scope}	
\draw[-,semithick, red] (0.05,0.05) -- (1.63,1.63);
\draw[red] (1.5,1.2) node {$\varepsilon$};
\end{tikzpicture}
\caption{Corollary \ref{cor:archimedeanproperty} shows there is no smallest real number. In particular, no matter how small a positive real number $\varepsilon$ is, there's a rational number of the form $1/n_\varepsilon$ that's less than $\varepsilon$.}
\label{fig:archimedeanproperty}
\end{figure}

\begin{corollary}\label{cor:archimedeanproperty}
Given $\varepsilon>0$, there is a positive integer $n_\varepsilon$ where 
\begin{align}
0<\frac{1}{n_\varepsilon}<\varepsilon.
\end{align}
\end{corollary}

\begin{proof}
Suppose $\varepsilon>0$. Then $1/\varepsilon$ is a real number and we can apply the Archimedean Property \ref{thm:archimedeanproperty}. So there is a positive integer $n_\varepsilon$ such that $1/\varepsilon<n_\varepsilon$. Using some algebraic properties of inequalities and noting $n_\varepsilon$ is positive yields
\begin{align}
0<\frac{1}{n_\varepsilon}<\varepsilon.
\end{align}
See Figure \ref{fig:archimedeanproperty}.
\end{proof}

Yet another intuitive idea we can formally prove is the notion that every real number is between two consecutive integers. See Figure \ref{fig:betweenconsecutiveintegers}.

\begin{corollary}\label{cor:betweenconsecutiveintegers}
For every $x\in\R$ there is some $m_x\in\Z$ such that
\begin{align}\label{eqn:betweenconsecutiveintegers}
m_x\leq x<m_x+1.
\end{align}
\end{corollary}

\begin{figure}
\centering
\begin{tikzpicture}
\draw (-3,0) node {$(-\infty,\infty)$};
	\draw[<->,semithick] (-0.5,0) -- (6.5,0);
	\draw (3,0) node {$\bullet$};
	\draw (3,-0.5) node {$x$};		
\draw (-3,-1.5) node {$[m_x,m_x+1)$};
	\draw[-,semithick] (2,-1.5) -- (4.5,-1.5);
	\draw (3,-1.5) node {$\bullet$};
	\draw (3,-2) node {$x$};
	\draw (2,-1.5) node {$[$};
	\draw (2,-2) node {$m_x$};
	\draw (4.48,-1.5) node {$)$};
	\draw (4.5,-2) node {$m_x+1$};
\end{tikzpicture}
\caption{Each given real number $x$ lies between a pair of consecutive integers $m_x$ and $m_x+1$. See Corollary \ref{cor:betweenconsecutiveintegers}.}
\label{fig:betweenconsecutiveintegers}
\end{figure}

The proof addresses a subtlety: The set to which we want to apply the Axiom of Completeness may or may not be empty.

\begin{proof}
Let $x\in\R$ and let $S_x=\{z\in\Z:z\leq x\}$. Since $-x\in\R$, by the Archimedean Property \ref{thm:archimedeanproperty}, there is an $n_{-x}\in\N$ where $-x<n_{-x}$. Hence, $-n_{-x}<x$ and $-n_{-x}\in\Z$. Thus, $-n_{-x}\in S_x$, so $S_x$ is nonempty and bounded above by $x$.

Now, since $S_x$ is nonempty and bounded above, by the Axiom of Completeness \ref{ax:axiomofcompleteness} we have $u=\sup{S_x}$ exists. By Theorem \ref{thm:equivalentaxiomofcompleteness}, $u$ is the least upper bound for $S_x$, which implies $u-1$ is not an upper bound for $S_x$. So, there is some $m_x\in S_x\subseteq\Z$ where $u-1<m_x$. This implies $u<m_x+1$, and since $u$ is an upper bound for $S_x$, we have $m_x+1$ is an integer that is not in $S_x$. Hence, $x<m_x+1$. Therefore, $m_x$ is an integer where $m_x\leq x<m_x+1$.
\end{proof}

No matter how close two distinct real numbers are, there is always a rational number between them. 

\begin{theorem}[Density of $\Q$ in $\R$]\label{thm:densityofQinR}
Let $x,y\in\R$ where $x<y$. Then there is some $r\in\Q$ where $x<r<y$.
\end{theorem}

The proof below has some steps which may not make sense on a first reading. So, before that, let me share some of my scratch work with you. 

\begin{scratch}\label{scr:densityofQinR}
The goal is to come up with an integer $m$ and a positive integer $n$ where 
\begin{align}\label{eqn:densitygoal}
x<r=\frac{m}{n}<y.
\end{align}
How can we prove this in a mathematically rigorous way? By that I mean, can we get the result we want by relying on the Axioms, Definitions, Theorems, Corollaries, and other technical statements we've come across so far? 

We can use Corollary \ref{cor:archimedeanproperty} to find an $n\in\N$ where $1/n$ is small enough to be less than the distance between $x$ and $y$, that is
\begin{align}\label{eqn:1overnlessthan}
1/n<|x-y|=y-x.
\end{align} 
See Figure \ref{fig:1overnlessthan}.

\begin{figure}
\centering
\begin{tikzpicture} 
	\draw[<->,semithick] (-1,0) -- (7,0);
	\draw (3,0) node {$\bullet$};
	\draw (3,-0.5) node {$x$};
	\draw (4,0) node {$\bullet$};
	\draw (4,-0.5) node {$y$};	
	\draw[-,semithick] (3.25,0.4) -- (3.75,0.4);
	\draw (3.25,0.4) node {$|$};
	\draw (3.75,0.4) node {$|$};		
	\draw (3.5,.9) node {$1/n$};
\end{tikzpicture}
\caption{The distance between any two real numbers $x$ and $y$ is larger than the reciprocal $1/n$ of some positive integer $n$. See line \eqref{eqn:1overnlessthan} in Scratch Work \ref{scr:densityofQinR}.} 
\label{fig:1overnlessthan}
\end{figure}

But we still need a numerator $m\in\Z$ to produce the inequalities in line \eqref{eqn:densitygoal}. We can aim for $m$ to specifically give us 
\begin{align}\label{eqn:mnxy}
\frac{m-1}{n}&\leq x <\frac{m}{n}<y.
\end{align}
Such a numerator $m$ will be furnished by Corollary \ref{cor:betweenconsecutiveintegers}, stemming from a real number that lines up nicely for the conclusion of the proof. But which real number exactly? Solving for $m$ in the leftmost inequality in \eqref{eqn:mnxy} yields  $m\leq nx+1$. Now for the proof. 
\end{scratch}

\begin{proof}[Proof of Theorem \ref{thm:densityofQinR}]
Suppose $x,y\in\R$ where $x<y$. Then $y-x>0$, so Corollary \ref{cor:archimedeanproperty} guarantees there is an $n\in\N$ where
$1/n<y-x$. We can rearrange the inequality to get
\begin{align}\label{eqn:densitymiddlestep}
nx+1<ny.
\end{align}

Since $nx+1$ is a real number, it is between consecutive integers. See Figure \ref{fig:betweenm}. By Corollary \ref{cor:betweenconsecutiveintegers}, there is an $m\in\Z$ where 
\begin{align}\label{eqn:betweenm}
m\leq nx+1<m+1.
\end{align}

\begin{figure}
\centering
\begin{tikzpicture} 
	\draw[-,semithick] (0,0) -- (6,0);
	\draw (0,0) node {$[$};
	\draw (0,-0.5) node {$m$};
	\draw (2,0) node {$\bullet$};
	\draw (2,-0.5) node {$nx+1$};
	\draw (5.98,0) node {$)$};
	\draw (6,-0.5) node {$m+1$};	
\end{tikzpicture}
\caption{As in line \eqref{eqn:betweenm} from the proof of Theorem \ref{thm:densityofQinR}, the real number $nx+1$ is between two consecutive integers $m$ and $m+1$.}
\label{fig:betweenm}
\end{figure}

\noindent The right-hand side of \eqref{eqn:betweenm} implies $nx<m$, so $x<m/n$. In fact,
\begin{align}\label{eqn:betweenrationals}
\frac{m-1}{n}\leq x<\frac{m}{n}.
\end{align}
See Figure \ref{fig:betweenrationals}.

\begin{figure}
\centering
\begin{tikzpicture}
	\draw[<->,semithick] (-1,0) -- (7,0);
	\draw (3,0) node {$\bullet$};
	\draw (2.9,-0.5) node {$x$};	
	\draw (3.33,0) node {$\bullet$};		
	\draw (3.33,-0.7) node {$\displaystyle  \frac{m}{n}$};
	\draw[-,semithick] (2.83,0.4) -- (3.33,0.4);
	\draw (2.83,0.4) node {$|$};
	\draw (3.33,0.4) node {$|$};	
	\draw (3.08,0.9) node {$1/n$};
\end{tikzpicture}
\caption{As in line \eqref{eqn:betweenrationals} from the proof of Theorem \ref{thm:densityofQinR}, the rational number $m/n$ is than $1/n$ away from the real number $x$.}
\label{fig:betweenrationals}
\end{figure}

Also, combining the left-hand side of \eqref{eqn:betweenm} with \eqref{eqn:densitymiddlestep} yields
\begin{align}
m\leq nx+1<ny.
\end{align}
Hence, $m<ny$ and so $m/n<y$. Therefore, as in Figure \ref{fig:rationalbetweenreals},
\begin{align}\label{eqn:rationalbetweenreals}
x<\frac{m}{n}<y.
\end{align}

\begin{figure}
\centering
\begin{tikzpicture}
	\draw[<->,semithick] (-1,0) -- (7,0);
	\draw (3,0) node {$\bullet$};
	\draw (2.9,-0.5) node {$x$};
	\draw (4,0) node {$\bullet$};
	\draw (4.1,-0.5) node {$y$};	
	\draw (3.33,0) node {$\bullet$};		
	\draw (3.33,-0.7) node {$\displaystyle  \frac{m}{n}$};
\end{tikzpicture}
\caption{As in line \eqref{eqn:rationalbetweenreals} at the end of the proof of Theorem \ref{thm:densityofQinR}, the rational number $m/n$ is between the real numbers $x$ and $y$.}
\label{fig:rationalbetweenreals}
\end{figure}
\end{proof}

Theorem \ref{thm:squarerootoftwo} shows that no rational number is the square root of 2, but that doesn't immediately mean the the square root of 2 is a real number. This is something we can prove using the completeness of the real line and its consequences.

\begin{theorem}\label{thm:squarerootoftworeal}
There is an $x\in\R$ where $x^2=2$.
\end{theorem}

The proof makes use of a carefully chosen set of real numbers that's bounded above whose supremum plays a key role along with a pair of contradictions. The supremum of this set exists by the the Axiom of Completeness \ref{ax:axiomofcompleteness} and the way the set is chosen allows us to show, using a pair of similar contradictions, that the square of its supremum can be neither larger nor smaller than 2. So by the trichotomy property of $\R$, the supremum must be 2 (see Axiom \ref{ax:orderedfield}). However, only one of the contradictions is derived here.

\begin{proof}[Half of the proof of Theorem \ref{thm:squarerootoftworeal}]
Let $S$ be the set of real numbers whose squares are less than 2. That is, $S=\{y\in\R:y^2<2\}$.

Since $1^2=1<2$ we have $1\in S$, so $S$ is nonempty. Since $1<7<y$ implies $2<49=7^2<y^2$, we have that $7$ is an upper bound for $S$. By the Axiom of Completeness \ref{ax:axiomofcompleteness}, there is a real number $u$ where $u=\sup{S}$. Since $u$ is an upper bound for $S$, we have $0<1\leq u$. In particular, $u$ is positive.

To establish a contradiction, suppose $u^2<2$. The goal is to find an element in $S$ that is greater than $u$. Since $u^2<2$, we have $2-u^2>0$. Since $u>0$, we have $2u+1>0$ as well. Thus,
\begin{align}
\frac{2-u^2}{2u+1}>0, 
\end{align}
which is enough wiggle room to work with. By Corollary \ref{cor:archimedeanproperty}, there is a positive integer $n$ large enough where both 
\begin{align}
0<\frac{1}{n}<\frac{2-u^2}{2u+1} \quad\textnormal{and}\quad n>1.
\end{align}
Thus, after some algebraic manipulation we have 
\begin{align}
\frac{2u+1}{n}<2-u^2,
\end{align}
and by choosing $n>1$ we have $1/n^2<1/n$. Combining these results we have 
\begin{align}
\left(u+\frac{1}{n}\right)^2&=u^2+\frac{2u}{n}+\frac{1}{n^2}\\
&<u^2+\frac{2u}{n}+\frac{1}{n}\\
&=u^2+\frac{2u+1}{n}\\
&<u^2+2-u^2.
\end{align}
This implies $(u+(1/n))^2<2$, therefore $u+(1/n)$ is in $S$ and greater than $u$. But this contradicts the assertion that $u$ is an upper bound for $S$, so we must have $u^2\geq 2$.

The case where $u^2>2$ is assumed leads to another contradiction following from a similar argument. So, the completion of the proof of Theorem \ref{thm:squarerootoftwo} is left as an exercise.
\end{proof}

Theorem \ref{thm:squarerootoftworeal} tells us that there is a positive real number whose square is 2 and this number is denoted by $\sqrt{2}$. Theorem \ref{thm:squarerootoftwo} tells us $\sqrt{2}$ is not a rational number, so it must be {\em irrational}. That is, $\sqrt{2}\in\R\backslash\Q$. We can use this to prove that between any two real numbers, there is an irrational number.

\begin{corollary}[Density of $\R\backslash\Q$ in $\R$]\label{cor:densityofirrationals}
Let $x,y\in\R$ where $x<y$. Then there is some $v\in\R\backslash\Q$ where $x<v<y$.
\end{corollary}

\begin{proof}
Suppose $x<y$ so that $x-\sqrt{2}<y-\sqrt{2}$. By the Density of $\Q$ in $\R$ (Theorem \ref{thm:densityofQinR}), there is a rational number $r$ such that
\begin{align}
x-\sqrt{2}<r<y-\sqrt{2}.
\end{align}
Therefore, $x<r+\sqrt{2}<y.$

Now let, $v=r+\sqrt{2}$. To complete the proof, we should show $v$ is irrational. This can be done by establishing a contradiction. We have $r\in\Q$, so suppose $v\in\Q$ as well. Then there are integers $m$ and $s$ along with positive integers $n$ and $t$ where $r=m/n$ and $v=s/t$. Then we have
\begin{align}
\sqrt{2}=v-r=\frac{ns-mt}{nt}.
\end{align}
Since $ns-mt$ is an integer and $nt$ is a positive integer, we have that $\sqrt{2}$ is rational, establishing a contradiction of Theorem \ref{thm:squarerootoftwo}. 
\end{proof}

The Axiom of Completeness \ref{ax:axiomofcompleteness} allows us to prove many results about the structure and properties of the real line. Some of which you may have seen or worked with before, others may be new. Still, more questions remain to be asked and answered: In addition to $\sqrt{2}$, what other kinds of real numbers are irrational? How many irrational numbers are there? What other properties does the real line have in store for us?

\vs
\section*{Exercises}
\setcounter{theorem}{0}

Exercises are for play: Do scratch work, draw stuff, and make mistakes---make {\em lots} of mistakes---before worrying about writing proofs. {\em Have fun!}


\xca Prove Theorem \ref{thm:equivalentinfimum}: Suppose $\ell\in\R$ is a lower bound for a set $S\subseteq\R$. Then $\ell=\inf{S}$ if and only if $\ell$ is the greatest lower bound of $S$. 

Hint: Carefully modify the proof of Theorem \ref{thm:equivalentaxiomofcompleteness}, which connects supremum and least upper bound.

\xca Prove for any prime number $p$ and every rational number $r$ that $r^2\neq p$. (So $\sqrt{p}$ is irrational). Modify proof of Theorem \ref{thm:squarerootoftwo}.

\xca Complete the proof of Theorem \ref{thm:squarerootoftworeal} for the case where $u=\sup{S}$ and it is assumed $u^2>2$. 

Hint: Carefully modifying the half of the proof already provided.
%

\xca Prove that if $x\in\R$, then $x\acl{\Q}$ and $x\acl{\R\backslash\Q}$. Thus, both the set of rational number $\Q$ and the set of irrational numbers $\R\backslash\Q$ are {\em dense} in $\R$.


\vs
\section{Arbitrarily close in Euclidean spaces}
\label{sec:arbitrarilycloseineuclideanspaces}

In the more general setting of points and sets in a Euclidean space $\R^m$, we have the following definition for {\em arbitrarily close}.

\begin{definition}\label{def:acl}
Let $\bfy$ be a point in $\R^m$ and let $B\subseteq\R^m$. The point $\bfy$ is said to be \textit{arbitrarily close} to the set $B$, and we write $\bfy \acl B$, if for every $\varepsilon > 0$ there is a point $\bfx_\varepsilon$ in $B$ such that
\begin{align}\label{eqn:aclinequality}
d_m(\bfx_\varepsilon,\bfy)=	 \|\bfx_\varepsilon -\bfy\|_m <\varepsilon.
\end{align}
The phrase ``$B$ is {\em arbitrarily close} to $\bfy$'' is defined and denoted in the same way. 
\end{definition}

\begin{example}\label{eg:acl}
In Figure \ref{fig:yaclBxepsilon}, $B$ is a solid rectangle containing three of its corners and two of its sides. The point $\bfy$ is the corner of the rectangle that is not in $B$ but is arbitrarily close to $B$.
\end{example}

\begin{figure}
\centering
\begin{tikzpicture} 
\draw[dashed, fill=blue!15] (-3,-1.41) rectangle (1.41,1.41);
	\draw (-0.8,0) node {$B$};
	\draw (-3.02,1.42) node {$\bullet$};
	\draw (-3.02,-1.42) node {$\bullet$};
	\draw (1.44,-1.42) node {$\bullet$};					
\begin{scope}[thick, dashed, red] 
\draw (1.45,1.43) circle (2.1cm); 
\draw (1.45,1.43) circle (1.4cm); 
\draw (1.45,1.43) circle (0.7cm); 
\end{scope}	
\draw[semithick] (-3,-1.41) -- (-3,1.41);
\draw[semithick] (-3,-1.41) -- (1.41,-1.41);
\draw[-,semithick, red] (1.48,1.48) -- (2.92,2.92);
\draw[red] (2.8,2.55) node {$\varepsilon$};
	\draw (-0.3,1.2) node {$\bullet$};
	\draw (-0.2,0.8) node {$\bfx_\varepsilon$};
	\draw (1.1,0.5) node {$\bullet$};	
	\draw (1.2,1.2) node {$\bullet$};
	\draw[fill=white] (1.41,1.41) circle (0.08cm);
	\draw (1.8,1.41) node {$\bfy$};	
\end{tikzpicture}
\caption{A point $\bfy$ arbitrarily close to a set $B$. The point $\bfx_\varepsilon$ is in $B$ and within a distance of $\varepsilon$ from $\bfy$ (within the larger red circle).}
\label{fig:yaclBxepsilon}
\end{figure}

\begin{remark}\label{rmk:smallaswelikerevisit}
As in the definition for arbitrarily close in the real line (Defintion \ref{def:aclreal}), we can think of the positive real number $\varepsilon$ as the amount of error or ``wiggle room'' we'd like to allow. In Figure \ref{fig:yaclBxepsilon}, just one radius $\varepsilon$ is drawn to keep things from getting too cluttered. However, since Definitions \ref{def:aclreal} and \ref{def:acl} allow for \textit{any} $\varepsilon>0$, we can take the error to be as small as we like. So, $\bfy \acl B$ means $B$ gets as close to $\bfy$ as we like, no matter how close. In order to prove $\bfy \acl B$, it suffices can respond to each $\varepsilon>0$ with a point $\bfx_\varepsilon$ which is in $B$ and within $\varepsilon$ of $\bfy$.
\end{remark}

As a first result, points in a set are arbitrarily close to the set. 

\begin{lemma}\label{lem:elementacl}
Let $\bfy\in\R^m$ and let $B\subseteq\R^m$. If $\bfy\in B$, then $\bfy\acl B$.
\end{lemma}

The idea of the proof is to choose the same point $\bfx_\varepsilon=\bfy$ for each $\varepsilon>0$.

\begin{proof}
Assume $\bfy\in B$ and let $\varepsilon>0$. Choosing $\bfx_\varepsilon=\bfy$ yields
\begin{align}
	d_m(\bfx_\varepsilon,\bfy)&=\|\bfx_\varepsilon -\bfy\|_m = \|\bfy -\bfy\|_m = 0<\varepsilon.
\end{align}
 Hence, $\bfy\acl B$.
\end{proof}

A natural question students have asked is: ``What does it mean when two points are arbitrarily close?'' Since the definition of arbitrarily close (Definition \ref{def:acl}) compares a point to a set, we need to be creative in order to properly to use it to answer this question. We can replace one of the points with a singleton. 

\begin{lemma}\label{lem:equalpoints}
Let $\bfx,\bfy\in\R^m$. We have $\bfx=\bfy$ if and only if $\bfx\acl\{\bfy\}$.
\end{lemma}

\begin{proof}
Suppose $\bfx=\bfy$. Then for every $\varepsilon>0$, we have $d_m(\bfx,\bfy)=0<\varepsilon$. Therefore, $\bfx \acl \{\bfy\}$.

Now, suppose $\bfx\neq \bfy$. Then $0 < d_m(\bfx,\bfy)/2 \leq d_m(\bfx,\bfy)$. Therefore, $\bfx$ is not arbitrarily close to $\{\bfy\}$.
\end{proof}

\begin{remark}\label{rmk:twopointsacl}
In any Euclidean space $\R^m$, it's appropriate to interpret Lemma \ref{lem:equalpoints} as saying two points are arbitrarily close if and only if they are equal. \textit{But this is not necessarily the case when we extend the definition of arbitrarily close to topological spaces!} If you're not familiar with topology yet, don't worry. I'm just pointing out that cool things happen in topology when other beautiful structures aside from Euclidean spaces are in play.
\end{remark}

\begin{prob}
Draw some figures to go with the proofs of Lemmas \ref{lem:elementacl} and \ref{lem:equalpoints}. Playing with examples in the plane $\R^2$ isn't a bad idea.
\end{prob}

To provide another perspective for notions of closeness, consider the terminology of {\em neighborhoods} found in topology.

\begin{definition}\label{def:neighborhood}
Let $\varepsilon>0$ and $\bfc\in\R^m$. The $\varepsilon$-{\em neighborhood} of $\bfc$, denoted by $V_\varepsilon(\bfc)$, is the set of points within $\varepsilon$ of $\bfc$. That is,
\begin{align}\label{eqn:neighborhood}
V_\varepsilon(\bfc)=\{\bfx\in \R^m: d_m(\bfx,\bfc)=\|\bfx-\bfc\|_m<\varepsilon\}.
\end{align}
\end{definition}

The value of $\varepsilon$ used here can still be thought of as an ``error'' or as a bound for the distance from $\bfc$ we want to allow. Also, the word {\em neighborhood} means the same thing as an ``$\varepsilon$-neighborhood of $\bfc$'' and is used when $\varepsilon$ and $\bfc$ need not be specified.

Given a fixed $m\in\N$ and its corresponding Euclidean space $\R^m$, what do $\varepsilon$-neighborhoods look like? See Figure \ref{fig:threeneighborhoods} for $\varepsilon$-neighborhoods of some fixed $\varepsilon>0$ centered at $\bfc_3$ in $\R^3$, $\bfc_2$ in $\R^2$, and $c_1$ in $\R$, respectively. 

\begin{figure}
\centering
\begin{tikzpicture}
\draw (-5,0) node {$V_\varepsilon(\bfc_3)\subseteq\R^3$};
  \shade[ball color = blue!15, opacity = 0.4] (0,0) circle (2cm);
  \draw (0,-0.02) node {$\bullet$}; 
  \draw (0,-0.35) node {$\bfc_3$}; 
  \draw (-2,0) arc (180:360:2 and 0.6);
  \draw[dashed] (2,0) arc (0:180:2 and 0.6);
  \draw[dashed] (0,0 ) -- node[above]{$\varepsilon$} (2,0);
\draw (-5,-5) node {$V_\varepsilon(\bfc_2)\subseteq\R^2$};  
  \draw[dashed, fill=blue!15] (0,-5) circle (2cm);
  \draw (0,-5) -- node[above]{$\varepsilon$} (2,-5);
  \draw (0,-5.02) node {$\bullet$};
  \draw (0,-5.35) node {$\bfc_2$};
\draw (-5,-8) node {$V_\varepsilon(c_1)\subseteq\R$};
	\draw[-,semithick] (-2,-8) -- (2,-8);
	\draw (1.98,-8) node {$)$};
	\draw (-1.98,-8) node {$($};
  \draw (0,-8.02) node {$\bullet$};	
	\draw (-2,-8.5) node {$c_1-\varepsilon$};
	\draw (0,-8.5) node {$c_1$};	
	\draw (2,-8.5) node {$c_1+\varepsilon$};
\end{tikzpicture}
\caption{Some $\varepsilon$-neighborhoods centered at $\bfc_3$ in $\R^3$, $\bfc_2$ in $\R^2$, and $c_1$ in $\R$, respectively, for some fixed $\varepsilon>0$.}
\label{fig:threeneighborhoods}
\end{figure}

\begin{itemize}
 \item $V_\varepsilon(\bfc_3)$ is the open sphere of radius $\varepsilon$ centered $\bfc_3$ and does not include points on its surface exactly $\varepsilon$ away from $\bfc_3$. 
 \item $V_\varepsilon(\bfc_2)$ is the open disk of radius $\varepsilon$ centered $\bfc_2$ and does not include points on the circle exactly $\varepsilon$ away from $\bfc_2$.
 \item $V_\varepsilon(c_1)$ is the open interval of length $2\varepsilon$ centered $c_1$ and does not include the endpoints $c_1-\varepsilon$ and $c_1+\varepsilon$.
\end{itemize}

\begin{remark}\label{rmk:aclvianeighborhoods}
As suggested at the start of this chapter, neighborhoods provide an important perspective for defining notions of arbitrarily close. For a point $\bfy$ and a set $B$ in a Euclidean space $\R^m$, we have $\bfy\acl{B}$ if and only if every $\varepsilon$-neighborhood of $\bfy$ intersects $B$. That is, $\bfy\acl{B}$ if and only if for every $\varepsilon>0$ we have
\begin{align}
V_\varepsilon(\bfy)\cap B &\neq \varnothing,
\end{align} 
where $\varnothing$ denotes the empty set. 
See Definitions \ref{def:acl} and \ref{def:neighborhood}.
\end{remark}

The following lemma highlights a special property of neighborhoods in the real line that turns out to be quite useful. In particular, the real line is ordered, so any neighborhood of a real number has only two directions to go in. This effect is established by the Trichotomy Property of the real line (see Axiom \ref{ax:orderedfield}) and the definition of absolute value (Definition \ref{def:absolutevalue}). Specifically, every pair real numbers $x$ and $y$ satisfy exactly one of the following: $x<y$, $y<x$, or $x=y$. The proof of the lemma is left to Exercise \ref{exer:neighborhoods}.

\begin{lemma}\label{lem:neighborhoods}
Let $c$ and $x$ be real numbers. For each $\varepsilon>0$, the following are equivalent:
\begin{align}\label{eqn:neighborhoods}
	x\in V_\varepsilon(c)& \,\Longleftrightarrow\,
	|x-c|<\varepsilon\\
	& \,\Longleftrightarrow\, -\varepsilon<x-c<\varepsilon \\
	& \,\Longleftrightarrow\, c-\varepsilon<x<c+\varepsilon.
\end{align}
\end{lemma}

Let's try turning things around. What if some point $\bfw\in\R^m$ is not arbitrarily close to $B$? Negation leads immediately to the following definition.

\begin{definition}\label{def:awf}
We say $\bfw$ is \textit{away from} $B$ and write $\bfw \awf B$ if there is some $\varepsilon_\bfw>0$ such that for every $\bfx$ in $B$ we have 
\begin{align}\label{eqn:awfinequality}
d_m(\bfx,\bfw)=\|\bfx -\bfw\|_m \geq\varepsilon_\bfw.
\end{align}
\end{definition}

\begin{figure}
\centering
\begin{tikzpicture} 
\draw[dashed, fill=blue!15] (-3,-1.41) rectangle (1.41,1.41);
	\draw[blue] (3,0) node {$\circ$};
	\draw (3,-0.4) node {$\bfw$};
	\draw (-0.8,0) node {$B$};				
\draw[semithick] (-3,-1.41) -- (-3,1.41);
\draw[semithick] (-3,-1.41) -- (1.41,-1.41);
\draw[semithick, dashed, blue] (3,0) circle (1cm);
\draw[-,semithick, blue] (3.05,0.05) -- (3.71,0.71);
\draw[blue] (3.6,0.15) node {$\varepsilon_\bfw$};
\end{tikzpicture}
\caption{A point $\bfw$ away from a set $B$ where all points in $B$ are more than some positive distance $\varepsilon_\bfw$ away from $\bfw$. Thus, $V_{\varepsilon_\bfw}(\bfw)\subseteq\R^m\backslash B$. See Definition \ref{def:awf}.}
\label{fig:wawfB}
\end{figure}

\begin{remark}\label{rmk:distancebetween}
To recap, we have $\bfy \acl B$ if there is no distance between $\bfy$ and $B$, while $\bfw \awf B$ if there is some distance between $\bfw$ and $B$. Equivalently, $\bfw\awf{B}$ if there is some $\varepsilon_\bfw$-neighborhood $V_{\varepsilon_\bfw}(\bfw)$ where we have $V_{\varepsilon_\bfw}(\bfw)\subseteq\R^m\backslash B$. So in order to prove $\bfw \awf B$, all we need is one fixed distance $\varepsilon_\bfw>0$ that keeps the all points in $B$ that far from $\bfw$ or more. See Figure \ref{fig:wawfB}.
\end{remark}

Theorem \ref{thm:densityofQinR}, Corollary \ref{cor:densityofirrationals}, and Lemma \ref{lem:neighborhoods} combine to produce a nice result on the relationships between rational numbers, irrational numbers, and $\varepsilon$-neighborhoods in the real line.

\begin{theorem}\label{thm:opencontainsrationalirrational}
Let $c\in\R$ and $\varepsilon>0$. Then there is a rational number and an irrational number in the $\varepsilon$-neighborhood
\begin{align}
V_\varepsilon(c)=(c-\varepsilon,c+\varepsilon).
\end{align}
\end{theorem}

\begin{proof}
Let $c\in\R$ and $\varepsilon>0$. Then $c-\varepsilon<c+\varepsilon$, so by Theorems \ref{thm:densityofQinR} and Corollary \ref{cor:densityofirrationals}, there is an $r\in\Q$ and a $v\in\R\backslash\Q$ where
\begin{align}
c-\varepsilon<r<c+\varepsilon \quad\textnormal{and}\quad  c-\varepsilon<v<c+\varepsilon.
\end{align}
By Lemma \ref{lem:neighborhoods} we have $r\in (c-\varepsilon,c+\varepsilon)$ and $v\in (c-\varepsilon,c+\varepsilon)$.
\end{proof}

Another property of real numbers that might seem intuitive is the notion that the only real number arbitrarily close to both positive and negative real numbers is zero. To this end, let $\R^+=(0,\infty)$ and $\R^-=(-\infty,0)$.

\begin{lemma}\label{lem:zeroacl}
A real number $\ell$ is equal to $0$ if and only if $\ell\acl\R^+$ and $\ell\acl\R^-$.
\end{lemma}

\begin{proof}
First, assume $\ell=0$ and let $\varepsilon>0$. Then $\varepsilon/2\in\R^+$, $-\varepsilon/2\in\R^-$, and
\begin{align}
-\varepsilon< &-\frac{\varepsilon}{2} < \ell=0 < \frac{\varepsilon}{2}<\varepsilon.
\end{align}
By Lemma \ref{lem:neighborhoods}, $0\acl\R^+$ and $0\acl\R^-$.

Next, assume $\ell>0$ and let $y< 0$. Then 
\begin{align}
y< 0 < \frac{\ell}{2}<\ell.
\end{align}
Since $\varepsilon_0=\ell/2>0$, it follows that $|\ell-y|>\ell/2$. Therefore $\ell\awf{\R^-}$. Similarly, no negative number is arbitrarily close to $\R^+$.
\end{proof}

To close out this chapter, we will often want to consider the set of points arbitrarily close to a given set.

\begin{definition}\label{def:closure}
Let $S\subseteq\R^m$. The {\em closure} of $S$, denoted by $\overline{S}$, is the set of points arbitrarily close to $S$. Thus,
\begin{align}
\overline{S} &= \{ \bfx \in \R^m: \bfx \acl S\}.
\end{align}
\end{definition}

\begin{example}\label{eg:aclclosure}
In Figure \ref{fig:aclclosure}, $\overline{B}$ is the closure of the rectangle $B$ from Example \ref{eg:acl}. $\overline{B}$ is a solid rectangle that contains its corners and sides, thus the corner $\bfy$ is in $\overline{B}$.

\begin{figure}
\centering
\begin{tikzpicture}  
\draw[fill=blue!15] (-3,-1.41) rectangle (1.41,1.41);
	\draw (1.44,1.41) node {$\bullet$};
	\draw (1.8,1.41) node {$\bfy$};
	\draw (-0.8,0) node {$\overline{B}$};
	\draw (-3.02,1.42) node {$\bullet$};
	\draw (-3.02,-1.42) node {$\bullet$};
	\draw (1.44,-1.42) node {$\bullet$};					
\begin{scope}[semithick, dashed, red] 
\draw (1.44,1.42) circle (2.1cm); 
\draw (1.44,1.42) circle (1.4cm); 
\draw (1.44,1.42) circle (0.7cm); 
\end{scope}	
\draw (-3,-1.41) -- (-3,1.41);
\draw (-3,-1.41) -- (1.41,-1.41);
\draw[-,semithick, red] (1.48,1.48) -- (2.92,2.92);
\draw[red] (2.8,2.55) node {$\varepsilon$};
	\draw (-0.3,1.2) node {$\bullet$};
	\draw (-0.2,0.8) node {$\bfx_\varepsilon$};
	\draw (1.1,0.5) node {$\bullet$};	
	\draw (1.2,1.2) node {$\bullet$};
\end{tikzpicture}
\caption{The closure $\overline{B}$ contains all points in and arbitrarily close to the rectangle $B$, including the corner $\bfy$ and the sides.}
\label{fig:aclclosure}
\end{figure}
\end{example}

\begin{example}\label{eg:closedneighborhoods}
There are subtle differences between $\varepsilon$-neighborhoods and their closures. Can you spot the differences between Figures \ref{fig:threeneighborhoods} and \ref{fig:closedneighborhoods}?

\begin{figure}
\centering
\begin{tikzpicture}
\draw (-5,0) node {$\overline{V_\varepsilon(\bfc_3)}\subseteq\R^3$};
  \shade[ball color = blue!15, opacity = 0.4] (0,0) circle (2cm);
  \draw (0,0) circle (2cm);
  \draw (0,-0.02) node {$\bullet$}; 
  \draw (0,-0.35) node {$\bfc_3$}; 
  \draw (-2,0) arc (180:360:2 and 0.6);
  \draw[dashed] (2,0) arc (0:180:2 and 0.6);
  \draw[dashed] (0,0 ) -- node[above]{$\varepsilon$} (2,0);
\draw (-5,-5) node {$\overline{V_\varepsilon(\bfc_2)}\subseteq\R^2$};  
  \draw[fill=blue!15] (0,-5) circle (2cm);
  \draw (0,-5) -- node[above]{$\varepsilon$} (2,-5);
  \draw (0,-5.02) node {$\bullet$};
  \draw (0,-5.35) node {$\bfc_2$};
\draw (-5,-8) node {$\overline{V_\varepsilon(c_1)}\subseteq\R$};
	\draw[-,semithick] (-2,-8) -- (2,-8);
	\draw (2,-8) node {$]$};
	\draw (-2,-8) node {$[$};
  \draw (0,-8.02) node {$\bullet$};	
	\draw (-2,-8.5) node {$c_1-\varepsilon$};
	\draw (0,-8.5) node {$c_1$};	
	\draw (2,-8.5) node {$c_1+\varepsilon$};
\end{tikzpicture}
\caption{Closures of $\varepsilon$-neighborhoods centered at $\bfc_3$ in $\R^3$, $\bfc_2$ in $\R^2$, and $c_1$ in $\R$, respectively, for some fixed $\varepsilon>0$. Which points are in these sets that are not in their counterparts in Figure \ref{fig:threeneighborhoods}?}
\label{fig:closedneighborhoods}
\end{figure}

\begin{itemize}
 \item $\overline{V_\varepsilon(\bfc_3)}$ is the closed sphere of radius $\varepsilon$ centered $\bfc_3$, including the points on the surface exactly $\varepsilon$ away from $\bfc_3$. 
 \item $\overline{V_\varepsilon(\bfc_2)}$ is the closed disk of radius $\varepsilon$ centered $\bfc_2$, including the points on the circle exactly $\varepsilon$ away from $\bfc_2$.
 \item $\overline{V_\varepsilon(c_1)}$ is the closed interval of length $2\varepsilon$ centered $c_1$, including the endpoints $c_1-\varepsilon$ and $c_1+\varepsilon$.
\end{itemize}
\end{example}

\begin{remark}\label{rmk:closureconvential}
Some of you may have seen a definition for closure in a class on real analysis, topology, or some other topic. Definition \ref{def:closure} is equivalent to more conventional definitions of closure, but the justification is left to Chapter \ref{ch:topologyofeuclideanspaces}. For now, closures give us a powerful tool for exploring properties of sets before the boss fight in Chapter \ref{ch:sequenceslimitsandcodas}: Defining limits and convergence for {\em sequences}. 
\end{remark}

\begin{example}\label{eg:closure}
Recall the sets $A,B,F,$ and $G$ from previous Examples \ref{eg:openinterval} and \ref{eg:twocountablesets} from Section \ref{sec:acl}. What are the closures of these sets? We have:
\begin{align}
	A&=\{2-(1/\sqrt{n}):n\in\N\}\quad
	\Longrightarrow \quad\overline{A}=A\cup\{2\},\notag\\
	B&=\{[2-(1/\sqrt{n})](-1)^n:n\in\N\}\quad
	\Longrightarrow \quad\overline{B}=B\cup\{-2,2\}\notag,\\
	F&=[0,3140]=\{x\in\R:0\leq x\leq 3140\}\quad
	\Longrightarrow \quad\overline{F}=[0,3140]=F, \quad\textnormal{and} \notag\\ 
	G&=(0,3140)=\{x\in\R:0<x<3140\}\quad
	\Longrightarrow \quad\overline{G}=[0,3140]=F.\notag
\end{align}

As usual, I think figures help. See Figure \ref{fig:setsandclosures}. 

\begin{figure}
\centering
\begin{tikzpicture}
\draw (-2,0) node {$A$};
\draw (4,0) node {$\circ$};
\draw (3.76,0) node {$...$};
\draw (3,-0.5) node {$1$};
\draw (4,-0.5) node {$2$};
\foreach \Point in {(3,0), (3.29,0), (3.42,0), (3.5,0)}
{
    \node at \Point {\textbullet};
}
\draw (-2,-1.5) node {$\overline{A}$};
\draw (4,-1.5) node {$\bullet$};
\draw (3.76,-1.5) node {$...$};
\draw (3,-2) node {$1$};
\draw (4,-2) node {$2$};
\foreach \Point in {(3,-1.5), (3.29,-1.5), (3.42,-1.5), (3.5,-1.5)}
{
    \node at \Point {\textbullet};
}
\draw (-2,-3) node {$B$};
\draw (0,-3) node {$\circ$};
\draw (4,-3) node {$\circ$};
\draw (3.76,-3) node {...};
\draw (0.28,-3) node {...};
\draw (0,-3.5) node {$-2$};
\draw (1,-3.5) node {$-1$};
\draw (4,-3.5) node {$2$};
\foreach \Point in {(1,-3), (3.29,-3), (0.58,-3), (3.5,-3)}
{
    \node at \Point {\textbullet};
}
\draw (-2,-4.5) node {$\overline{B}$};
\draw (0,-4.5) node {$\bullet$};
\draw (4,-4.5) node {$\bullet$};
\draw (3.76,-4.5) node {...};
\draw (0.28,-4.5) node {...};
\draw (0,-5) node {$-2$};
\draw (1,-5) node {$-1$};
\draw (4,-5) node {$2$};
\foreach \Point in {(1,-4.5), (3.29,-4.5), (0.58,-4.5), (3.5,-4.5)}
{
    \node at \Point {\textbullet};
}
\draw (-2,-6) node {$F$};
	\draw[-,semithick] (0,-6) -- (4,-6);
	\draw (0,-6) node {$[$};
	\draw (4,-6) node {$]$};
	\draw (0,-6.5) node {$0$};
	\draw (4,-6.5) node {$3140$};
\draw (-2,-7.5) node {$\overline{F}$};
	\draw[-,semithick] (0,-7.5) -- (4,-7.5);
	\draw (0,-7.5) node {$[$};
	\draw (4,-7.5) node {$]$};
	\draw (0,-8) node {$0$};
	\draw (4,-8) node {$3140$};
\draw (-2,-9) node {$G$};
	\draw[-,semithick] (0,-9) -- (4,-9);
	\draw (0.02,-9) node {$($};
	\draw (3.98,-9) node {$)$};
	\draw (0,-9.5) node {$0$};
	\draw (4,-9.5) node {$3140$};
\draw (-2,-10.5) node {$\overline{G}$};
	\draw[-,semithick] (0,-10.5) -- (4,-10.5);
	\draw (0,-10.5) node {$[$};
	\draw (4,-10.5) node {$]$};
	\draw (0,-11) node {$0$};
	\draw (4,-11) node {$3140$};	
\end{tikzpicture}
\caption{The sets of real numbers $A,B,F,$ and $G$ from Examples \ref{eg:openinterval} and \ref{eg:twocountablesets}, along with their closures. In each case, the closure contains the points in the original set along with points arbitrarily close to the original set.}
\label{fig:setsandclosures}
\end{figure}
\end{example}

Each of the sets in Example \ref{eg:closure} are bounded above by $3140$ and bounded below $-2$. See Figure \ref{fig:setsandclosures}. In Euclidean spaces, the notions of bounded above and bounded below lose meaning since there are infinitely many directions to move around in, not just the two we are limited to in the real line. Even so, {\em boundedness} is a property some sets can have. The idea is to use single bound for all directions.

\begin{definition}\label{def:boundedset} 
A set $S\subseteq\R^m$ is {\em bounded} if there is a $b\geq 0$ such that for all $\bfx$ in $S$ we have
\begin{align}
\|\bfx\|_m\leq b.
\end{align}
In this case, $b$ is called a {\em bound} for $S$ and we say $S$ is {\em bounded by} $b$.
\end{definition}

\begin{remark}\label{rmk:boundedabovebelow}
In the real line $\R$, a set $S$ is bounded if and only if it is bounded above and bounded below.
\end{remark}

This section wraps up with a trio of results about bounded sets. Lemmas \ref{lem:epsilonbound} and \ref{lem:orderacl} partially stem from the order inherent to the real line, while Lemma \ref{lem:closurebound} pertains to bounded sets in Euclidean space. All of these results make use of arbitrarily close in some way (Definitions \ref{def:aclreal} and \ref{def:acl}).

\begin{lemma}\label{lem:epsilonbound}
Suppose $x$, $a$, and $b$ are real numbers. Then 
\begin{enumerate}
	\item $x\leq b$ if and only if for every $\varepsilon>0$ we have $x<b+\varepsilon$.
	\item $x\geq a$ if and only if for every $\varepsilon>0$ we have $x>a-\varepsilon$.
\end{enumerate}
\end{lemma}

The proofs of statements (i) and (ii) in Lemma \ref{lem:epsilonbound} are similar enough that we  will only work on (i) here and leave (ii) as an exercise. Also, since $b<b+\varepsilon$ for every $\varepsilon>0$, one direction of (i) has a short proof. For the other direction, a contraposition argument yields the result.

\begin{proof}[Proof of (i) in Lemma \ref{lem:epsilonbound}]
First, suppose $x\leq b$ and let $\varepsilon>0$. Then 
\begin{align}
	x\leq b< b+\varepsilon.
\end{align}

For the other direction, assume $x>b$ to set up a contraposition argument. Then $x-b>0$ and so
\begin{align}
	\varepsilon_0=\frac{x-b}{2}>0.
\end{align}
Hence,
\begin{align}
	b+\varepsilon_0
	&=b+\frac{x-b}{2}
	=\frac{x+b}{2}
	<\frac{x+x}{2}
	=x. 
\end{align}
Therefore, there is some $\varepsilon_0>0$ where $x>b+\varepsilon_0$.
\end{proof} 

\begin{lemma}\label{lem:orderacl}
Suppose $S\subseteq \R$ and $y\acl{S}$.
\begin{enumerate}
	\item If $b$ is an upper bound for $S$, then $y\leq b$. 
	\item If $a$ is a lower bound for $S$, then $a\leq y$. 
\end{enumerate}
\end{lemma}

\begin{figure}
\centering
\begin{tikzpicture} 
\draw (-3,0) node {$S$};
	\draw (-0.5,0) node {$\circ$};
	\draw (-0.5,-0.5) node {$a$};
	\draw (4.5,0) node {$\circ$};
	\draw (4.5,-0.5) node {$b$};	
	\draw[-,semithick] (0,0) -- (2,0);
	\draw (0,0) node {$[$};
	\draw (2,0) node {$)$};
	\draw (2,-0.5) node {$y$};	
\draw (4,0) node {$\circ$};
\draw (3.76,0.02) node {...};
\foreach \Point in {(3,0), (3.5,0)}
{
    \node at \Point {\textbullet};
}	
\end{tikzpicture}
\caption{As in Lemma \ref{lem:orderacl}, here is a set of real numbers $S$ along with a lower bound $a$, an upper bound $b$, and a point $y$ where $y\acl{S}$.}
\label{fig:orderacl}
\end{figure}

The statements (i) and (ii) in Lemma \ref{lem:orderacl} are similar enough that we will only work on (i) here and leave (ii) as an exercise. See Figure \ref{fig:orderacl}.

\begin{proof}[Proof of (i) in Lemma \ref{lem:orderacl}]
To set up a contraposition argument, suppose $y\acl{S}$ and $y>b$. Then $\varepsilon_b=y-b>0$. By the definition of arbitrarily close (Definition \ref{def:acl}), there is some $x_b$ in $S$ where 
\begin{align}
	|x_b-y|<\varepsilon_b=y-b.
\end{align}
Therefore,
\begin{align}\label{eqn:orderacl}
	y-x_b&\leq|x_b-y|<y-b.
\end{align}
So, by subtracting $y$ then multiplying through by $-1$ in \eqref{eqn:orderacl} we have
\begin{align}
	x_b&>b.
\end{align}
Since $x_b$ is in $S$, $b$ is not an upper bound for $S$.
\end{proof}

The points in the closure of a bounded set are so close to the original set that there is no distance between them. As a result, a bounded set and its closure respect the same bounds. See Figure \ref{fig:Sboundedbyb}.

\begin{lemma}\label{lem:closurebound}
Suppose $S\subseteq\R^m$ is bounded by $b$. Then $\overline{S}$ is bounded by $b$ as well. That is, if $\bfy\acl{S}$, then 
\begin{align}
\|\bfy\|_m\leq b.
\end{align}
\end{lemma}

\begin{figure}
\centering
\begin{tikzpicture} 
\draw[dashed, fill=blue!15] (-1.41,-1.41) rectangle (1.41,1.41);
	\draw (0,-1) node {$S$};
\draw (0,0) circle (2.5cm);	
	\draw (2.5,0) --(0,0);  
	\draw (2,-0.3) node {$b$};  
	\draw (0,-0.02) node {$\bullet$};
	\draw (0,-0.3) node {$\mathbf{0}$};
\begin{scope}[thick, dashed, red] 
\draw (1.42,1.42) circle (0.2cm); 
\draw (1.42,1.42) circle (1.2cm); 
\draw (1.42,1.42) circle (0.7cm); 
\end{scope}	
	\draw[fill=white] (1.41,1.41) circle (0.08cm);
	\draw (1.8,1.41) node {$\bfy$};
\end{tikzpicture}
\caption{As in Lemma \ref{lem:closurebound} and Lekha Patil's proof, a set $S$ in the plane $\R^2$ bounded by  a positive real number $b$ along with a point $\bfy$ in $\R^2$ where $\bfy\acl{S}$ and $\|\bfy\|_m\leq b$.}
\label{fig:Sboundedbyb}
\end{figure}

Lemma \ref{lem:closurebound} was presented as an exercise to my undergraduate real analysis class in Summer 2021. One student, Lekha Patil, came up with an elegant proof. Her proof is provided below along with Figure \ref{fig:Sboundedbyb}. An alternate proof is provided thereafter.

\begin{proof}[Lekha Patil's proof of Lemma \ref{lem:closurebound}]
Suppose $\bfy\acl{S}$ and $b$ is a bound for $S$. Then for every point $\bfx$ in $S$ we have 
\begin{align}
	\|\bfy\|_m&=\|\bfy-\bfx+\bfx\|_m\\
	&\leq \|\bfy-\bfx\|_m+\|\bfx\|_m \quad\textnormal{(triangle inequality \eqref{eqn:triangleinequality})}\\
	&\leq \|\bfy-\bfx\|_m+b. \quad\textnormal{(Definition \ref{def:boundedset})}
\end{align}
Now let $\varepsilon>0$. Since $\bfy\acl{S}$, there is some $\bfx_\varepsilon$ in $S$ where 
\begin{align}
	\|\bfy\|_m&\leq \|\bfy-\bfx_\varepsilon\|_m+b<\varepsilon+b.
\end{align}
Since $\varepsilon>0$ is arbitrary (standing for {\em any} positive number, so Lemma \ref{lem:epsilonbound} applies), we have 
\begin{align}
	\|\bfy\|_m&\leq b.
\end{align}
Hence, $b$ is a bound for $\overline{S}$ as well.
\end{proof}

The line ``Since $\varepsilon>0$ is arbitrary'' highlights a key intuitive idea behind the definition of arbitrarily close, much like Lemma \ref{lem:equalpoints}: If the difference between two objects is less than {\em every} $\varepsilon>0$, then there's no difference at all. To me, this intuition is reinforced by Lemmas \ref{lem:epsilonbound} and \ref{lem:closurebound}.

An alternative proof of Lemma \ref{lem:closurebound} makes use of a contraposition argument and the definition of {\em away from} (Definition \ref{def:awf}). Basically, a point whose norm is greater than a bound for a set is away from the set. See Figure \ref{fig:wawayfromS}.

\begin{figure}
\centering
\begin{tikzpicture} 
\draw[dashed, fill=blue!15] (-1.41,-1.41) rectangle (1.41,1.41);
	\draw (0,-1) node {$S$};
\draw (0,0) circle (2.5cm);	
	\draw (2.5,0) --(0,0);  
	\draw (2,-0.3) node {$b$};    
	\draw (0,-0.02) node {$\bullet$};
	\draw (0,-0.3) node {$\mathbf{0}$};
\draw[thick, dashed, blue] (4,0) circle (1.5cm);
\draw[blue] (4.85,0.4) node {$\varepsilon_\bfw$};
\draw (4,-0.3) node {$\bfw$};  
 	\draw[blue] (4,0) node {$\circ$};
	\draw[-,semithick, blue] (4.05,0.05) -- (5.06,1.06);	
\end{tikzpicture}
\caption{In the second proof of Lemma \ref{lem:closurebound}, a set $S$ in the plane $\R^2$ bounded by a positive real number $b$ is away from any point $\bfw$ in $\R^2$ where $\|\bfw\|_m> b$.}
\label{fig:wawayfromS}
\end{figure}

\begin{proof}[Alternate proof of Lemma \ref{lem:closurebound}]
Suppose $b$ is a bound for $S$ and $\|\bfw\|_m>b$.
For every $\bfx$ in $S$ we have $\|\bfx\|_m\leq b$, so $-b\leq -\|\bfx\|_m$. Define 
\begin{align}
	\varepsilon_\bfw= \|\bfw\|_m-b>0.
\end{align}
Then we have
\begin{align}
	0< \varepsilon_\bfw&=\|\bfw\|_m-b\\ 
	&\leq \|\bfw\|_m-\|\bfx\|_m\\	
	&\leq\|\bfw-\bfx\|_m \quad\textnormal{(reverse tri. ineq.  \eqref{eqn:reversetriangleinequality})} 
\end{align}
Hence, $\bfw\awf{S}$ and $\bfw$ is not in $\overline{S}$.
\end{proof}

The following chapter explores properties of sequences through the lens of arbitrarily close (Definition \ref{def:acl}). In particular, a central tenet of this book is the way arbitrarily close allows us to parse the definitions of {\em convergence } and {\em limits} (Definition \ref{def:sequentiallimit}). 

\vs
\section*{Exercises}
\setcounter{theorem}{0}

Exercises are for play: Do scratch work, draw stuff, and make mistakes---make {\em lots} of mistakes---before worrying about writing proofs. {\em Have fun!}


\xca\label{exer:neighborhoods}
Prove Lemma \ref{lem:neighborhoods}. Hint: Use the definition of absolute value (Definition \ref{def:absolutevalue}) and Axiom \ref{ax:orderedfield}, especially the Trichotomy property.

\xca Prove part (ii) of Lemma \ref{lem:epsilonbound}. The proof follows from a careful consideration and modification of the proof of part (i).

\xca\label{exer:threeacl}
Pick a nonempty set $S$ in the plane $\R^2$. Draw $S$ and, depending on what you drew, try to draw three points as follows: One that is arbitrarily close to both $S$ and its complement  $\R^2\backslash S$, one that is away from $S$, and one that is away from  $\R^2\backslash S$. Next, draw the three following sets: The set of all points arbitrarily close to both $S$ and its complement  $\R^2\backslash S$, the set of all points away from $S$, and the set of all points away from $\R^2\backslash S$. Are there any points away from both $S$ and $\R^2\backslash S$?

\chapter[Sequences, Limits, and Codas]{Sequences, Limits, and Codas}
\label{ch:sequenceslimitsandcodas}

In my opinion, the technical definitions of {\em limit} and {\em convergence} (Definition \ref{def:sequentiallimit}) along with {\em continuity} (Definition \ref{def:continuity}) are the most difficult in undergraduate mathematics. As explored in this chapter, the definition of arbitrarily close (Definitions \ref{def:aclreal} and \ref{def:acl}) provides a stepping stone towards understanding these challenging concepts.

If you're interested, you might like to take a look at \cite{Barnes,Borzellino,LarsenSwinyard,Seager} for some other interesting approaches to teaching real analysis at the undergraduate level which are, in part, designed to address this difficulty. Also see \cite{Cornu,LarsenSwinyard} for a thorough discussion of the challenges that come with teaching convergence and limits.

This chapter explores limits and convergence for {\em sequences}, highlighted by Definition \ref{def:sequentiallimit}. We start with a formal definition of a sequence (Definition \ref{def:sequence}) and an investigation of what it means for points to be arbitrarily close to sequences.

\vs
\section{Sequences and staying close}
\label{sec:sequencesandstayingclose}

What are {\em sequences}? One way to think of a sequence is as an unending list of objects. More specifically, a sequence imbues two structures on a collection of objects: (i) The objects are listed in ordered (first, second, third...); and (ii) the list is infinitely long, even if some objects are repeated. 

Formally, a {\em sequence} is a function from the set of positive integers $\N$ into some nonempty set. The ordering of $\N$ in the domain becomes an ordering on the range of the function, generating the terms of the sequence. This ordering enables us to expand the notion of arbitrarily close to the more focused---and classically important---notions of {\em limits} and {\em convergence}.

\begin{definition}\label{def:sequence}
A {\em sequence} $(\bfx_n)$ is a function whose domain is $\N$. For a nonempty set $B\subseteq\R^m$, a {\em sequence of points in} $B$ is a function $f:\N\to B$ where the outputs---also called {\em terms}---are written
\begin{align}\label{eqn:sequencerange}
f(n)=\bfx_n
\end{align}
for each $n\in\N$. The {\em range} of a sequence is the set of terms given by:
\begin{align}\label{eqn:sequencerange}
f(\N)=\{\bfx_n\in B: n\in\N \textnormal{ and } \bfx_n=f(n)\}.
\end{align}
(The range $f(\N)$ does not repeat elements and does not have any particular ordering.)
\end{definition}

\begin{remark}\label{rmk:sequencerange}
The index $n$ of the term $\bfx_n$ is actually an input for the function $f$ which defines the sequence $(\bfx_n)$. Following convention, sequences in this book are written $(\bfx_n)$, suppressing the fact that $(\bfx_n)$ represents a function but keeping the ordering from $\N$ intact. Technically, the range $f(\N)$ does not repeat elements and does not have any particular ordering on its own. Throughout the book, figures for sequences of real numbers are sometimes graphs with both inputs (positive integers) on a horizontal axis and outputs (terms) on a vertical axis. Other times, figures for sequences are plots of the ranges, especially for sequences of points in the plane where the order is encoded in the indices of the terms. See Figures \ref{fig:twocountablesetssequences}, \ref{fig:sequenceangraph}, and \ref{fig:sequencebngraph}.
\end{remark}

\begin{notation}\label{not:abusesequencenotation}
Following another convention which is a mild abuse of notation, for a sequence $(\bfx_n)$ defined by $f:\N\to B$ we take the phrase ``$\bfy$ is arbitrarily close to the sequence $(\bfx_n)$'' to mean the point $\bfy$ is arbitrarily close to the range $f(\N)$. So, $\bfy\acl{(\bfx_n)}$ means $\bfy\acl{f(\N)}$. Similarly, $(\bfx_n)\subseteq\R^m$ means $f(\N)\subseteq\R^m$.  
\end{notation}

The following examples provide a first glimpse into the connection between {\em arbitrarily close} and {\em limits} of sequences.

\begin{example}\label{eg:twocountablesetssequences}
Consider the following sequences of real numbers $(a_n)$ and $(b_n)$ defined for each $n\in\N$ by
\begin{align}
a_n&=2-\left(\frac{1}{\sqrt{n}}\right) \qquad\textnormal{and}\qquad
b_n=\left[2-\left(\frac{1}{\sqrt{n}}\right)\right](-1)^n.
\end{align}

\begin{figure}
\centering
\begin{tikzpicture} 
\draw (-2,0) node {$(a_n)$};
\draw (4,0) node {$\circ$};
\draw (3.76,0) node {$...$};
\draw (2.85,-0.5) node {$a_1$};
\draw (3.31,-0.5) node {$a_2$};
\draw (4.05,-0.5) node {$2$};
\foreach \Point in {(3,0), (3.29,0), (3.42,0), (3.5,0)}
{
    \node at \Point {\textbullet};
}
\draw (-2,-1.5) node {$(b_n)$};
\draw (0,-1.5) node {$\circ$};
\draw (4,-1.5) node {$\circ$};
\draw (-0.1,-2) node {$-2$};
\draw (4.05,-2) node {$2$};
\draw (3.76,-1.5) node {...};
\draw (0.28,-1.5) node {...};
\foreach \Point in {(1,-1.5), (3.29,-1.5), (0.58,-1.5), (3.5,-1.5)}
{
    \node at \Point {\textbullet};
}
\draw (1.1,-2) node {$b_1$};
\draw (3.14,-2) node {$b_2$};
\draw (0.58,-2) node {$b_3$};
\draw (3.6,-2) node {$b_4$};
\end{tikzpicture}
\caption{The ranges of the sequences of real numbers $(a_n)$ and $(b_n)$ in Example \ref{eg:twocountablesetssequences} are the sets $A$ and $B$, respectively, from Example \ref{eg:twocountablesets}. The black dots, like $\bullet$, represent the terms of the sequences.}
\label{fig:twocountablesetssequences}
\end{figure}

\begin{figure}
\centering
\begin{tikzpicture}
\foreach \Point in {(0.25,0), (0.5,0.29), (0.75,0.42), (1,0.50), (1.25,0.55), (1.5,0.59), (1.75,0.62), (2,0.65), (2.25,0.67), (2.5,0.68), (2.75,0.70), (3,0.71), (3.25,0.72), (3.5,0.73), (3.75,0.74), (4,0.75)}
{
   \node at \Point {\textbullet};
}
\draw (-3,0.75) node {graph of};
\draw (-3,0) node {$(a_n)$};
	\draw (0,-1) node {$-$};
	\draw (0,0) node {$-$};		
	\draw (0,1) node {$-$};
	\draw (-0.5,-1) node {$0$};
	\draw (-0.5,0) node {$1$};
	\draw (-0.5,1) node {$2$};
  \draw[-To, dashed] (0,1) -- (4.5,1);		
  \draw[-To] (0,-1.3) -- (4.5,-1.3);
  \draw[-To] (0,-1.3) -- (0,1.5) node[above] {terms$\qquad$};
	\draw (1,-1.3) node {$|$};
	\draw (2,-1.3) node {$|$};
	\draw (3,-1.3) node {$|$};
	\draw (4,-1.3) node {$|$};
	\draw (1,-1.8) node {$4$};
	\draw (2,-1.8) node {$8$};
	\draw (3,-1.8) node {$12$};
	\draw (4,-1.8) node {$16$};
	\draw (5.5,-1.8) node {indices};
\end{tikzpicture}
\caption{A graph of the sequence $(a_n)$ from Example \ref{eg:twocountablesetssequences}. Technically, the black dots are {\em not} the terms of the sequence. Those would lie on the vertical axis but are not plotted here.}
\label{fig:sequenceangraph}
\end{figure}

\begin{figure}
\centering
\begin{tikzpicture}
\foreach \Point in {(0.25,-1.00), (0.5,1.29), (0.75,-1.42), (1,1.50), (1.25,-1.55), (1.5,1.59), (1.75,-1.62), (2,1.65), (2.25,-1.67), (2.5,1.68), (2.75,-1.70), (3,1.71), (3.25,-1.72), (3.5,1.73), (3.75,-1.74), (4,1.75)}
{
   \node at \Point {\textbullet};
}
\draw (-3,0.75) node {graph of};
\draw (-3,0) node {$(b_n)$};
	\draw (0,-2) node {$-$};
	\draw (0,-1) node {$-$};
	\draw (0,0) node {$-$};		
	\draw (0,1) node {$-$};
	\draw (0,2) node {$-$};	
	\draw (-0.5,-2) node {$-2$};
	\draw (-0.5,-1) node {$-1$};
	\draw (-0.5,0) node {$0$};
	\draw (-0.5,1) node {$1$};		
	\draw (-0.5,2) node {$2$};
  \draw[-To, dashed] (0,-2) -- (4.5,-2);		
  \draw[-To, dashed] (0,2) -- (4.5,2);
  \draw[-To] (0,-2.3) -- (4.5,-2.3);
  \draw[-To] (0,-2.3) -- (0,2.5) node[above] {terms$\qquad$};
	\draw (1,-2.3) node {$|$};
	\draw (2,-2.3) node {$|$};
	\draw (3,-2.3) node {$|$};
	\draw (4,-2.3) node {$|$};
	\draw (1,-2.8) node {$4$};
	\draw (2,-2.8) node {$8$};
	\draw (3,-2.8) node {$12$};
	\draw (4,-2.8) node {$16$};
	\draw (5.5,-2.8) node {indices};
\end{tikzpicture}
\caption{A graph of the sequence $(b_n)$ from Example \ref{eg:twocountablesetssequences}. The terms $b_n$ would be {\em heights} along the vertical axis, but they are not plotted here. Note the heights alternate between being close to $2$ and $-2$ as the indices increase.}
\label{fig:sequencebngraph}
\end{figure}
\end{example}

In the proofs following Example \ref{eg:twocountablesets}, we saw that $2\acl{A}$ and $2\acl{B}$. Hence, $2\acl{(a_n)}$ and $2\acl{(b_n)}$. 
So we can say both $(a_n)$ and $(b_n)$ {\em get arbitrarily close to} 2. Based on your intuition from calculus, does either sequence have 2 as their {\em limit?} In other words, does either sequence {\em converge} to 2? 

\begin{remark}\label{rmk:twosteplimit}
One way I like to parse the limit and convergence of a given sequence is to consider: (i) Does the sequence {\em get arbitrarily close} to a point? and (ii) Does the sequence {\em stay close} to that point?

Overall, my calculus intuition tells me $(a_n)$ {\em converges} to 2 while $(b_n)$ does not. To me, $(a_n)$ both gets close and stays close to 2. On the other hand, $(b_n)$ gets close to $2$ but does not stay close to 2. See Figures \ref{fig:twocountablesetssequences}, \ref{fig:sequenceangraph}, and \ref{fig:sequencebngraph}.
\end{remark}

\begin{figure}
\centering
\begin{tikzpicture}
\draw (-2,0) node {$(b_n)$};
\foreach \Point in {(1,0), (3.29,0), (0.58,0), (3.5,0)}
{
    \node at \Point {\textbullet};
}
\draw (1.1,-0.5) node {$b_1$};
\draw (3.14,-0.5) node {$b_2$};
\draw (0.58,-0.5) node {$b_3$};
\draw (3.6,-0.5) node {$b_4$};
\draw (0,0) node {$\circ$};
\draw (4,0) node {$\circ$};
\draw (3.76,0) node {...};
\draw (0.28,0) node {...};
\draw (-0.1,-0.5) node {$-2$};
\draw (4.05,-0.5) node {$2$};
\begin{scope}[thick, dashed, blue] 
\draw (4,0) circle (1.5cm); 
\end{scope}
\draw[-,semithick, blue] (4.05,0.02) -- (5.06,1.06);
\draw[blue] (4.7,0.3) node {$\varepsilon_0$};
\end{tikzpicture}
\caption{As in Remark \ref{rmk:twosteplimit}, the range of the sequence $(b_n)$ from Example \ref{eg:twocountablesetssequences} seems to get arbitrarily close to $2$ but does not stay close to $2$ in that an infinite number of the terms are $\varepsilon_0$ or more away from $2$.}
\label{fig:bnepsilon0}
\end{figure}

\begin{figure}
\centering
\begin{tikzpicture}
\draw (-2,0) node {$(a_n)$};
\draw (4,0) node {$\circ$};
\draw (4,-0.5) node {$2$};
\draw (3.76,0) node {$...$};
\draw (3,-0.5) node {$a_1$};
\foreach \Point in {(3,0), (3.29,0), (3.42,0), (3.5,0)}
{
    \node at \Point {\textbullet};
}
\draw (-2,-2.5) node {$(a_n)$};
\draw (4,-2.5) node {$\circ$};
\draw (4,-3) node {$2$};
\draw (3.76,-2.5) node {$...$};
\draw (3,-3) node {$a_1$};
\foreach \Point in {(3,-2.5), (3.29,-2.5), (3.42,-2.5), (3.5,-2.5)}
{
    \node at \Point {\textbullet};
}
\draw (-2,-4) node {$(a_n)$};
\draw (4,-4) node {$\circ$};
\draw (4,-4.65) node {$2$};
\draw (3.76,-4) node {$...$};
\draw (3,-4.65) node {$a_1$};
\foreach \Point in {(3,-4), (3.29,-4), (3.42,-4), (3.5,-4)}
{
    \node at \Point {\textbullet};
}
\begin{scope}[thick, dashed, red] 
\draw (4,-4) circle (0.38cm); 
\draw (4,-2.5) circle (0.85cm); 
\draw (4,0) circle (1.4cm); 
\end{scope}	
\draw[-,semithick, red] (4.05,0.05) -- (5,1);
\draw[red] (4.6,0.3) node {$\varepsilon$};
\end{tikzpicture}
\caption{The sequence $(a_n)$ from Example \ref{eg:twocountablesetssequences} gets arbitrarily close and stays close to $2$: For every $\varepsilon>0$, an infinite number of the $a_n$ terms are within $\varepsilon$ of $2$ {\em and} only a finite number of the  terms are more than $\varepsilon$ away from $2$.}
\label{fig:anepsilons}
\end{figure}

As proven below Example \ref{eg:twocountablesets}, $2\acl{B}$ since the $b_n$ where $n$ is even are near $2$, so the sequence $(b_n)$ gets arbitrarily close to $2$. However, an infinite number of the terms of $(b_n)$, specifically those where $n$ is odd, are more than $\varepsilon_0=3/2$ away from the point $2$. (These terms are bunching together near $-2$.) This behavior exhibits what I mean by saying the sequence $(b_n)$ does not stay close to $2$. See Figure \ref{fig:bnepsilon0}.

Something else happens with $(a_n)$. No matter how small we take $\varepsilon>0$ to be, only a finite number of the terms $a_n$ are more than $\varepsilon$ away from $2$. See Figure \ref{fig:anepsilons}. This is one way to interpret what I mean when I say $(a_n)$ gets close and stays close to $2$. It is also a characterization of when a sequence {\em converges} and has a {\em limit}. Dylan Alvarenga, a student from Fall 2021, used very similar terminology when describing how he saw sequences converge.

{\em So what do I mean by all this, exactly?} And can I prove what I'm saying here? We'll get there in Section \ref{sec:limitofasequence} once we have more mathematical tools at our disposal. But there is no need to dive into the definition of convergence and limit (Definition \ref{def:sequentiallimit}) just yet. Playing around with more examples will help pave the way to understanding the difficult definitions ahead.

For now, here is an example showing that ranges of sequences can be found within all sorts of sets. See Figure \ref{fig:openintervalsequence}.

\begin{figure}
\centering
\begin{tikzpicture}
\draw (-2,-1.5) node {$G$};
	\draw[-,semithick] (0,-1.5) -- (4,-1.5);
	\draw (0.02,-1.5) node {$($};
	\draw (3.98,-1.5) node {$)$};	
	\draw (2.75,-1.5) node {$\bullet$};
	\draw (3.25,-1.5) node {$\bullet$};
	\draw (3.75,-1.5) node {$\bullet$};			
	\draw (0,-2) node {$0$};	
	\draw (4.2,-1.5) node {$\ell$};
\begin{scope}[thick, dashed, red] 
\draw (4,-1.5) circle (.5cm); 
\draw (4,-1.5) circle (1cm); 
\draw (4,-1.5) circle (1.5cm); 
\end{scope}	
\end{tikzpicture}
\caption{The open interval $G=(0,3140)$ from Examples \ref{eg:openinterval}, \ref{eg:openintervalrevisit}, and \ref{eg:openintervalsequence} along with a few distances around $\ell=3140$ and points in $G$ within those distances of $\ell$, respectively.}
\label{fig:openintervalsequence}
\end{figure}

\begin{example}\label{eg:openintervalsequence}
Once again, consider the open interval $G=(0,3140)$ and the real number $\ell=3140$. In Examples \ref{eg:openinterval} and \ref{eg:openintervalrevisit}, we showed 
\begin{align}\label{eqn:openintervalsequence}
	3140 \acl G \qquad\textnormal{and}\qquad \sup{G}=3140.  
\end{align}
It turns out that a sequence $(x_n)$ of points in $G$ can be found that provides an infinite collection of elements which are as close to the right endpoint $\ell=3140$ as desired. See Figure \ref{fig:openintervalsequence}. In particular, $x_n=3140-(1/n)$ is in $G$ for each $n\in \N$. Each $x_n$ is represented by a $\bullet$ in the figure, which is not to scale. 

Moreover, for a given $\varepsilon>0$ (which establishes how close to $\ell$ we'd like to get) and thanks to the Corollary of the Archimedean Property \ref{cor:archimedeanproperty}, there is a positive integer $n_\varepsilon$ large enough to give us
\begin{align}
	\frac{1}{n_\varepsilon} <\varepsilon.
\end{align}
Now, by choosing the point $x_{n_\varepsilon}=3140-(1/n_\varepsilon)$ in $G$ we have
\begin{align}
	d_{\R}(x_{n_\varepsilon},\ell)=|x_{n_\varepsilon}-\ell|&=\left|3140-\frac{1}{n_\varepsilon}-3140\right| = \frac{1}{n_\varepsilon}  <\varepsilon.
\end{align}
Hence, $(x_n)\subseteq G$ and $3140\acl{(x_n)}$.
\end{example}

\begin{remark}\label{rmk:almostdefiningconvergence}
Actually, in Example \ref{eg:openintervalsequence}, any positive integer $n>1/\varepsilon$ suffices. Moreover, given $\varepsilon>0$, an infinite number of $x_n$ from the set $G$ are within $\varepsilon$ of $3140$ and only a finite number are $\varepsilon$ or more away from $3140$. As we will be discussed in Section \ref{sec:limitofasequence}, this means $(x_n)$ converges and the limit of $(x_n)$ is $2$.
\end{remark}

The following example provides a sequence that does not seem to stay close to any points at all.

\begin{example}\label{eg:alternatingtoinfinity}
Let $c_n=f(n)=(-1)^nn$ for each $n\in\N$. This sequence jumps between negative odd numbers and positive even numbers, like this:
\begin{align}
	c_1=-1,\quad c_2=2,\quad c_3=-3,\quad c_4=4,\ldots
\end{align}

\begin{figure}
\centering
\begin{tikzpicture} 
\draw (-5,0) node {$(c_n)$};
\foreach \Point in {(-0.5,0), (1,0), (-1.5,0), (2,0), (-2.5,0), (3,0)}
{
    \node at \Point {\textbullet};
}
\draw (-3,0) node {$...$};
\draw (3.5,0) node {$...$};
\draw (-0.5,-0.5) node {$c_1$};
\draw (1,-0.5) node {$c_2$};
\draw (-1.5,-0.5) node {$c_3$};
\draw (2,-0.5) node {$c_4$};
\draw (-2.5,-0.5) node {$c_5$};
\draw (3,-0.5) node {$c_6$};

\draw (-5,-1.5) node {$f(\N)$};
\foreach \Point in {(-0.5,-1.5), (1,0-1.5), (-1.5,-1.5), (2,-1.5), (-2.5,-1.5), (3,-1.5)}
{
    \node at \Point {\textbullet};
}
\draw (-3,-1.5) node {$...$};
\draw (3.5,-1.5) node {$...$};
\draw (-0.5,-2) node {$-1$};
\draw (1,-2) node {$2$};
\draw (-1.5,-2) node {$-3$};
\draw (2,-2) node {$4$};
\draw (-2.5,-2) node {$-5$};
\draw (3,-2) node {$6$};
\end{tikzpicture}
\caption{Two plots of the sequence $(c_n)$ from Example \ref{eg:alternatingtoinfinity}: The top features the terms in their positions along the order determined by their indices, but not their values; the second features the values of the range $f(\N)$ (so the values of the terms), but no order. What would a graph of the sequence $(c_n)$ look like?}
\label{fig:alternatingtoinfinity}
\end{figure}

In Figure \ref{fig:alternatingtoinfinity}, it looks like the terms $c_n$ do not stay close to any particular real number as the index $n$ increases. When we specify their values as integers, the ordering from $\N$ is no longer specified. The ordering the sequence $(c_n)$ inherits from $\N$ plays a significant role in determining that $(c_n)$ does not converge. 
\end{example}

Before getting to the definitions of {\em limit} and {\em convergence} for sequences are the focus of Section \ref{sec:limitofasequence} (Definition \ref{def:sequentiallimit}), let's consider sequences in $\R^2$ to get a sense of how sequences can behave in higher dimensions.

\begin{example}\label{eg:nolimit}
Consider the sequence in the plane $\R^2$ given by 
\begin{align}\label{eqn:nolimit}
\bfz_n=
\begin{cases}
\left[
	\begin{array}{c}
		2+(2/n)	\\
		1	
	\end{array}
\right],\quad\textnormal{if }n \textnormal{ is odd},\\ \\
\left[
	\begin{array}{c}
		-1	\\
		3-(2/n)
	\end{array}
\right],\quad\textnormal{if }n \textnormal{ is even}.
\end{cases}
\end{align}
Also, consider the points $\bfu$ and $\bfv$ given by
\begin{align}
	\bfu=
\left[
	\begin{array}{c}
	2\\
	1	
\end{array}
\right]
\qquad\textnormal{and}\qquad
	\bfv=
\left[
	\begin{array}{c}
	-1\\
	3
\end{array}
\right].
\end{align}
See Figure \ref{fig:nolimit} for a plot of the sequence $(\bfz_n)$ along with the points $\bfu$ and $\bfv$. It turns out we have both $\bfu \acl (\bfz_n)$ and $\bfv \acl (\bfz_n)$.

\begin{figure}
\centering
\begin{tikzpicture}
\draw (-3,2) node {$(\bfz_n)$};
\foreach \Point in {(4,1), (-1,2), (2.67,1), (-1,2.5)}
{
    \node at \Point {\textbullet};
}
	\draw (2,1) node {$\circ$};
	\draw (2,0.5) node {$\bfu$};
	\draw (2.29,1) node {$...$};
	\draw (4.1,0.48) node {$\bfz_1$};
	\draw (2.77,0.48) node {$\bfz_3$};
\foreach \Point in {(-1,2.68), (-1,2.78), (-1,2.88)}
{
    \node at \Point {$.$};
}
	\draw (-1,3) node {$\circ$};
	\draw (-0.53,3) node {$\bfv$};
	\draw (-0.5,1.98) node {$\bfz_2$};
	\draw (-0.5,2.48) node {$\bfz_4$};
\end{tikzpicture}
\caption{The terms $\bfz_n$ where $n$ is odd seem to be bunching together near $\bfu$ while the terms
where $n$ is even are bunching together near $\bfv$. See Example \ref{eg:nolimit}.}
\label{fig:nolimit}
\end{figure}
\end{example}

\begin{proof}[Proof of $\bfu \acl (\bfz_n)$ and $\bfv \acl (\bfz_n)$ in Example \ref{eg:nolimit}]

Let $\varepsilon>0$. By the Corollary of the Archimedean Property \ref{cor:archimedeanproperty}, there is an {\em odd} integer $j_\varepsilon$ that's large enough to give us\footnote{Don't see how these computations work out? If so, that's okay. Some steps were skipped. Fill them in!}
\begin{align}
	d_2(\bfz_{j_\varepsilon},\bfu)=\|\bfz_{j_\varepsilon}-\bfu\|_2&= \frac{2}{{j_\varepsilon}}<\varepsilon.
\end{align}
Again by the Corollary of the Archimedean Property \ref{cor:archimedeanproperty}, there is also an {\em even} integer $k_\varepsilon$ that's large enough to give us
\begin{align}
	d_2(\bfz_{k_\varepsilon},\bfv)=\|\bfz_{k_\varepsilon}-\bfv\|_2&= \frac{2}{k_\varepsilon}<\varepsilon.
\end{align}
Hence, $\bfu \acl (\bfz_n)$ and $\bfv \acl (\bfz_n)$.
\end{proof}

\begin{remark}\label{rmk:nohigherdimgraphs}
Unlike some of the previous examples of sequences of real numbers, I will not provide graphs for sequences in the plane $\R^2$ or Euclidean spaces $\R^m$ where $m\geq 2$. The graph of a sequence in the plane $\R^2$ would have three axes: one for the indices and two for the terms (which are vectors with two entries). Honestly, I don't know how to plot such graphs in a meaningful way for a PDF like this one.
\end{remark}

Does the sequence $(\bfz_n)$ have a {\em limit?} Should it?

\begin{example}\label{eg:2Dlimit}
Consider the sequence $(\bfx_n)$ in the plane $\R^2$ defined by 
\begin{align}\label{eqn:2Dlimit}
\bfx_n=
\left[
	\begin{array}{c}
		2-(1/\sqrt{n})\\
		\\	
		1+(1/n^2)	
	\end{array}
\right]\quad \textnormal{for each }n\in\N.  
\end{align}
Also, consider the point $\bfy$ given by
\begin{align}
	\bfy=
\left[
	\begin{array}{c}
		2\\
		1	
	\end{array}
\right].
\end{align}
See Figure \ref{fig:2Dlimit} for a plot of the sequence $(\bfx_n)$ along with $\bfy$.
\end{example}

\begin{figure}
\centering
\begin{tikzpicture}
\draw (-2,1.5) node {$(\bfx_n)$};
\foreach \Point in {(1,2), (1.29,1.25), (1.42,1.11), (1.5,1.06)}
{
    \node at \Point {\textbullet};
}
	\draw (2,1) node {$\circ$};
	\draw (2.5,1) node {$\bfy$};
	\draw (0.5,2) node {$\bfx_1$};
	\draw (0.79,1.25) node {$\bfx_2$};
\foreach \Point in {(1.65,1.03), (1.75,1.01), (1.85,1)}
{
    \node at \Point {$.$};
}
\end{tikzpicture}
\caption{The sequence $(\bfx_n)$ and point $\bfy$ from Example \ref{eg:2Dlimit}.}
\label{fig:2Dlimit}
\end{figure}

It turns out $\bfy \acl (\bfx_n)$---which will be proven shortly---and it looks to me like $(\bfx_n)$ gets close and stays close to $\bfy$ and only $\bfy$. But the coordinates converge at different rates, which is why the $\bfx_n$ do not lie on a straight line. It also affects the way I choose to prove $\bfy \acl (\bfx_n)$.

\begin{scratch}\label{scr:2Dlimit}
As in Scratch \ref{scr:startatend}, let's try starting at the end. For each $\varepsilon>0$, we want to end up with a term $\bfx_{n_\varepsilon}$ from the sequence $(\bfx_n)$ that is within $\varepsilon$ of $\bfy$. That is, we want a positive integer $n_\varepsilon$ where
\begin{align}\label{eqn:2Dlimitgoal}
	d_2(\bfx_{n_\varepsilon},\bfy)&=\|\bfx_{n_\varepsilon}-\bfy\|_2\\
	&=\sqrt{\left(\frac{-1}{\sqrt{n_\varepsilon}}\right)^2+\left(\frac{1}{n_\varepsilon^2}\right)^2}\\
	&=\sqrt{\frac{1}{n_\varepsilon}+\frac{1}{n_\varepsilon^4}}\\
	&<\varepsilon.
\end{align}
So, how do we for solve $n_\varepsilon$ in the inequality
\begin{align}\label{eqn:2Dlimitgoalrightmost}
\sqrt{\frac{1}{n_\varepsilon}+\frac{1}{n_\varepsilon^4}}
&<\varepsilon?
\end{align}
{\em We don't need to!} 

Any positive integer $n_\varepsilon$ that's large enough to ensure $\bfx_{n_\varepsilon}$ is within $\varepsilon$ of $\bfy$ will do. There's no need for $n_\varepsilon$ to be as small as possible or anything like that. With this in mind, we can try to find a simpler inequality to work with that will still get the job done. 

To find a simpler inequality than \eqref{eqn:2Dlimitgoalrightmost}, note that for every positive integer $n$ we have 
\begin{align}\label{eqn:comparingpowers}
	\frac{1}{n^4}\leq\frac{1}{n}.
\end{align}
This is due to the fact that $1/n$ is between $0$ and $1$ and taking the 4th power of a number between $0$ and $1$ keeps it the same or makes it smaller. I won't prove this here, but it follows from Theorem \ref{thm:inequalityproperties}.

Now, by adding $1/n$ to both sides of \eqref{eqn:comparingpowers} we get
\begin{align}\label{eqn:comparingpowersplus}
	\frac{1}{n}+\frac{1}{n^4}\leq \frac{2}{n}.
\end{align}
Also, since $0\leq x\leq y$ implies $\sqrt{x}\leq\sqrt{y}$ (another consequence of Theorem \ref{thm:inequalityproperties}), we have
\begin{align}\label{eqn:comparingroots}
	\sqrt{\frac{1}{n}+\frac{1}{n^4}}\leq\sqrt{\frac{2}{n}}.
\end{align}
So all we really need is a positive integer  $n_\varepsilon$ large enough to give us
\begin{align}\label{eqn:comparingrootsepsilon}
	\sqrt{\frac{2}{n_\varepsilon}}<\varepsilon.
\end{align}
This inequality is much more manageable than \eqref{eqn:2Dlimitgoalrightmost}, right? Solving for $n_\varepsilon$ in \eqref{eqn:comparingrootsepsilon} yields
\begin{align}\label{eqn:comparingroots}
	n_\varepsilon>\frac{2}{\varepsilon^2}.
\end{align}

We have enough for a proof.
\end{scratch} 

\begin{proof}[Proof of $\bfy\acl(\bfx_n)$ in Example \ref{eg:2Dlimit}]
Let $\varepsilon>0$. (Once again, we let $\varepsilon>0$ to allow for any distance or amount of ``error''  and set up a verification of the definition of arbitrarily close, Definition \ref{def:acl}.) By the Archimedean Property (Theorem \ref{thm:archimedeanproperty}), there is a positive integer $n_\varepsilon$ large enough so that
\begin{align}\label{eqn:comparingspowersproof}
	n_\varepsilon>\frac{2}{\varepsilon^2}.
\end{align}
Since $n_\varepsilon\geq 1$, we have 
\begin{align}
\frac{1}{n_\varepsilon}+\frac{1}{n_\varepsilon^4}\leq \frac{2}{n_\varepsilon}.
\end{align}
So, since $0\leq x\leq y$ implies $\sqrt{x}\leq\sqrt{y}$ we have
\begin{align}\label{eqn:comparingrootsproof}
	\sqrt{\frac{1}{n_\varepsilon}+\frac{1}{n_\varepsilon^4}}\leq\sqrt{\frac{2}{n_\varepsilon}}.
\end{align}
Now consider the term $\bfx_{n_\varepsilon}$ from the sequence $(\bfx_n)$. We have
\begin{align}\label{eqn:2Dlimitproof}
	d_2(\bfx_{n_\varepsilon},\bfy)&=\|\bfx_{n_\varepsilon}-\bfy\|_2\\
	&=\sqrt{\left(\frac{-1}{\sqrt{n_\varepsilon}}\right)^2+\left(\frac{1}{n_\varepsilon^2}\right)^2}\\
	&=\sqrt{\frac{1}{n_\varepsilon}+\frac{1}{n_\varepsilon^4}}\\
	&\leq \sqrt{\frac{2}{n_{\varepsilon}}}\\
	&<\varepsilon.
\end{align}
Hence, $\bfy\acl(\bfx_n)$.
\end{proof}

\begin{remark}\label{rmk:notoptimalindex}
The formula for the key index $n_\varepsilon$ provided by \eqref{eqn:comparingspowersproof} is not optimal, but it is good enough. For instance, in Figure \ref{fig:2Dlimitwith1}, $\varepsilon_1=1$ provides a radius around $\bfy$. Based on the formula \eqref{eqn:comparingspowersproof}, we can choose the index 
\begin{align}
n_1=3>\frac{2}{1^2}=2,
\end{align} 
which ensures $\bfx_{n_1}=\bfx_3$ is within $\varepsilon_1=1$ of $\bfy$. As Figure \ref{fig:2Dlimitwith1} indicates\footnote{In fact, $d_2(\bfx_2,\bfy)=\sqrt{9/16}=3/4<1=\varepsilon_1$.}, the point $\bfx_2$ is also within $\varepsilon_1=1$ of $\bfy$, but this doesn't matter. All we need is a single index whose term is as close to $\bfy$ as we like.
\end{remark}

\begin{figure}
\centering
\begin{tikzpicture}
\draw (-2,1.5) node {$(\bfx_n)$};
\foreach \Point in {(1,2), (1.29,1.25), (1.42,1.11), (1.5,1.06)}
{
    \node at \Point {\textbullet};
}
	\draw (2,1) node {$\circ$};
	\draw (2.3,0.9) node {$\bfy$};
	\draw (0.85,1.75) node {$\bfx_1$};
	\draw (1.6,1.45) node {$\bfx_2$};
\foreach \Point in {(1.65,1.03), (1.75,1.01), (1.85,1)}
{
    \node at \Point {$.$};
}
\begin{scope}[thick, dashed, red] 
\draw (2,1) circle (1cm); 
\end{scope}	
\draw[-,semithick, red] (2.05,1.05) -- (2.71,1.71);
\draw[red] (2.2,1.55) node {$1$};
\end{tikzpicture}
\caption{All of the terms of the sequence $(\bfx_n)$ are within a distance of $1$ away from the point $\bfy$, except for the first term $\bfx_1$. See Examples \ref{eg:2Dlimit} and \ref{eg:2Dlimitconverges}.}
\label{fig:2Dlimitwith1}
\end{figure}

\begin{prob}\label{prob:limitintuition}
To prepare for the next section, what does the notation ``$\lim_{n\to\infty}\bfx_n=\bfy$'' mean to you? Write what your intuition tells you and draw some things. What does your intuition say about the sequences in Examples \ref{eg:twocountablesetssequences}, \ref{eg:openintervalsequence}, \ref{eg:alternatingtoinfinity}, \ref{eg:nolimit}, and \ref{eg:2Dlimit}? Describe the similarities and differences you can see between these sequences. 
\end{prob}

\begin{remark}\label{rmk:convergesornot}
In Definition \ref{def:sequentiallimit} below, we build on the notion of arbitrarily close to define  when a sequence to {\em converges} to its {\em limit}. This definition is notoriously difficult to understand, leading me to present Section \ref{sec:sequencesandremainingclose} first where we discuss what sequences are and what they can be arbitrarily close to. See the works \cite{Barnes,Borzellino,LarsenSwinyard,Seager} for some other approaches to teaching real analysis at the undergraduate level which are, in part, designed to address this difficulty. Also see \cite{Cornu,LarsenSwinyard} for a thorough discussion of the challenges that come with the pedagogy of convergence and limits.

By covering arbitrarily close first, I have set us up to carefully parse the definition of limit and convergence for sequences in Definition \ref{def:sequentiallimit}. I hope it helps!
\end{remark}

Once we have Definition \ref{def:sequentiallimit}, we can formally state and prove some of the comments made throughout this section. Namely:
\begin{enumerate}
	\item The sequences $(a_n)$, $(x_n)$, and $(\bfx_n)$ converge. That is, their limits exist. See Examples \ref{eg:twocountablesetssequences}, \ref{eg:openintervalsequence}, and \ref{eg:2Dlimit}, respectively. 
	\item The sequences $(b_n)$, $(c_n)$, and $(\bfz_n)$ do not converge. That is, they {\em diverge} and their limits do not exist. See Examples \ref{eg:twocountablesetssequences}, \ref{eg:alternatingtoinfinity}, and \ref{eg:nolimit}, respectively.
\end{enumerate}

The notions of arbitrarily close (Definition \ref{def:acl}) and limit of sequence (Definition \ref{def:sequentiallimit} below) are deeply related. This relationship is the focus of the next section and is highlighted by a complete characterization via Theorem \ref{thm:exercise0}. As developed below, if we want to say some point is the limit of a sequence, it is not enough for the point to be arbitrarily close to the sequence. The sequence must stay close to the point {\em and only that point.}

\vs
\section*{Exercises}
\setcounter{theorem}{0}

Exercises are for play: Do scratch work, draw stuff, and make mistakes---make {\em lots} of mistakes---before worrying about writing proofs. {\em Have fun!}


\vs
\section{Limit of a sequence}
\label{sec:limitofasequence}

This sections focuses on one of the most difficult concepts in analysis: A formal definition for the {\em limit} and {\em convergence} of a sequence. My hope is that by first defining and exploring arbitrarily close in Chapter \ref{ch:kernelofanalysis} and sequences in Section \ref{sec:sequencesandstayingclose}, you are now better prepared for this classic challenge. Even so, it's tough! Please be patient and give yourself time to understand what's going on.

As we delve into the formal definitions, please keep in mind whatever properties you expect limits to have. One of the goals is to use formal definitions to capture our intuition about convergence.

A feature I'm aiming for when defining limit and convergence stems from ideas explored earlier in the book: A convergent sequence should get arbitrarily close and stay close to its limit. 

\begin{definition}\label{def:sequentiallimit}
Let $(\bfx_n)$ be a sequence of points in $\R^m$ and let $\bfy$ be a point in $\R^m$. The sequence $(\bfx_n)$ {\em converges to} $\bfy$ if for every distance $\varepsilon>0$ there is a positive integer $n_\varepsilon$ such that for every positive integer $n$ we have
\begin{align}
 n\geq n_\varepsilon\quad\implies\quad	d_m(\bfx_n,\bfy)=\|\bfx_n-\bfy\|_m &< \varepsilon.
\end{align}
In this case, $\bfy$ is called the {\em limit} of the sequence $(\bfx_n)$ and we write
\begin{align}
	\lim_{n\to\infty}\bfx_n=\bfy \qquad\textnormal{or}\qquad \textstyle{\lim_{n\to\infty}\bfx_n=\bfx}.
\end{align} 
Also, we say $(\bfx_n)$ {\em converges to} $\bfy$ and the positive integer $n_\varepsilon$ is called a {\em threshold} for this convergence. If a sequence does not converge to any point, we say the sequence {\em diverges}.
\end{definition}

\begin{remark}\label{rmk:aclvslimit}
The concepts arbitrarily close and convergence are deeply related, but they are not equivalent. For both arbitrarily close and convergence, $\varepsilon>0$ tells us how close we would like the terms of the sequence $(\bfx_n)$ to be to $\bfy$. The key difference is role played by the positive integer $n_\varepsilon$. In both cases, $n_\varepsilon$ is an index whose value depends on the given positive distance or ``error'' $\varepsilon$. With arbitrarily close, $n_\varepsilon$ ensures at least one term $\bfx_{n_\varepsilon}$ is within $\varepsilon$ of $\bfy$. With convergence, $n_\varepsilon$ is a threshold ensuring {\em all} terms $\bfx_n$ that follow $\bfx_{n_\varepsilon}$ are within $\varepsilon$ of $\bfy$ (encoded by indices $n\geq n_\varepsilon$). See Figure \ref{fig:openintervalsequenceconverges}. 

Thus, when $\lim_{n\to\infty}\bfx_n=\bfy$ and we specify a distance, the term $\bfx_{n_\varepsilon}$ gets close enough to $\bfy$ and the terms $\bfx_n$ with $n\geq n_\varepsilon$  stay close to $\bfy$. Also, only a finite number of the terms, those with index between $1$ and $n_\varepsilon-1$, can be more than $\varepsilon$ away from $\bfy$. 
\end{remark}

\begin{figure}
\centering
\begin{tikzpicture}	
\draw (-2,0) node {I};
\draw (5,0) node {$\circ$};
\draw (5,-0.3) node {$\ell$};
\draw (4.6,0.01) node {......};
\foreach \Point in {(0,0), (2.5,0), (3.33,0), (3.75, 0), (4,0), (4.17,0)}
{
    \node at \Point {\textbullet};
}
\draw (0,-0.4) node {$x_1$};
\draw (2.4,-0.4) node {$x_2$};

\draw (-2,-3) node {II};
\draw (5,-3) node {$\circ$};
\draw (5,-3.3) node {$\ell$};
\foreach \Point in {(0,-3), (2.5,-3), (3.33,-3)}
{
    \node at \Point {\textbullet};
}
\draw (0,-3.4) node {$x_1$};
\draw (2.4,-3.4) node {$x_2$};
\draw (3.3,-3.42) node {$x_{n_\varepsilon}$};
\begin{scope}[thick, dashed, red] 
\draw (5,-3) circle (2.3cm); 
\end{scope}	
\draw[-,semithick, red] (5.05,-2.95) -- (6.63,-1.37);
\draw[red] (6.1,-2.25) node {$\varepsilon$};

\draw (-2,-8) node {III};
\draw (5,-8) node {$\circ$};
\draw (5,-8.3) node {$\ell$};
\draw (4.6,-7.99) node {......};
\foreach \Point in {(3.33,-8), (3.75,-8), (4,-8), (4.17,-8)}
{
    \node at \Point {\textbullet};
}
\draw (3.3,-8.42) node {$x_{n_\varepsilon}$};
\draw (4,-8.42) node {$x_n$};
\begin{scope}[thick, dashed, red] 
\draw (5,-8) circle (2.3cm); 
\end{scope}	
\draw[-,semithick, red] (5.05,-7.95) -- (6.63,-6.37);
\draw[red] (6.1,-7.25) node {$\varepsilon$};

\end{tikzpicture}
\caption{A visual progression through the definition of limit and convergence (Definition \ref{def:sequentiallimit}) using the sequence $x_n=3140-(1/n)$ from Examples \ref{eg:openintervalsequence} and \ref{eg:openintervalsequenceconverges} whose limit is $\ell=3140$. Step I: Lay out the range of the sequence and the candidate for the limit $\ell$. Step II: Let $\varepsilon>0$ represent a distance around $\ell$ and find an index $n_\varepsilon$ ensuring $x_{n_\varepsilon}$ is within $\varepsilon$ of $\ell$. Step III: Ignore or discard the terms with indices $1,2,\ldots,n_\varepsilon-1$, then verify $n_\varepsilon$ is a threshold by checking the terms $x_n$ where $n\geq n_\varepsilon$ are all within $\varepsilon$ of $\ell$. If this process works {\em for every} $\varepsilon>0$, then $\lim_{n\to\infty}x_n=\ell$.}
\label{fig:openintervalsequenceconverges}
\end{figure}

It may help to consider quantified versions of the statements where ``$\forall$'' means ``for all'' and ``$\exists$'' means ``there exists'':\s
\begin{center}
\begin{tabular}{lc|cl}
\underline{$\bfy \acl{(\bfx_n)}$} & & & \underline{$\bfy =\lim_{n\to\infty}(\bfx_n)$}\\
 & & & \\
$\forall\, \varepsilon > 0,$ & & & $\forall\, \varepsilon > 0,$ \\
$\exists\, n_\varepsilon \in\N$ such that \hspace{25pt}& & & $\exists\, n_\varepsilon \in\N$ such that\\
\hspace{10pt}$d_m(\bfx_{n_\varepsilon},\bfy)<\varepsilon$. & & &\hspace{10pt}$n\geq n_\varepsilon  \, \Longrightarrow\, d_m(\bfx_n,\bfy)<\varepsilon$. 
\end{tabular}
\end{center}
\s

To me, the difference in the role played by the index $n_\varepsilon$ is the most important reason to introduce a formal definition of arbitrarily close before dealing with convergence. It allows us to break down {\em convergence} and {\em limits} for sequences into the smaller steps {\em arbitrarily close} and {\em staying close}. 

\begin{example}\label{eg:openintervalsequenceconverges}
Recall the sequence $(x_n)$ from Example \ref{eg:openintervalsequence} given by $x_n=3140-(1/n)$ for each $n\in\N$. We have $\lim_{n\to\infty}x_n=\ell=3140$. 
\end{example}

\begin{scratch}\label{scr:openintervalsequenceconverges}
To prove $\lim_{n\to\infty}x_n=\ell=3140$, Example \ref{eg:openintervalsequence} provides scratch work we can build on. Specifically, for a given $\varepsilon>0$, the choice for an index $n_\varepsilon$ where  $1/n_\varepsilon<\varepsilon$ (equivalently, $n_\varepsilon>1/\varepsilon$) provides us with a suitable threshold. For the red $\varepsilon>0$ in Figure \ref{fig:openintervalsequenceconverges}: In II, the term $x_{n_\varepsilon}=3140-1/n_\varepsilon$ is within $\varepsilon$ of $\ell=3140$; {\em moreover}, in III each $x_n$ where $n\geq n_\varepsilon$ (the ``$\bullet$'' and ``$......$'' to the right of $x_{n_\varepsilon}$) is also  within $\varepsilon$ of $\ell=3140$. Thus, II shows $(x_n)$ getting arbitrarily close to $3140$ while III shows $(x_n)$ staying close to $3140$.
\end{scratch}

Once again, by not specifying a particular value of $\varepsilon$, we are accounting for {\em all} positive distances at the same time.

\begin{proof}[Proof of $\lim_{n\to\infty}x_n=\ell=3140$ in Example \ref{eg:openintervalsequenceconverges}]
Let $\varepsilon>0$.
Choose a positive integer $n_\varepsilon$  where $n_\varepsilon>1/\varepsilon$. Then for every index $n\geq n_\varepsilon$ we have
\begin{align}
	\frac{1}{n}\leq \frac{1}{n_\varepsilon}<\varepsilon.
\end{align}
Therefore, $n_\varepsilon$ is a threshold since  for every $n\geq n_\varepsilon$ we have
\begin{align}
	d_{\R}(x_n,\ell)=|x_n-\ell|&=\left|3140-\frac{1}{n}-3140\right| = \frac{1}{n}\leq \frac{1}{n_\varepsilon} <\varepsilon.
\end{align}
Hence, $\lim_{n\to\infty}x_n=\ell=3140$.
\end{proof}

\begin{example}\label{eg:twocountablesetssequencesconverges}
Recall the sequence $(a_n)$ from Example \ref{eg:twocountablesetssequences} given by $a_n=2-(1/\sqrt{n})$ for each $n\in\N$. We have $\lim_{n\to\infty}a_n=2$. 
\end{example}

\begin{scratch}\label{scr:twocountablesetssequencesconverges}
In Example \ref{scr:startatend}, the choice of an index $n_\varepsilon>1/\varepsilon^2$ was developed to show $2\acl{A}$ and therefore $2\acl{(a_n)}$. But $n_\varepsilon$ also serves as a threshold we can use to prove $\lim_{n\to\infty}a_n=2$.
\end{scratch}

\begin{proof}
[Proof of $\lim_{n\to\infty}a_n=2$ in Example \ref{eg:twocountablesetssequencesconverges}]
Let $\varepsilon>0$. Choose a positive integer $n_\varepsilon$ such that 
\begin{align}
	n_\varepsilon>\frac{1}{\varepsilon^2}, \quad\textnormal{which implies}\quad
	\frac{1}{\sqrt{n_\varepsilon}}<\varepsilon.
\end{align}
Note $0\leq x\leq y$ implies $\sqrt{x}\leq \sqrt{y}$. So, for every index $n\geq n_\varepsilon$ we have 
\begin{align}
	\frac{1}{\sqrt{n}}&\leq\frac{1}{\sqrt{n_\varepsilon}}<\varepsilon.
\end{align}
Then $n_\varepsilon$ is a threshold since for every $n\geq n_\varepsilon$ we also have 
\begin{align}
d_\R(a_n,2)&=|a_n-2|=\left|2-\frac{1}{\sqrt{n}}-2 \right| =\frac{1}{\sqrt{n}}\leq\frac{1}{\sqrt{n_\varepsilon}}<\varepsilon.
\end{align}
Therefore, $\lim_{n\to\infty}a_n=2$.
\end{proof}

\begin{remark}\label{rmk:limitproofguide}
Scratch work is an important part of developing any proof, but it is vital to proving some point is the limit of a sequence (equivalently, a sequence converges). In fact, the scratch work for showing a point is arbitrarily close to a sequence often---but not always---leads to a suitable threshold that can be used to prove the point is actually the limit of the sequence. 

Here's a guide for developing scratch work and proving $\lim_{n\to\infty}\bfx_n=\bfy$ by verifying Definition \ref{def:sequentiallimit}:\s

\textbf{Scratch work for $\lim_{n\to\infty}\bfx_n=\bfy$:}
\begin{itemize}
	\item Start with scratch work. Do whatever makes sense for you. 
	\item Include a figure with the sequence $(\bfx_n)$ and the point $\bfy$ which serves as a candidate for the limit.
	\item Consider the inequality you want to end up with, typically:
	\begin{align}
		d_m(\bfx_n,\bfy)&=\|\bfx_n-\bfy\|_m<\varepsilon.
	\end{align}
	Use this inequality to find a formula for a potential threshold $n_\varepsilon$. The triangle inequality as it appears in \eqref{eqn:triangleinequality} and \eqref{eqn:addzero} is used quite often in scratch work and proofs involving limits and convergence.
	\item Consider all indices $n\geq n_\varepsilon$ and their distances $d_m(\bfx_n,\bfy)=\|\bfx_n-\bfy\|_m$ in order to ensure $n_\varepsilon$ is a threshold for the convergence of $(\bfx_n)$ to its limit $\bfy$. 
\end{itemize}	

\textbf{Proving $\lim_{n\to\infty}\bfx_n=\bfy$:} 
\begin{itemize}
	\item Early in the proof, perhaps the first step, write ``Let $\varepsilon>0$'' or something similar, indicating you are accounting for {\em all} positive distances at the same time.
	\item Define or choose an index $n_\varepsilon$ based on your scratch work.
	\item Verify $n_\varepsilon$ is truly a threshold and $\bfy$ is the limit by showing $n\geq n_\varepsilon$ implies
\begin{align}
	d_m(\bfx_n,\bfy)&=\|\bfx_n-\bfy\|_m<\varepsilon.
\end{align}
\end{itemize}
\end{remark} 
 
\begin{example}\label{eg:2Dlimitconverges}
Consider the sequence $(\bfx_n)$ and point $\bfy$ in the plane $\R^2$ from Example \ref{eg:2Dlimit} defined by 
\begin{align}\label{eqn:2Dlimitconverges}
\bfx_n=
\left[
	\begin{array}{c}
		\displaystyle 2-(1/\sqrt{n})\\
		\\	
		\displaystyle 1+(1/n^2)	
	\end{array}
\right]\quad \textnormal{for each }n\in\N,
\quad \textnormal{and} \quad
	\bfy=
\left[
	\begin{array}{c}
		2\\
		1	
	\end{array}
\right].
\end{align}
We have $\lim_{n\to\infty}\bfx_n=\bfy$.
\end{example} 
 
\begin{scratch}\label{scr:2Dlimitconverges}
For starters, see Figure \ref{fig:2Dlimitconverges} for another plot of the sequence $(\bfx_n)$ and point $\bfy$.

\begin{figure}
\centering
\begin{tikzpicture}	
\draw (-2,1.5) node {$(\bfx_n)$};
\foreach \Point in {(1,2), (1.29,1.25), (1.42,1.11), (1.5,1.06)}
{
    \node at \Point {\textbullet};
}
	\draw (2,1) node {$\circ$};
	\draw (2.5,1) node {$\bfy$};
	\draw (0.5,2) node {$\bfx_1$};
	\draw (0.79,1.25) node {$\bfx_2$};
\foreach \Point in {(1.65,1.03), (1.75,1.01), (1.85,1)}
{
    \node at \Point {$.$};
}
\end{tikzpicture}
\caption{A plot of the sequence $(\bfx_n)$ and point $\bfy$ from Examples \ref{eg:2Dlimit} and \ref{eg:2Dlimitconverges} where $\lim_{n\to\infty}\bfx_n=\bfy$.}
\label{fig:2Dlimitconverges}
\end{figure}
 
The bulk of the work to prove $\lim_{n\to\infty}\bfx_n=\bfy$ was done in Scratch \ref{scr:2Dlimit} where the goal was to show $\bfy\acl{(\bfx_n)}$. There, in response to $\varepsilon>0$, the choice of an index was determined to be any positive integer $n_\varepsilon$ large enough to give us
\begin{align}\label{eqn:comparingrootsrepeat}
	n_\varepsilon>\frac{2}{\varepsilon^2}\qquad \Longleftrightarrow \qquad \sqrt{\frac{2}{n_\varepsilon}}<\varepsilon.
\end{align} 
\end{scratch} 
 
\begin{proof}[Proof of $\lim_{n\to\infty}\bfx_n=\bfy$ in Example \ref{eg:2Dlimitconverges}]
Let $\varepsilon>0$. Choose a positive integer $n_\varepsilon$ where
\begin{align}\label{eqn:2Ddefiningthreshold}
	n_\varepsilon>\frac{2}{\varepsilon^2}, \quad\textnormal{which implies}\quad \sqrt{\frac{2}{n_\varepsilon}}<\varepsilon.
\end{align}
For each index $n\geq n_\varepsilon$ we also have $n\geq 1$, hence 
\begin{align}
	\frac{1}{n}+\frac{1}{n^4}\leq \frac{2}{n}.
\end{align}
Furthermore, since $0\leq x\leq y$ implies $\sqrt{x}\leq \sqrt{y}$, we have
\begin{align}\label{eqn:2Dcomparingrootsproof}
	\sqrt{\frac{1}{n}+\frac{1}{n^4}}						&\leq\sqrt{\frac{2}{n}}\leq \sqrt{\frac{2}{n_\varepsilon}}<\varepsilon.
\end{align}
Therefore, for every $n\geq n_\varepsilon$ we have 
\begin{align}\label{eqn:2Dlimitconvergesproof}
	d_2(\bfx_n,\bfy)&=\|\bfx_n-\bfy\|_2\\
	&=\sqrt{\left(\frac{-1}{\sqrt{n}}\right)^2+\left(\frac{1}{n^2}\right)^2}\qquad\textnormal{(by  \eqref{eqn:distance})}\\
	&=\sqrt{\frac{1}{n}+\frac{1}{n^4}}\\
	&\leq\sqrt{\frac{2}{n}}\qquad\textnormal{(by  \eqref{eqn:2Dcomparingrootsproof})}\\
	&\leq\sqrt{\frac{2}{n_\varepsilon}}\\
	&<\varepsilon.
\end{align}
Hence, $\lim_{n\to\infty}\bfx_n=\bfy$.
\end{proof} 
 
\begin{remark}\label{rmk:stilldetailoverconcision}
Revised versions of many of the steps in Scratch \ref{scr:2Dlimit} appear in the proof for $\lim_{n\to\infty}\bfx_n=\bfy$ in Example \ref{eg:2Dlimitconverges}, yet many were left out. I considered shortening the proof even further, but at this point I am still aiming for detail over concision. 
\end{remark}
 
The following theorem establishes the fundamental connection between the definitions of limit and arbitrarily close.

\begin{theorem}\label{thm:exercise0}
Let $\bfy\in\R^m$ and $S\subseteq \R^m$. Then $\bfy$ is arbitrarily close to $S$ if and only if there is a sequence $(\bfx_n)$ of points in $S$ whose limit is $\bfy$. 
\end{theorem}

\begin{figure}
\centering
\begin{tikzpicture}	 
\draw[dashed, fill=blue!15] (-3,-1.41) rectangle (1.41,1.41);
	\draw (-1,-0.7) node {$S$};
	\draw (-3.02,1.42) node {$\bullet$};
	\draw (-3.02,-1.42) node {$\bullet$};
	\draw (1.44,-1.42) node {$\bullet$};		
\begin{scope}[thick, dashed, red] 
\draw (1.45,1.43) circle (2.1cm); 
\draw (1.45,1.43) circle (1.4cm); 
\draw (1.45,1.43) circle (0.7cm); 
\end{scope}	
\draw (-3,-1.41) -- (-3,1.41);
\draw (-3,-1.41) -- (1.41,-1.41);
\draw[-,semithick, red] (1.48,1.48) -- (2.92,2.92);
\draw[red] (2.8,2.55) node {$\varepsilon$};
	\draw (-2,0.5) node {$\bullet$};
	\draw (-2,0.1) node {$\bfx_1$};	
	\draw (-1,0.5) node {$\bullet$};	
	\draw (-1,0.1) node {$\bfx_2$};	
	\draw (-0.1,0.6) node {$\bullet$};
	\draw (0.2,0.2) node {$\bfx_{n_\varepsilon}$};
	\draw (0.6,0.8) node {$\bullet$};
	\draw (1.05,1.05) node {$\bullet$};		
	\draw (1.2,1.2) node {$\cdot$};
	\draw (1.25,1.25) node {$\cdot$};
	\draw (1.3,1.3) node {$\cdot$};
\draw[fill=white] (1.41,1.41) circle (0.08cm);
	\draw (1.8,1.41) node {$\bfy$};	
\end{tikzpicture}
\caption{A set $S$ and a point $\bfy$ in the plane $\R^2$ where $\bfy\acl S$. By Theorem \ref{thm:exercise0}, there is a sequence $(\bfx_n)$ of points in $S$ where $\lim_{n\to\infty}\bfx_n=\bfy$.}
\label{fig:exercise0}
\end{figure}

The proof of Theorem \ref{thm:exercise0} is an important exercise for mathematicians who are dealing with the definition of convergence for the first time. I strongly encourage you to try it yourself before reading the proof provided here. Your understanding of the definitions for arbitrarily close and convergence (Definitions \ref{def:acl} and \ref{def:sequentiallimit}) will be strengthened by giving this proof a shot.

\begin{proof} Assume $\bfy$ is arbitrarily close to $S$ and note that $1/n>0$ for each positive integer $n$. So by the definition of arbitrarily close (Definition \ref{def:acl}), for each $n$ there is some $\bfx_n$ in $S$ where 
\begin{align}
	d_m(\bfx_n,\bfy)&=\|\bfx_n-\bfy\|_m <\frac{1}{n}.
\end{align}
Now, let $\varepsilon>0$. (By not specifying a particular value of $\varepsilon$, we are accounting for all positive distances at the same time.) By the Corollary of the Archimedean Property \ref{cor:archimedeanproperty}, there is a positive integer $n_\varepsilon$ large enough so that $1/n_\varepsilon<\varepsilon$. Then for every $n\geq n_\varepsilon$ we have
\begin{align}
	\frac{1}{n}&\leq \frac{1}{n_\varepsilon} < \varepsilon.
\end{align}
Hence, $n_\varepsilon$ is a threshold for the convergence of $(\bfx_n)$ to its limit $\bfy$  since for every $n\geq n_\varepsilon$ we have
\begin{align}
	d_m(\bfx_n,\bfy)&=\|\bfx_n-\bfy\|_m < \frac{1}{n}\leq \frac{1}{n_\varepsilon} < \varepsilon.
\end{align}
Therefore, $(\bfx_n)$ is a sequence of points in $S$ where $\lim_{n\to\infty}\bfx_n=\bfy$.

For the converse, assume there is a sequence $(\bfx_n)$ of points in $S$ whose limit is $\bfy$ and let $\varepsilon>0$. By the definition of limit and  convergence (Definition \ref{def:sequentiallimit}), there is a threshold $n_\varepsilon$ such that $\bfx_{n_\varepsilon}$ is in $S$ and $\|\bfx_{n_\varepsilon}-\bfy\|_m< \varepsilon$. Therefore, $\bfy\acl{S}$.
\end{proof}

\begin{remark}\label{rmk:infinitepointsfromacl}
In the above proof of Theorem \ref{thm:exercise0}, why does the definition of arbitrarily close---which furnishes just one point in response to a given positive distance---yield a full sequence of points with a potentially infinite set of distinct terms? There are infinitely many positive integers $n$ and each positive distance $1/n$ comes with its own point $\bfx_n$.
\end{remark}

\begin{remark}\label{rmk:aclnotlimit}
A word of caution: Theorem \ref{thm:exercise0} does not say that if a sequence is arbitrarily close to a given point, then the limit exists and is equal to the given point. 
\end{remark}

\begin{example}\label{eg:twocountablesetsdiverges}
Once again, consider the following sequence of real numbers $(b_n)$ from Example \ref{eg:twocountablesetssequences} defined for each $n\in\N$ by
\begin{align}
	b_n&=\left[2-\frac{1}{\sqrt{n}}\right](-1)^n.
\end{align}
In Remark \ref{rmk:twosteplimit}, I mention that my calculus intuition tells me $(b_n)$ does not converge to $2$, despite the fact that $2\acl(b_n)$. See Figure \ref{fig:twocountablesetsdiverges}.

\begin{figure}
\centering
\begin{tikzpicture}	
\draw (-2,0) node {$(b_n)$};
\foreach \Point in {(1,0), (3.29,0), (0.58,0), (3.5,0)}
{
    \node at \Point {\textbullet};
}
\draw (1.1,-0.5) node {$b_1$};
\draw (3.14,-0.5) node {$b_2$};
\draw (0.58,-0.5) node {$b_3$};
\draw (3.6,-0.5) node {$b_4$};
\draw (0,0) node {$\circ$};
\draw (4,0) node {$\circ$};
\draw (3.76,0) node {...};
\draw (0.28,0) node {...};
\draw (-0.1,-0.5) node {$-2$};
\draw (4.05,-0.5) node {$2$};
\begin{scope}[thick, dashed, blue] 
\draw (4,0) circle (1.5cm); 
\end{scope}
\draw[-,semithick, blue] (4.05,0.02) -- (5.06,1.06);
\draw[blue] (4.75,0.35) node {$\varepsilon_0$};
\end{tikzpicture}
\caption{The sequence $(b_n)$ from Examples \ref{eg:twocountablesetssequences} and \ref{fig:twocountablesetsdiverges} does not converge to $2$ (or anything else for that matter). Here, an infinite collection of the terms $b_n$ are $\varepsilon_0=3/2$ or more away from $2$.}
\label{fig:twocountablesetsdiverges}
\end{figure}

\begin{proof}[Proof of $\lim_{n\to\infty}b_n\neq 2$.]
An infinite number of the terms of $(b_n)$, specifically those whose index is odd, are more than $\varepsilon_0=3/2$ away from the point $2$. So, for any given index $n$, there is an odd integer $j_n\geq n$ that yields
\begin{align}
	d_\R(b_{j_n},2)&=|b_{j_n}-2|=4-\frac{1}{\sqrt{j_n}} >\frac{3}{2}=\varepsilon_0.
\end{align}  
Therefore, the distance $\varepsilon_0=3/2$ has no corresponding threshold. Hence, $(b_n)$ does not converge to $2$ and $\lim_{n\to\infty}b_n\neq 2$.
\end{proof}

The above proof only shows that $(b_n)$ does not converge to $2$. A similar proof shows that $(b_n)$ also does not converge to $-2$. The arguments rely on an idea mentioned in Remark \ref{rmk:aclvslimit} due to my former student Dylan Alvarenga: When a sequence converges and we have any $\varepsilon>0$, only a finite number of the terms can be more than $\varepsilon$ away from the limit.

But such an argument is not enough to say $(b_n)$ {\em diverges}. For that, we could show $(b_n)$ does not converge to any real number at all. However, we will try a different approach. In Example \ref{eg:divergentexamples}, we revisit the divergence of the sequence $(b_n)$, as well as the divergence of $(c_n)$ from Example \ref{eg:alternatingtoinfinity} and $(\bfz_n)$ from Example \ref{eg:nolimit}, once we have more mathematical tools at our disposal.
\end{example}

Theorem \ref{thm:exercise0} allows us to prove many facts about limits of sequences, including the following corollary. More results follow later in this chapter and throughout the book.

\begin{corollary}\label{cor:supremumlimit}
Suppose $S,T\subseteq\R$ where $u=\sup{S}$ and $v=\inf{T}$. Then there is a sequence $(x_n)$ of real numbers in $S$ whose limit is $u$ and there is a sequence $(w_n)$ of real numbers in $T$ whose limit is $v$.
\end{corollary}

\begin{proof}
The statement follows from Definition \ref{def:supremumacl} and Theorem \ref{thm:exercise0}. Since a supremum and an infimum are arbitrarily close to their respective sets, each is the limit of a sequence of points in their respective sets. 
\end{proof}

One more idea before moving on to the next section. Another way to think of the role the threshold $n_\varepsilon$ plays for us in Definition \ref{def:sequentiallimit} is that it allows us to say a property {\em eventually} holds.

\begin{definition}\label{def:eventually}
A statement or property $P(\cdot)$ is said to hold {\em eventually} or {\em for large enough} $n\in\N$ if there is a threshold $n_0$ such that  $n\geq n_0$ implies $P(n)$ is true. 
\end{definition}

So when  $\lim_{n\to\infty}\bfx_n=\bfy$, we can say for any $\varepsilon>0$, the terms $\bfx_n$ are eventually within $\varepsilon$ of $\bfy$.

With such a deep connection between arbitrarily close and limits in the context of sequences established in Theorem \ref{thm:exercise0}, it should hopefully come as no surprise that many of the concepts explored in calculus, analysis, and even topology can be discussed in terms of points arbitrarily close to sets.
%
%
%

\vs
\section*{Exercises}
\setcounter{theorem}{0}

Exercises are for play: Do scratch work, draw stuff, and make mistakes---make {\em lots} of mistakes---before worrying about writing proofs. {\em Have fun!}

\xca\label{exer:0} Provide a walkthrough for the proof of Theorem \ref{thm:exercise0}. (This  fundamental exercise connects the definition of arbitrarily close to the definition of limits for sequences.)

\xca Find the real numbers arbitrarily close to, but not in, each of the following sets (they're all subsets of the real line). Don't prove anything, but draw stuff!

\begin{itemize}
	\item[\textbf{a.}] $A=\N$
	\item[\textbf{b.}] $B=\{8-(-1)^n/n: n\in\N\}$
	\item[\textbf{c.}] $C=\{3(-1)^n+1/n:n\in\N\}$
	\item[\textbf{d.}] $D=\{.9,.99,.999,\ldots\}$ 	
	\item[\textbf{e.}] $E=(-1,1]$
	\item[\textbf{f.}] $F=[-1,1]$  
	\item[\textbf{g.}] $G=(-1,1]\cup\{-3+(1/n):n\in\N\}$
	\item[\textbf{h.}] $H=\{0,1,0,1/2,0,1/3,\ldots\}$  
\end{itemize}

\xca A sequence of real numbers $(x_n)$ is a said to be {\em strictly increasing} if $x_n<x_{n+1}$ for each $n\in\N$. 
\begin{enumerate}
	\item Prove that if $U\subseteq\R$, $\sup U$ exists, and $\sup U\notin U$, then there is a strictly increasing sequence $(x_n)$ of points in $U$ such that $\sup U=\lim{x_n}$.
	\item Find an example of a set $V\subseteq\R$ where $\sup{V}$ exists but there is no strictly increasing sequence of points in $V$ whose limit is $\sup{V}$.  
\end{enumerate}	

\xca Suppose $(x_n)\subseteq\R$ is a sequence where $\lim_{n\to\infty}x_n=0$ and let $z_n=x_n+3140+(-1)^n$ for each $n\in\N$. (Compare with Example \ref{eg:twocountablesetsdiverges}.) 
\begin{enumerate}
	\item Prove $3139\acl{(z_n)}$ and $3141\acl{(z_n)}$.
	\item Despite the results of part (i), prove 
	\begin{align}
		\lim_{n\to\infty}z_n\neq 3139 \quad\textnormal{and}\quad \lim_{n\to\infty}z_n\neq 3141.
	\end{align}
	\item Prove $(z_n)$ diverges by showing for every real number $c$ we have
	\begin{align}
		\lim_{n\to\infty}z_n\neq c.
	\end{align} (The upcoming result Divergence Criteria \ref{thm:divergencecriteria} would make short work of this proof, but the goal here is to get some practice working with the negation of Definition \ref{def:sequentiallimit}.)
\end{enumerate}

\vs
\section{Properties of limits}
\label{sec:propertiesoflimits}


Now that we have a formal definition for the limit and convergence of a sequence in Definition \ref{def:sequentiallimit}, we can use it to prove a bunch of ideas from calculus and multivariable calculus. Lots of scratch work and details are provided with the proofs. Much of the effort comes from the scratch work for determining appropriate thresholds.  

First, the limit of a sequence in a Euclidean space is unique.

\begin{theorem}\label{thm:limitsunique}
Every convergent sequence in $\R^m$ has a unique limit. 
\end{theorem}

\begin{scratch}
My idea is to show any two points $\bfy$ and $\bfz$ satisfying the definition of limit for the same sequence are arbitrarily close to each other, so they must be the same point (see Lemma \ref{lem:equalpoints}). The definition of limit and convergence for sequences (Definition \ref{def:sequentiallimit}) ensures that we can respond to {\em any positive distance} we like.

However, unlike the examples explored in Section \ref{sec:limitofasequence}, we do not have explicit formulas for the thresholds to work with. Instead, we are in a more general setting where we will both {\em use} assumptions that certain limits exist and {\em show} other related limits exist. Also, whenever $\varepsilon>0$, we have $\varepsilon/2>0$ as well. So given any $\varepsilon>0$, we can use the definition of limit to take both $\bfy$ and $\bfz$ to be within $\varepsilon/2$ of the sequence, each coming with their own threshold which we can use to create a particular index to complete the proof. The points $\bfy$ and $\bfz$ would then be within any $\varepsilon$ of each other thanks to the triangle inequality \eqref{eqn:triangleinequality} combined with one particular term in the sequence. 
\end{scratch}

\begin{proof}
Suppose $(\bfx_n)$ is a convergent sequence in $\R^m$ where
\begin{align}
	\bfy&=\lim_{n\to\infty}\bfx_n \qquad\textnormal{and}\qquad 
	\bfz=\lim_{n\to\infty}\bfx_n.
\end{align}
Let $\varepsilon>0$. Then $\varepsilon/2>0$ and by the definition of limit (Definition \ref{def:sequentiallimit}), there are two positive integers $j_{\varepsilon/2}$ and $k_{\varepsilon/2}$ where 
\begin{align}
	n &\geq j_{\varepsilon/2} \quad\implies\quad d_m(\bfx_n,\bfy)<\varepsilon/2  \qquad\textnormal{and} \label{eqn:limituniquey}\\
	n &\geq k_{\varepsilon/2} \quad\implies\quad d_m(\bfx_n,\bfz) <\varepsilon/2.\label{eqn:limituniquez}
\end{align}
Now consider the index $n_\varepsilon=\max\{j_{\varepsilon/2},k_{\varepsilon/2}\}$ (so $n_\varepsilon$ is the larger of the two). We have both $n_\varepsilon\geq j_{\varepsilon/2}$ and $n_\varepsilon\geq k_{\varepsilon/2}$, therefore
\begin{align}
	d_m(\bfy,\bfz)&\leq d_m(\bfy,\bfx_{n_\varepsilon})+d_m(\bfx_{n_\varepsilon},\bfz)
	\qquad\textnormal{(tri. ineq. \eqref{eqn:triangleinequality})}\\
	&< \frac{\varepsilon}{2} +\frac{\varepsilon}{2} \qquad\textnormal{(\eqref{eqn:limituniquey} and  \eqref{eqn:limituniquez})}\\
	&=\varepsilon.
\end{align}
Hence, $\bfy\acl\{\bfz\}$ and by Lemma \ref{lem:equalpoints}, $\bfy=\bfz$.
\end{proof}

\begin{remark}\label{rmk:notationsubscriptproof}
Does the above proof make sense? Without more detailed scratch work, it may be hard to tell why each step is there. However, you should be able to tell how each line of the proof follows from assertions and conclusions that come before. This is easier said than done and takes some time. So, please take your time. If you don't feel comfortable with this proof, try writing up a walkthrough. How would you write the proof?

Can you tell why I chose the subscripts for $n_\varepsilon$, $j_{\varepsilon/2}$, and $k_{\varepsilon/2}$ the way I did? Each of these positive integers is an index for the sequence $(\bfx_n)$ given in response to the distance $\varepsilon$ or $\varepsilon/2$, accordingly. These distances allow us to make use of the definition of arbitrarily close (Definition \ref{def:acl}) and the definition of limit and convergence for sequences (Definition \ref{def:sequentiallimit}).
\end{remark}

The following is a special case of the definition of bounded sets (Definition \ref{def:boundedset}) adapted for sequences.

\begin{definition}\label{def:boundedsequence}
A sequence $(\bfx_n)$ of points in $\R^m$ is {\em bounded} if its range is a bounded set. That is, $(\bfx_n)$ is bounded if there is a real number $b\geq 0$ such that for every index $n\in\N$ we have 
\begin{align}
	\|\bfx_n\|_m&<b.
\end{align}
In this case, we say $b$ is a {\em bound} for the sequence $(\bfx_n)$. If a sequence is not bounded, we say it is {\em unbounded}.
\end{definition}

\begin{theorem}\label{thm:convergentsequencebounded}
Every convergent sequence in $\R^m$ is bounded. 
\end{theorem}

\begin{scratch}\label{scr:convergentsequencebounded}
Consider the following pair of figures of convergent sequences. In Figure \ref{fig:convergentsequencebounded1}, the term $\bfx_1$ happens to have the largest norm out of all of the terms in the sequence. 

\begin{figure}
\centering
\begin{tikzpicture}  
\draw (0,0) circle (3.5cm);	
	\draw (0.05,0) -- (3.5,0);  
	\draw (1.75,-0.4) node {$b$};    
	\draw (0,-0.02) node {$\circ$};
	\draw (0,-0.4) node {$\mathbf{0}$};
\draw (1.45,1.43) node {$\circ$};
	\draw (1.8,1.43) node {$\bfy$};				
\begin{scope}[thick, dashed, blue] 
\draw (1.45,1.43) circle (0.7cm); 
\end{scope}	
\draw[-,semithick, blue] (1.49,1.49) -- (2,1.9);
\draw[blue] (1.55,1.85) node {$\varepsilon_0$};
	\draw (-3.5,0) node {$\bullet$};
	\draw (-3.1,-0.2) node {$\bfx_1$};	
	\draw (-1.75,0.3) node {$\bullet$};	
	\draw (-1.75,-0.1) node {$\bfx_2$};	
	\draw (-0.1,0.6) node {$\bullet$};
	\draw (0.1,0.63) node {$\cdot$};
	\draw (0.25,0.65) node {$\cdot$};
	\draw (0.4,0.67) node {$\cdot$};	
	\draw (0.6,0.7) node {$\bullet$};
	\draw (0.9,0.3) node {$\bfx_{n_0-1}$};	
	\draw (1.05,1.05) node {$\bullet$};
	\draw (1.5,1) node {$\bfx_{n_0}$};		
	\draw (1.18,1.18) node {$\cdot$};
	\draw (1.24,1.24) node {$\cdot$};
	\draw (1.3,1.3) node {$\cdot$};
\end{tikzpicture}
\caption{A convergent sequence $(\bfx_n)$ with limit $\bfy$ in $\R^m$ where the first term $\bfx_1$ has the largest norm. See Scratch Work \ref{scr:convergentsequencebounded} and the proof of Theorem \ref{thm:convergentsequencebounded}.}
\label{fig:convergentsequencebounded1}
\end{figure}

\begin{figure}
\centering
\begin{tikzpicture}  
\draw (0,0) circle (2.75cm);	
	\draw (2.75,0) --(0.05,0);  
	\draw (1.38,-0.4) node {$b$};    
	\draw (0,-0.02) node {$\circ$};
	\draw (0,-0.4) node {$\mathbf{0}$};
\draw (1.45,1.43) node {$\circ$};
	\draw (1.8,1.43) node {$\bfy$};				
\begin{scope}[thick, dashed, blue] 
\draw (1.45,1.43) circle (0.7cm); 
\end{scope}	
\draw[-,semithick, blue] (1.49,1.49) -- (2,1.9);
\draw[blue] (1.55,1.85) node {$\varepsilon_0$};
	\draw (-1.5,0.5) node {$\bullet$};
	\draw (-1.5,0.1) node {$\bfx_1$};	
	\draw (-0.8,0.5) node {$\bullet$};	
	\draw (-0.8,0.1) node {$\bfx_2$};	
	\draw (-0.1,0.6) node {$\bullet$};
	\draw (0.1,0.63) node {$\cdot$};
	\draw (0.25,0.65) node {$\cdot$};
	\draw (0.4,0.67) node {$\cdot$};	
	\draw (0.6,0.7) node {$\bullet$};
	\draw (0.9,0.3) node {$\bfx_{n_0-1}$};	
	\draw (1.05,1.05) node {$\bullet$};
	\draw (1.5,1) node {$\bfx_{n_0}$};		
	\draw (1.18,1.18) node {$\cdot$};
	\draw (1.24,1.24) node {$\cdot$};
	\draw (1.3,1.3) node {$\cdot$};
\end{tikzpicture}
\caption{A convergent sequence $(\bfx_n)$ with limit $\bfy$ in $\R^m$ where no particular term has the largest norm but $\|\bfy\|+\varepsilon_0$ for some positive $\varepsilon_0$ serves as a bound for $(\bfx_n)$. See Scratch Work \ref{scr:convergentsequencebounded} and the proof of Theorem \ref{thm:convergentsequencebounded}.}
\label{fig:convergentsequencebounded2}
\end{figure}

In Figure \ref{fig:convergentsequencebounded2}, no particular term in the sequence has the largest norm, but for some $\varepsilon_0>0$ the real number $\varepsilon_0+\|\bfy\|$ is a bound for the sequence.

In any case, when a sequence $(\bfx_n)$ converges to $\bfy$ in $\R^m$, we can choose any $\varepsilon_0>0$ and get a threshold $n_0$ to ensure all of the terms $\bfx_n$ where $n\geq n_0$ are within $\varepsilon_0$ of $\bfy$. From there, the real number $b$ defined by
\begin{align}
	b=\max\{\|\bfx_1\|_m,\|\bfx_2\|_m,\ldots,\|\bfx_{n_0-1}\|_m,\varepsilon_0+\|\bfy\|_m\}
\end{align}
is a bound for the sequence.
\end{scratch}

\begin{proof}
Suppose $(\bfx_n)$ is a sequence in $\R^m$ whose limit is $\bfy$. Let $\varepsilon_0=7$. Since $\bfy=\lim_{n\to\infty}\bfx_n$, there is a threshold $n_0$ where $n\geq n_0$ implies
\begin{align}\label{eqn:initialbound7}
	d_m(\bfx_n,\bfy)&=\|\bfx_n-\bfy\|_m<\varepsilon_0=7.
\end{align}
Therefore, for $n\geq n_0$,
\begin{align}
	\|\bfx_n\|_m&=\|\bfx_n\underbrace{-\bfy+\bfy}_{\textnormal{add } \mathbf{0}}\|_m\\
	&\leq\|\bfx_n-\bfy\|_m+\|\bfy\|_m\qquad\textnormal{(tri. ineq. \eqref{eqn:addzero})}\\
	&<7+\|\bfy\|_m. \qquad\textnormal{(\eqref{eqn:initialbound7})}
\end{align}
 
Now define $b$ by
\begin{align}
	b=\max\{\|\bfx_1\|_m,\|\bfx_2\|_m,\ldots,\|\bfx_{n_0-1}\|_m,7+\|\bfy\|_m\}.
\end{align}
Then $b\geq 0$ and for every index $n\in\N$ we have 
\begin{align}
	\|\bfx_n\|_m&\leq b.
\end{align}
Therefore, $(x_n)$ is bounded.
\end{proof}

You may recall ideas from calculus such as ``the limit of sums is the sum of the limits'' and others relating algebra to limits and convergence. We now have the mathematical tools to formally prove these ideas.

\begin{theorem}\label{thm:algebraiclimitseuclidean}
Suppose $c\in\R$ and suppose $(\bfa_n)$ and $(\bfb_n)$ are convergent sequences in $\R^m$ where $\lim_{n\to\infty}\bfa_n=\bfa$ and $\lim_{n\to\infty}\bfb_n=\bfb$.
Then
\begin{enumerate}
	\item $\displaystyle \lim_{n\to\infty}(\bfa_n+\bfb_n)
	=\lim_{n\to\infty}\bfa_n+\lim_{n\to\infty}\bfb_n=\bfa+\bfb, \qquad$ and\\
	\item $\displaystyle \lim_{n\to\infty} (c\,\bfa_n)
	=c\lim_{n\to\infty}\bfa_n=c\,\bfa$.
\end{enumerate}
\end{theorem}

\begin{remark}\label{rmk:algebraiclimitseuclidean}
To prove each statement in Theorem \ref{thm:algebraiclimitseuclidean}, we can verify the definition of limit (Definition \ref{def:sequentiallimit}) holds by considering an arbitrary $\varepsilon>0$ and finding a suitable threshold $n_\varepsilon$. This threshold can be shown to ensure the terms with indices $n\geq n_\varepsilon$ are within $\varepsilon$ of the proposed limit. 

As in the proof of Theorem \ref{thm:limitsunique} but unlike the examples explored in Section \ref{sec:limitofasequence}, we do not have explicit formulas to work with. We are in a more general setting where we will both {\em use} assumptions that certain limits exist along with their corresponding thresholds, and {\em show} other related limits exist by defining new thresholds as needed.
\end{remark}

\begin{scratch}\label{scr:algebraiclimitssum}
To derive some scratch work for directly proving 
\begin{align}
	\lim_{n\to\infty}(\bfa_n+\bfb_n)=\bfa+\bfb,
\end{align} let's start at the end. Given $\varepsilon>0$, we want to end up with 
\begin{align}\label{eqn:algebraiclimitssumgoal}
	\|(\bfa_n+\bfb_n)-(\bfa+\bfb)\|_m<\varepsilon.
\end{align}
We can assume
\begin{align}
	\lim_{n\to\infty}\bfa_n&=\bfa\qquad\textnormal{and}\qquad\lim_{n\to\infty}\bfb_n=\bfb,
\end{align}
so by Definition \ref{def:sequentiallimit}, given any $\varepsilon>0$ there are thresholds $j_\varepsilon$ and $k_\varepsilon$ where $n\geq j_\varepsilon$ implies
\begin{align}\label{eqn:initialbounda}
	\|\bfa_n-\bfa\|_m<\varepsilon
\end{align}
and $n\geq k_\varepsilon$ implies
\begin{align}\label{eqn:initialboundb}
	\|\bfb_n-\bfb\|_m<\varepsilon.
\end{align}
Combining \eqref{eqn:initialbounda} and \eqref{eqn:initialboundb} by taking $n$ large enough so that both $n\geq j_\varepsilon$ and $n\geq k_\varepsilon$, and applying triangle inequality \eqref{eqn:triangleinequality2}, gives us
\begin{align}
	\|(\bfa_n+\bfb_n)-(\bfa+\bfb)\|_m
	&=\|(\bfa_n-\bfa)+(\bfb_n-\bfb)\|_m\\
	&\leq\|\bfa_n-\bfa\|_m+\|\bfb_n-\bfb)\|_m\\
	&<\varepsilon+\varepsilon\\
	&=2\varepsilon.
\end{align}
This isn't quite our goal \eqref{eqn:algebraiclimitssumgoal}, but we can adapt: The definition of limit and convergence for sequences (Definition \ref{def:sequentiallimit}) ensures that we can respond to {\em any positive distance} we like with suitable thresholds. So, as in the proof of Theorem \ref{thm:limitsunique}, $\varepsilon/2>0$ whenever $\varepsilon>0$. We can take advantage of the assumptions 
\begin{align}
	\lim_{n\to\infty}\bfa_n&=\bfa\qquad\textnormal{and}\qquad\lim_{n\to\infty}\bfb_n=\bfb
\end{align}
by considering indices $n$ and thresholds $j_{\varepsilon/2}$ and $k_{\varepsilon/2}$ where both
\begin{align}
	n\geq j_{\varepsilon/2} &\quad\implies\quad \|\bfa_n-\bfa\|_m<\frac{\varepsilon}{2} \qquad\textnormal{and}
	\label{eqn:secondbounda}\\
	n\geq k_{\varepsilon/2} &\quad\implies\quad 	\|\bfb_n-\bfb\|_m<\frac{\varepsilon}{2}.\label{eqn:secondboundb}
\end{align}
See Figure \ref{fig:algebraiclimitssum}. Choosing $n_\varepsilon=\max\{j_{\varepsilon/2},k_{\varepsilon/2}\}$ (so $n_\varepsilon$ is the larger of the two) yields a sufficient threshold for the sums.
\end{scratch}

\begin{figure}
\centering
\begin{tikzpicture}	
\draw (-3,0) node {$(\bfa_n)$};
\draw (4,0) node {$\circ$};
\draw (4.15,-0.25) node {$\bfa$};
\draw (3.76,0.02) node {$...$};
\foreach \Point in {(2,0), (3,0), (3.5,0)}
{
    \node at \Point {\textbullet};
}
\draw (2,-0.4) node {$\bfa_1$};
\draw (3,-0.4) node {$\bfa_2$};
\draw (3.6,-0.3) node {$\bfa_{n_\varepsilon}$};
\begin{scope}[thick, dashed, red] 
\draw (4,0) circle (0.85cm); 
\end{scope}	
\draw[-,semithick, red] (4.05,0.05) -- (4.6,0.6);
\draw[red] (4.05,0.5) node {$\varepsilon/2$};

\draw (-3,-2) node {$(\bfb_n)$};
\draw (4,-2) node {$\circ$};
\draw (4.15,-2.25) node {$\bfb$};
\draw (3.76,-1.98) node {$...$};
\foreach \Point in {(1,-2), (3,-2), (3.5,-2)}
{
    \node at \Point {\textbullet};
}
\draw (1,-2.4) node {$\bfb_2$};
\draw (3,-2.4) node {$\bfb_1$};
\draw (3.6,-2.3) node {$\bfb_{n_\varepsilon}$};
\begin{scope}[thick, dashed, red] 
\draw (4,-2) circle (0.85cm); 
\end{scope}	
\draw[-,semithick, red] (4.05,-1.95) -- (4.6,-1.4);
\draw[red] (4.05,-1.5) node {$\varepsilon/2$};

\draw (-3,-5) node {$(\bfa_n+\bfb_n)$};
\draw (4,-5) node {$\circ$};
\draw (4.75,-5.15) node {$\bfa+\bfb$};
\draw (3.73,-4.98) node {$...$};
\foreach \Point in {(0,-5), (1,-5), (3,-5)}
{
    \node at \Point {\textbullet};
}
\draw (-0.3,-5.4) node {$\bfa_2+\bfb_2$};
\draw (1.3,-5.4) node {$\bfa_1+\bfb_1$};
\draw (3.3,-5.3) node {$\bfa_{n_\varepsilon}+\bfb_{n_\varepsilon}$};
\begin{scope}[thick, dashed, red]  
\draw (4,-5) circle (1.7cm);  
\end{scope}	
\draw[-,semithick, red] (4.05,-4.95) -- (5.22,-3.83);
\draw[red] (4.5,-4.1) node {$\varepsilon$};
\end{tikzpicture}
\caption{A figure to accompany the proof of Theorem \ref{thm:algebraiclimitseuclidean} showing $\lim_{n\to\infty}\bfa_n=\bfa$ and $\lim_{n\to\infty}\bfb_n=\bfb$ implies $\lim_{n\to\infty}(\bfa_n+\bfb_n)=\bfa+\bfb$.}
\label{fig:algebraiclimitssum}
\end{figure}

\begin{proof}[Proof of $\lim_{n\to\infty}(\bfa_n+\bfb_n)=\bfa+\bfb$ in Theorem \ref{thm:algebraiclimitseuclidean}]
Assume
\begin{align}\label{eqn:algebraiclimitssumassumption}
	\lim_{n\to\infty}\bfa_n&=\bfa\qquad\textnormal{and}\qquad\lim_{n\to\infty}\bfb_n=\bfb,
\end{align}
and let $\varepsilon>0$. Then $\varepsilon/2>0$ and there are thresholds $j_{\varepsilon/2}$ and $k_{\varepsilon/2}$ where 
\begin{align}
	n\geq j_{\varepsilon/2} &\implies\|\bfa_n-\bfa\|_m<\frac{\varepsilon}{2} \qquad\textnormal{and}
	\label{eqn:boundaproof}\\
	n\geq k_{\varepsilon/2} &\implies	\|\bfb_n-\bfb\|_m<\frac{\varepsilon}{2}.\label{eqn:boundbproof}
\end{align}
Define $n_\varepsilon=\max\{j_{\varepsilon/2},k_{\varepsilon/2}\}$ (so $n_\varepsilon$ is the larger of the two). Then every index $n$ where $n\geq n_\varepsilon$ is large enough to give us both $n\geq j_{\varepsilon/2}$ and $n\geq k_{\varepsilon/2}$. So by the triangle inequality \eqref{eqn:triangleinequality2}, \eqref{eqn:boundaproof}, and \eqref{eqn:boundbproof} we have $n\geq n_\varepsilon$ implies
\begin{align}
	\|(\bfa_n+\bfb_n)-(\bfa+\bfb)\|_m
	&=\|(\bfa_n-\bfa)+(\bfb_n-\bfb)\|_m\\
	&\leq\|\bfa_n-\bfa\|_m+\|\bfb_n-\bfb\|_m\\
	&<\frac{\varepsilon}{2}+\frac{\varepsilon}{2}\\
	&=\varepsilon.
\end{align}
Therefore, $\lim_{n\to\infty}(\bfa_n+\bfb_n)=\bfa+\bfb$.
\end{proof}

Next, let's prove $\lim_{n\to\infty}c\,\bfa_n=c\,\bfa$ as in Theorem \ref{thm:algebraiclimitseuclidean}.

\begin{scratch}
Once again, let's start at the end. Given $\varepsilon>0$, we want to end up with 
\begin{align}\label{eqn:algebraiclimitsscalargoal}
	\|c\,\bfa_n-c\,\bfa\|_m<\varepsilon.
\end{align}
By \eqref{eqn:scalarfactor} we can consider
\begin{align}\label{eqn:algebraiclimitsscalargoalrevised}
	\|c\,\bfa_n-c\,\bfa\|_m&=|c|\|\bfa_n-\bfa\|_m<\varepsilon.
\end{align}
So if $c\neq 0$, then $|c|\neq 0$ and we can divide both sides of the rightmost inequality in \eqref{eqn:algebraiclimitsscalargoalrevised} to get
\begin{align}\label{eqn:algebraiclimitsscalargoalfinal}
	\|\bfa_n-\bfa\|_m<\frac{\varepsilon}{|c|}.
\end{align}
So, by assuming $\lim_{n\to\infty}\bfa_n=\bfa$,  Definition \ref{def:sequentiallimit} ensures a threshold $n_{\varepsilon/|c|}$ can be found in response to the positive distance $\varepsilon/|c|$ that will suffice, as long as $c\neq 0$.  
\end{scratch}

\begin{proof}[Proof of $\lim_{n\to\infty}c\,\bfa_n=c\,\bfa$ in Theorem \ref{thm:algebraiclimitseuclidean}] This proof has two cases: (i) $c=0$ and (ii) $c\neq 0$.

\underline{Case (i) $c=0$}: Suppose $c=0$ and $\varepsilon>0$. Define $n_\varepsilon=7$. Then for every index $n\geq n_\varepsilon=7$ we have
\begin{align}
	\|c\,\bfa_n-c\,\bfa\|_m&=\|\mathbf{0}-\mathbf{0}\|_m=0<\varepsilon.
\end{align}
Therefore, $\lim_{n\to\infty}c\,\bfa_n=c\,\bfa=\mathbf{0}$.

\underline{Case (ii) $c\neq 0$}: Suppose $c\neq 0$, $\lim_{n\to\infty}\bfa_n=\bfa$, and $\varepsilon>0$. Then $\varepsilon/|c|>0$ and by the definition of limit (Definition \ref{def:sequentiallimit}), there is a threshold $n_{\varepsilon/|c|}$ such that 
\begin{align}\label{eqn:algebraiclimitsscalarpenultimateproof} 
	n\geq n_{\varepsilon/|c|} \quad\implies\quad\|\bfa_n-\bfa\|_m<\frac{\varepsilon}{|c|}.
\end{align}
By \eqref{eqn:scalarfactor} and \eqref{eqn:algebraiclimitsscalarpenultimateproof},
for all indices $n\geq n_\varepsilon$ we have
\begin{align}
	\|c\,\bfa_n-c\,\bfa\|_m
	&=|c|\|\bfa_n-\bfa\|_m\\
	&<|c|\frac{\varepsilon}{|c|} \\
	&=\varepsilon.
\end{align}	
Therefore, $\lim_{n\to\infty}c\,\bfa_n=c\,\bfa.$
\end{proof}

There are more algebraic properties for limits when we restrict our attention to the real line. Do you remember hearing statements like ``the limit of a product is the product of the limits'' and ``the limit of a quotient is the quotient of the limits''?

\begin{theorem}\label{thm:algebraiclimitsreal}
Suppose $(a_n)$ and $(b_n)$ are convergent sequences in the real line $\R$ where $\lim_{n\to\infty}a_n=a$ and $\lim_{n\to\infty}b_n=b$. Then
\begin{align}
	\lim_{n\to\infty}(a_nb_n)&=\left(\lim_{n\to\infty}a_n\right)\left(\lim_{n\to\infty}b_n\right)=ab.
\end{align}	
If we further assume $b\neq 0$ and $b_n\neq 0$ for any index $n$, then 
\begin{align}
	\lim_{n\to\infty}\frac{a_n}{b_n}	&=\frac{\displaystyle \lim_{n\to\infty}a_n}{\displaystyle\lim_{n\to\infty}b_n}		=\frac{a}{b}.
\end{align}
\end{theorem}\s

\begin{scratch}
To derive some scratch work for showing \begin{align}
	\lim_{n\to\infty}a_nb_n=ab
\end{align} 
in Theorem \ref{thm:algebraiclimitsreal}, let's start at the end. Given $\varepsilon>0$, we want to end up with 
\begin{align}\label{eqn:algebraiclimitsproductgoal}
	|a_nb_n-ab|<\varepsilon.
\end{align}

In order to get \eqref{eqn:algebraiclimitsproductgoal}, we can try to play around with expressions involving both $|a_n-a|$ and $|b_n-b|$ since they appear when we assume $\lim_{n\to\infty}a_n=a$ and $\lim_{n\to\infty}b_n=b$. 

There are plenty of tools at our disposal to help us here, but the key step of the proof below may seem to come out of nowhere. So, try to consider it to be a technique someone figured out a long time ago that we can now use to our advantage. This key step is the choice of a particular version of zero given by 
\begin{align}
	0=-ab_n+ab_n.
\end{align}
Adding this especially nice version of zero along with the triangle inequality \eqref{eqn:addzero} yields the following string of ineqaulaities:
\begin{align}\label{eqn:limitproductscratch}
	|a_nb_n-ab|&=|a_nb_n\underbrace{-ab_n+ab_n}_{\textnormal{add }0}-ab|\\
	&\leq |a_nb_n-ab_n|+|ab_n-ab| \qquad\textnormal{(tri. ineq. \eqref{eqn:addzero})}\\
	&=|b_n||a_n-a|+|a||b_n-b|.
\end{align}
From there, assuming $\lim_{n\to\infty}a_n=a$ and $\lim_{n\to\infty}b_n=b$ allows us to respond to a given $\varepsilon>0$ with thresholds $j_\varepsilon$ and $k_\varepsilon$ where 
\begin{align}\label{eqn:limitproductaepsilon}
	n &\geq j_\varepsilon \quad\implies\quad |a_n-a|<\varepsilon\qquad \textnormal{and}\\
\label{eqn:limitproductbepsilon}
	n &\geq k_\varepsilon \quad\implies\quad |b_n-b|<\varepsilon.
\end{align}
Combining \eqref{eqn:limitproductscratch} through \eqref{eqn:limitproductbepsilon} yields
\begin{align}\label{eqn:limitproductscratchepsilon}
	|a_nb_n-ab|&\leq |b_n||a_n-a|+|a||b_n-b|\\
	&<|b_n|\varepsilon+|a|\varepsilon\\
	&=\varepsilon(|b_n|+|a|).
\end{align}

From here, we can use a common bound on $a$ and the sequence $(b_n)$ to help find a suitable threshold for $(a_nb_n)$. By Theorem \ref{thm:convergentsequencebounded}, the convergence of $(a_n)$ and $(b_n)$ ensures they are bounded by some real numbers $u\geq 0$ and $v\geq 0$, respectively. We can consider the sum $q=u+v$, which is a bound for both $(a_n)$ and $(b_n)$. The limit $a$ is arbitrarily close to the sequence $(a_n)$ by Theorem \ref{thm:exercise0}, so by Lemma \ref{lem:closurebound} $a$ respects that same bound and we have $|a|\leq u\leq q$. (A similar statement holds for $b$, but we won't need it.) Next, by choosing $u>0$ or $v>0$ to ensure $q>0$, we can then consider the positive distance $\varepsilon/2q$. The thresholds $j_{\varepsilon/2q}$ for $(a_n)$ and $k_{\varepsilon/2q}$ for $(b_n)$ combine to create a threshold for $(a_nb_n)$, for instance we can use $n_\varepsilon=j_{\varepsilon/2q}+k_{\varepsilon/2q}$. 

We have the pieces. Time for a proof.
\end{scratch}

\begin{proof}[Proving $\lim_{n\to\infty}a_nb_n=ab$ in Theorem \ref{thm:algebraiclimitsreal}]
Assume $\lim_{n\to\infty}a_n=a$ and $\lim_{n\to\infty}b_n=b$. By Theorem \ref{thm:convergentsequencebounded}, $(a_n)$ and $(b_n)$ are both bounded by a some positive real number $q$. The limit $a$ is arbitrarily close to the sequence $(a_n)$ by Theorem \ref{thm:exercise0}, so  we have $|a|\leq q$ by Lemma \ref{lem:closurebound}. Thus,
\begin{align}\label{eqn:ab_nboundq}
	|a|\leq q \quad\textnormal{and}\quad |b_n|\leq q \quad\textnormal{for every index }n\in\N.
\end{align}
Now, let $\varepsilon>0$. Since $2q>0$, we also have $\varepsilon/2q>0$. By the definition of limit and convergence for sequences (Definition \ref{def:sequentiallimit}), there are thresholds $j_{\varepsilon/2q}$ and $k_{\varepsilon/2q}$ where 
\begin{align}\label{eqn:limitproductaq}
	n &\geq j_{\varepsilon/2q} \quad\implies\quad |a_n-a|<\frac{\varepsilon}{2q}\qquad \textnormal{and}\\
\label{eqn:limitproductbq}
	n &\geq k_{\varepsilon/2q} \quad\implies\quad |b_n-b|<\frac{\varepsilon}{2q}.
\end{align}
Define $n_\varepsilon=j_{\varepsilon/2q}+k_{\varepsilon/2q}$. Then for any index $n\geq n_\varepsilon$ we have both $n\geq j_{\varepsilon/2q}$ and $n\geq k_{\varepsilon/2q}$. Hence, 
\begin{align}\label{eqn:limitproductproof}
	|a_nb_n-ab|&=|a_nb_n\underbrace{-ab_n+ab_n}_{\textnormal{add }0}-ab|\\
	&\leq |a_nb_n-ab_n|+|ab_n-ab| \qquad\textnormal{(tri. ineq. \eqref{eqn:addzero})}\\
	&=|b_n||a_n-a|+|a||b_n-b| \\
	&\leq q|a_n-a|+q|b_n-b| \qquad\textnormal{(by \eqref{eqn:ab_nboundq})}\\
	&<q\left(\frac{\varepsilon}{2q}\right)+q\left(\frac{\varepsilon}{2q}\right)
\qquad\textnormal{(by \eqref{eqn:limitproductaq} and \eqref{eqn:limitproductbq})}\\
	&=\varepsilon.	
\end{align}
Therefore, $\lim_{n\to\infty}a_nb_n=ab$.
\end{proof}

The next result takes advantage of the deep connection between limits and arbitrarily close in Theorem \ref{thm:exercise0} and what we have proven regarding the order inherent to the real line in Lemma \ref{lem:orderacl}.

\begin{corollary}\label{cor:orderlimits}
Suppose $a$ and $b$ are real numbers and $(x_n)$ is a convergent sequence in the real line $\R$. 
\begin{enumerate}
	\item If $x_n\leq b$ for every index $n\in\N$, then $\lim_{n\to\infty}x_n=\ell\leq b$. 
	\item If $x_n\geq a$ for every index $n\in\N$, then $\lim_{n\to\infty}x_n=\ell\geq a$. 
\end{enumerate}
\end{corollary}

\begin{figure}
\centering
\begin{tikzpicture}
\draw (-3,-1.5) node {$(x_n)$};
\draw (-0.5,-1.5) node {$\circ$};
\draw (4,-1.5) node {$\circ$};
\draw (4.5,-1.5) node {$\circ$};
\draw (3.76,-1.5) node {...};
\draw (-0.5,-2) node {$a$};
\draw (1,-2) node {$x_1$};
\draw (3,-2) node {$x_2$};
\draw (4,-2) node {$\ell$};
\draw (4.5,-2) node {$b$};
\foreach \Point in {(1,-1.5), (3,-1.5),  (3.5,-1.5)}
{
    \node at \Point {\textbullet};
}
\end{tikzpicture}
\caption{A plot of a convergent sequence $(x_n)$ with limit $\ell$, lower bound $a$, and upper bound $b$ as in Corollary \ref{cor:orderlimits}.}
\end{figure}

\begin{proof}
Suppose $\lim_{n\to\infty}x_n=\ell$. Then by Theorem \ref{thm:exercise0}, $\ell\acl{(x_n)}$.

Suppose $x_n\leq b$ for every index $n\in\N$. Then $b$ is an upper bound for $(x_n)$ and by part (i) of Lemma \ref{lem:orderacl}, we have $\ell\leq b$.

Now suppose $x_n\geq a$ for every index $n\in\N$. Then $a$ is a lower bound for $(x_n)$ and by part (ii) of Lemma \ref{lem:orderacl}, we have $\ell\geq a$.
\end{proof}

To motivate another property of convergent sequences from calculus, consider the following example along with Figures \ref{fig:squeezesequencegraphs} and \ref{fig:squeezesequenceranges}.

\begin{example}\label{eg:squeezesequence}
Consider the sequence of real numbers $(y_n)$ defined for each $n\in\N$ by
\begin{align}
	y_n &= \frac{\sin{\sqrt{n^2+1}}}{n}.
\end{align}
Then $\lim_{n\to\infty}y_n=0$. See Figures \ref{fig:squeezesequencegraphs} and \ref{fig:squeezesequenceranges}.
\end{example}

\begin{remark}
A proof for Example \ref{eg:squeezesequence} using the definition of limit and convergence (Definition \ref{def:sequentiallimit}) would follow from the fact from trigonometry that for every real number $x$ we have
\begin{align}
	-1\leq \sin{x}\leq 1
\end{align}
combined with choosing a positive integer $n_\varepsilon>1/\varepsilon$ as a threshold in response to the distance $\varepsilon>0$. However, a proof using the Squeeze Theorem \ref{thm:squeezesequence} is in order so we can see how to put it to use.
\end{remark}

\begin{figure}
\centering
\begin{tikzpicture}
\foreach \Point in {(0.25,1), (0.5,0.5), (0.75,0.33), (1,0.25), (1.25,0.2), (1.5,0.17), (1.75,0.14), (2,0.13), (2.25,0.11), (2.5,0.10), (2.75,0.09), (3,0.08), (3.25,0.08), (3.5,0.07), (3.75,0.07), (4,0.06)}
{
   \node[red] at \Point {\textbullet};
}
\foreach \Point in {(0.25,-1), (0.5,-0.5), (0.75,-0.33), (1,-0.25), (1.25,-0.2), (1.5,-0.17), (1.75,-0.14), (2,-0.13), (2.25,-0.11), (2.5,-0.10), (2.75,-0.09), (3,-0.08), (3.25,-0.08), (3.5,-0.07), (3.75,-0.07), (4,-0.06)}
{
   \node[blue!60] at \Point {\textbullet};
}
\foreach \Point in {(0.25,0.84), (0.5,0.45), (0.75,0.05), (1,-0.19), (1.25,-0.19), (1.5,-0.05), (1.75,-0.09), (2,0.12), (2.25,0.05), (2.5,-0.05), (2.75,-0.09), (3,-0.04), (3.25,0.03), (3.5,0.07), (3.75,0.04), (4,-0.02)}
{
   \node at \Point {\textbullet};
}
\draw (-3,1.5) node {graphs of};
\draw[red] (-3,0.75) node {$(z_n)$};
\draw (-3,0) node {$(y_n)$};
\draw[blue] (-3,-0.75) node {$(x_n)$};
	\draw (0,-1) node {$-$};
	\draw (0,0) node {$-$};		
	\draw (0,1) node {$-$};
	\draw (-0.5,-1) node {$-1$};
	\draw (-0.5,0) node {$0$};
	\draw (-0.5,1) node {$1$};
  \draw[-To, dashed] (0,0) -- (4.5,0);		
  \draw[-To] (0,-1.3) -- (4.5,-1.3);
  \draw[-To] (0,-1.3) -- (0,1.5) node[above] {terms$\qquad$};
	\draw (1,-1.3) node {$|$};
	\draw (2,-1.3) node {$|$};
	\draw (3,-1.3) node {$|$};
	\draw (4,-1.3) node {$|$};
	\draw (1,-1.8) node {$4$};
	\draw (2,-1.8) node {$8$};
	\draw (3,-1.8) node {$12$};
	\draw (4,-1.8) node {$16$};
	\draw (5.5,-1.8) node {indices};
\end{tikzpicture}
\caption{Graphs of the sequences $(x_n), (y_n),$ and $(z_n)$ where $x_n=-1/n$, $y_n=\sin{(\sqrt{n^2+1}})/n$, and $z_n=1/n$ for each $n\in\N$. Note the term (i.e., height) $y_n$ is between $x_n$ and $z_n$ for each index $n$. See the Squeeze Theorem \ref{thm:squeezesequence}, Example \ref{eg:squeezesequence}, and Figure \ref{fig:squeezesequenceranges}.}
\label{fig:squeezesequencegraphs}
\end{figure}

\begin{figure}
\centering
\begin{tikzpicture}
\draw[blue!80] (-6,0) node {$(x_n)$};
\draw (0,0) node {$\circ$};
\draw (0,-0.5) node {$0$};
\draw[blue!80] (-0.5,0) node {$...$};
\foreach \Point in {(-4,0), (-2,0), (-1.33,0), (-1,0)}
{
    \node[blue!80] at \Point {\textbullet};
}
\draw[blue!80] (-4,-0.5) node {$x_1$};
\draw[blue!80] (-2,-0.5) node {$x_2$};
\draw[blue!80] (-1.4,-0.5) node {$x_3$};
\draw[blue!80] (-0.9,-0.5) node {$x_4$};

\draw (-6,-1.5) node {$(y_n)$};
\draw (0,-1.5) node {$\circ$};
\draw (0,-2) node {$0$};
\draw (-0.3,-1.5) node {$...$};
\foreach \Point in {(3.37,-1.5), (1.82,-1.5), (0.19,-1.5), (-0.76,-1.5)}
{
    \node at \Point {\textbullet};
}
\draw (3.37,-2) node {$y_1$};
\draw (1.82,-2) node {$y_2$};
\draw (0.4,-2) node {$y_3$};
\draw (-0.76,-2) node {$y_4$};

\draw[red] (-6,-3) node {$(z_n)$};
\draw (0,-3) node {$\circ$};
\draw (0,-3.5) node {$0$};
\draw[red] (0.5,-3) node {$...$};
\foreach \Point in {(4,-3), (2,-3), (1.33,-3), (1,-3)}
{
    \node[red] at \Point {\textbullet};
}
\draw[red] (4,-3.5) node {$z_1$};
\draw[red] (2,-3.5) node {$z_2$};
\draw[red] (1.4,-3.5) node {$z_3$};
\draw[red] (0.9,-3.5) node {$z_4$};

\draw (-6,-4.5) node {{\em all}};
\draw (0,-4.5) node {$\circ$};
\draw (0,-5) node {$0$};
\draw (-0.3,-4.5) node {$...$};
\draw[red] (0.5,-4.5) node {$...$};

\foreach \Point in {(-4,-4.5), (-2,-4.5), (-1.33,-4.5), (-1,-4.5)}
{
    \node[blue!80] at \Point {\textbullet};
}
\draw[blue!80] (-4,-5) node {$x_1$};
\draw[blue!80] (-2,-5) node {$x_2$};
\draw[blue!80] (-1.5,-5) node {$x_3$};
\draw[blue!80] (-1,-5) node {$x_4$};

\foreach \Point in {(3.37,-4.5), (1.82,-4.5), (0.19,-4.5), (-0.76,-4.5)}
{
    \node at \Point {\textbullet};
}
\draw (3.37,-5) node {$y_1$};
\draw (1.82,-5) node {$y_2$};
\draw (0.4,-5) node {$y_3$};
\draw (-0.5,-5) node {$y_4$};

\foreach \Point in {(4,-4.5), (2,-4.5), (1.33,-4.5), (1,-4.5)}
{
    \node[red] at \Point {\textbullet};
}
\draw[red] (4,-5) node {$z_1$};
\draw[red] (2.2,-5) node {$z_2$};
\draw[red] (1.3,-5) node {$z_3$};
\draw[red] (0.9,-5) node {$z_4$};
\end{tikzpicture}
\caption{Ranges of the sequences $(x_n), (y_n),$ and $(z_n)$ where $x_n=-1/n$, $y_n=\sin{(\sqrt{n^2+1})}/n$, and $z_n=1/n$ for each $n\in\N$.  See the Squeeze Theorem \ref{thm:squeezesequence} and Example \ref{eg:squeezesequence}. Between Figures \ref{fig:squeezesequencegraphs} and \ref{fig:squeezesequenceranges}, which best showcases the ``squeezing'' that gives the Squeeze Theorem \ref{thm:squeezesequence} its name?}
\label{fig:squeezesequenceranges}
\end{figure}

\begin{theorem}[Squeeze Theorem]\label{thm:squeezesequence}
Suppose $(x_n)$, $(y_n)$, and $(z_n)$ are sequences in the real line $\R$ where 
\begin{enumerate}
	\item for each $n\in\N$ we have $x_n\leq y_n\leq z_n$, and
	\item $\displaystyle \lim_{n\to\infty}x_n=\lim_{n\to\infty}z_n=\ell$.
\end{enumerate}
Then $(y_n)$ converges and $\displaystyle \lim_{n\to\infty}y_n=\ell$.
\end{theorem}

\begin{scratch}
Let's try to prove Theorem \ref{thm:squeezesequence} directly from the assumptions and the definition of limit and convergence for sequences (Definition \ref{def:sequentiallimit}). Also, see Figures \ref{fig:squeezesequencegraphs} and \ref{fig:squeezesequenceranges}.

The goal is to find a threshold $n_\varepsilon\in\N$ where $n\geq n_\varepsilon$ implies 
\begin{align}\label{eqn:squeezeabsolute}
	|y_n-\ell|<\varepsilon. 
\end{align}
How does \eqref{eqn:squeezeabsolute} relate to the assumption $x_n\leq y_n\leq z_n$? We can subtract $\ell$ and get
\begin{align}
	x_n-\ell\leq y_n-\ell\leq z_n-\ell,
\end{align}
but this expression does not involve absolute values like the definition of limit and convergence for sequences in the real line (Definition \ref{def:sequentiallimit}). To connect with the assumption that $(x_n)$ and $(z_n)$ converge, note we also have 
\begin{align}
	-|x_n-\ell|&\leq x_n-\ell\leq y_n-\ell\leq z_n-\ell\leq |z_n-\ell|.
\end{align}
From here, each sequence $(x_n)$ and $(z_n)$ has a corresponding threshold we can use. These two thresholds allow us to define a threshold for $(y_n)$ and squeeze the terms from both sides via properties of inequalities and Lemma \ref{lem:neighborhoods} where inequalities with and without absolute values are related to one another.
\end{scratch}

\begin{proof}
Assume $x_n\leq y_n\leq z_n$ for each $n\in\N$ and
\begin{align}
	\lim_{n\to\infty}x_n&=\lim_{n\to\infty}z_n=\ell.
\end{align}
Let $\varepsilon>0$. Since $(x_n)$ and $(z_n)$ converge to $\ell$, there are thresholds $j_\varepsilon$ and $k_\varepsilon$ where $n\geq j_\varepsilon$ and $n\geq k_\varepsilon$ imply
\begin{align}
	|x_n-\ell|<\varepsilon\qquad\textnormal{and}\qquad	|z_n-\ell|<\varepsilon,
\end{align}
respectively. Now define $n_\varepsilon=\max\{j_\varepsilon,k_\varepsilon\}$. Then for every $n\geq n_\varepsilon$ we have both $n\geq j_\varepsilon$ and $n\geq k_\varepsilon$. Therefore, by Lemma \ref{lem:neighborhoods} and other properties of inequalities we have
\begin{align}
	-\varepsilon &<-|x_n-\ell|\leq x_n-\ell\leq y_n-\ell\leq z_n-\ell\leq |z_n-\ell|<\varepsilon.
\end{align}
In particular, we have
\begin{align}
	-\varepsilon &< y_n-\ell <\varepsilon.
\end{align}
So, $n_\varepsilon$ is a threshold for $(y_n)$ since by Lemma \ref{lem:neighborhoods} we have
\begin{align}
	|y_n-\ell|<\varepsilon. 
\end{align}
Therefore, $(y_n)$ converges and $\lim_{n\to\infty}y_n=\ell$.
\end{proof}

\begin{proof}[Proof for Example \ref{eg:squeezesequence}]
A classic result from trigonometry helps us here. For every real number $\theta$ we have 
\begin{align}
	-1\leq\sin{\theta}&\leq 1.
\end{align}
Therefore, for each $n\in\N$ we have
\begin{align}
	z_n=-\frac{1}{n}\leq\frac{\sin{\sqrt{n^2+1}}}{n}&\leq \frac{1}{n}=x_n.
\end{align} 
We also have 
\begin{align}
	\lim_{n\to\infty}\left(-\frac{1}{n}\right)=\lim_{n\to\infty}\left(\frac{1}{n}\right)=0.
\end{align}
So, by the Squeeze Theorem \ref{thm:squeezesequence}, we have $\lim_{n\to\infty}y_n=0$.
\end{proof}
%
%

\vs
\section*{Exercises}
\setcounter{theorem}{0}

Exercises are for play: Do scratch work, draw stuff, and make mistakes---make {\em lots} of mistakes---before worrying about writing proofs. {\em Have fun!}
%

\vs
\section{Ensuring convergence}
\label{sec:ensuringconvergence}

Section \ref{sec:propertiesoflimits} features results where the convergence of some given sequences is assumed and leads to the convergence of other related sequences. This section explores other ways to ensure the convergence of a sequence by considering various properties the sequence or a related set might have.

\begin{definition}\label{def:monotonesequence}
A sequence of real numbers $(x_n)$ is {\em increasing} if for every $n\in\N$ we have
\begin{align}
	x_n\leq x_{n+1}.
\end{align}
If for every $n\in\N$ we have
\begin{align}
	x_n< x_{n+1}, 
\end{align}
then $(x_n)$ is {\em strictly increasing}.

A sequence of real numbers $(y_n)$ is {\em decreasing} if for every $n\in\N$ we have
\begin{align}
	y_n\geq y_{n+1}.
\end{align}
If for every $n\in\N$ we have
\begin{align}
	y_n> y_{n+1}, 
\end{align}
then $(y_n)$ is {\em strictly decreasing}.

A sequence of real numbers is {\em monotone} if it is increasing or decreasing.
\end{definition}

\begin{figure}
\centering
\begin{tikzpicture}
\foreach \Point in {(0.25,1), (0.5,0.71), (0.75,0.58), (1,0.50), (1.25,0.45), (1.5,0.41), (1.75,0.38), (2,0.35), (2.25,0.33), (2.5,0.32), (2.75,0.30), (3,0.29), (3.25,0.28), (3.5,0.27), (3.75,0.26), (4,0.25)}
{
   \node[red] at \Point {\textbullet};
}
\foreach \Point in {(0.25,-1), (0.5,-0.71), (0.75,-0.58), (1,-0.50), (1.25,-0.45), (1.5,-0.41), (1.75,-0.38), (2,-0.35), (2.25,-0.33), (2.5,-0.32), (2.75,-0.30), (3,-0.29), (3.25,-0.28), (3.5,-0.27), (3.75,-0.26), (4,-0.25)}
{
   \node[blue!60] at \Point {\textbullet};
}
\foreach \Point in {(0.25,0), (0.5,0), (0.75,0), (1,0), (1.25,0), (1.5,0), (1.75,0), (2,0), (2.25,0), (2.5,0), (2.75,0), (3,0), (3.25,0), (3.5,0), (3.75,0), (4,0)}
{
   \node at \Point {\textbullet};
}
\draw (-3,1.5) node {graphs of};
\draw[red] (-3,0.75) node {$(b_n)$};
\draw (-3,0) node {$(c_n)$};
\draw[blue] (-3,-0.75) node {$(a_n)$};
	\draw (0,-1) node {$-$};
	\draw (0,0) node {$-$};		
	\draw (0,1) node {$-$};
	\draw (-0.5,-1) node {$9$};
	\draw (-0.5,0) node {$10$};
	\draw (-0.5,1) node {$11$};
  \draw[-To, dashed] (0,0) -- (4.5,0);		
  \draw[-To] (0,-1.3) -- (4.5,-1.3);
  \draw[-To] (0,-1.3) -- (0,1.5) node[above] {terms$\qquad$};
	\draw (1,-1.3) node {$|$};
	\draw (2,-1.3) node {$|$};
	\draw (3,-1.3) node {$|$};
	\draw (4,-1.3) node {$|$};
	\draw (1,-1.8) node {$4$};
	\draw (2,-1.8) node {$8$};
	\draw (3,-1.8) node {$12$};
	\draw (4,-1.8) node {$16$};
	\draw (5.5,-1.8) node {indices};
\end{tikzpicture}
\caption{Graphs of the sequences $(a_n)$, $(b_n)$, and $(c_n)$ from Example \ref{eg:monotonesequences}. All three sequences are monotone and converge to 10.}
\label{fig:monotonesequencesgraphs}
\end{figure}

\begin{figure}
\centering
\begin{tikzpicture}
\draw[blue=60!] (-4,0) node {$(a_n)$};
\draw (0,0) node {$\circ$};
\draw (0,-0.5) node {$10$};
\draw[blue=60!] (-0.25,0) node {$...$};
\draw[blue=60!] (-1.10,-0.5) node {$a_1$};
\draw[blue=60!] (-0.65,-0.5) node {$a_2$};
\foreach \Point in {(-1,0), (-0.71,0), (-0.58,0), (-0.50,0)}
{
    \node[blue=60!] at \Point {\textbullet};
}
\draw[red] (-4,-1.5) node {$(b_n)$};
\draw (0,-1.5) node {$\circ$};
\draw (0,-2) node {$10$};
\draw[red] (0.25,-1.5) node {$...$};
\draw[red] (1.15,-2) node {$b_1$};
\draw[red] (0.71,-2) node {$b_2$};
\foreach \Point in {(1,-1.5), (0.71,-1.5), (0.58,-1.5), (0.50,-1.5)}
{
    \node[red] at \Point {\textbullet};
}
\draw (-4,-3) node {$(c_n)$};
\draw (0,-3) node {\textbullet};
\draw (0,-3.5) node {$10=c_n$};
\end{tikzpicture}
\caption{Ranges of the sequences $(a_n)$, $(b_n)$, and $(c_n)$ from Example \ref{eg:monotonesequences}. All three sequences are monotone and converge to 10.}
\label{fig:monotonesequences}
\end{figure}

\begin{example}\label{eg:monotonesequences}
Consider the sequences of real numbers $(a_n)$, $(b_n)$, and $(c_n)$ defined for each $n\in\N$ by
\begin{align}
	a_n&=10-\frac{1}{\sqrt{n}},\quad 
	b_n=10+\frac{1}{\sqrt{n}},
	\quad\textnormal{and}\quad
	c_n=10.
\end{align}
See Figures \ref{fig:monotonesequencesgraphs} and \ref{fig:monotonesequences}. For every $n\in\N$ we have
\begin{align}
	-\frac{1}{\sqrt{n}}\leq -\frac{1}{\sqrt{n+1}}
	\qquad\textnormal{and}\qquad 
	\frac{1}{\sqrt{n}}\geq \frac{1}{\sqrt{n+1}}.
\end{align}
Hence, $(a_n)$ is increasing while $(b_n)$ is decreasing.

For every $n\in\N$ we have both
\begin{align}
	c_n&=10\leq 10=c_{n+1}\qquad\textnormal{and}\qquad c_n=10\geq 10=c_{n+1}.
\end{align}
So, despite how strange it may sound, $(c_n)$ is both increasing and decreasing. The same is true for all constant sequences of real numbers.
\end{example}

The following lemma gives us an alternative way to think about monotonicity: Instead of considering consecutive terms like $x_n$ and $x_{n+1}$, we can compare two terms based on the order of their indices.

\begin{lemma}\label{lem:monotonesequence}
A sequence of real numbers $(x_n)$ is increasing if and only if for every pair of positive integers $j$ and $k$ we have
\begin{align}
	j<k \qquad \Longrightarrow \qquad x_j\leq x_k.
\end{align}
Likewise, a sequence of real numbers $(y_n)$ is decreasing if and only if for every pair of positive integers $j$ and $k$ we have
\begin{align}
	j<k \qquad \Longrightarrow \qquad y_j\geq y_k.
\end{align}
\end{lemma}

\begin{remark}
The proofs of the two cases in Lemma \ref{lem:monotonesequence} can be very similar, so I will prove the case for increasing sequences here but leave the case for decreasing sequences as an exercise. An induction argument helps with one of the implications.
\end{remark}

\begin{proof}[Proving the increasing case in Lemma \ref{lem:monotonesequence}]
Suppose for every pair of positive integers $j$ and $k$ we have
\begin{align}
	j<k \qquad \Longrightarrow \qquad x_j\leq x_k.
\end{align}
Since $n<n+1$ for every positive integer $n$, we have
\begin{align}
	x_n\leq x_{n+1}.
\end{align}
Hence, $(x_n)$ is increasing.

Now suppose $(x_n)$ is increasing and fix a positive integer $j$. To establish a base case for an induction argument, we have 
\begin{align}
	x_j\leq x_{j+1}
\end{align}
by the definition of an increasing sequence (Definition \ref{def:monotonesequence}).

To establish an inductive case for the same fixed $j$, suppose $k$ is a positive integer where $j<k$ and we have
\begin{align}
	x_j\leq x_k.
\end{align}
By the definition of an increasing sequence (Definition \ref{def:monotonesequence}), we have 
\begin{align}
	x_k\leq x_{k+1}
\end{align}
and since $j<k<k+1$, we also have
\begin{align}
	x_j\leq x_k \leq x_{k+1}.
\end{align}
Furthermore, since $j$ represents an arbitrary positive integer, for every pair of positive integers $j$ and $k$ we have
\begin{align}
	j<k \qquad \Longrightarrow \qquad x_j\leq x_k.
\end{align}
\end{proof}

Monotonicity and boundedness combine to ensure convergence. See Figure \ref{fig:monotoneboundedsequence}.

\begin{theorem}[Monotone and Bounded Convergence]
\label{thm:monotoneboundedsequence}
If $(x_n)$ is a monotone and bounded sequence of real numbers, then $(x_n)$ converges.  Furthermore, if $(x_n)$ is increasing and bounded, then
\begin{align}
	\lim_{n\to\infty}x_n=\sup\{x_n:n\in\N\}.
\end{align}
If $(x_n)$ is decreasing and bounded, then
\begin{align}
	\lim_{n\to\infty}x_n=\inf\{x_n:n\in\N\}.
\end{align}
\end{theorem}

\begin{scratch}\label{scr:monotoneboundedsequence}
The proofs of the two cases in Theorem \ref{thm:monotoneboundedsequence} can be very similar, so I will explore the case for increasing sequences here but leave the case for decreasing sequences as an exercise.

A subtlety worth noting is that the candidate for the limit of an increasing sequence---the supremum---does not exist unless the sequence is bounded above. This concern deserves attention. In general, we need to be careful and ensure the tools and concepts we use are justified. 

By assuming the sequence $(x_n)$ is bounded, its supremum $u$ is assured to exist by the Axiom of Completeness \ref{ax:axiomofcompleteness}. So, we can use $u$ as a candidate for the limit, as follows. Our goal is now to find a threshold $n_\varepsilon$ where 
\begin{align}\label{eqn:monotoneboundedsequencegoal}
	n\geq n_\varepsilon \quad\implies\quad |x_n-u|<\varepsilon.
\end{align}
Since the supremum of a sequence is arbitrarily close to the sequence, for every $\varepsilon>0$ there is an index $n_\varepsilon$ where 
\begin{align}
	|x_{n_\varepsilon}-u|<\varepsilon.
\end{align} 
From there, the assumption that $(x_n)$ is increasing and bounded above by $u$ combines with properties of absolute value and inequalities to give us our goal \eqref{eqn:monotoneboundedsequencegoal}.
\end{scratch}

\begin{figure}
\centering
\begin{tikzpicture}	
\draw (5,0) node {$\circ$};
\draw (5,-0.4) node {$u$};
\draw (4.6,0.01) node {.....};
\foreach \Point in {(0,0), (2.5,0), (3.33,0), (3.75, 0), (4,0), (4.17,0)}
{
    \node at \Point {\textbullet};
}
\draw (0,-0.5) node {$x_1$};
\draw (2.4,-0.5) node {$x_2$};
\draw (3.3,-0.53) node {$x_{n_\varepsilon}$};
\begin{scope}[thick, dashed, red] 
\draw (5,0) circle (2.3cm); 
\draw (5,0) circle (1.4cm); 
\draw (5,0) circle (0.7cm);
\end{scope}	
\draw[-,semithick, red] (5.05,0.05) -- (6.63,1.63);
\draw[red] (6.5,1.2) node {$\varepsilon$};
\end{tikzpicture}
\caption{An increasing sequence of real numbers $(x_n)$ bounded above by its supremum $u$ where the index $n_\varepsilon$ is a threshold responding to some distance $\varepsilon>0$. See Scratch Work \ref{scr:monotoneboundedsequence} for the proof of the increasing (and bounded) case of the Monotone and Bounded Convergence Theorem \ref{thm:monotoneboundedsequence}.}
\label{fig:monotoneboundedsequence}
\end{figure}

\begin{proof}[Proving the increasing case in Theorem \ref{thm:monotoneboundedsequence}]
Suppose $(x_n)$ is an increasing and bounded sequence of real numbers.  (See Figure \ref{fig:monotoneboundedsequence}.) Then the Axiom of Completeness \ref{ax:axiomofcompleteness} ensures the existence of the supremum
\begin{align}
	u=\sup\{x_n:n\in\N\}.
\end{align}
By the definition of supremum (Definition \ref{def:supremumacl}), $u$ is an upper bound for the range of $(x_n)$. Hence, for every $k\in\N$ we have 
\begin{align}\label{eqn:uupperbound}
	x_k\leq u.
\end{align}
Also by the definition of supremum (Definition \ref{def:supremumacl}) we have $u\acl (x_n)$. So by the definition of arbitrarily close (Definition \ref{def:acl}), for every $\varepsilon>0$ there is an index $n_\varepsilon$ where the term $x_{n_\varepsilon}$ satisfies
\begin{align}\label{eqn:uepsilon}
	|x_{n_\varepsilon}-u|<\varepsilon.
\end{align}
Inequality \eqref{eqn:uupperbound} ensures $0\leq u-x_k$ for every $k\in\N$, so by the definition of absolute value (Definition \ref{def:absolutevalue}) we have 
\begin{align}\label{eqn:uxkabsolutevalue}
	|x_k-u|=u-x_k.
\end{align}
Since $(x_n)$ is increasing, by Lemma \ref{lem:monotonesequence} for every positive integer $n$ where $n\geq n_\varepsilon$ we have 
\begin{align}\label{eqn:xnepsilonxn}
	x_{n_\varepsilon}\leq x_n.
\end{align}
So, by \eqref{eqn:uupperbound}, \eqref{eqn:uepsilon}, \eqref{eqn:uxkabsolutevalue},
\eqref{eqn:xnepsilonxn}, and properies of inequalities, for every positive integer $n$ where $n\geq n_\varepsilon$ we have 
\begin{align}
	|x_n-u|=u-x_n\leq u-x_{n_\varepsilon}=|x_{n_\varepsilon}-u|<\varepsilon.
\end{align}
Therefore, $n_\varepsilon$ is a threshold for the convergence $(x_n)$ and 
\begin{align}
	\lim_{n\to\infty}x_n&=u=\sup\{x_n:n\in\N\}.
\end{align}
\end{proof}

Out of necessity, the context of Theorem \ref{thm:monotoneboundedsequence} is limited to sequences in the real line $\R$ due to the inequalities (i.e., the order of the real line). 

The following theorem shows the convergence of a sequence in a Euclidean space $\R^m$ ensures the convergence of its components in $\R$, and vice versa. See Figure \ref{fig:componentwisesequences}.

\begin{figure}
\centering
\begin{tikzpicture}	
\draw (-2,1.5) node {$(\bfx_n)$};
\foreach \Point in {(1,2), (1.29,1.25), (1.42,1.11), (1.5,1.06)}
{
    \node at \Point {\textbullet};
}
	\draw (2,1) node {$\circ$};
	\draw (2.5,1) node {$\bfx$};
	\draw (0.5,2) node {$\bfx_1$};
	\draw (0.79,1.25) node {$\bfx_2$};
\foreach \Point in {(1.65,1.03), (1.75,1.01), (1.85,1)}
{
    \node at \Point {$.$};
}
\draw[blue=60!] (-2,0) node {$(x_{1,n})$};
\draw (2,0) node {$\circ$};
\draw (2,-0.5) node {$x_1$};
\draw[blue=60!] (1.75,0) node {$...$};
\draw[blue=60!] (1,-0.5) node {$x_{1,1}$};
\foreach \Point in {(1,0), (1.29,0), (1.42,0), (1.50,0)}
{
    \node[blue=60!] at \Point {\textbullet};
}
\draw[red] (-2,-1.5) node {$(x_{2,n})$};
\draw (2,-1.5) node {$\circ$};
\draw (2,-2) node {$x_2$};
\draw[red] (2.12,-1.5) node {$.$};
\draw[red] (3,-2) node {$x_{2,1}$};
\foreach \Point in {(3,-1.5), (2.25,-1.5)}
{
    \node[red] at \Point {\textbullet};
}
\end{tikzpicture}
\caption{A plot of the sequence $(\bfx_n)$ and point $\bfx$ in the plane $\R^2$ from Theorem \ref{thm:componentwisesequences} and \ref{eg:2Dlimitconverges} where $\lim_{n\to\infty}\bfx_n=\bfx$ along with their components in their own horizontal copies of the real line $\R$. The horizontal components $(x_{1,n})$ increase towards $x_1$ while the vertical components $(x_{2,n})$ decrease towards $x_2$.}
\label{fig:componentwisesequences}
\end{figure}

\begin{theorem}\label{thm:componentwisesequences}
Suppose $\bfx$ is a point and $(\bfx_n)$ is a sequence of points in $\R^m$ where for each index $n$ we have
\begin{align}
\bfx
&=\left[
	\begin{array}{c}
	x_1\\
	x_2\\
	\vdots\\
	x_m
\end{array}
\right]
\qquad\textnormal{and}\qquad
\bfx_n
=\left[
	\begin{array}{c}
	x_{1,n}\\
	x_{2,n}\\
	\vdots\\
	x_{m,n}
\end{array}
\right].
\end{align}
Then 
\begin{align}
	\lim_{n\to\infty}\bfx_n&=\bfx
\end{align}
if and only if for every $k=1,2,\ldots,m$ we have
\begin{align}
	\lim_{n\to\infty}x_{k,n}&=x_k.
\end{align}
\end{theorem}

\begin{remark}
Theorem \ref{thm:componentwisesequences} yields the following equations when either of its hypotheses are satisfied:
\begin{align}\label{eqn:swaplimit}
\lim_{n\to\infty}\bfx_n
&=\lim_{n\to\infty}
\left[
	\begin{array}{c}
	x_{1,n}\\
	x_{2,n}\\		
	\vdots\\
	x_{m,n}
	\end{array}
\right]	
=\left[
	\begin{array}{c}
	\displaystyle\lim_{n\to\infty}x_{1,n}\\
	\displaystyle\lim_{n\to\infty}x_{2,n}\\
	\vdots\\
	\displaystyle\lim_{n\to\infty}x_{m,n}
\end{array}
\right]	
=
\left[
	\begin{array}{c}
	x_1\\
	x_2\\
	\vdots\\
	x_m
	\end{array}
\right]
=
\bfx.
\end{align}
Note how in \eqref{eqn:swaplimit}, the limit symbol can be thought of as moving in and out of the brackets. Also, when we have the convergence
\begin{align}
	\lim_{n\to\infty}x_{k,n}&=x_k
\end{align}
for every $k=1,2,\ldots,m$, we can say $(\bfx_n)$ {\em converges componentwise}.
\end{remark}

\begin{scratch}
When $\lim_{n\to\infty}\bfx_n=\bfx$, there is a threshold $n_\varepsilon$ that ensures $\bfx_n$ is within $\varepsilon$ of $\bfx$ in all directions at the same time, including each component's direction. So, the same threshold $n_\varepsilon$ suffices for the  convergence in each component. See Figure \ref{fig:boxinadisk}.

On the other hand, when $\lim_{n\to\infty}x_{k,n}=x_k$ for each $k=1,\ldots,m$, then given any positive distance there are $m$ thresholds, one for each component. So given $\varepsilon>0$, each of the $m$ thresholds can be adapted to respond to a suitable proportion of $\varepsilon$. From there, the maximum of the set of $m$ adapted thresholds serves as a threshold to ensure $\lim_{n\to\infty}\bfx_n=\bfx$.
\end{scratch}

\begin{figure}
\centering
\begin{tikzpicture}
\draw[dashed,blue,fill=blue!15] (0,0) ellipse (2cm and 2cm);
	\draw (0,0) node {\textbullet};
	\draw (0,-0.4) node {$\bfx$};
	\draw[line width=1.5pt,-stealth](0,0)--(1,1.73);
	\draw (1.35,1.8) node {$\bfx_n$};
	\draw (1.15,0.6) node {$\bfx_n-\bfx$};
	\draw[dashed,black] (0,0) rectangle (1,1.73);
	\draw (0.6,-0.3) node {$u_1$};
	\draw (-0.4,0.86) node {$u_2$};		
	\draw[blue] (0,0) -- (-2,0);
	\draw[blue] (-1,-0.4) node {$\varepsilon$};
\end{tikzpicture}
\caption{In the plane $\R^2$, any point $\bfx_n$ within a positive distance $\varepsilon$ of the point $\bfx$ creates a vector $\bfx_n-\bfx$ whose  components $u_1=x_{1,n}-x_{1}$ and $u_2=x_{2,n}-x_{2}$ have lengths (absolute values) strictly less than $\varepsilon$. See the proof of Theorem \ref{thm:componentwisesequences}.}
\label{fig:boxinadisk}
\end{figure}

\begin{proof}[Proof of Theorem \ref{thm:componentwisesequences}]
Suppose $\lim_{n\to\infty}\bfx_n=\bfx$ where the convergence is in $\R^m$. Let $\varepsilon>0$. By the definition of limit and convergence (Definition \ref{def:sequentiallimit}), there is a threshold $n_\varepsilon$ where $n\geq n_\varepsilon$ ensures 
\begin{align}
	d_m(\bfx_n,\bfx)=\|\bfx_n-\bfx\|_m<\varepsilon.
\end{align}
The threshold $n_\varepsilon$ for the sequence $(\bfx_n)$ also serves as a suitable threshold for each component sequence $(x_{k,n})$. Indeed, since $0\leq x\leq y$ implies $\sqrt{x}\leq \sqrt{y}$, we have for each $k=1,\ldots,m$ and every $n\geq n_\varepsilon$ that
\begin{align}
	d_\R(x_{k,n},x_k)
	&=|x_{k,n}-x_k|\\
	&=\sqrt{(x_{k,n}-x_k)^2}\\
	&\leq\sqrt{\sum_{j=1}^m(x_{j,n}-x_j)^2}\\
	&=\|\bfx_n-\bfx\|_m\\
	&<\varepsilon.
\end{align}
See Figure \ref{fig:boxinadisk}. Hence, for every $k=1,2,\ldots,m$ we have
\begin{align}
	\lim_{n\to\infty}x_{k,n}&=x_k.
\end{align}

Now suppose for every $k=1,2,\ldots,m$ we have
\begin{align}
	\lim_{n\to\infty}x_{k,n}&=x_k.
\end{align}
Let $\varepsilon>0$ and note $\varepsilon/\sqrt{m}>0$. In response, there is a threshold $n_k$ for each $k=1,2,\ldots,m$ where for every $n\geq n_k$ we have
\begin{align}
	d_\R(x_{k,n},x_k)
	&=|x_{k,n}-x_k|
	<\frac{\varepsilon}{\sqrt{m}}.
	\label{eqn:componentbound}
\end{align}
Define $n_\varepsilon=\max\{n_1,n_2,\ldots,n_m\}$. Then for every $n\geq n_\varepsilon$ we have $n\geq n_k$ for each $k=1,2,\ldots,m$, thus inequality \eqref{eqn:componentbound} holds for each $k$. Therefore, since $0\leq x< y$ implies $\sqrt{x}< \sqrt{y}$, we also have
\begin{align}
	d_m(\bfx_n,\bfx)
	&=\|\bfx_n-\bfx\|_m\\
	&=\sqrt{\sum_{j=1}^m(x_{k,n}-x_k)^2}\\
	&<\sqrt{\sum_{j=1}^m \left(\frac{\varepsilon}{\sqrt{m}}\right)^2}\\
	&=\sqrt{m\left(\frac{\varepsilon^2}{m}\right)}\\
	&=\varepsilon.
\end{align}
Hence, $\lim_{n\to\infty}\bfx_n=\bfx$.
\end{proof}

{\em Subsequences} have already made an appearance, but a formal definition is in order so we can start proving results about their convergence.

\begin{definition}\label{def:subsequence}
Let $(\bfx_n)$ be a sequence of points in $\R^m$ and let $(n_k)$  be a strictly increasing sequence of positive integers. That is, 
\begin{align}
	n_1<n_2<n_3<\cdots
\end{align}
The sequence $(\bfx_{n_k})$ is called a {\em subsequence} of $(\bfx_n)$. 
\end{definition}

\begin{remark}\label{rmk:subsequencenotation}
The notation used to define subsequences in Definition \ref{def:subsequence} can be confusing, especially with the double subscripts. However, it ensures subsequences comprise only terms from the original sequence and the terms stay in order.
\end{remark}
 
\begin{definition}\label{def:subsequentiallimits}
A point $\bfy$ is a \textit{subsequential limit} of a sequence $(\bfx_n)$ if $(\bfx_n)$ has a subsequence whose limit is $\bfy$. The set of subsequential limits of $(\bfx_n)$ is denoted by $\Slim((\bfx_n))$.
\end{definition}

\begin{example}\label{eg:nolimitsubsequences}
Consider the sequence $(\bfz_n)$ from Example \ref{eg:nolimit} in $\R^2$ given by 
\begin{align}\label{eqn:nolimitsubsequences}
\bfz_n=
\begin{cases}
\left[
	\begin{array}{c}
		2+(2/n)	\\
		1	
	\end{array}
\right],\quad\textnormal{if }n \textnormal{ is odd},\\ \\
\left[
	\begin{array}{c}
		-1	\\
		3-(2/n)
	\end{array}
\right],\quad\textnormal{if }n \textnormal{ is even}.
\end{cases}
\end{align}

\begin{figure}
\centering
\begin{tikzpicture}
\draw (-3,2) node {$(\bfz_n)$};
\foreach \Point in {(4,1), (-1,2), (2.67,1), (-1,2.5)}
{
    \node at \Point {\textbullet};
}
	\draw (2,1) node {$\circ$};
	\draw (2,0.5) node {$\bfu$};
	\draw (2.29,1) node {$...$};
	\draw (4.1,0.48) node {$\bfz_1$};
	\draw (2.77,0.48) node {$\bfz_3$};
\foreach \Point in {(-1,2.68), (-1,2.78), (-1,2.88)}
{
    \node at \Point {$.$};
}
	\draw (-1,3) node {$\circ$};
	\draw (-0.53,3) node {$\bfv$};
	\draw (-0.5,1.98) node {$\bfz_2$};
	\draw (-0.5,2.48) node {$\bfz_4$};
\end{tikzpicture}
\caption{A plot of the sequence $(\bfz_n)$ along with $\bfu$ and $\bfv$ from Example \ref{eg:nolimitsubsequences}.}
\label{fig:nolimitcopy}
\end{figure}
See Figure \ref{fig:nolimitcopy}. Also, consider the points $\bfu$ and $\bfv$ given by
\begin{align}
	\bfu=
\left[
	\begin{array}{c}
	2\\
	1	
\end{array}
\right]
\qquad\textnormal{and}\qquad
	\bfv=
\left[
	\begin{array}{c}
	-1\\
	3
\end{array}
\right],
\end{align}
as well as the subsequences $(\bfz_{2k-1}), (\bfz_{2k})$, and $(\bfz_{3k})$. We have 
\begin{align}
\bfz_{2k-1}&=
\left[
	\begin{array}{c}
		2+(2/(2k-1))	\\
		1	
	\end{array}
\right],\quad
\bfz_{2k}=
\left[
	\begin{array}{c}
		-1	\\
		3-(1/k)
	\end{array}
\right]
\quad\textnormal{and},\\	
\bfz_{3k}&=
\begin{cases}
\left[
	\begin{array}{c}
		2+(2/3k)	\\
		1	
	\end{array}
\right],\quad\textnormal{if }k \textnormal{ is odd},\\ \\
\left[
	\begin{array}{c}
		-1	\\
		3-(2/3k)
	\end{array}
\right],\quad\textnormal{if }k \textnormal{ is even}.
\end{cases}
\end{align}
See Figure \ref{fig:nolimitsubsequences}. The subsequence of odd indices $(\bfz_{2k-1})$ converges to $\bfu$ with a threshold $s_\varepsilon$ satisfying 
\begin{align}
	s_\varepsilon	&>\frac{1}{\varepsilon}+\frac{1}{2}.
\end{align}
The subsequence of even indices $(\bfz_{2k})$ converges to $\bfv$ with a threshold $t_\varepsilon$ satisfying 
\begin{align}
	t_\varepsilon	&>\frac{1}{\varepsilon}.
\end{align}
The subsequence $(\bfz_{3k})$ diverges, as will be proven later. 
\end{example}

\begin{figure}
\centering
\begin{tikzpicture}
	\draw (2,1) node {$\circ$};
	\draw (2,0.5) node {$\bfu$};
	\draw (-1,3) node {$\circ$};
	\draw (-0.53,3) node {$\bfv$};
\begin{scope}[red]
\foreach \Point in {(4,1), (2.67,1)}
{
    \node at \Point {\textbullet};
}

	\draw (2.29,1) node {$...$};
	\draw (4.1,0.48) node {$\bfz_1$};
	\draw (2.77,0.48) node {$\bfz_3$};
	\draw (3,1.75) node {$(\bfz_{2k-1})$};
\end{scope}
\begin{scope}[blue]
\foreach \Point in {(-1,2.68), (-1,2.78), (-1,2.88)}
{
    \node at \Point {$.$};
}
\foreach \Point in {(-1,2), (-1,2.5)}
{
    \node at \Point {\textbullet};
}
	\draw (-0.5,1.98) node {$\bfz_2$};
	\draw (-0.5,2.48) node {$\bfz_4$};
	\draw (-2,2.4) node {$(\bfz_{2k})$};
\end{scope}
\end{tikzpicture}
\caption{A plot of the subsequences $(\bfz_{2k-1})$ in red and $(\bfz_{2k})$ in blue, along with $\bfu$ and $\bfv$ from Example \ref{eg:nolimitsubsequences}.}
\label{fig:nolimitsubsequences}
\end{figure}

Ultimately, we have $\Slim{(\bfz_n)}=\{\bfu,\bfv\}$. The thresholds $s_\varepsilon$ and $t_\varepsilon$ can be used to show $\bfu$ and $\bfv$ belong to $\Slim{(\bfz_n)}$ by proving
\begin{align}
	\lim_{k\to\infty}\bfz_{2k-1}&=\bfu\quad\textnormal{and}\quad
\lim_{k\to\infty}\bfz_{2k}=\bfv,\quad\textnormal{respectively}.
\end{align}
But why are no other points in $\Slim{(\bfz_n)}$? Every point in $\R^2$ aside from $\bfu$ and $\bfv$ is either away from $(\bfz_n)$ or is one term from $(\bfz_n)$ and away from all the others. This takes some effort to prove and is left as an exercise.

The following theorem solidifies what I believe is an intuitive idea.

\begin{theorem}\label{thm:allsubsequences}
Every subsequence of a convergent sequence in $\R^m$ converges to the same limit as the original sequence.
\end{theorem}

\begin{proof}
Suppose $(\bfx_n)$ is a convergent sequence of points in $\R^m$ whose limit is $\bfy$, and suppose $(\bfx_{n_k})$ is a subsequence of $(\bfx_n)$. Let $\varepsilon>0$. By the definition of limit and convergence (Definition \ref{def:sequentiallimit}), there is a threshold $j_\varepsilon$ where $n\geq j_\varepsilon$ implies
\begin{align}
	d_m(\bfx_n,\bfy)=\|\bfx_n-\bfy\|_m
	<\varepsilon.
\end{align}
Then $j_\varepsilon$ is also a suitable threshold for the subsequence $(\bfx_{n_k})$, as follows. Since the indices of $(\bfx_{n_k})$ form a strictly increasing sequence by definition of subsequence (Definition \ref{def:subsequence}), we have $n_k\geq k$ for every positive integer $k$. Hence, we have $n_k\geq k\geq j_\varepsilon$ implies
\begin{align}
	d_m(\bfx_{n_k},\bfy)=\|\bfx_{n_k}-\bfy\|_m <\varepsilon.
\end{align}
Therefore, $(\bfx_{n_k})$ converges to $\bfy$.
\end{proof}

The following corollary of Theorem \ref{thm:allsubsequences} formalizes an idea from calculus.

\begin{corollary}\label{cor:powersofconstant}
Suppose $0\leq c<1$. Then $\lim_{n\to\infty}=0$.
\end{corollary}

\begin{remark}
Corollary \ref{cor:powersofconstant} does not follow from Corollary \ref{cor:orderlimits} which provides bounds on where limits could be (between $0$ and $1$) but not enough information to determine the limit precisely. Theorem \ref{thm:allsubsequences} provides a way to do so by taking advantage of a particular subsequence as well as properties unique to zero.
\end{remark}

\begin{proof}
Suppose $0\leq c<1$. Then for every positive integer $n$ we have 
\begin{align}\label{eqn:powersofconstant}
	c^n\geq c^n\cdot c=c^{n+1}.
\end{align}
So by Corollary \ref{cor:orderlimits}, $(c^n)$ is a decreasing sequence bounded below by $0$. By the Monotone and Bounded Convergence Theorem \ref{thm:monotoneboundedsequence}, $(c^n)$ converges to some real number $\ell$. Then the subsequence $(c^{2n})$ also converges to $\ell$ by Theorem \ref{thm:allsubsequences}. By Theorem \ref{thm:algebraiclimitsreal} we have 
\begin{align}
	\ell&=\lim_{n\to\infty}c^{2n}
	=\lim_{n\to\infty}(c^n\cdot c^n)				=\left(\lim_{n\to\infty} c^n\right)\left(\lim_{n\to\infty}c^n\right)
	=\ell^2.
\end{align}
Hence, either $\ell=0$ or $\ell=1$. However, by Lemma \ref{lem:monotonesequence} and  Corollary \ref{cor:orderlimits} we have 
\begin{align}
	0\leq \ell=\lim_{n\to\infty}c^n\leq c<1.
\end{align}
So, $0\leq \ell <1$ and we must have $\lim_{n\to\infty}c^n=\ell=0$.
\end{proof}

The contrapositions of Theorems \ref{thm:convergentsequencebounded} and \ref{thm:allsubsequences} provide conditions to ensure divergence. As such, the proof of the following theorem is omitted.

\begin{theorem}[Divergence Criteria]
\label{thm:divergencecriteria}
Suppose $(\bfx_n)\subseteq\R^m$.
\begin{enumerate}
	\item If $(\bfx_n)$ is unbounded, then $(\bfx_n)$ diverges.
	\item If $(\bfx_n)$ has subsequences with different limits, then $(\bfx_n)$ diverges.
	\item If $(\bfx_n)$ has a divergent subsequence, then $(\bfx_n)$ diverges.
\end{enumerate}
\end{theorem}

\begin{example}\label{eg:divergentexamples}
The sequences $(b_n)$, $(c_n)$, and $(\bfz_n)$ from Examples \ref{eg:twocountablesetssequences}, \ref{eg:alternatingtoinfinity}, and \ref{eg:nolimit} diverge.
\end{example}

\begin{proof}[Proofs for Example \ref{eg:divergentexamples}] 
Since $c_n=(-1)^nn$ for each positive integer $n$, we have $|c_n|=n$. So by the Archimedean Property \ref{thm:archimedeanproperty}, $(c_n)$ is unbounded. Hence, by part (i) of the Divergence Criteria \ref{thm:divergencecriteria}, $(c_n)$ diverges.

The subsequences $(b_{2k-1})$ and $(b_{2k})$ satisfy
\begin{align}
	b_{2k-1}&=-\left(2-\frac{1}{\sqrt{2k-1}}\right)\qquad\textnormal{and}\qquad
	b_{2k}=2-\frac{1}{\sqrt{2k}}
\end{align}
for every positive integer $k$. Hence, 
\begin{align}
	\lim_{k\to\infty}b_{2k-1}&=-2\qquad\textnormal{and}\qquad
	\lim_{k\to\infty}b_{2k}=2.
\end{align}
Therefore, $(b_n)$ diverges by part (ii) of the Divergence Criteria \ref{thm:divergencecriteria}.

A shown in Example \ref{eg:nolimitsubsequences}, the subsequences $(\bfz_{2k-1})$ and $(\bfz_{2k})$ satisfy 
\begin{align}
	\lim_{k\to\infty}\bfz_{2k-1}&=\bfu=
\left[
	\begin{array}{c}
	2\\
	1	
\end{array}
\right]
\qquad\textnormal{and}\qquad
	\lim_{k\to\infty}\bfz_{2k}=\bfv=
\left[
	\begin{array}{c}
	-1\\
	3
\end{array}
\right].
\end{align}
Therefore, $(\bfz_n)$ diverges by part (ii) of the Divergence Criteria \ref{thm:divergencecriteria}.
\end{proof}

The following section further develops and proves some significant results about ensuring convergence of sequences.

\vs
\section*{Exercises}
\setcounter{theorem}{0}

Exercises are for play: Do scratch work, draw stuff, and make mistakes---make {\em lots} of mistakes---before worrying about writing proofs. {\em Have fun!}

%

\vs
\section{Some big theorems}
\label{sec:somebigtheorems}

A fundamental consequence---perhaps {\em the} fundamental consequence---of the completeness of the real line and Euclidean spaces is that completeness ensures the existence of suitable candidates for limits of sequences. This section explores more ways the convergence of sequences and existence of important points can be ensured by building upon results we have proven so far.

To motivate the first result in this section, Example \ref{eg:emptyintersection} shows us that intersections of nonempty and overlapping sets can be empty.

\begin{example}\label{eg:emptyintersection}
Consider the intervals defined for each $n\in\N$ by
\begin{align}
(0,1/n]=\{x\in\R:0<x\leq 1/n\}.
\end{align}
We have $\bigcap_{n=1}^\infty(0,1/n]=\varnothing$ (the intersection of all these intervals is empty). 
\end{example}

\begin{scratch}\label{scr:emptyintersection}
To prove the intersection in Example \ref{eg:emptyintersection} is empty, we should show that no real number is in the intersection. Since a given point needs to be in \textit{every} set in order to be in the intersection, we only need to find \textit{one} set where a given point doesn't belong. 

If $x\leq 0$, then $x$ is not in any of the intervals, so it's not in the intersection. But what if $x>0$? Things are trickier, but we can handle it. For instance, $x=1/100$ is in the first 100 intervals, but not the 101st since $1/101<1/100$ and we have $1/100\notin(0,1/101]$. It might help to note 
\begin{align}
(0,1/101]&=\{y\in\R:0<y\leq 1/101\}.
\end{align}
The Corollary of the Archimedean Property \ref{cor:archimedeanproperty} allows us to apply this type of argument for any positive number. 
\end{scratch}

Time for a proof, which can be done in two cases.

\begin{proof}[Proof of Example \ref{eg:emptyintersection}]

\noindent\underline{Case (i)}: Suppose $x\leq 0$. Then $x\notin (0,1]$. So,
\begin{align}
x\notin\bigcap_{n=1}^\infty(0,1/n].
\end{align}
\noindent\underline{Case (ii)}: Suppose $x>0$. By Corollary \ref{cor:archimedeanproperty}, there is an $n_x\in\N$ where $0<1/n_x<x$, so $x$ is too large to be in the interval $(0,1/n_x]$. Hence, 
\begin{align}
x\notin\bigcap_{n=1}^\infty(0,1/n].
\end{align}
So, whether $x\leq 0$ or $x>0$, we have $x$ is not in the intersection. Therefore,
\begin{align}
	\bigcap_{n=1}^\infty(0,1/n]=\varnothing.
\end{align}
\end{proof}

Example \ref{eg:emptyintersection} holds despite the fact that the intervals are \textit{nested}: Each interval contains the next one. 

\begin{definition}\label{def:nested}
A sequence $(S_n)$ of sets in $\R^m$ is {\em nested} if for every positive integer $n$ we have $S_n\supseteq S_{n+1}$.
\end{definition}

If we add a couple of conditions to the nested property, we can ensure the intersection of intervals is nonempty.

\begin{theorem}[NCBI Property\footnote{NCBI stands for ``nested, closed, bounded intervals''.}]\label{thm:nestedintervals}
Every nested sequence of closed and bounded intervals has a nonempty intersection.
\end{theorem}

To show the intersection is nonempty, we need to find just one point that is in all of the intervals. The Axiom of Completeness \ref{ax:axiomofcompleteness} ensures of the existence of such a point, as long as we have the right kind of set to invoke the axiom. See Figure \ref{fig:nestedclosedboundedintervals}.

\begin{proof}
Suppose for each positive integer $n$ we have 
\begin{align}
	[a_n,b_n]&=\{x\in\R:a_n\leq x\leq b_n\}\quad\textnormal{and}\label{eqn:intervalsclosedbounded}\\
	[a_1,b_1]&\supseteq	[a_2,b_2] \supseteq	[a_3,b_3] \supseteq \cdots.\label{eqn:intervalsnested}
\end{align}
Line \eqref{eqn:intervalsclosedbounded} ensures the intervals are closed and bounded while line \eqref{eqn:intervalsnested} ensures they form a nested sequence. 

\begin{figure}
\centering
\begin{tikzpicture}
\draw (-2,0) node {$[a_1,b_1]$};
	\draw[-,semithick] (0,0) -- (6,0);
	\draw (0,0) node {$[$};
	\draw (6,0) node {$]$};
	\draw (0,-0.5) node {$a_1$};
	\draw (6,-0.5) node {$b_1$};
\draw (-2,-1.5) node {$[a_2,b_2]$};
	\draw[-,semithick] (2,-1.5) -- (5,-1.5);
	\draw (2,-1.5) node {$[$};
	\draw (5,-1.5) node {$]$};
	\draw (2,-2) node {$a_2$};
	\draw (5,-2) node {$b_2$};
\draw (-2,-3) node {$[a_3,b_3]$};	
	\draw[-,semithick] (3,-3) -- (5,-3);
	\draw (3,-3) node {$[$};
	\draw (5,-3) node {$]$};
	\draw (3,-3.5) node {$a_3$};
	\draw (5,-3.5) node {$b_3$}; 
\draw (-2,-4.5) node {$[a_4,b_4]$};	
	\draw[-,semithick] (3.5,-4.5) -- (4.5,-4.5);
	\draw (3.5,-4.5) node {$[$};
	\draw (4.5,-4.5) node {$]$};
	\draw (3.5,-5) node {$a_4$};
	\draw (4.5,-5) node {$b_4$};
\draw (-2,-5.75) node {$\vdots$};	
\draw (4,-5.75) node {$\vdots$};
\draw (-2,-7) node {$L$};
\draw (4.25,-7) node {$\circ$};
\draw (4.25,-7.45) node {$u$};
\draw (3.88,-7) node {$...$};
\draw (0,-7.5) node {$a_1$};
\draw (2,-7.5) node {$a_2$};
\draw (3,-7.5) node {$a_3$};
\draw (3.5,-7.5) node {$a_4$};
\foreach \Point in {(0,-7), (2,-7), (3,-7), (3.5,-7)}
{
    \node at \Point {\textbullet};
}
\end{tikzpicture}
\caption{A sequence of nested, closed, bounded intervals to accompany the NCBI Property (Theorem \ref{thm:nestedintervals}) along with the set $L$ comprising the left endpoints of the intervals and its supremum $u$. This supremum $u$ is in the intersection of the intervals but not necessarily in $L$.}
\label{fig:nestedclosedboundedintervals}
\end{figure}

Let $L$ denote the set of left-endpoints of intervals $[a_n,b_n]$. So,
\begin{align}
	L&=\{a_n:n\in\N\}.
\end{align}
Since the first interval $[a_1,b_1]$ contains all the others, we have $a_n\leq b_1$ for every $n\in\N$. Thus, $L$ is bounded above by $b_1$. Since $a_1\in L$, $L$ is nonempty. By the Axiom of Completeness \ref{ax:axiomofcompleteness}, $u=\sup L$ exists. See Figure \ref{fig:nestedclosedboundedintervals}.

It turns out every $b_n$ is an upper bound for $L$.  If not, there would be some $a_k\in L$ where $b_n<a_k$, but this would contradict the nested property in line \eqref{eqn:intervalsnested}. Now, since $u=\sup L$ is the least upper bound of $L$ by Theorem \ref{thm:equivalentaxiomofcompleteness}, we have $u\leq b_n$ for every $n\in\N$. Since $u=\sup L$ is an upper bound for $L$ by Definition \ref{def:supremumacl}, we also have $a_n\leq u$ for every $n\in\N$. Hence, for every $n\in\N$ we have
\begin{align}
	a_n\leq u \leq b_n.
\end{align}
Therefore, $u\in [a_n,b_n]$ for every $n\in\N$ and so
\begin{align}
	\bigcap_{n=1}^\infty[a_n,b_n]\neq\varnothing.
\end{align}
\end{proof}

The NCBI Property (Theorem \ref{thm:nestedintervals}) generalizes to higher dimensions when we consider {\em boxes} in place of intervals: In the real line $\R$, a box is an interval; in the plane $\R^2$, a box is a rectangle; in $\R^3$, a box is a rectangular parallelepiped. The result appears as the NCBB\footnote{NCBB stands for ``nested, closed, bounded boxes''.} Property (Theorem \ref{thm:nestedboxes}) below.

\begin{definition}\label{def:boxes}
A set $B\subseteq\R^m$ is a {\em box} if for each $j=1,\ldots,m$ there is an interval $I_j$ where   
\begin{align}
B&=I_1\times I_2\times\cdots\times I_m\\
&=\left\{\bfx =
	\left[
		\begin{array}{c}
		x_1\\
		x_2\\
		\vdots\\
		x_m
		\end{array}
	\right]: x_j\in I_j\textnormal{ for each } j=1,\ldots,m
\right\}.
\end{align}
Furthermore, $B$ is a {\em closed box} if each $I_j$ is a closed interval. Similarly, $B$ is an {\em open box} if each $I_j$ is an open interval.  
\end{definition}

\begin{notation}\label{not:sequenceofboxes}
Since the NCBB Property (Theorem \ref{thm:nestedboxes}) deals with a sequence of boxes, some carefully chosen notation will help us be precise. Given a sequence of boxes $(B_n)$, for each positive integer $n$ and each $j=1,\ldots,m$, let $I_{j,n}$ be an interval where 
\begin{align} 
B_n&=I_{1,n}\times I_{2,n}\times\cdots\times I_{m,n}\\
&=\left\{\bfx =
	\left[
		\begin{array}{c}
		x_1\\
		x_2\\
		\vdots\\
		x_m
		\end{array}
	\right]: x_j\in I_{j,n}\textnormal{ for each } j=1,\ldots,m
\right\}.
\end{align}
The intersection of the boxes amounts to the cross product of the intersections of their component intervals:
\begin{align} 
\bigcap_{n=1}^\infty B_n&=
\bigcap_{n=1}^\infty I_{1,n}\times 
\bigcap_{n=1}^\infty I_{2,n}
\times\cdots\times 
\bigcap_{n=1}^\infty I_{m,n}\\
&=\left\{\bfx =
	\left[
		\begin{array}{c}
		x_1\\
		x_2\\
		\vdots\\
		x_m
		\end{array}
	\right]: x_j\in \bigcap_{n=1}^\infty I_{j,n}\textnormal{ for each } j=1,\ldots,m
\right\}.
\end{align}
Similarly, containment in $\R^m$ amounts to containment in $\R$ of each component. That is,
\begin{align}
	B_n&\supseteq B_{n+1} \quad\Longleftrightarrow\quad
	I_{j,n}\supseteq I_{j,n+1} \textnormal{ for each } j=1,\ldots,m,
\end{align}
so $(B_n)$ is nested if and only if $(I_{j,n})$ is nested for each $j=1,\ldots,m$.
\end{notation}

\begin{theorem}[NCBB Property]\label{thm:nestedboxes}
Every nested sequence of closed and bounded boxes in $\R^m$ has a nonempty intersection.
\end{theorem}

\begin{scratch}\label{scr:nestedboxes}
My approach is motivated by Theorem \ref{thm:componentwisesequences} where convergence of a sequence in a Euclidean space $\R^m$ is ensured by the convergence in the real line $\R$ of each of its components (and vice versa). Here, the NCBI Property (Theorem \ref{thm:nestedintervals}) applies to the closed intervals that define the closed boxes in question, allowing us to find a point in the intersection of these boxes by controlling their component intervals. 
\end{scratch}

\begin{proof}
Suppose $(B_n)$ is a nested sequence of closed and bounded boxes in $\R^m$ whose component intervals are given by $I_{j,n}$ for each positive integer $n$ and each $j=1,\ldots,m$. Then for each $j=1,\ldots,m$, the  sequence $(I_{j,n})$ is a nested sequence of closed and bounded intervals. So by the NCBI Property (Theorem \ref{thm:nestedintervals}), we have $\cap_{n=1}^\infty I_{j,n}$ is nonempty for each $j=1,\ldots,m$. Thus, for each $j=1,\ldots,m$ there is a real number $u_j$ in $\cap_{n=1}^\infty I_{j,n}$. Now define
\begin{align}
	\bfu &=
	\left[
		\begin{array}{c}
		u_1\\
		u_2\\
		\vdots\\
		u_m
		\end{array}
	\right].
\end{align}
Then $\bfu\in\cap_{n=1}^\infty B_n$, so 
$\cap_{n=1}^\infty B_n$ is nonempty.
\end{proof}

\begin{prob}\label{prob:disksinsidesquares}
Draw a big disk along with a smaller square contained completely inside the big disk. Then, draw a disk small enough to be contained completely within the square. Can you see how this process can be repeated indefinitely, no matter how small the disks and squares are? 

This problem is designed to help you connect the definition of limit and convergence of sequences (Definition \ref{def:sequentiallimit}), which involves spheres and disks in the form of $\varepsilon$-neighborhoods, to boxes in various Euclidean spaces. Boxes and convergence combine in the proof of the Bolzano-Weierstrass Theorem \ref{thm:bolzanoweierstrass}. Compare and contrast Figures \ref{fig:threeneighborhoods}, \ref{fig:bolzanoweierstrassscratch}, and \ref{fig:bolzanoweierstrassproof1}.
\end{prob}

The following theorem is a major result in analysis which guarantees the existence of a convergent subsequence for a given bounded sequence. The proof is considerably longer than the proofs we've seen up to this point, so give yourself time to go over it in detail.

\begin{theorem}[Bolzano-Weierstrass Theorem]\label{thm:bolzanoweierstrass}
Every bounded sequence in $\R^m$ has a convergent subsequence.
\end{theorem}

\begin{scratch}
The Bolzano-Weierstrass Theorem requires a bit of work to prove. All we have to start with is a bounded sequence in $\R^m$, so we need to ensure the existence of a suitable subsequence as well as a suitable candidate for the limit of the subsequence. From there, we need to prove the candidate really is the limit of the subsequence. The proof below takes advantage of results we have built up so far. In particular, the NCBB Property (Theorem \ref{thm:nestedboxes}) ensures the existence of a point which serves as a suitable candidate for the limit, but that will follow from a carefully constructed subsequence that stems from considering a carefully chosen sequence of boxes constructed in a recursive manner: After an initial set of steps, the process is repeated indefinitely. 

The idea is to take a box that's big enough to contain the bounded sequence, then recursively bisect boxes along each of their sides to produce smaller and smaller boxes. In the real line $\R$, bisecting an interval produces two intervals with half the original length. In the plane $\R^2$, bisecting a square along each of its sides produces $2^2=4$ squares with half the original side length. (See Figures \ref{fig:bolzanoweierstrassscratch} and \ref{fig:bolzanoweierstrassproof1}.) In a Euclidean space $\R^m$, bisecting a box along each of its sides produces $2^m$ boxes whose side lengths are half the original side lengths, respectively.

At each step, at least one of the smaller boxes must contain an infinite number of the terms of the sequence. From there, we can choose one such smaller box and one term in the box to add to a subsequence, then repeat. By bisecting with each new step, we ensure our chosen subsequence converges. This effect reminds me of the Squeeze Theorem \ref{thm:squeezesequence}, but we make use of other results instead.   
\end{scratch}

\begin{figure}
\centering
\begin{tikzpicture}	 
\draw[fill=blue!15] (-4,-4) rectangle (4,4);
	\draw[fill=white] (0,0) circle (0.08cm);
	\draw (0,-0.4) node {$\mathbf{0}$};
	\draw (-2,2.5) node {$\bullet$};
	\draw (-2,2.1) node {$\bfx_1$};	
	\draw (-1,2.5) node {$\bullet$};	
	\draw (-1,2.1) node {$\bfx_3$};	
	\draw (-0.1,2.6) node {$\bullet$};
	\draw (-0.1,2.2) node {$\bfx_5$};
	\draw (0.6,2.8) node {$\bullet$};
	\draw (1.05,3.05) node {$\bullet$};		
	\draw (1.2,3.2) node {$\cdot$};
	\draw (1.25,3.25) node {$\cdot$};
	\draw (1.3,3.3) node {$\cdot$};
\draw[fill=white] (1.41,3.41) circle (0.08cm);
	\draw (1.8,3.41) node {$\bfu$};	
		\draw (-2,-2.5) node {$\bullet$};
	\draw (-2,-2.1) node {$\bfx_2$};	
	\draw (-1,-2.5) node {$\bullet$};	
	\draw (-1,-2.1) node {$\bfx_4$};	
	\draw (-0.1,-2.6) node {$\bullet$};
	\draw (-0.1,-2.2) node {$\bfx_6$};
	\draw (0.6,-2.8) node {$\bullet$};
	\draw (1.05,-3.05) node {$\bullet$};		
	\draw (1.2,-3.2) node {$\cdot$};
	\draw (1.25,-3.25) node {$\cdot$};
	\draw (1.3,-3.3) node {$\cdot$};
\draw[fill=white] (1.41,-3.41) circle (0.08cm);
	\draw (1.8,-3.41) node {$\bfv$};
\end{tikzpicture}
\caption{A bounded sequence $(\bfx_n)$ in the plane $\R^2$ contained in a closed square with the origin $\mathbf{0}$ in the center. The points $\bfu$ and $\bfv$ are arbitrarily close to the sequence $(\bfx_n)$.}
\label{fig:bolzanoweierstrassscratch}
\end{figure}

\begin{proof}[Proof of the Bolzano-Weierstrass Theorem \ref{thm:bolzanoweierstrass}]
Suppose $(\bfx_n)$ is a bounded sequence of points in $\R^m$. Let $B_1$ be a closed box in $\R^m$ defined by 
\begin{align}
	B_1&=[-b,b]\times\cdots\times[-b,b]
\end{align}
where $b>0$ is large enough to contain the range of the sequence $(\bfx_n)$, similar to Figure \ref{fig:bolzanoweierstrassscratch}.

Choose a term $\bfx_{n_1}$ to serve as the first term of our desired subsequence. Next, consider the $2^m$ distinct closed boxes whose component intervals are of the form $[0,b]$ or $[-b,0]$ and whose union is $B_1$. At least one of these smaller boxes contains an infinite number of the terms of $(\bfx_n)$, so choose one such box and name it $B_2$. Note that the length of each side of $B_2$ is $b$, half the length of each side of $B_1$. From there, choose a second term $\bfx_{n_2}$ from the sequence where $n_1<n_2$ and $\bfx_{n_2}$ is in $B_2$. See Figure \ref{fig:bolzanoweierstrassproof1}.

\begin{figure}
\centering
\begin{tikzpicture}	 
\draw[fill=blue!15] (-4,-4) rectangle (4,4);
\draw[fill=blue!15] (0,0) rectangle (4,4);
	\draw[fill=white] (0,0) circle (0.08cm);
	\draw (0,-0.4) node {$\mathbf{0}$};
	\draw (3.5,-3.5) node {$B_1$};
	\draw (3.5,0.5) node {$B_2$};	
	\draw (-2,2.5) node {$\bullet$};
	\draw (-1,2.5) node {$\bullet$};	
	\draw (-0.1,2.6) node {$\bullet$};
	\draw (0.6,2.8) node {$\bullet$};
	\draw (0.6,2.4) node {$\bfx_{n_2}$};		
	\draw (1.05,3.05) node {$\bullet$};		
	\draw (1.2,3.2) node {$\cdot$};
	\draw (1.25,3.25) node {$\cdot$};
	\draw (1.3,3.3) node {$\cdot$};
\draw[fill=white] (1.41,3.41) circle (0.08cm);
	\draw (1.8,3.41) node {$\bfu$};	
		\draw (-2,-2.5) node {$\bullet$};
	\draw (-2,-2.1) node {$\bfx_{n_1}$};	
	\draw (-1,-2.5) node {$\bullet$};	
	\draw (-0.1,-2.6) node {$\bullet$};
	\draw (0.6,-2.8) node {$\bullet$};
	\draw (1.05,-3.05) node {$\bullet$};		
	\draw (1.2,-3.2) node {$\cdot$};
	\draw (1.25,-3.25) node {$\cdot$};
	\draw (1.3,-3.3) node {$\cdot$};
\draw[fill=white] (1.41,-3.41) circle (0.08cm);
	\draw (1.8,-3.41) node {$\bfv$};
\end{tikzpicture}
\caption{A bounded sequence $(\bfx_n)$ in the plane $\R^2$ contained in a closed square $B_1$ with the origin $\mathbf{0}$ in the center. The terms $\bfx_{n_1}$ and $\bfx_{n_2}$ are chosen so that $n_1<n_2$, $\bfx_{n_1}$ is in $B_1$, and $\bfx_{n_2}$ is in $B_2$. The points $\bfu$ and $\bfv$ are arbitrarily close to the sequence $(\bfx_n)$.}
\label{fig:bolzanoweierstrassproof1}
\end{figure}

Proceeding recursively, suppose $k>2$ is a positive integer for which a closed box $B_{k-1}$ and term $\bfx_{n_{k-1}}$ have been chosen where $n_{k-2}<n_{k-1}$, $\bfx_{n_{k-1}}$ is in $B_{k-1}$, $B_{k-2}\subseteq B_{k-1}$, and the length of each side of $B_{k-1}$ is $b/2^k$ (half the length of each side of $B_{k-2}$). Bisect each side of $B_{k-1}$ to produce $2^m$ distinct closed boxes whose union is $B_{k-1}$. At least one of these closed boxes contains an infinite number of the terms of the sequence $(\bfx_n)$, so choose one such box and rename it $B_k$. Note that the length of each side of $B_k$ is $b/2^{k-2}$, half the length of each side of $B_{k-1}$. From there, choose a $k$-th term $\bfx_{n_k}$ from the sequence where $n_{k-1}<n_k$ and $\bfx_{n_k}$ is in $B_k$.  

Our recursive process yields a subsequence $(\bfx_{n_k})$ and a sequence of closed boxes $(B_k)$ where 
\begin{align}
	B_{k-1}\supseteq B_k
\end{align}
and the length of the sides of $B_k$ is $b/2^{k-2}$ for each positive integer $k$. Thus, $(B_k)$ is a nested sequence of closed and bounded boxes. By the NCBB Property (Theorem \ref{thm:nestedboxes}), there is point $\bfu$ in $\cap_{k=1}^\infty B_k$.

It remains to show $\bfu$ is the limit of the subsequence $(\bfx_{n_k})$. Since our construction makes use of boxes instead of the disks or spheres defined by neighborhoods, consider the componentwise breakdown of $\bfu$ and $(\bfx_{n_k})$ given for each positive integer $k$ by
\begin{align}
	\bfu &=
	\left[
		\begin{array}{c}
		u_1\\
		u_2\\
		\vdots\\
		u_m
		\end{array}
	\right]\qquad\textnormal{and}\qquad
	\bfx_{n_k} =
	\left[
		\begin{array}{c}
		x_{1,n_k}\\
		x_{2,n_k}\\
		\vdots\\
		x_{m,n_k}
		\end{array}
	\right].
\end{align} 
Both $\bfu$ and $\bfx_{n_k}$ are in $B_k$ and the lengths of the sides of $B_k$ are $b/2^{k-2}$ for each positive integer $k$, so we have 
\begin{align}
	|x_{j,n_k}-u_j|\leq\frac{b}{2^{k-2}}
\end{align}
for each $j=1,\ldots,m$ and every positive integer $k$. By Corollary \ref{cor:powersofconstant} and part (ii) of Theorem \ref{thm:algebraiclimitseuclidean} we have
\begin{align}
	\lim_{k\to\infty}\frac{b}{2^{k-2}}=0.
\end{align}

Let $\varepsilon>0$. By the definition of limit and convergence for sequences (Definition \ref{def:sequentiallimit}), there is a threshold $k_\varepsilon$ where $k\geq k_\varepsilon$ implies 
\begin{align}
	|x_{j,n_k}-u_j|\leq\frac{b}{2^{k-2}}<\varepsilon
\end{align}
for each $j=1,\ldots,m$. Therefore, the index $k_\varepsilon$ is a threshold for every component sequence $(x_{j,n_k})$ at the same time and we have
\begin{align}
	\lim_{k\to\infty}x_{j,n_k}=u_j
\end{align}
for each $j=1,\ldots,m$. Since the components of the subsequence $(\bfx_{n_k})$ converge, by Theorem \ref{thm:componentwisesequences} we finally have 
\begin{align}
	\lim_{k\to\infty}\bfx_{n_k}&=\bfu.
\end{align}
\end{proof}

The Bolzano-Weierstrass Theorem \ref{thm:bolzanoweierstrass} ensures the existence of a limit even without a particular candidate in mind or readily available. This is quite different from many of the results pertaining to limits and convergence discussed earlier in this chapter. In fact, the definition of limit and convergence for sequences (Definition \ref{def:sequentiallimit}) explicitly relies on a candidate for the limit. The Cauchy Criterion (Theorem \ref{thm:cauchycriterion}) is a powerful result which ensures the existence of a limit even when we do not have a candidate. 

\begin{definition}\label{def:cauchysequence}
Let $(\bfx_n)$ be a sequence of points in $\R^m$. The sequence $(\bfx_n)$ is {\em Cauchy} if for every $\varepsilon>0$ there is a positive integer $n_\varepsilon$ such that for positive integers $n$ and $k$ we have
\begin{align}
 n,k\geq n_\varepsilon\quad\implies\quad	d_m(\bfx_n,\bfx_k)=\|\bfx_n-\bfx_k\|_m &< \varepsilon.
\end{align}
Also, the positive integer $n_\varepsilon$ is called a {\em threshold} and its value depends on $\varepsilon$.
\end{definition}

\begin{remark}\label{rmk:limitvscauchy}
The concepts of convergence and Cauchy are equivalent. For both, $\varepsilon>0$ tells us how close we would like pairs of objects to be while the positive integer $n_\varepsilon$ is a threshold that ensures the objects are within $\varepsilon$ of each other. The key difference is the objects taken into consideration: Convergence compares terms of a sequence with the limit or a candidate for the limit; Cauchy compares pairs of terms of a sequence and does not consider a candidate for the limit at all.

Consider quantified versions of the statements where ``$\forall$'' means ``for all'' and ``$\exists$'' means ``there exists'':\s
\begin{center}
\begin{tabular}{lc|cl}
\underline{$(\bfx_n)$ is Cauchy} & & & \underline{$\bfy =\lim_{n\to\infty}(\bfx_n)$}\\
 & & & \\
$\forall\, \varepsilon > 0,$ & & & $\forall\, \varepsilon > 0,$ \\
$\exists\, n_\varepsilon \in\N$ such that \hspace{25pt}& & & $\exists\, n_\varepsilon \in\N$ such that\\
\hspace{10pt}$n,k\geq n_\varepsilon  \, \Longrightarrow\, d_m(\bfx_n,\bfx_k)<\varepsilon$.& & &\hspace{10pt}$n\geq n_\varepsilon  \, \Longrightarrow\, d_m(\bfx_n,\bfy)<\varepsilon$. 
\end{tabular}
\end{center}
\end{remark}

To me, when a sequence is both convergent and Cauchy, the terms of the sequence get close and stay close to the limit and to each other. To prove they are equivalent, an additonal property of Cauchy sequences will help.

\begin{lemma}\label{lem:cauchybounded}
Every Cauchy sequence in $\R^m$ is bounded. 
\end{lemma}

\begin{scratch}\label{scr:cauchysequencebounded}
The proof of Lemma \ref{lem:cauchybounded} is very similar to the proof of Theorem \ref{thm:convergentsequencebounded} which says convergent sequences are bounded. Both amount to solidifying the idea that, eventually, the terms of the sequence are as close together as we like.

In Figure \ref{fig:cauchysequencebounded}, no particular term in the sequence has the largest norm, but for some $\varepsilon_0>0$ there is a threshold $n_0$ that produces the real number $\varepsilon_0+\|\bfx_{n_0}\|_m$ which is a bound for the sequence. However, this may not be true in general since a term whose index is less than $n_0$ may have a larger norm.

\begin{figure}
\centering
\begin{tikzpicture} 
\draw (0,0) circle (3.11cm);	
	\draw (0.05,0) --(3.11,0);  
	\draw (1.55,-0.4) node {$b$};    
	\draw (0,-0.02) node {$\circ$};
	\draw (0,-0.4) node {$\mathbf{0}$};
\draw (1.45,2.43) node {$\circ$};				
\begin{scope}[thick, dashed, blue] 
\draw (1.05,2.05) circle (0.8cm); 
\end{scope}	
\draw[-,semithick, blue] (1.08,2.08) -- (0.28,2.08);
\draw[blue] (0.68,2.35) node {$\varepsilon_0$};
	\draw (-1.5,-1.5) node {$\bullet$};
	\draw (-1.5,-1.9) node {$\bfx_1$};	
	\draw (-0.8,1.6) node {$\bullet$};	
	\draw (-0.8,1.2) node {$\bfx_2$};	
	\draw (-0.1,0.8) node {$\bullet$};
	\draw (-0.1,0.4) node {$\bfx_3$};
	\draw (0.1,0.83) node {$\cdot$};
	\draw (0.25,0.85) node {$\cdot$};
	\draw (0.4,0.87) node {$\cdot$};	
	\draw (0.6,0.9) node {$\bullet$};
	\draw (0.9,0.5) node {$\bfx_{n_0-1}$};	
	\draw (1.05,2.05) node {$\bullet$};
	\draw (1.05,1.75) node {$\bfx_{n_0}$};		
	\draw (1.18,2.18) node {$\cdot$};
	\draw (1.24,2.24) node {$\cdot$};
	\draw (1.3,2.3) node {$\cdot$};
\end{tikzpicture}
\caption{A Cauchy sequence $(\bfx_n)$ in the plane $\R^2$. By Lemma \ref{lem:cauchybounded}, $(\bfx_n)$ must be bounded by some nonnegative real number $b$. Note how the $\circ$ near the term $\bfx_{n_0}$ looks like it could represent the limit of the sequence $(\bfx_n)$, but the definition of a Cauchy sequence makes no mention of a limit at all. See Definition \ref{def:cauchysequence}, Scratch Work \ref{scr:cauchysequencebounded}, and the proof of Theorem \ref{thm:convergentsequencebounded}.}
\label{fig:cauchysequencebounded}
\end{figure}
\end{scratch}

\begin{proof}
Suppose $(\bfx_n)$ is a Cauchy sequence in $\R^m$. Let $\varepsilon_0=11$. By the definition of a Cauchy sequence \ref{def:cauchysequence}, there is a threshold $n_0$ where $n\geq n_0$ implies
\begin{align}\label{eqn:initialboundcauchy}
	d_m(\bfx_n,\bfx_{n_0})&=\|\bfx_n-\bfx_{n_0}\|_m<\varepsilon_0=11.
\end{align}
Hence, for $n\geq n_0$,
\begin{align}
	\|\bfx_n\|_m&=\|\bfx_n\underbrace{-\bfx_{n_0}+\bfx_{n_0}}_{\textnormal{add } \mathbf{0}}\|_m\\
	&\leq\|\bfx_n-\bfx_{n_0}\|_m+\|\bfx_{n_0}\|_m\qquad\textnormal{(tri. ineq. \eqref{eqn:addzero})}\\
	&<11+\|\bfx_{n_0}\|_m. \qquad\textnormal{(\eqref{eqn:initialboundcauchy})}
\end{align}
 
Now define $b$ by
\begin{align}
	b=\max\{\|\bfx_1\|_m,\|\bfx_2\|_m,\ldots,\|\bfx_{n_0-1}\|_m,11+\|\bfx_{n_0}\|_m\}.
\end{align}
Then $b\geq 0$ and for every index $n\in\N$ we have 
\begin{align}
	\|\bfx_n\|_m&<b.
\end{align}
Therefore, $(x_n)$ is bounded.
\end{proof}

The following Cauchy Criterion (Theorem \ref{thm:cauchycriterion}) is another major result in analysis that depends heavily on the completeness of the real line and Euclidean spaces. It provides a characterization of the existence of limits and the convergence of sequences, all without having an explicit candidates for the limits in mind. 

\begin{theorem}[Cauchy Criterion]\label{thm:cauchycriterion}
Suppose $(\bfx_n)$ is a sequence of points in $\R^m$. Then $(\bfx_n)$ converges if and only if $(\bfx_n)$ is Cauchy.
\end{theorem}

\begin{scratch}\label{scr:cauchycriterion}
The proof is lopsided: One direction follows from the definitions plus the triangle inequality \eqref{eqn:addzero}, the other sets up and takes advantage of the Bolzano-Weierstrass Theorem \ref{thm:bolzanoweierstrass} to ensure the existence of a candidate for the limit before the definitions and the triangle inequality \eqref{eqn:addzero} are used.
\end{scratch}

\begin{proof}
First, suppose $(\bfx_n)$ converges to $\bfy$ in $\R^m$. Let $\varepsilon>0$. By the definition of limit and convergence (Definition \ref{def:sequentiallimit}), there  is a threshold $n_{\varepsilon/2}$ where $n\geq n_{\varepsilon/2}$ ensures
\begin{align}
	d_m(\bfx_n,\bfy)&=\|\bfx_n-\bfy\|_m<\frac{\varepsilon}{2}.
\end{align}
Suppose $n$ and $k$ are positive integers where $n,k\geq n_{\varepsilon/2}$. Then we have 
\begin{align}
	d_m(\bfx_n,\bfx_k)&=\|\bfx_n-\bfx_k\|_m\\
	&=\|\bfx_n\underbrace{-\bfy+\bfy}_{\textnormal{add zero}}-\bfx_k\|_m\\
	&\leq \|\bfx_n-\bfy\|_m+\|\bfy-\bfx_k\|_m\\
	&<\frac{\varepsilon}{2}+\frac{\varepsilon}{2}\\
	&=\varepsilon.
\end{align}
Therefore, $n_{\varepsilon/2}$ is a suitable threshold and $(\bfx_n)$ is Cauchy.

Next, suppose $(\bfx_n)$ is Cauchy. In order to find a suitable candidate for the limit, note that by Lemma \ref{lem:cauchybounded}, $(\bfx_n)$ is bounded. So by the Bolzano-Weierstrass Theorem \ref{thm:bolzanoweierstrass}, there is a point $\bfy$ in $\R^m$ and subsequence $(\bfx_{n_k})$ whose limit is $\bfy$.

To show $\bfy$ is the limit of the original sequence $(\bfx_n)$, let $\varepsilon>0$. By the definition of a Cauchy sequence (Definition \ref{def:cauchysequence}), there is a threshold $j_{\varepsilon/2}$ such that $n,j\geq j_{\varepsilon/2}$ ensures 
\begin{align}
	d_m(\bfx_n,\bfx_j)&=\|\bfx_n-\bfx_j\|_m<\frac{\varepsilon}{2}.
\end{align}
Since the subsequence $(\bfx_{n_k})$ converges to $\bfy$, by the definition of limit and convergence for sequences (Definition \ref{def:sequentiallimit}), there is a threshold $k_{\varepsilon/2}$ such that $n_k\geq k\geq k_{\varepsilon/2}$ ensures 
\begin{align}
	d_m(\bfx_{n_k},\bfy)&=\|\bfx_{n_k}-\bfy\|_m<\frac{\varepsilon}{2}.
\end{align}
Now, there is a positive integer $k_0$ large enough so that both $n_{k_0}\geq k_0\geq k_{\varepsilon/2}$ and $n_{k_0}$ is an index for the subsequence where $n_{k_0}\geq j_{\varepsilon/2}$. Then for every $n\geq n_{k_0}$ we have 
\begin{align}
	d_m(\bfx_n,\bfy)&=\|\bfx_n-\bfy\|_m\\
	&=\|\bfx_n\underbrace{-\bfx_{n_{k_0}}+\bfx_{n_{k_0}}}_{\textnormal{add zero}}-\bfy\|_m\\
	&\leq \|\bfx_n-\bfx_{n_{k_0}}\|_m+\|\bfx_{n_{k_0}}-\bfy\|_m\\
	&<\frac{\varepsilon}{2}+\frac{\varepsilon}{2}\\
	&=\varepsilon.
\end{align}
Therefore, $n_{k_0}$ is a suitable threshold and $(\bfx_n)$ converges to $\bfy$.
\end{proof}

The next result solidifies the notion that if two sequences approach each other and one is Cauchy, then they both converge to the same limit.

\begin{corollary}\label{cor:samelimit}
Suppose $(\bfa_n)$ and $(\bfb_n)$ are sequences of points in $\R^m$ where $\lim_{n\to\infty}d_m(\bfa_n,\bfb_n)=0$ and suppose $(\bfa_n)$ is Cauchy. Then $(\bfa_n)$ and $(\bfb_n)$ converge to the same limit.
\end{corollary}

\begin{scratch}
By the Cauchy Criterion \ref{thm:cauchycriterion}, if $(\bfa_n)$ is Cauchy then $(\bfa_n)$ converges to some limit $\bfa$. From there, the condition of having the sequences approach each other, namely $\lim_{n\to\infty}d_m(\bfa_n,\bfb_n)=0$, can be used to show $(\bfb_n)$ converges to $\bfa$ as well. Some of the techniques used in the proof should look familiar.
\end{scratch}

\begin{proof}
Suppose $(\bfa_n)$ is Cauchy. Then by the Cauchy Criterion \ref{thm:cauchycriterion}, $(\bfa_n)$ converges to some limit $\bfa$. 

Furthermore, suppose $\lim_{n\to\infty}d_m(\bfa_n,\bfb_n)=0$ and let $\varepsilon>0$. Then we also have $\varepsilon/2>0$ and by two applications of the definition of limit and convergence for sequences (Definition \ref{def:sequentiallimit}), there are thresholds $j_{\varepsilon/2}$ and $k_{\varepsilon/2}$ where
\begin{align}
	n&\geq j_{\varepsilon/2}\quad\implies\quad d_m(\bfa_n,\bfa)=\|\bfa_n-\bfa\|_m <\frac{\varepsilon}{2}\qquad\textnormal{and}\\
	n&\geq k_{\varepsilon/2}\quad\implies\quad |d_m(\bfa_n,\bfb_n)-0|=\|\bfa_n-\bfb_n\|_m <\frac{\varepsilon}{2}.
\end{align}
Define $n_\varepsilon=\max\{j_{\varepsilon/2},k_{\varepsilon/2}\}$. Then for all $n\geq n_\varepsilon$ we have both $n\geq j_{\varepsilon/2}$ and $n\geq k_{\varepsilon/2}$. Therefore, $n\geq n_\varepsilon$ implies
\begin{align}
	d_m(\bfb_n,\bfa)&=\|\bfb_n-\bfa\|_m\\
	&=\|\bfb_n\underbrace{-\bfa_n+\bfa_n}_{\textnormal{add zero}}-\bfa\|_m\\
	&\leq \|\bfb_n-\bfa_n\|_m+\|\bfa_n-\bfa\|_m\\
	&<\frac{\varepsilon}{2}+\frac{\varepsilon}{2}\\
	&=\varepsilon.
\end{align}
Hence, $n_\varepsilon$ is a threshold for the convergence of $(\bfb_n)$ to $\bfa$, and 
\begin{align}
	\lim_{n\to\infty}\bfa_n=\bfa=\lim_{n\to\infty}\bfb_n.
\end{align}
\end{proof}


\vs
\section*{Exercises}
\setcounter{theorem}{0}

Exercises are for play: Do scratch work, draw stuff, and make mistakes---make {\em lots} of mistakes---before worrying about writing proofs. {\em Have fun!}

\chapter[Topology of Euclidean Spaces]{Topology of Euclidean Spaces}
\label{ch:topologyofeuclideanspaces}

The definition of arbitrarily close in Definition \ref{def:acl} allows us to explore ways a point can be arbitrarily close to a given set, whatever form such a set might take. An exploration of how a given point relates to either a given set or its {\em complement} through the lens of arbitrarily close leads to fundamental aspects of the usual topology on a Euclidean space $\R^m$. The definition of arbitrarily close is essentially topological in nature: We have $\bfy \acl{B}$ if and only if every neighborhood of $\bfy$ intersects $B$. See Figure \ref{fig:yaclBagain}.

\vs
\section{A closed-minded approach to topology}
\label{sec:closedmindedtopology}

Let's start with a toy problem.

\begin{prob}\label{prob:allpointsacl}
Draw a set $W$ in the plane $\R^2$ and determine the set of points arbitrarily close to $W$. HINT: Make use of Lemma \ref{lem:elementacl}. Next, determine and draw the set of points away from $W$.
\end{prob}

The set of points arbitrarily close to a given set gives rise to classic topological concepts: {\em closure} and {\em closed sets}. The definition of closure was already provided in Definition \ref{def:closure}; it's repeated here for convenience. Also, for a reminder about notation and terminology regarding $\varepsilon$-neighborhoods such as $V_\varepsilon(\bfx)$. See Section \ref{sec:arbitrarilycloseineuclideanspaces}, especially Definition \ref{def:neighborhood}, Figure \ref{fig:threeneighborhoods}, and Remark \ref{rmk:aclvianeighborhoods}).

\begin{definition}\label{def:closureclosed}
Let $B\subseteq\R^m$. The {\em closure} of $B$, denoted by $\overline{B}$, is the set of points arbitrarily close to $B$. Thus,
\begin{align}
\overline{B} = \{ \bfx \in \R^m: \bfx \acl B\} = \{\bfx\in \R^m: \, \forall\, \varepsilon>0, V_\varepsilon(\bfx)\cap B\neq\varnothing\}. 
\end{align}
A set $F\subseteq \R^m$ is {\em closed} if it contains all points arbitrarily close to it (that is, if $\overline{F}\subseteq F$). See Figures \ref{fig:yaclBagain} and \ref{fig:aclclosureagain}.
\end{definition}

Note that the empty set $\varnothing$ is vacuously closed.

\begin{figure}
\centering
\begin{tikzpicture} 
\draw[dashed, fill=blue!15] (-3,-1.41) rectangle (1.41,1.41);
	\draw (-0.8,0) node {$B$};
	\draw (-3.02,1.42) node {$\bullet$};
	\draw (-3.02,-1.42) node {$\bullet$};
	\draw (1.44,-1.42) node {$\bullet$};		
\begin{scope}[semithick, dashed, red] 
\draw (1.45,1.43) circle (2.1cm); 
\draw (1.45,1.43) circle (1.4cm); 
\draw (1.45,1.43) circle (0.7cm); 
\end{scope}	
\draw[semithick] (-3,-1.41) -- (-3,1.41);
\draw[semithick] (-3,-1.41) -- (1.41,-1.41);
	\draw[fill=white] (1.41,1.41) circle (0.08cm);
	\draw (1.8,1.41) node {$\bfy$};
\end{tikzpicture}
\caption{A point $\bfy$ and a set $B$ in the plane $\R^2$ where $\bfy\acl{B}$. As such, every neighborhood of $\bfy$ intersects $B$. Also, $B$ is not closed since $\bfy$ is not in $B$. See Definition \ref{def:closureclosed}.}
\label{fig:yaclBagain}
\end{figure}

\begin{remark}\label{rmk:classicclosure}
The definitions of closure and closed sets in Definition \ref{def:closureclosed} are not the standard ones found in other texts. However, they are equivalent. One benefit of using our Definition \ref{def:closureclosed}---as well as many definitions in this chapter and throughout the book---is how it follows directly from the definition of arbitrarily close (Definition \ref{def:acl}). Classic approaches such as the one in \cite[Definition 3.2.7, p.90]{Abbott} require the definitions of more complicated ideas such as limits of sequences or similar concepts  like {\em accumulation points} before defining closure and closed. (See Definition \ref{def:pointsaclset}.)
\end{remark}

\begin{figure}
\centering
\begin{tikzpicture}  
\draw[semithick, fill=blue!15] (-3,-1.41) rectangle (1.41,1.41);
	\draw (1.44,1.41) node {$\bullet$};
	\draw (1.8,1.41) node {$\bfy$};
	\draw (-0.8,0) node {$F$};
	\draw (-3.02,1.42) node {$\bullet$};
	\draw (-3.02,-1.42) node {$\bullet$};
	\draw (1.44,-1.42) node {$\bullet$};					
\begin{scope}[semithick, dashed, red] 
\draw (1.44,1.42) circle (2.1cm); 
\draw (1.44,1.42) circle (1.4cm); 
\draw (1.44,1.42) circle (0.7cm); 
\end{scope}	
\draw (-3,-1.41) -- (-3,1.41);
\draw (-3,-1.41) -- (1.41,-1.41);
\draw[-,semithick, red] (1.48,1.48) -- (2.92,2.92);
\draw[red] (2.8,2.55) node {$\varepsilon$};
\end{tikzpicture}
\caption{The set $F$ contains all points in and arbitrarily close to $F$, including the corner $\bfy$ and the sides of the rectangle. As such, $F$ is closed. See Definition \ref{def:closureclosed}.}
\label{fig:aclclosureagain}
\end{figure}

\begin{prob}\label{prob:isyoursetclosed}
What is the closure $\overline{W}$ of your set $W$ from Problem \ref{prob:allpointsacl}? Is your set $W$ closed? One way to check is to make sure all of the points outside of $W$ are away from $W$.
\end{prob}

The first result in this section is a fundamental property of closed sets.

\begin{lemma}\label{lem:intersectionclosed}
The intersection of any collection of closed sets is closed.
\end{lemma}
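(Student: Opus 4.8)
The plan is to show that if $\{F_\alpha\}_{\alpha \in A}$ is a collection of closed subsets of $X$ and $F = \bigcap_{\alpha \in A} F_\alpha$, then $F$ contains every point of $X$ that is arbitrarily close to $F$. So I would start by fixing an arbitrary point $y \in X$ with $y \acl F$, and the goal becomes showing $y \in F$, i.e., $y \in F_\alpha$ for every $\alpha \in A$.

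The key observation is a monotonicity fact for \emph{arbitrarily close}: if $y \acl B$ and $B \subseteq C$, then $y \acl C$. This is immediate from Definition \ref{def:acl}, since any $x \in B$ witnessing $d(x,y) < \varepsilon$ also lies in $C$. I would either state this as a one-line sub-observation or simply invoke it inline. Applying it with $B = F$ and $C = F_\alpha$ (valid since $F \subseteq F_\alpha$ for each $\alpha$) gives $y \acl F_\alpha$ for every $\alpha \in A$. Now, because each $F_\alpha$ is closed, Definition \ref{def:closureclosed} tells us $F_\alpha$ contains every point arbitrarily close to it; hence $y \in F_\alpha$. Since this holds for all $\alpha \in A$, we conclude $y \in \bigcap_{\alpha} F_\alpha = F$, as desired.

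There is essentially no obstacle here — the proof is a direct unwinding of the definitions, and the only conceptual ingredient is the monotonicity of $\acl$ under set inclusion. If anything, the mild subtlety worth a sentence is the edge case where the index set $A$ is empty, in which case $F = X$ (the empty intersection), and $X$ is trivially closed since $X$ contains all of its points; but given the informal tone of the paper I would likely not dwell on this. I would keep the write-up to a short paragraph: fix $y \acl F$, push $\acl$ up to each $F_\alpha$ by inclusion, use closedness of each $F_\alpha$ to land $y$ inside it, and intersect.
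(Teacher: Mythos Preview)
Your proposal is correct and mirrors the paper's own argument almost exactly: the paper fixes $y \acl \bigcap_\alpha F_\alpha$, picks $\varepsilon>0$, finds $x \in (\bigcap_\alpha F_\alpha)\cap V_\varepsilon(y)$, notes $x \in F_\alpha \cap V_\varepsilon(y)$ for each $\alpha$, and concludes $y \in F_\alpha$ by closedness. The only cosmetic difference is that you isolate the monotonicity of $\acl$ under inclusion as a named sub-observation, whereas the paper carries it out inline via the $\varepsilon$-neighborhood formulation.
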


\begin{scratch}\label{scr:intersectionclosed}
We need to be careful here. Lemma \ref{lem:intersectionclosed} refers to {\em any} collection of closed sets, no matter how large. So, I let $A$ stand for any nonempty index set which could be finite, countably infinite, or uncountable. Aside from this sublety, the result follows from the definitions of a closed set (Definition \ref{def:closureclosed}), arbitrarily close (Definition \ref{def:acl}), and neighborhood
(Definition \ref{def:neighborhood}), as well as properties of intersections. It may help to consider to revisit Remark \ref{rmk:aclvianeighborhoods} as well.
\end{scratch}

\begin{proof}
Let $\{F_\alpha:\alpha\in A\}$ denote a collection of closed sets in $\R^m$ with a nonempty index set $A$. If any $F_\alpha$ is empty, then the intersection $\cap_{\alpha\in A} F_\alpha$ is empty as well and is therefore closed. 

Next, suppose $F_\alpha$ is nonempty for every $\alpha\in A$ and $\bfy$ is arbitrarily close to $\cap_{\alpha\in A} F_\alpha$. Let $\varepsilon>0$ and consider the $\varepsilon$-neighborhood $V_\varepsilon(\bfy)$. By the definition of a closed set (Definition \ref{def:closureclosed}) and the version of arbitrarily close in terms of neighborhoods as in Remark \ref{rmk:aclvianeighborhoods}, there is some $\bfx \in (\cap_{\alpha\in A} F_\alpha)\cap V_\varepsilon(\bfy)$ which means $\bfx \in F_\alpha\cap V_\varepsilon(\bfy)$ for each $\alpha$. Since the distance $\varepsilon$ was chosen arbitrarily and each $F_\alpha$ is closed, it follows that $\bfy\in F_\alpha$ for each $\alpha$; thus $\bfy\in\cap_{\alpha\in A} F_\alpha$. Therefore, $\cap_{\alpha\in A} F_\alpha$ is closed.
\end{proof}

Another lemma regarding closed sets indicates closures are themselves closed. Its proof is left as an exercise.

\begin{lemma}\label{lem:closureisclosed}
For any set $S\subseteq\R^m$, the closure $\overline{S}$ is a closed set. 
\end{lemma}

There is another immediate corollary of Lemma \ref{lem:intersectionclosed} stemming from the fact that the coda of a sequence of points in $\R^m$ is defined to be an intersection of closed sets. See Definition \ref{def:sequencecoda}.

\begin{corollary}\label{cor:sequencecodaclosed}
The coda of every sequence of points in $\R^m$ is closed.
\end{corollary}

A classic way to define closed sets in analysis stems from considering the limits of sequences whose terms are in the set. The following theorem provides a first characterization along these lines, but a more classic characterization which is very similar comes from considering what are called {\em accumulation points} (see Definition \ref{def:pointsaclset}). 

\begin{theorem}\label{thm:closedlimits}
A set $F\subseteq\R^m$ is closed if and only if $F$ contains the limits of all convergent sequences of points in $F$.
\end{theorem}

\begin{scratch}\label{scr:closedlimits}
The proof follows from the definition of a closed set (Definition \ref{def:closureclosed}) along with the fundamental connection between the definition of arbitrarily close (Definition \ref{def:acl}) and the definition of limit and convergence for sequences (Definition \ref{def:sequentiallimit}) provided by Theorem \ref{thm:exercise0}.
\end{scratch}

\begin{proof}
Suppose $F$ is a closed subset of $\R^m$ and let $(\bfx_n)$ be a convergent sequence of points in $\R^m$ with limit $\bfx$. By Theorem \ref{thm:exercise0}, we have $\bfx\acl{(\bfx_n)}$. 
Since every term in $(\bfx_n)$ is in $F$, we also have $\bfx\acl{F}$. By the definition of a closed set (Definition \ref{def:closureclosed}), the limit $\bfx$ is in $F$.

Now suppose $F$ contains the limits of all convergent sequences of points in $F$ and suppose $\bfy\acl{F}$. By Theorem \ref{thm:exercise0}, there is a sequence $(\bfy_n)$ of points in $F$ whose limit is $\bfy$. So, $F$ contains $\bfy$ and therefore, $F$ is closed.
\end{proof}

The {\em complements} of the closed sets in a Euclidean space $\R^m$ form the fundamental objects in the mathematical subject area known as {\em topology}. These complements are called {\em open} sets. See Definitions \ref{def:complement}, \ref{def:open}, and \ref{def:topology}.

\begin{definition}\label{def:complement}
Let $B$ be a subset of some set $X$. The \textit{complement} of $B$ (with respect to $X$) is the set of points in $X$ but not $B$. Thus, the complement of $B$ is given by
\begin{align}
	X\backslash B&=\{x\in X:x\notin B\}.
\end{align}
\end{definition}

To reinforce a key concept in Euclidean spaces, any point $\bfz$ not in a closed set $F$ is away from $F$ (see Definitions \ref{def:closureclosed} and \ref{def:awf}). Hence, there is an $\varepsilon_\bfz$-neighborhood $V_{\varepsilon_\bfz}(\bfz)$ which does not intersect $F$. This is precisely a characterizing property---therefore a defining property---of points in an {\em open} set (cf. \cite[Definition 3.2.1, p.88]{Abbott}). See Figure \ref{fig:openset}.

\begin{definition}\label{def:open}
A set $U\subseteq \R^m$ is {\em open} if every point in $U$ has a neighborhood contained in $U$.
\end{definition} 

\begin{figure}
\centering
\begin{tikzpicture}  
\draw[dashed, fill=blue!15] (-3,-1.41) rectangle (1.41,1.41);					
\begin{scope}[semithick, dashed, red] 
\draw (1.44,1.42) circle (2.1cm); 
\draw (1.44,1.42) circle (1.4cm); 
\draw (1.44,1.42) circle (0.7cm); 
\end{scope}	
	\draw (-2.2,-0.5) node {$\bullet$};
	\draw (-2.2,-0.8) node {$\bfa$};
	\draw[dashed] (-2.2,-0.5) circle (0.65cm);	
	\draw (-2.7,1.08) node {$\bullet$};
	\draw (-2.7,0.6) node {$\bfb$};
	\draw[dashed] (-2.7,1.1) circle (0.23cm);
	\draw[fill=white] (1.41,1.41) circle (0.08cm);
	\draw (1.8,1.41) node {$\bfy$};
	\draw (-0.8,0) node {$U$};
	\draw[fill=white] (-3,1.41) circle (0.08cm);
	\draw[fill=white] (-3,-1.41) circle (0.08cm);
	\draw[fill=white] (1.41,-1.41) circle (0.08cm);		
\end{tikzpicture}
\caption{The set $U$ contains a neighborhood around each of its points, such as $\bfa$ and $\bfb$. So, $U$ is an open set (see Definition \ref{def:open}). Note the neighborhood centered at $\bfb$ is necessarily smaller than the neighborhood centered at $\bfa$.  The point $\bfy$ is arbitrarily close to $U$, but no neighborhood of $\bfy$ is contained in $U$ and $\bfy$ is not in $U$.}
\label{fig:openset}
\end{figure}

\begin{remark}\label{rmk:openclassic}
Equivalently, $U$ is open if for every $\bfa\in U$ there is an $\varepsilon_\bfa>0$ such that $V_{\varepsilon_\bfa}(\bfa)\subseteq U$ (see Figure \ref{fig:openset}). In the language of the negation of arbitrarily close, a set $U$ is open if every point of $U$ is away from $\R^m\backslash U$ (see Definition \ref{def:awf}). In other words, all points in an open set are {\em interior points} (see Definition \ref{def:pointsaclset} below).
\end{remark}

The fundamental connection between open and closed sets readily follows (cf. \cite[Theorem 3.2.13, p.92]{Abbott}).

\begin{theorem}\label{thm:closedopen}
A set $U\subseteq \R^m$ is open if and only if its complement $\R^m\backslash U$ is closed.
\end{theorem}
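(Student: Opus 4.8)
The plan is to prove both directions of the biconditional directly from the definitions, using the characterization of ``not arbitrarily close'' already set up in the text: namely that $y \nacl S$ if and only if there is some $\varepsilon_0 > 0$ with $V_{\varepsilon_0}(y) \cap S = \varnothing$. Everything hinges on carefully negating Definition \ref{def:acl} and tracking what it says about the complement.

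\emph{First direction ($O$ open $\Rightarrow$ $O^c$ closed).} Suppose $O$ is open. To show $O^c$ is closed, I take an arbitrary point $y \in X$ with $y \acl O^c$ and aim to show $y \in O^c$, equivalently $y \notin O$. I argue by contradiction: if $y \in O$, then by openness there is some $\varepsilon > 0$ with $V_\varepsilon(y) \subseteq O$, so $V_\varepsilon(y) \cap O^c = \varnothing$. But this directly contradicts $y \acl O^c$, which demands that every $\varepsilon$-neighborhood of $y$ meet $O^c$. Hence $y \in O^c$, and $O^c$ contains all points arbitrarily close to it, i.e. $O^c$ is closed.

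\emph{Second direction ($O^c$ closed $\Rightarrow$ $O$ open).} Suppose $O^c$ is closed. Take an arbitrary $a \in O$; I must produce an $\varepsilon > 0$ with $V_\varepsilon(a) \subseteq O$. Since $a \in O$, we have $a \notin O^c$. Because $O^c$ is closed, it contains every point arbitrarily close to it, so from $a \notin O^c$ we conclude $a \nacl O^c$. By the characterization recalled just before Definition \ref{def:open}, $a \nacl O^c$ means there is some $\varepsilon_0 > 0$ with $V_{\varepsilon_0}(a) \cap O^c = \varnothing$, which is to say $V_{\varepsilon_0}(a) \subseteq O$. Since $a \in O$ was arbitrary, $O$ is open.

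I do not anticipate a genuine obstacle here; the proof is essentially an exercise in contraposition and in reading off the negation of ``arbitrarily close.'' The one point that deserves care---and the closest thing to a ``hard part''---is making sure the logical equivalences line up cleanly: ``$O^c$ closed'' is being used in its contrapositive form ($a \notin O^c \Rightarrow a \nacl O^c$), and ``$a \nacl O^c$'' must be converted to the neighborhood statement $V_{\varepsilon_0}(a) \cap O^c = \varnothing$ using the equivalence established in the paragraph preceding Definition \ref{def:open}. As long as those two translations are invoked explicitly, each direction is a two- or three-line argument, so I would simply write both out in full rather than sketching.
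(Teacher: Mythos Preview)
Your proof is correct and follows essentially the same approach as the paper's: both directions are argued exactly as you describe, with the first using contradiction (if $y\in O$ then openness gives a neighborhood missing $O^c$, contradicting $y\acl O^c$) and the second using the contrapositive of closedness together with the negation of arbitrarily close to produce the needed neighborhood. The only cosmetic difference is notation ($y,a$ versus the paper's $y,z$); the logical structure is identical.
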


A proof very similar to the one presented here was created by Rasha Issa as she prepared for a final exam in the summer of 2019. In particular, she used the language of arbitrarily close and preferred this approach over the one used in \cite[Theorem 3.2.13, p.92]{Abbott}.

\begin{proof}[Rasha Issa's proof of Theorem \ref{thm:closedopen}.]
Assume $U$ is open and suppose $\bfy$ is arbitrarily close to $\R^m\backslash U$. By way of contradiction, assume $\bfy\in U$. Since $U$ is open, there is an $\varepsilon_\bfy$-neighborhood of $\bfy$ contained in $U$. Hence, every $\bfx$ in $\R^m\backslash U$ lies outside of this $\varepsilon_\bfy$-neighborhood of $\bfy$. Thus, $\bfx$ is at least a positive distance $\varepsilon_\bfy$ away from $\bfy$. Hence, $\bfy$ is away from $\R^m\backslash U$, a contradiction. Therefore, $\R^m\backslash U$ is closed.

For the converse, assume $\R^m\backslash U$ is closed and let $\bfz\in U$. Since $\R^m\backslash U$ contains all points arbitrarily close to $\R^m\backslash U$, $\bfz$ is away from $\R^m\backslash U$. So there must be some $\varepsilon_\bfz>0$ where $V_{\varepsilon_{\bfz}}(\bfz)\subseteq U$. Therefore, $U$ is open.
\end{proof}

\begin{prob}\label{prob:isyourcomplementopen}
Is the complement of your set $W$ from Problem \ref{prob:allpointsacl} open?
\end{prob}

The following illustrates Theorem \ref{thm:closedopen} in the real line $\R$ by revisiting the closed interval $F$ studied throughout Chapter \ref{ch:kernelofanalysis}.

\begin{example}
Consider the closed interval $F=[0,3140]$ and let $w=3141$. Also, consider the positive distance $\varepsilon_w=1/10$. Then for every $x\in F$,
\begin{align}
	|x-3141|\geq\frac{1}{10}=\varepsilon_w.
\end{align} 
As a result, the neighborhood $V_{\varepsilon_w}(w)=(3141-1/10,3141+1/10)$
contains $w$ and is away from $F$ in the sense that 
\begin{align}
	F\cap V_{\varepsilon_w}(w) =[0,3140]\cap \left(3141-\frac{1}{10},3141+\frac{1}{10}\right) =\varnothing.
\end{align}

Now consider the complement
\begin{align}
	\R\backslash F&=(-\infty,0)\cup (3140,\infty).
\end{align} 
Every element in $\R\backslash F$ comes with an $\varepsilon$-neighborhood which is also contained in  $\R\backslash F$. Specifically, for each $z\in \R\backslash F$, define $\varepsilon_z$ to be the shorter of the distances between $z$ and the endpoints of $F=[0,3140]$. Since $z\neq 0$ and $z\neq 3140$, we can use 
\begin{align}\label{eqn:chooseepsilonz}
	\varepsilon_z=\min\{|z-0|,|z-3140|\}>0.
\end{align}
Then (perhaps after drawing a figure) we have
\begin{align}
	F\cap V_{\varepsilon_z}(z)= [0,3140]\cap V_{\varepsilon_z}(z) =\varnothing.
\end{align}
Hence, $V_{\varepsilon_z}(z)\subseteq \R\backslash F$. 

Therefore, the complement $\R\backslash F$ is open since every element in the complement of the closed interval $F$ comes with an $\varepsilon$-neighborhood contained in $\R\backslash F$. (See Definition \ref{def:open}.) Moreover, by Theorem \ref{thm:closedopen}, the interval $F=[0,3140]$ is closed. (Why is this last statement not redundant?)
\end{example}

The word {\em topology} describes both a mathematical topic and a particular mathematical object. The topic is massive and connects many other topics in beautiful and endless ways. The mathematical object is a particular collection of subsets of a given set.

\begin{definition}\label{def:topology}
Let $X$ be a set and let $\mathcal{T}$ be a collection of subsets of $X$. Then $\mathcal{T}$ is a {\em topology on} $X$ if the following properties hold:
\begin{enumerate}
	\item The empty set $\varnothing$ and the set $X$ are in $\mathcal{T}$.
	\item The intersection of any finite number of sets in $\mathcal{T}$ is a set in $\mathcal{T}$.
	\item The union of any collection of sets in $\mathcal{T}$ is a set in $\mathcal{T}$.
\end{enumerate}
When a set $X$ is paired with a topology on $X$, we call $X$ a {\em topological space}.
\end{definition}

\begin{theorem}\label{thm:opensetstopology}
The collection of all open subsets of $\R^m$ is a topology on $\R^m$. That is,
\begin{enumerate}
	\item The empty set $\varnothing$ and the set $\R^m$ are open.
	\item The intersection of any finite number of open sets is open. 
	\item The union of any collection of open sets is open.
\end{enumerate} 
\end{theorem}

\begin{remark}\label{rmk:topologyterminologyopen}
For those of you who have seen topology before, Theorem \ref{thm:opensetstopology} may sound like a tautology. After all, in a topology class, open sets are defined to be the sets in a topology. However, our definition for open sets (Definition \ref{def:open}) precedes the definition of topology (Definition \ref{def:topology}), so the proof of Theorem \ref{thm:opensetstopology} amounts to verifying the collection of all open subsets of $\R^m$ satisfies the three properties defining a topology.
\end{remark}

\begin{scratch}\label{scr:opensetstopology}
All of the results follow from a careful application of the definitions. 
\end{scratch}

\begin{proof} Let $\mathcal{T}$ denote the collection of all open subsets of $\R^m$.\s   

\noindent \underline{Proof of (i)}: Consider the empty set $\varnothing$. Then $\varnothing$ vacuously satisfies the definition of an open set (Definition \ref{def:open}) since it has no points in need of a neighborhood. Hence, $\varnothing$ is in $\mathcal{T}$.

Now consider the set $\R^m$ itself. 
Since $\R^m$ contains all $\varepsilon$-neighborhoods of all points in $\R^m$, we have $\R^m$ is open. (For instance, $\R^m$ contains the $17$-neighborhood of $\bfx$ for every $\bfx\in\R^m$). Hence, $\R^m$ is in $\mathcal{T}$.\s

\noindent \underline{Proof of (ii)}: Suppose $U_1,U_2,\ldots,U_n$ are open sets in $\R^m$ and let 
\begin{align}
	\bfx\in\bigcap_{j=1}^n U_j.
\end{align}
So, $\bfx$ is in the open set $U_j$ for each $j=1,\ldots,n$. By the definition of open (Definition \ref{def:open}), for each $j=1,\ldots,n$ there is an $\varepsilon_j>0$ such that the $\varepsilon_j$-neighborhood of $\bfx$, $V_{\varepsilon_j}(\bfx)$, is contained in $U_j$. Since we are considering a finite number of open sets, the smallest of these neighborhoods has positive radius $\varepsilon_0=\min\{\varepsilon_1,\ldots,\varepsilon_n\}$ and is contained in each of the $\varepsilon_j$-neighborhoods of $\bfx$. That is,
\begin{align}
	V_{\varepsilon_0}(\bfx)\subseteq \bigcap_{j=1}^n V_{\varepsilon_j}(\bfx)\subseteq \bigcap_{j=1}^n U_j.
\end{align}
Hence, the intersection of any finite number of open sets in $\R^m$ is an open set, and so $\bigcap_{j=1}^n U_j$ is in $\mathcal{T}$.\s

\noindent \underline{Proof of (iii)}:
Suppose $\{U_\alpha:\alpha\in A\}$ is a collection of open sets in $\R^m$ with nonempty index set $A$ and let 
\begin{align}
	\bfx\in\bigcup_{\alpha\in A} U_\alpha.
\end{align}
Then there must be some index $\alpha_\bfx$ in $A$ where $\bfx$ is in the open set $U_{\alpha_\bfx}$. By the definition of open (Definition \ref{def:open}), there is an $\varepsilon_\bfx>0$ where 
\begin{align}
	V_{\varepsilon_\bfx}(\bfx)\subseteq U_{\alpha_\bfx}\subseteq \bigcup_{\alpha\in A} U_\alpha.
\end{align}
Therefore, the union of any collection of open sets in $\R^m$ is an open set.

Since all three conditions defining a topology are satisfied by $\mathcal{T}$, we have $\mathcal{T}$ is a topology on $\R^m$. (See Definition \ref{def:topology}.)
\end{proof}

\begin{example}\label{eg:reallinetopology}
The standard topology on the real line $\R$ is the collection of all open intervals and all unions of open intervals.
\end{example}

Example \ref{eg:reallinetopology} is just one piece of a more powerful statement regarding the standard topology on the real line. Its proof is left as a challenging exercise and makes use of the following definition.

\begin{definition}\label{def:pairwisedisjoint}
A collection of sets $\mathcal{S}$ is {\em pairwise disjoint} if for every pair of sets $A,B\in \mathcal{S}$ where $A\neq B$ we have $A\cap B=\varnothing$.
\end{definition}

\begin{theorem}\label{thm:reallineopensets}
Every open subset of the real line is the union of a pairwise disjoint countable collection of open intervals. That is, for every open set $U\subseteq\R$, there is a sequence of open intervals $(I_n)$ where the collection $\{I_n:n\in\N\}$ is pairwise disjoint and 
\begin{align}
	U=\bigcap_{n=1}^\infty I_n.
\end{align}
\end{theorem}

An analogy of Theorem \ref{thm:opensetstopology} holds for collections of closed sets, one of which has already been stated in Lemma \ref{lem:intersectionclosed}.

\begin{theorem}\label{thm:closedtrio}
The following properties regarding closed sets in $\R^m$ hold:
\begin{enumerate}
	\item The empty set $\varnothing$ and the set $\R^m$ are closed.
	\item The union of any finite number of closed sets is closed. 
	\item The intersection of any collection of closed sets is closed.
\end{enumerate}
\end{theorem}

\begin{scratch}\label{scr:closedtriodemorgans}
Part (iii) of Theorem \ref{thm:closedtrio} is a rephrased version of Lemma \ref{lem:intersectionclosed}. Parts (i) and (ii) follow from Theorem \ref{thm:closedopen} and results from set theory on the relationships between complements, intersections, and unions known as {\em De Morgan's Laws}. These results are stated but not proven below. From there, the details of the proof of Theorem \ref{thm:closedtrio} are left as an exercise. 
\end{scratch}

\begin{theorem}[De Morgan's Laws] 
\label{thm:demorgans}
Suppose $A$ and $B$ are subsets of some set $X$. Then: 
\begin{enumerate}
	\item $X\backslash(A\cap B)=(X\backslash A)\cup(X\backslash B)$; and
	\item $X\backslash(A\cup B)=(X\backslash A)\cap (X\backslash B)$. 	
\end{enumerate}
Suppose $\mathcal{C}$ is a collection of subsets of some set $X$.
Then:
\begin{enumerate}
	\item $\displaystyle X\backslash\left(\bigcap_{S\,\in\,\mathcal{C}} S\right)=\bigcup_{S\,\in\,\mathcal{C}} (X\backslash S)$; and
	\item $\displaystyle X\backslash\left(\bigcup_{S\,\in\,\mathcal{C}} S\right)=\bigcap_{S\,\in\,\mathcal{C}} (X\backslash S)$. 
\end{enumerate}
\end{theorem}

The following section explores the classic topological notion of {\em connectedness} using an unconventional definition that stems from the concept of arbitrarily close.

%

\vs
\section*{Exercises}
\setcounter{theorem}{0}

Exercises are for play: Do scratch work, draw stuff, and make mistakes---make {\em lots} of mistakes---before worrying about writing proofs. {\em Have fun!}

\xca\label{exer:closedinterval}
Consider an closed interval $[a,b]=\{x\in\R:a\leq x\leq b\}$ where $a$ and $b$ are real numbers satisfying $a<b$. Is $[a,b]$ closed according to Definition \ref{def:closureclosed}? Draw figures and prove your answer. Why isn't this trivial?

\xca\label{exer:openinterval}
Consider an open interval $(a,b)=\{x\in\R:a<x<b\}$ where $a$ and $b$ are real numbers satisfying $a<b$. Is $(a,b)$ open according to Definition \ref{def:open}? Draw figures and prove your answer. Why isn't this trivial? 

\xca\label{exer:neitherinterval}
Consider an interval $(a,b]=\{x\in\R:a<x\leq b\}$ where $a$ and $b$ are real numbers satisfying $a<b$. Prove $(a,b]$ is neither closed nor open.

\xca\label{exer:reallineclopen}
Consider the real line $\R=(-\infty,\infty)$. Prove $\R$ is both open and closed.

\xca\label{exer:closureisclosed}
Prove Corollary \ref{lem:closureisclosed}: The closure of a set is itself a closed set. 

\xca Prove the closure of a given set in $\R^m$ is the smallest closed set containing the given set in the following sense: Given a set $S\subseteq\R^m$, every closed set which contains $S$ also contains the closure $\overline{S}$.

\xca\label{exer:neitherrectangle}
Consider a rectangle $B$ in the plane $\R^2$ such as in Figure \ref{fig:yaclBagain} which contains some of its sides and corners, but not all of them. Prove $B$ is neither open nor closed.

\xca\label{exer:demorgans}
Prove De Morgan's Laws (Theorem \ref{thm:demorgans}).

\xca\label{exer:reallineopensets}
Prove Theorem \ref{thm:reallineopensets}.

\xca\label{exer:closedtrio}
Prove Theorem \ref{thm:closedtrio}.

\vs
\section{Connected sets}
\label{sec:connectedsets}

The notion of {\em connectedness} is yet another classic topic in analysis and topology which lends itself to a description in terms of arbitrarily close (Definition \ref{def:acl}). Intuitively, a set $E$ is {\em connected} if it comes in one piece with no separate chunks. For the purpose of writing proofs, a more technical definition is in order: A set $E$ is {\em connected} if every partition of $E$ features a point in one set arbitrarily close to another set. See Figure \ref{fig:connectedinplane}.

\begin{definition}\label{def:connected}
A set $E\subseteq\R^m$ is {\em connected} if for any pair of nonempty sets $A$ and $B$ where $A\cup B=E$, there is a point $\bfx$ in $A$ where $\bfx\acl B$ or there is a point $\bfy$ in $B$ where $\bfy \acl A$. On the other hand, a set $E$ is {\em  disconnected} if it is not connected, that is, there are nonempty sets $A$ and $B$ where $A\cup B=E$, every point in $A$ is a away from $B$, and every point in $B$ is away from $A$. 
\end{definition}

\begin{figure}
\centering
\begin{tikzpicture} 
\draw[semithick, fill=blue!15] (-1,-1) rectangle (1,1);
	\draw (0,0) node {$S$};
\draw[dashed, fill=blue!15] (2,2) circle (1.41cm); 	
	\draw (2,2) node {$D$};	
\draw[fill] (1,1) circle (0.08cm);
	\draw (0.6,0.6) node {$\bfw$};
\begin{scope}[semithick, dashed, red] 
	\draw (1,1) circle (0.3cm);
	\draw (1,1) circle (1cm); 
	\draw (1,1) circle (1.7cm); 
\end{scope}	
\end{tikzpicture}
\caption{The closed square $S$ and the open disk $D$ form Example \ref{eg:connectedinplane} form the union $E=S\cup D$, which is connected. See Definition \ref{def:connected}. The point $\bfw$ (the upper right corner of $S$) is in $S$ and arbitrarily close to $D$.}
\label{fig:connectedinplane}
\end{figure}

\begin{example}\label{eg:connectedinplane}
In the plane $\R^2$, consider the closed square $S$ and the open disk $D$ as in Figure \ref{fig:connectedinplane} which are given by
\begin{align}
	S&=\left\{
	\bfx\in\R^2:\bfx=
	\left[
	\begin{array}{c}
	x\\
	y
\end{array}
\right],
	-1\leq x\leq 1, \textnormal{ and } -1\leq y\leq 1
	\right\}, \qquad\textnormal{and}\\
	D&=
	V_{\sqrt{2}}(\bfx_0)=
	\left\{
	\bfx\in\R^2: d_2(\bfx,\bfx_0)<\sqrt{2}
	\textnormal{ where }
	\bfx_0=\left[
	\begin{array}{c}
	2\\
	2
\end{array}
\right]
	\right\}
.
\end{align}
The set $E=S\cup D$ is connected (which is not proven here). In particular, the sets $S$ and $D$ do not form a separation of $E$ since the point $\bfw=\left[
	\begin{array}{c}
	1\\
	1
\end{array}
\right]$ is in both $S$ (its coordinates check out) and $\overline{D}$ where 
\begin{align}
	\overline{D}&=\left\{
	\bfx\in\R^2: d_2(\bfx,\bfx_0)\leq\sqrt{2}
	\textnormal{ where }
	\bfx_0=\left[
	\begin{array}{c}
	2\\
	2
\end{array}
\right]
	\right\}
	=\overline{V_{\sqrt{2}}(\bfx_0)}.
\end{align}
To check, we have $d_2(\bfx_0,\bfw)=\sqrt{(2-1)^2+(2-1)^2}=\sqrt{2}$. Hence, $\bfw$ is indeed in $\overline{D}$ and so $\bfw\in S$ while $\bfw\acl{D}$.
\end{example}

\begin{remark}\label{rmk:connectedclassic}
In other notation, $E$ is connected if for any nonempty sets $A$ and $B$ with $A\cup B=E$ there is some $\bfx\in A\cap \overline{B}$ or some $\bfy\in B\cap \overline{A}$. Also, the empty set $\varnothing$ is vacuously connected since it is not a union of nonempty sets.

Alternate---and more common---approaches first define disconnected then take its negation to define connected. 
\begin{itemize}
	\item[(i)] A set $F\subseteq\R^m$ is {\em disconnected} if there is a pair of nonempty sets $A$ and $B$  where $A\cup B=F, A\cap\overline{B}=\varnothing$, and $B\cap\overline{A}=\varnothing$. Such a pairing of nonempty sets $A$ and $B$ is called a {\em separation} of $F$. From there, a set $E\subseteq\R^m$ is said to be connected if it is not disconnected. (Cf. \cite[Definition 3.4.4, p.104]{Abbott}.)
	\item[(ii)] A set $F\subseteq\R$ is {\em disconnected} if there is a pair of nonempty open sets $U$ and $V$  where $F\subseteq U\cup V$, $U\cap V=\varnothing$, $U\cap F\neq \varnothing$, and $V\cap F\neq \varnothing$. Such a pairing of sets $U$ and $V$ is called a {\em separation} of $F$. From there, a set $E\subseteq\R^m$ is said to be connected if it is not disconnected. 
\end{itemize}

The fact that these two approaches to defining connectedness are equivalent to Definition \ref{def:connected} is left as an exercise. Personally, I find them to be clunky and unsatisfying: Both approaches define connectedness as the negation of some other property. But they are more common, so it's important to mention them here. This led me to wonder what connectedness means on its own terms, in turn leading to Definition \ref{def:connected}.
\end{remark}

\begin{figure}
\centering
\begin{tikzpicture} 
\draw[dashed, fill=blue!15] (-1,-1) rectangle (1,1);
	\draw (0,0) node {$S^o$};
\draw[dashed, fill=blue!15] (2,2) circle (1.41cm); 	
	\draw (2,2) node {$D$};	
\draw[fill=white] (1,1) circle (0.08cm);
	\draw (0.6,0.6) node {$\bfw$};
\begin{scope}[semithick, dashed, red] 
	\draw (1,1) circle (0.3cm);
	\draw (1,1) circle (1cm); 
	\draw (1,1) circle (1.7cm); 
\end{scope}	
\end{tikzpicture}
\caption{The open square $S^o$ and the open disk $D$ form Example \ref{eg:disconnectedinplane} form the union $U=S\cup D$, which is disconnected. See Definition \ref{def:connected}. The point $\bfw$ (the upper right corner of $S^o$) is {\em not} in $S^o$ but is arbitrarily close to both $S^o$ and $D$.}
\label{fig:disconnectedinplane}
\end{figure}

\begin{example}\label{eg:disconnectedinplane}
Now consider the open square $S^o$ given by
\begin{align}
	S^o&=\left\{
	\bfx\in\R^2:\bfx=
	\left[
	\begin{array}{c}
	x\\
	y
\end{array}
\right], 
	-1< x<1, \textnormal{ and } -1< y < 1
	\right\}
\end{align}
and once again consider the open disk $D$ from Example \ref{eg:connectedinplane}. It turns out the set $U=S^o\cup D$ is disconnected since both both $\overline{S^o}\cap D=\varnothing$ and $S^o\cap\overline{D}=\varnothing$. See Figure \ref{fig:disconnectedinplane}.

Furthermore, $S^o$ and $D$ form a separation of $U$ (according to both versions of a separation in Remark \ref{rmk:connectedclassic}). In this case, the point $\bfw=\left[
	\begin{array}{c}
	1\\
	1
\end{array}
\right]$ is the only point in the plane where we have both $\bfw\acl{S^o}$ and $\bfw\acl{D}$, but $\bfw$ is in neither $S^o$ nor $D$. 
\end{example}

Thanks to the deep connection between limits of convergent sequences and the notion of arbitrarily close established in Theorem \ref{thm:exercise0}, there is a characterization of connectedness in terms of limits.

\begin{corollary}\label{cor:connectedacl}
A set $E\subseteq\R^m$ is connected if and only if for any pair of nonempty sets $A$ and $B$ where $A\cup B=E$, there is a sequence of points in $A$ whose limit is in $B$, or vice versa. 
\end{corollary}

\begin{scratch}\label{scr:connectedacl}
The result follows from Definition \ref{def:connected} and both directions of Theorem \ref{thm:exercise0} (the fundamental connection between arbitrarily close and limits of sequences).
\end{scratch}

\begin{proof}
First, suppose $E$ is connected. Then, without loss of generality and by Definition \ref{def:connected}, for any pair of nonempty sets $A$ and $B$ where $A\cup B=E$, there is a point $\bfy$ in $B$ where $\bfy\acl{A}$. By Theorem \ref{thm:exercise0}, there is a sequence $(\bfx_n)$ of points in $A$ whose limit is $\bfy$.

Now suppose $A$ and $B$ are nonempty sets where $A\cup B=E$. Also, without loss of generality, suppose there is a sequence $(\bfx_n)$ of points in $A$ whose limit is $\bfy$ and where $\bfy$ is in $B$. By Theorem \ref{thm:exercise0}, $\bfy\acl{B}$. So, by the definition of a connected set (Definition \ref{def:connected}), $E$ is connected.
\end{proof}

Next up, consider the classic example of the {\em topologist's sine curve} thought of as a subset of the plane. 

\begin{example}\label{eg:topologistsine}
Let $G$ be the graph of the function $g:\R^+\to\R$ given by
\begin{align}
	g(x)=\sin\left(\frac{1}{x}\right).
\end{align}
That is, 
\begin{align}
	G&=\left\{
	\bfx\in\R^2:\bfx=
	\left[
	\begin{array}{c}
	x\\
	y
\end{array}
\right]
	\textnormal{where } x>0 \textnormal{ and } y=\sin\left(\frac{1}{x}\right)
	\right\}.
\end{align}
Try using free online software such as Desmos, GeoGebra, or WolframAlpha to plot $G$. Also, let $L$ be the line segment given by 
\begin{align}
	L&=\left\{
	\bfx\in\R^2:\bfx=
	\left[
	\begin{array}{c}
	x\\
	y
\end{array}
\right]
	\textnormal{where } x=0 \textnormal{ and } -1\leq y\leq 1
	\right\}.\label{eqn:segmentL}
\end{align}
Even though $G\cap L=\varnothing$, the set $E=G\cup L$ is connected. 
The proof that $E=G\cup L$ is connected will be handled later when we have more tools at our disposal. For now, we can prove every point in $L$ is the limit of a convergent sequence of points $G$. 
\end{example}

\begin{proof}[Partial proof of Example \ref{eg:topologistsine}]
Let $\bfp=\left[
	\begin{array}{c}
	0\\
	y_0
\end{array}
\right]$ be a point in $L$. Then \eqref{eqn:segmentL} ensures we have $-1\leq y_0\leq 1$, so there is some $x_0>0$ where $\sin(1/x_0)=y_0$. 

Now consider the sequence of positive real numbers defined by
\begin{align}
	a_n=\frac{1}{x_0+2\pi n}
\end{align}
for each positive integer $n$. Then, thanks to the periodicity of the sine function, we have 
\begin{align}
	g(a_n)&=\sin\left(\frac{1}{a_n}\right)=\sin(x_0+2\pi n)=y_0.
\end{align}
From there, consider the sequence $(\bfx_n)$ in the plane defined by 
\begin{align}
\bfx_n&=\left[
	\begin{array}{c}
	a_n\\
	g(a_n)
	\end{array}
	\right]
=\left[
	\begin{array}{c}
	a_n\\
	y_0
	\end{array}
\right]
\end{align}
for each positive integer $n$ and $(\bfx_n)$ is a sequence of points in $G$. Since
\begin{align}
	\lim_{n\to\infty}a_n&=\lim_{n\to\infty}\left(\frac{1}{x_0+2\pi n}\right)=0 \qquad\textnormal{and}\\
	\lim_{n\to\infty}g(a_n)&=\lim_{n\to\infty}\sin\left(\frac{1}{a_n}\right)=\lim_{n\to\infty}y_0=y_0,
\end{align}
Theorem \ref{thm:componentwisesequences} (regarding componentwise convergence) applies and tells us 
\begin{align}
	\lim_{n\to\infty}\bfx_n
	&=\left[
		\begin{array}{c}
			\displaystyle\lim_{n\to\infty}a_n\\
			\displaystyle\lim_{n\to\infty}y_0
		\end{array}
	\right]
	=\left[
		\begin{array}{c}
			0\\
			y_0
		\end{array}
	\right]
	=\bfp.
\end{align}
So, every point in $L$ is the limit of a sequence of points in $G$.
\end{proof}

Theorem \ref{thm:realconnected} is yet another special feature of the real line $\R$: Intervals and singletons are the only connected subsets of the real line. 

In the case of intervals, their characterization in Lemma \ref{lem:intervalcharacterization} provides a helpful perspective to consider as when trying to prove Theorem \ref{thm:realconnected}. The proof of Lemma \ref{lem:intervalcharacterization} amounts to checking the definition of an interval in Definition \ref{def:intervals}, so it is omitted. After all, intervals are defined to be subsets of the real line comprising the points between endpoints and possibly the endpoints themselves. See Figure \ref{fig:intervalplots}.

\begin{lemma}\label{lem:intervalcharacterization}
A subset $I$ of the real line $\R$ is an interval if and only if whenever $x\in I,y\in I$ and $x<z<y$, then $z\in I$ as well.
\end{lemma}

\begin{theorem}\label{thm:realconnected}
A nonempty subset of the real line $\R$ is connected if and only if the subset is an interval or a singleton.
\end{theorem}

The proof of the implication stating connected implies the set is either a singleton or an interval is handled via contraposition. But first, the proof that singletons and intervals are connected is handled directly with a pair of cases. 

\begin{proof}
Suppose $E$ is a nonempty subset of the real line $\R$.

\underline{Case (i)}: Suppose is a singleton where $E=\{c\}$. Then any nonempty subsets $A$ and $B$ of $E$ must also be the same singleton, that is
\begin{align}
	A&=B=\{c\}.
\end{align}
Since $c\in A$ and $c\in B$, by Lemma \ref{lem:elementacl} we have $c\acl{A}$ and $c\acl{B}$. Therefore, $E$ is connected (see Definition \ref{def:connected}).

\underline{Case (ii)}: Suppose $E$ is an interval with nonempty subsets $A$ and $B$ where $E=A\cup B$ and, without loss of generality, there are real numbers $a_0\in A$ and $b_0\in B$ where $a_0<b_0$. 

Now consider the closed interval $I_0=[a_0,b_0]$ which is a subset of $E$ since $E$ is an interval. Following a bisection method much like the proof of the NCBI Property \ref{thm:nestedintervals}, the midpoint $(a_0+b_0)/2$ is in $E$. Therefore, $(a_0+b_0)/2$ is in $A$ or $B$. Define $I_1=[a_1,b_1]$ by either $I_1=[a_0,(a_0+b_0)/2]$ or $I_1=[(a_0+b_0)/2,b_0]$, chosen so that $a_1\in A$ and $b_1\in B$. Proceeding recursively, define a sequence of closed and bounded intervals $I_n$ where: the midpoint of $I_n$ is an endpoint of $I_{n+1}$; $I_{n+1}$ is chosen so its left endpoint $a_{n+1}$ is in $A$, and its right endpoint $b_{n+1}$ is in $B$. Then the sequence of intervals $(I_n)$ is nested, so by the NCBI Property \ref{thm:nestedintervals}, there is a point 
$x$ where
\begin{align}
	x\in \bigcap_{n=1}^\infty I_n\subseteq E.
\end{align}

Since the $I_n$ are constructed via bisection, their lengths are successively cut in half. So, for every $n\in\N$ we have 
\begin{align}
	a_n \leq x \leq b_n\qquad \textnormal{and}\qquad |a_n-b_n|=\frac{|a_0-b_0|}{2^n}.
\end{align}
By Theorem \ref{thm:algebraiclimitseuclidean} and Corollary \ref{cor:powersofconstant}, we have
\begin{align}
	\lim_{n\to\infty}\frac{|a_0-b_0|}{2^n}=0.
\end{align}

Now let $\varepsilon>0$. Since $a_n\leq x\leq b_n$, properties inequalities and the definition of limit and convergence (Definition \ref{def:sequentiallimit}), there is a threshold $n_\varepsilon$ where for every $n\geq n_\varepsilon$ we have both
\begin{align}
	|a_n-x|&\leq |a_n-b_n|=\frac{|a_0-b_0|}{2^n}<\varepsilon \quad\textnormal{and}\\
	|b_n-x|&\leq |a_n-b_n|=\frac{|a_0-b_0|}{2^n}<\varepsilon.
\end{align} 
Since $a_n\in A$ and $b_n\in B$ for every $n\in\N$, we have $x\acl{A}$ and $x\acl{B}$. Since $x\in E$ and $E=A\cup B$, we have $x\in A$ or $x\in B$. Therefore, by Definition \ref{def:connected}, $E$ is connected.

Next, to show a nonempty connected subset of the real line is connected, let's argue via contraposition. Suppose $E$ contains at least two points and is not an interval (otherwise, $E$ contains just one point and is thus a singleton). By Lemma \ref{lem:intervalcharacterization} and without loss of generality, there are $x\in E$ and $y\in E$ where $x<y$ as well as $z\in (x,y)$ where $z\notin E$. Define 
\begin{align}
	L_z=(-\infty,z)\cap E\quad\textnormal{and}\quad R_z=(z,\infty)\cap E. 
\end{align}
Then $E=L_z\cup R_z$, $x\in L_z$, and $y\in R_z$. For every $\ell\in L_z$ and every $r\in R_z$ we have
\begin{align}
 \ell<z<r.
\end{align}
So, for every $\ell\in L_z$ and every $r\in R_z$ we have both 
\begin{align}
	|\ell-r|&>|\ell-z|>0 \quad\textnormal{and}\\
	|\ell-r|&>|r-z|>0.
\end{align}
Hence, every $\ell\in L_z$ is away from $R_z$ and every $r\in R_z$ is away from $L_z$. Therefore, $E$ is disconnected (see Definition \ref{def:connected}).
\end{proof}

To conclude the section, the following definition provides a specific meaning for the concept of having two {\em sets} arbitrarily close to one another. Former students Jeffrey Robbins, Lekha Patil, and Ryan Aniceto each thought of equivalent versions of this definition. 

\begin{definition}\label{def:twosetsacl}
Suppose $A,B\subseteq\R^m$. The sets $A$ and $B$ are said to be {\em arbitrarily close} if their closures intersect, thus $\overline{A}\cap\overline{B}\neq\varnothing$. In this case we write $A\acl{B}$. 

Equivalently, $A\acl{B}$ if there is a point $\bfy$ where both $\bfy\acl{A}$ and $\bfy\acl{B}$. 
\end{definition}

The notions of connectedness (Definition \ref{def:connected}) and having two sets arbitrarily close to one another (Definition \ref{def:twosetsacl}) are related, but not equivalent.

\begin{example}\label{eg:setsacldisconnected}
The open sets $S^o$ and $D$ in Example \ref{eg:disconnectedinplane} are arbitrarily close as in Definition \ref{def:twosetsacl} ($S^o\acl{D}$) since the point $\bfw$ is arbitrarily close to both sets as in Definition \ref{def:acl}. That is, $\bfw\acl{D}$ as explained in Example \ref{eg:connectedinplane} while $\bfw\acl{S^o}$ by similar reasoning: We have $\bfw\in{S}$ where $S$ is as in Example \ref{eg:connectedinplane}, and since $\bfw \in S$ and $S=\overline{S^o}$. However, the union $E^o=S^o\cup D$ is not connected as explained in Example \ref{eg:disconnectedinplane}.
\end{example}

\begin{remark}\label{rmk:variousacl}
Note Definition \ref{def:twosetsacl} makes use of  the notation ``$\acl$'' in two subtly different ways: When comparing two {\em sets} using $A\acl{B}$, Definition \ref{def:twosetsacl} provides a reasonable perspective which explicitly relies on what it means for {\em points} to be arbitrarily close to sets with both $\bfy\acl{A}$ and $\bfy\acl{B}$ as in Definitions \ref{def:aclreal} or \ref{def:acl}.

Additionally, a perspective on having two {\em points} arbitrarily close to one another is provided by Lemma \ref{lem:equalpoints}: In $\R^m$, two points are arbitrarily close to one another if and only if they are the same point. In the more general setting of topological spaces, this is not necessarily the case.
\end{remark}

Example \ref{eg:setsacldisconnected} shows us two sets can be arbitrarily close while having a disconnected union. Lemma \ref{lem:elementacl}---points in a set are arbitrarily close to the set---allows us to codify the relationship between connectedness (Definition \ref{def:connected})  and having pairs of sets arbitrarily close to one another (Definition \ref{def:twosetsacl}).

\begin{theorem}\label{thm:connectedimpliesacl}
Suppose $E\subseteq\R^m$ is connected. Then for every pair of nonempty sets $A$ and $B$ where $E=A\cup B$ we have $A\acl{B}$.
\end{theorem} 

\begin{proof}
Suppose $E$ is connected and $A$ and $B$ are nonempty where $E=A\cup B$. By the definition of connectedness (Definition \ref{def:connected}) and without loss of generality, there is a point $\bfy\in A$ such that $\bfy\acl{B}$. Since $\bfy\in A$, we also have $\bfy\acl{A}$ by Lemma \ref{lem:elementacl}. Therefore, $A\acl{B}$.
\end{proof}

The next section explores {\em compactness}, another important classic topic in analysis and topology with a difficult definition.

\vs
\section*{Exercises}
\setcounter{theorem}{0}

Exercises are for play: Do scratch work, draw stuff, and make mistakes---make {\em lots} of mistakes---before worrying about writing proofs. {\em Have fun!}
%

\vs
\section{Compact sets}
\label{sec:compactsets}

{\em Compactness} is a topological property with implications across analysis, but its definition can be difficult to understand and appreciate at first. As my former student Ryan Aniceto says:

\begin{quote}
Compactness is the next best thing to finiteness.
\end{quote}

This section aims to make sense of Ryan's notion in a mathematically concrete way. To help motivate the formal definition of compactness (Definition \ref{def:compact}), let's first see what we can say about finite sets in the real line $\R$. 

\begin{definition}\label{def:finite}
A set is {\em finite} if it is empty or if it has $n_0$ elements for some positive integer $n_0$.
\end{definition}

\begin{proposition}\label{prop:finitereal}
Suppose $S$ is a nonempty and finite set of real numbers. Then
\begin{enumerate}
	\item $S$ is bounded.
	\item Both $\max{S}$ and $\min{S}$ exist.
	\item $S$ is closed.
	\item Every sequence of real numbers in $S$ has a constant subsequence.
\end{enumerate}
\end{proposition}

For a nonempty finite set of real numbers, we can always list the elements from least to greatest.

\begin{proof}
Let $S$ be a nonempty and finite set of real numbers $S$ with $n_0$ elements. Without loss of generality, we have
\begin{align}
	S&=\{s_1,s_2,\ldots,s_{n_0}\} \quad\textnormal{where}\quad s_1<s_2<\cdots <s_{n_0}.
\end{align}
Hence, $s_1$ is a lower bound for $S$ which is in $S$ and $s_{n_0}$ is an upper bound for $S$ which is in $S$. So, $S$ is bounded with $\min{S}=s_1$ and $\max{S}=s_{n_0}$.

To see why $S$ is closed, note that by Lemma \ref{lem:elementacl}, each of the elements in $S$ is arbitrarily close to $S$. Also, every other real number is away from $S$. To that end, suppose $x$ is a real number that is not in $S$. Then for every $k=1,\ldots,n_0$ we have
\begin{align}
	x&\neq s_k\qquad \textnormal{and so}\qquad d_{\R}(x,s_k)=|x-s_k|>0. 
\end{align}
Now let $\varepsilon_x=\min\{|x-s_k|:k=1,\ldots,n_0\}$. Since there are only finite distances to consider, we have $\varepsilon_x>0$. Also, for every $k=1,\ldots,n_0$ we have
\begin{align}
	d_{\R}(x,s_k)=|x-s_k|\geq \varepsilon_x>0. 
\end{align}
Therefore, $x\awf{S}$. Since $S$ contains all points arbitrarily close to $S$, $S$ is closed. 

Finally, suppose $(x_n)$ is a sequence of points in $S$. Since there are infinitely many terms of the sequence $(x_n)$ but only a finite number of real numbers in $S$, at least one of the real numbers in $S$ must be repeated an infinite number of times. Hence, for some index $j_0\in\{1,2,\ldots,n_0\}$ and its corresponding element $s_{j_0}\in S$, there is a constant subsequence $(x_{n_k})$ such that for every index $n_k$ we have $x_{n_k}=s_{j_0}$.
\end{proof}

Each of the properties in Proposition \ref{prop:finitereal} fails to hold in general for infinite subsets of the real line and Euclidean spaces. For an example of a set of real numbers where none of these properties hold, consider the set of rational numbers $\Q$. 

\begin{example}\label{eg:rationalsnotcompact}
The set of rational numbers $\Q$ is unbounded. Moreover, neither $\sup{\Q}$ nor $\inf{\Q}$ exist, so neither $\max{\Q}$ and $\min{\Q}$ exist. $\Q$ is not closed since, for instance, $\sqrt{2}$ is arbitrarily close to $\Q$ but not in $\Q$. Also, some sequences of rational numbers have no constant subsequences. For instance, consider the sequence $(c_n)$ defined by $c_n=n$ for each positive integer $n$: Each $c_n=n$ is a rational number, but each one appears in any given subsequence of $(c_n)$ at most once and cannot be repeated. Therefore, given any subsequence of $(c_n)$, no term is repeated an infinite number of times. Hence, $(c_n)$ has no constant subsequences.
\end{example}

Compactness is a way to impose conditions on a set---whether finite or infinite---which ensures some properties hold which are similar to those of finite sets in Proposition \ref{prop:finitereal}. Still, I feel the formal definition for compactness (Definition \ref{def:compact}) could use some further motivation. To that end, consider closed and bounded intervals.

\begin{proposition}\label{prop:closedboundedintervals}
Suppose $a,b\in\R$ with $a<b$. Then the interval $I=[a,b]$ satisfies the following properties:
\begin{enumerate}
	\item $I$ is bounded.
	\item Both $\max{I}$ and $\min{I}$ exist.
	\item $I$ is closed.
	\item Every sequence of real numbers in $I$ has a convergent subsequence whose limit is in $I$.
\end{enumerate}
\end{proposition}

\begin{remark}\label{rmk:closedboundedintervals}
The only difference between the properties for finite sets of real numbers in Proposition \ref{prop:finitereal} and closed and bounded intervals in Proposition \ref{prop:closedboundedintervals} lies with the fourth property, respectively. 
\end{remark}

\begin{proof}
For the interval $I=[a,b]$, $a$ is a lower bound for $I$ which is in $I$ and $b$ is an upper bound for $I$ which is in $I$. Hence, $I$ is bounded with $\min{I}=a$ and $\max{I}=b$. 

The proof that $I$ is closed is an important exercise, so I won't provide it here. See Exercise \ref{exer:closedinterval}.

This leaves property (iv). Suppose $(x_n)$ is a sequence of real numbers in $I$. Since $I$ is bounded, $(x_n)$ is bounded as well. By the Bolzano-Weierstrass Theorem \ref{thm:bolzanoweierstrass}, $(x_n)$ has a convergent subsequence $(x_{n_k})$ with limit $\ell$. By Theorem \ref{thm:exercise0}, $\ell$ is arbitrarily close to $(x_{n_k})$, so $\ell$ is arbitrarily close to $I$ as well. Since $I$ is closed, $\ell$ is in $I$. 
\end{proof}

\begin{remark}\label{rmk:compactviafinite}
How far can we push the analogy of finiteness to infinite sets? Compactness is one way to answer this question, and it uses collections open sets to imbue a modicum of finiteness on infinite sets. Loosely speaking, compactness replaces the idea of having a finite number of elements in a set with the notion of approximating the set with a finite number of open sets in a peculiar way. 
\end{remark}

Please be patient. The definition of compactness takes a while to build.

\begin{definition}\label{def:opencover}
Let $S$ be a subset of $\R^m$. An {\em open cover} for $S$ is a collection $\mathcal{U}$ where 
\begin{enumerate}
	\item every object in $\mathcal{U}$ is an open set; and
	\item $\displaystyle S \subseteq \bigcup_{U\in\,\mathcal{U}}U$, in which case we say $\mathcal{U}$ {\em covers} $S$.
\end{enumerate}
\end{definition}
 
\begin{example}\label{eg:opencoversornot}
Consider the set of real numbers $F$ given by
\begin{align}
	F&=\{0\}\cup\left\{\frac{1}{n}:n\in\N\right\}.
\end{align}
See Figure \ref{fig:opencoversornot}. Also consider the open sets 
\begin{align}
	V_1&=\left(\frac{3}{4},\frac{5}{4}\right)\qquad \textnormal{and}\qquad 
	V_n=(a_n,b_n)
\end{align}
where $V_n$ is the open interval defined for each $n\geq 2$ by taking $a_n$ to be the midpoint between $1/n$ and $1/(n+1)$ and $b_n$ to be the midpoint between $1/(n-1)$ and $1/n$. From there, consider the collection of open sets $\mathcal{V}$ given by 
\begin{align}
	\mathcal{V}&=\left\{V_n:n\in\N\right\}=\{V_1,V_2,\ldots\}. 
\end{align}
See Figure \ref{fig:opencoversornot}.

We have that $\mathcal{V}$ is {\em not} an open cover for $F$ since $0$ is in $F$ but $0$ is not in any of the $V_n$, thus $\mathcal{V}$ does not cover $F$ since 
\begin{align}
	F\nsubseteq \bigcup_{n=1}^\infty V_n =\bigcup_{U\in\,\mathcal{V}}U.
\end{align}

However, by including one more open set that contains $0$ to the collection, we get an open cover $\mathcal{U}$ for the set $F$. To that end, define
\begin{align}
	V_0&=\left(-\frac{1}{3},\frac{1}{3}\right)\qquad\textnormal{and}\qquad
	\mathcal{U}=\mathcal{V}\cup\{V_0\}=\{V_0,V_1,V_2,\ldots\}.
\end{align}
Then $0\in V_0$ and $1/n\in V_n$ for each $n\in\N$. Therefore,
\begin{align}
	F\subseteq \bigcup_{n=0}^\infty V_n =\bigcup_{U\in\,\mathcal{U}}U.
\end{align}
Since $V_0$ and all of the $V_n$ are open, $\mathcal{U}$ is indeed an open cover for $F$.
\end{example}

\begin{figure}
\centering
\begin{tikzpicture}
\draw (-3,0) node {$F$};
\draw (0.5,0) node {$...$};
\foreach \Point in {(0,0), 
 (1.25,0), (1.67,0), (2.5,0), (5,0)}
{
    \node at \Point {\textbullet};
}
\draw (0,-0.5) node {$0$};
\draw (1.1,-0.5) node {$1/4$};
\draw (1.82,-0.5) node {$1/3$};
\draw (2.65,-0.5) node {$1/2$};
\draw (5,-0.5) node {$1$};

\draw (-3,-1.5) node {$\mathcal{V}$};
\draw (0.5,-1.5) node {$...$};
\foreach \Point in {(0,-1.5), (1.25,-1.5), (1.67,-1.5), (2.5,-1.5), (5,-1.5)}
{
    \node at \Point {\textbullet};
}
\draw[-,semithick] (3.75,-1.5) -- (6.25,-1.5);
\draw (3.78,-1.5) node {$($};
\draw (6.21,-1.5) node {$)$};
\draw (5,-2) node {$V_1$};
\draw[-,semithick] (2.09,-1.5) -- (3.75,-1.5);
\draw (2.13,-1.5) node {$($};
\draw (3.71,-1.5) node {$)$};
\draw (2.92,-2) node {$V_2$};
\draw[-,semithick] (1.46,-1.5) -- (2.09,-1.5);
\draw (1.50,-1.5) node {$($};
\draw (2.07,-1.5) node {$)$};
\draw (1.78,-2) node {$V_3$};
\draw[-,semithick] (1.12,-1.5) -- (1.46,-1.5);
\draw (1.16,-1.5) node {$($};
\draw (1.42,-1.5) node {$)$};
\draw (1.05,-2) node {$...$};

\draw (-3,-3) node {$\mathcal{W}$};
\foreach \Point in {(0,-3), (1.25,-3), (1.67,-3), (2.5,-3), (5,-3)}
{
    \node at \Point {\textbullet};
}
\draw[-,semithick] (3.75,-3) -- (6.25,-3);
\draw (3.78,-3) node {$($};
\draw (6.21,-3) node {$)$};
\draw (5,-3.5) node {$V_1$};
\draw[-,semithick] (2.09,-3) -- (3.75,-3);
\draw (2.13,-3) node {$($};
\draw (3.71,-3) node {$)$};
\draw (2.92,-3.5) node {$V_2$};
\draw[-,semithick] (1.46,-3) -- (2.09,-3);
\draw (1.50,-3) node {$($};
\draw (2.07,-3) node {$)$};
\draw (1.78,-3.5) node {$V_3$};
\draw[-,semithick] (-1.67,-3) -- (1.67,-3);
\draw (-1.66,-3) node {$($};
\draw (1.63,-3) node {$)$};
\draw (0,-3.5) node {$V_0$};
\end{tikzpicture}
\caption{The set of real numbers $F$ along with collections of open sets $\mathcal{V}$ and $\mathcal{W}$ from Example \ref{eg:opencoversornot} and Remark \ref{rmk:opencoversornot}. $\mathcal{V}$ is not a cover for $F$, but $\mathcal{W}$ is.}
\label{fig:opencoversornot}
\end{figure}

\begin{remark}\label{rmk:opencoversornot}
Note that in Example \ref{eg:opencoversornot}, the collection $\mathcal{V}$ comprises an infinite number of open sets but did not have enough for their union to contain $F$, so $\mathcal{V}$ is not an open cover for $F$. By adding the single open set $V_0$, we obtained the collection $\mathcal{U}$ which is an open cover for $F$. 

Furthermore, after adding $V_0$ to the collection, we only needed a finite number of open sets to contain $F$. More specifically, consider the finite collection
\begin{align}
	\mathcal{W}&=\{V_0,V_1,V_2,V_3\}.
\end{align}
See Figure \ref{fig:opencoversornot} once again. Then $1/n\in V_n$ for $n=1,2,3$, $0\in V_0$, and $1/n \in V_0$ for every $n\geq 4$. Hence,
\begin{align}
	F\subseteq \bigcup_{n=0}^3 V_n =\bigcup_{U\in\,\mathcal{W}}U.
\end{align}
Therefore, $\mathcal{W}$ is a finite subcollection of $\mathcal{U}$ whose objects are open sets with a union that still contains $F$. That is, $\mathcal{W}$ is itself an open cover for $F$, so $\mathcal{W}$ is a {\em finite subcover} (see Definition \ref{def:finitesubcover}).
\end{remark}

\begin{definition}\label{def:finitesubcover}
Let $S\subseteq\R^m$ and let $\mathcal{U}$ be an open cover for $S$. A subcollection $\mathcal{W}$ (of $\mathcal{U}$) is a {\em finite subcover}  if there are a finite number of open sets $U_1,\ldots,U_{n_0}$ such that
\begin{enumerate}
	\item $\displaystyle \mathcal{W}=\left\{U_1,\ldots,U_{n_0}\right\}\subseteq \mathcal{U}$, and
	\item $\displaystyle S \subseteq \bigcup_{n=1}^{n_0}U_n=\bigcup_{U\in\,\mathcal{W}}U$. (That is, $\mathcal{W}$ covers $S$.)
\end{enumerate}
\end{definition}

\begin{remark}
In Example \ref{eg:opencoversornot}, the collection $\mathcal{W}$ provides a way for us to represent the infinite collection of points in $F$ with a finite number of open sets:  Every real number in $F$ can be represented by the open set in the collection $\mathcal{W}$ containing the real number. Specifically, $V_n$ represents $1/n$ for each $n=1,2,3$ while $V_0$ simultaneously represents $0$ and all of the $1/n$ where $n\geq 4$. This is an interpretation of the ``next best thing to finiteness'' idea and leads us to the following definition for compactness.
\end{remark}

\begin{definition}\label{def:compact}
A set $K\subseteq\R^m$ is {\em compact} if every open cover for $K$ has a finite subcover.
\end{definition}

\begin{remark}\label{rmk:compactbeyondexample}
The definition of compactness (Definition \ref{def:compact}) goes beyond Example \ref{eg:opencoversornot} in a couple of important ways: (i) $K$ is not necessarily a subset of the real line; and (ii) {\em every} open cover for $K$ has a finite subcover (not just one open cover with a finite subcover). This is not an easy definition to process!
\end{remark}

\begin{example}\label{eg:notcompact}
Consider the set of positive integers $\N$ and the set of reciprocals of positive integers $S$ given by
\begin{align}
	S&=\left\{\frac{1}{n}:n\in\N\right\}.
\end{align}
Neither $\N$ nor $S$ is compact. To prove this, it suffices for each set to find a single open cover with no finite subcover.
\end{example}

\begin{proof}[Proof of Example \ref{eg:notcompact}]
For the set of positive integers $\N$, consider the collection $\mathcal{U}=\{U_1,U_2,\ldots\}$ of open intervals given for each $n$ in $\N$ by
\begin{align}
 U_n&=\left(n-1,n+1\right) \quad\textnormal{for}\quad 
\end{align}
Then for every $n\in\N$, $n$ is in $U_n$ and only $U_n$. Now let $\mathcal{V}$ be any finite subcollection of $\mathcal{U}$. Then there is some positive integer $k_0$ where 
\begin{align}
	\bigcup_{U\in\,\mathcal{V}}U\subseteq\bigcup_{n=1}^{k_0}U_n=(0,k_0+1).
\end{align}
So, $k_0+1\in \N$ but $k_0+1\notin\cup_{U\in\,\mathcal{V}}U$. Therefore, $\mathcal{V}$ is not an open cover for $\N$. Since $\mathcal{V}$ represents an arbitrary finite subcollection of $\mathcal{U}$, we have that $\mathcal{U}$ has no finite subcover. (Try drawing something for $\mathcal{U}$ and various $\mathcal{V}$.)

For the set $S$, consider the collection of open sets $\mathcal{W}=\{W_1,W_2,\ldots\}$ given by
\begin{align}
	W_1=\left(\frac{1}{2},2\right) \quad \textnormal{and}\quad  W_n&=\left(\frac{1}{n+1},\frac{1}{n-1}\right) \quad\textnormal{for}\quad n\geq 2.
\end{align}
Then for every $n\in\N$, $1/n$ is in $W_n$ and only $W_n$. Now let $\mathcal{T}$ be any finite subcollection of $\mathcal{W}$. Then there is some positive integer $j_0$ where 
\begin{align}
	\bigcup_{W\in\,\mathcal{T}}W\subseteq\bigcup_{n=1}^{j_0}W_n=\left(\frac{1}{j_0+1},2\right).
\end{align}
So, $1/(j_0+2)\in S$ but $1/(j_0+2)\notin\cup_{W\in\,\mathcal{T}}W$. Therefore, $\mathcal{T}$ is not an open cover for $S$. Since $\mathcal{T}$ represents an arbitrary finite subcollection of $\mathcal{W}$, we have that $\mathcal{W}$ has no finite subcover. Try drawing something for $\mathcal{W}$ and various $\mathcal{T}$.)
\end{proof}

\begin{remark}\label{rmk:notcompact}
The open covers with no finite subcovers in the proof of Example \ref{eg:notcompact} exploit features of the underlying sets: $\mathcal{U}$ takes advantage of the fact that $\N$ is unbounded while $\mathcal{W}$ takes advantage of the fact that $S$ is not closed since $0$ is arbitrarily close to $S$ but not in $S$.   
\end{remark}

So, what does it mean for a set to be {\em compact}? And how can we check whether or not a given set is compact? Definition \ref{def:compact} is difficult enough to understand, let alone work with. Thankfully, the Heine-Borel Theorem \ref{thm:heineborel} establishes various equivalent forms of compactness and allows us to make use of other, perhaps more manageable perspectives. At the moment, we are not quite ready to prove the Heine-Borel Theorem \ref{thm:heineborel}. For now, the depths of the relationships between the definitions of compactness, closed, and bounded are hinted at in the following lemmas.

\begin{lemma}\label{lem:compactimpliesbounded}
Every compact set in $\R^m$ is bounded.
\end{lemma}

\begin{proof}
To argue via contraposition, suppose $S$ is an unbounded set of points in $\R^m$. Also, consider the collection of open sets $\mathcal{V}=\{V_n(\mathbf{0}):n\in\N\}$ where for each positive integer $n$ we have 
\begin{align}
	V_n(\mathbf{0})=\{\bfx\in\R^m:\|\bfx\|_m<n\}
\end{align}
Then for every point $\bfx$ in $S$, by the Archimedean Property \ref{thm:archimedeanproperty} there is a positive integer $n_\bfx$ large enough so that $\|\bfx\|_m<n_\bfx$. Therefore, $\bfx$ is in $V_{n_\bfx}(\mathbf{0})$ and so $\mathcal{V}$ is an open cover for $S$.

Now suppose $\mathcal{W}$ is a finite subcollection of the open sets in $\mathcal{V}$. Then there is a positive integer $k_0$ where we have
\begin{align}
	\bigcup_{V\in\,\mathcal{W}}V\subseteq\bigcup_{n=1}^{k_0}V_n(\mathbf{0})=V_{k_0}(\mathbf{0})
\end{align}
Since $S$ is unbounded, there is a point $\bfx_0$ in $S$ where $\|\bfx_0\|_m>k_0$. As such, 
\begin{align}
	\bfx_0\notin V_{k_0}(\mathbf{0}) \quad\implies\quad \bfx_0\notin\bigcup_{V\in\,\mathcal{W}}V.
\end{align}
So, $\mathcal{W}$ is not an open cover for $S$. Since $\mathcal{W}$ is an arbitrary finite subcollection of $\mathcal{V}$, we have $S$ is not compact.
\end{proof}

\begin{lemma}\label{lem:compactimpliesclosed}
Every compact set in $\R^m$ is closed. 
\end{lemma}

\begin{proof}
To argue via contraposition, suppose $T$ is a subset of $\R^m$ that is not closed. Then there is a point $\bfy$ in $\R^m$ such that $\bfy\acl{T}$ but $\bfy$ is not in $T$. Now, for each positive integer $n$, consider the open set $\R^m\backslash \overline{V_{1/n}(\bfy)}$  where
\begin{align}
	\R^m\backslash \overline{V_{1/n}(\bfy)}&=\left\{\bfx\in\R^m:d_m(\bfx,\bfy)=\|\bfx-\bfy\|_m>1/n\right\}.
\end{align}
(That is, $\R^m\backslash \overline{V_{1/n}(\bfy)}$ is the complement of the closed $1/n$-neighborhood of $\bfy$, so $\R^m\backslash \overline{V_{1/n}(\bfy)}$ is open by Theorem \ref{thm:closedopen}.) Since $\bfy$ is not in $T$, for every point $\bft$ in $T$ we have 
\begin{align}
	d_m(\bft,\bfy)=\|\bft-\bfy\|_m>0.
\end{align}
So, by the Corollary of the Archimedean Property (Corollary \ref{cor:archimedeanproperty}), there is some positive integer $n_\bft$ large enough so that 
\begin{align}
	\frac{1}{n_\bft}<d_m(\bft,\bfy)=\|\bft-\bfy\|_m.
\end{align}
Thus, $\bft$ is in $\R^m\backslash \overline{V_{1/n_\bft}(\bfy)}$ and therefore 
\begin{align}
	\mathcal{V}=\left\{\R^m\backslash \overline{V_{1/n}(\bfy)}:n\in\N\right\}
\end{align}
is an open cover for $T$. 

Now suppose $\mathcal{W}$ is a finite subcollection of the open sets in $\mathcal{V}$. Then there is a positive integer $k_0$ where we have
\begin{align}
	\bigcup_{V\in\,\mathcal{W}}V\subseteq\bigcup_{n=1}^{k_0}(\R^m\backslash \overline{V_{1/n}(\bfy)})=\R^m\backslash \overline{V_{1/k_0}(\bfy)}.
\end{align}
Since $\bfy\acl{T}$, there is a point $\bft_0$ in $T$ such that 
\begin{align}
	d_m(\bft_0,\bfy)=\|\bft_0-\bfy\|_m<\frac{1}{k_0}.
\end{align}
Therefore,
\begin{align}
	\bft_0\in V_{1/k_0}(\mathbf{0})
	\quad\implies\quad
	\bft_0\notin \R^m\backslash \overline{V_{1/k_0}(\bfy)}
	\quad\implies\quad
	\bft_0\notin\cup_{V\in\,\mathcal{W}}V.
\end{align}
Hence, $\mathcal{W}$ is not an open cover for $T$. Since $\mathcal{W}$ is an arbitrary finite subcollection of $\mathcal{V}$, we have $T$ is not compact.
\end{proof}

\begin{lemma}\label{thm:closedandcompact}
Every closed subset of a compact set in $\R^m$ is compact.
\end{lemma}

\begin{proof}
Suppose $K$ is a compact set in $\R^m$, $F$ is a closed set in $\R^m$, and $F\subseteq K$. By Lemma \ref{lem:compactimpliesclosed}, $K$ is also a closed set in $\R^m$. By Theorem \ref{thm:closedopen}, $\R^m\backslash F$ is an open set.

Now let $\mathcal{U}$ be an open cover for $F$. The collection of open sets $\mathcal{U}\cup\{\R^m\backslash F\}$ is an open cover for $K$ since 
\begin{align}
	F\subseteq \bigcup_{U\in\,\mathcal{U}}U
\end{align}
implies
\begin{align}
	K\subseteq \R^m =F\cup(\R^m\backslash F) \subseteq  \left(\bigcup_{U\in\,\mathcal{U}}U\right)\cup (\R^m\backslash F).
\end{align}
By the definition of compact (Definition \ref{def:compact}), there is a finite subcover $\mathcal{V}=\{U_1,\ldots,U_{n_0}\}$ of $\mathcal{U}\cup\{\R^m\backslash F\}$ where
\begin{align}
	K\subseteq \bigcup_{n=1}^{n_0}U_n.
\end{align}
If $\R^m\backslash F=U_j$ for some $j=1,\ldots,n_0$, then $\mathcal{V}\backslash\{\R^m\backslash F\}$ is a finite subcover of $\mathcal{U}$ which covers $F$. Otherwise, $\mathcal{V}$ is a finite subcover of $\mathcal{U}$ which covers $F$. Either way, since $\mathcal{U}$ is an arbitrary open cover for $F$, we have $F$ is compact.
\end{proof}


Codas provide another context in which we can think about compactness.

\begin{definition}\label{def:codacompact}
A set $F\subseteq\R^m$ is said to be {\em coda-compact} if every sequence of points in $F$ has nonempty coda contained in $F$.
\end{definition}
Thus, $F$ is coda-compact if and only if for every sequence $(\bfx_n)$ of points in $F$ we have $\Coda((\bfx_n))$ is nonempty and a subset of $F$.

The following version of the classic Heine-Borel Theorem lists a variety of equivalent forms of compactness in the context of Euclidean spaces.

\begin{theorem}[Heine-Borel]\label{thm:heineborel}
A set $K\subseteq \R^m$ is compact if and only if the following equivalent conditions hold:
\begin{enumerate}
	\item Every open cover of $K$ has a finite subcover. \textnormal{(Topological Compactness)}
	\item Every sequence in $K$ has a convergent subsequence whose limit is in $K$. \textnormal{(Sequential Compactness)} 
	\item $K$ is closed and bounded. 
	\item $K$ is coda-compact.
\end{enumerate}
\end{theorem}

\vs
\section*{Exercises}
\setcounter{theorem}{0}

Exercises are for play: Do scratch work, draw stuff, and make mistakes---make {\em lots} of mistakes---before worrying about writing proofs. {\em Have fun!}


\vs
\section{Other topological properties}
\label{sec:othertopologicalproperties}

There are lots of ways a point can compare to a set. The following notions are classic concepts in analysis (and topology) which compare points and sets. In this textbook, these concepts are defined through the lenses of {\em arbitrarily close} and {\em away from} (Definitions \ref{def:acl} and \ref{def:awf}), so I am yet again straying from convention. Please keep in mind, I am not making these decisions to buck convention lightly. The theorems, lemmas, exercises, etc., are designed to take the intuition and technical aspects gained from considering {\em arbitrarily close} and {\em away from}, and connect them to the classic---and, sometimes, more awkward---definitions.

\begin{definition}\label{def:pointsaclset}
Let $\bfy\in\R^m$ and $B\subseteq\R^m$. Then:
\begin{enumerate}
	\item $\bfy$ is a {\em boundary point} of $B$ if $\bfy\acl B$ and $\bfy\acl (\R^m\backslash B)$;
	\item $\bfy$ is an {\em interior point} of $B$ if $\bfy\awf (\R^m\backslash B)$;
	\item $\bfy$ is an {\em exterior point} of $B$ if $\bfy\awf{B}$;
	\item $\bfy$ is a {\em accumulation point} of $B$ if $\bfy\acl (B\backslash\{\bfy\})$; $\quad$ and 
	\item $\bfy$ is an {\em isolated point} of $B$ if $\bfy\in B$ but $\bfy\awf (B\backslash\{\bfy\})$.
\end{enumerate}
\end{definition}

See Figures \ref{fig:pointsaclB} and \ref{fig:pointsaclE}.

\begin{remark}\label{rmk:pointaclset}
In other words and stated more loosely, Definition \ref{def:pointsaclset} says:
\begin{enumerate}
	\item $\bfy$ is a boundary point of $B$ if $\bfy$ is near both $B$ and the complement of $B$;
	\item $\bfy$ is an interior point of $B$ if $\bfy$ is away from the complement of $B$;
	\item $\bfy$ is an exterior point of $B$ if $\bfy$ is away from $B$;
	\item $\bfy$ is an accumulation point of $B$ if $\bfy$ is near points in $B$ other than $\bfy$; $\quad$ and 
	\item $\bfy$ is an isolated point of $B$ if $\bfy$ is in $B$ but away from the other points in $B$.
\end{enumerate}
\end{remark}

\begin{figure}
\centering
\begin{tikzpicture} 
\draw[dashed, fill=blue!15] (-3,-1.41) rectangle (1.41,1.41);		
\draw[semithick] (-3,-1.41) -- (-3,1.41);			
\begin{scope}[semithick, dashed, red]  
\draw (1.44,1.42) circle (1.4cm); 
\draw (1.44,1.42) circle (1.05cm);
\draw (1.44,1.42) circle (0.7cm); 
\end{scope}	
	\draw (-2.2,-0.5) node {$\bullet$};
	\draw (-2.2,-0.8) node {$\bfa$};
	\draw[dashed] (-2.2,-0.5) circle (0.65cm);	
	\draw (-2.7,1.08) node {$\bullet$};
	\draw (-2.7,0.6) node {$\bfb$};
	\draw[dashed] (-2.7,1.1) circle (0.23cm);
	\draw[fill=white] (1.41,1.41) circle (0.08cm);
	\draw (1.8,1.41) node {$\bfy$};
	\draw (-0.8,0) node {$R$};
	\draw[fill=black] (-3,1.41) circle (0.08cm);
	\draw[fill=black] (-3,-1.41) circle (0.08cm);
	\draw[fill=white] (1.41,-1.41) circle (0.08cm);
\foreach \Point in {(2,-0.5), (2.58,-0.5), (2.85,-0.5), (3,-0.5),  (3.1,-0.5)}
{
    \node at \Point {\textbullet};
}	
	\draw[fill=white] (4,1.41) circle (0.08cm);
	\draw (4,1.16) node {$\bfw$};
\draw[dashed,blue] (4,1.41) circle (0.45cm); 	
	\draw[fill=white] (4,-0.48) circle (0.08cm);
	\draw (4.4,-0.5) node {$\bfz$};
	\draw (3.6,-0.5) node {$\cdots$};
\draw[dashed,blue] (2,-0.5) circle (0.45cm); 
	\draw (2,-0.75) node {$\bfx_1$};	
	\draw (2.67,-0.8) node {$\bfx_2$};
\draw (1,-2.5) node {$B=R\cup\{\bfx_n:n\in\N\}$};
\end{tikzpicture}
\caption{A set $B$ in the plane $\R^2$ which is the union of a rectangle $R$ and the range of a convergent sequence $(\bfx_n)$. Here we have: $\bfx_1$, $\bfx_2$, $\bfy$, and $\bfz$ are boundary points of $B$; $\bfa$ and $\bfb$ are interior points of $B$; $\bfw$ is an exterior point of $B$; $\bfa$, $\bfb$, $\bfy$, and $\bfz$ are accumulation points of $B$; and $\bfx_1$ and $\bfx_2$ are isolated points of $B$. See Definition \ref{def:pointsaclset}.}
\label{fig:pointsaclB}
\end{figure}

\begin{figure}
\centering
\begin{tikzpicture} 
\draw (-2,0) node {$E$};
	\draw[-,semithick] (0,0) -- (1,0);
	\draw (0,0) node {$[$};
	\draw (0,-0.5) node {$0$};
	\draw (0.98,0) node {$)$};
	\draw (1,-0.5) node {$1$};	
	\draw (2,0) node {\textbullet};
	\draw (2,-0.5) node {$2$};	
\draw (3,0) node {$\circ$};
\draw (2.9,-0.5) node {$3$}; 
\draw (3.25,0) node {...};
\foreach \Point in {(3.5,0), (4,0)}
{
    \node at \Point {\textbullet};
}
\draw (3.5,-0.5) node {$3.5$}; 
\draw (4.1,-0.5) node {$4$}; 
\end{tikzpicture}
\caption{A set $E$ in the real line $\R$ which is the union of an interval and the range of a convergent sequence. See Definition \ref{def:pointsaclset} and Example \ref{eg:pointsaclset}.}
\label{fig:pointsaclE}
\end{figure}

\begin{example}\label{eg:pointsaclset}
To get a better idea of what's going on with Definition \ref{def:pointsaclset} in the real line $\R$, consider the set of real numbers $E$ given by 
\begin{align}
	E=[0,1)\cup\{2\}\cup\{3+(1/n):n\in\N\}
\end{align}	 
See Figure \ref{fig:pointsaclE}. Then:
\begin{enumerate}
	\item $0,1,2,3$, and $4$ are some of the boundary points of $E$;
	\item $1/2$ is one of the interior points of $E$;
	\item Negative real numbers, real numbers greater than $4$, and $2.5$ are some of the exterior points of $E$;
	\item $0,1/2,1$, and $3$ are some of the accumulation points of $E$; and
	\item $2$ and each $3+(1/n)$ are isolated points of $E$.
\end{enumerate}
\end{example}

%

The following proposition shows Definition \ref{def:pointsaclset} to be equivalent to a classic counterpart. The proof is left as an exercise as they follow readily from the definitions of arbitrarily close (Definition \ref{def:acl}) and away from (Definition \ref{def:awf}) as well as the corresponding statements.  See \cite[Chapter 3]{Abbott} for some classic definitions on the real line.

\begin{proposition}\label{prop:pointsaclset}
Let $\bfy\in\R^m$ and $B\subseteq\R^m$. Then:
\begin{enumerate}
	\item $\bfy$ is a boundary point of $B$ if and only if $\bfy$ is in the closure of $B$ but not its interior. See \textnormal{Definitions \ref{def:closureclosed} and \ref{def:setsaclset}}.
	\item $\bfy$ is an interior point of $B$ if and only if a neighborhood of $\bfy$ is contained in $B$. Cf. \cite[Excercise 3.2.14, p.95]{Abbott}.
	\item $\bfy$ is an exterior point of $B$ if and only if a neighborhood of $\bfy$ is contained in $\R^m\backslash B$.
	\item $\bfy$ is an accumulation point of $B$ if and only if every neighborhood of $\bfy$ contains points of $B$ other than $\bfy$. Cf. \cite[Definition 3.2.4, p.89]{Abbott}.
	\item $\bfy$ is an isolated point of $B$ if and only if $\bfy$ is in $B$ and a neighborhood of $\bfy$ has no other points of $B$. Cf. \cite[Definition 3.2.6, p.90]{Abbott}.	
\end{enumerate}
\end{proposition}

\begin{remark}\label{rmk:neighborhoodcontainment}
Take a look at Figure \ref{fig:pointsaclB} again. The interior points $\bfa$ and $\bfb$ of the set $B$ have neighborhoods contained in $B$, while the exterior point $\bfw$ has a neighborhood contained in $\R^m\backslash B$. The existence of neighborhoods around the points in a set which are themselves contained in the set is the characterizing feature of open sets. See Definition \ref{def:open} and the following corollary whose proof is omitted.
\end{remark}

\begin{corollary}\label{cor:openasinteriorpoints}
A set $U\subseteq\R^m$ is open if and only if every point of $U$ is an interior point.
\end{corollary}

In many other analysis texts, accumulation points are also called ``cluster points'' or ``limit points''. By definition, however, not all limits are accumulation points. For example, the limit of a constant sequence is the constant, but the range of a constant sequence has no accumulation points. 

Along similar lines, a point arbitrarily close to a set is not necessarily an accumulation point of the set. Such points are exactly the isolated points of the sets. Still, the concepts of arbitrarily close and accumulation points are deeply related. The foundation of their relationship is illuminated by the following corollary of Theorem \ref{thm:exercise0}. (Cf. \cite[Theorem 3.2.5, p.89]{Abbott}.)

\begin{corollary}\label{cor:accumulationlimit}
A point $\bfx$ is an accumulation point of $A\subseteq\R^m$ if and only if $\bfx=\lim_{n\to\infty}\bfa_n$ for some sequence $(\bfa_n)$ of points in $A$ with $\bfx\neq \bfa_n$ for every $n\in\N$.  
\end{corollary}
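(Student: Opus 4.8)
The plan is to derive Corollary~\ref{cor:accumulationlimit} directly from Theorem~\ref{thm:exercise0} by applying that theorem to the punctured set $A\setminus\{x\}$ rather than to $A$ itself. By Definition~\ref{def:accumulationpoint}, $x$ is an accumulation point of $A$ precisely when $x\acl{(A\setminus\{x\})}$. Theorem~\ref{thm:exercise0}, applied with $S=A\setminus\{x\}$, says this holds if and only if there is a sequence $(a_n)$ of points in $A\setminus\{x\}$ with $\lim_{n\to\infty}a_n=x$. But membership in $A\setminus\{x\}$ is exactly the statement ``$a_n\in A$ and $a_n\neq x$ for every $n\in\N$,'' which is the side condition appearing in the corollary. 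So the two directions fall out immediately once the substitution $S=A\setminus\{x\}$ is made.

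Concretely, I would write: suppose $x$ is an accumulation point of $A$, i.e.\ $x\acl{(A\setminus\{x\})}$; by Theorem~\ref{thm:exercise0} there is a sequence $(a_n)$ of points in $A\setminus\{x\}$ with limit $x$, and each such $a_n$ lies in $A$ and satisfies $a_n\neq x$. Conversely, if $x=\lim_{n\to\infty}a_n$ for some sequence $(a_n)$ with $a_n\in A$ and $a_n\neq x$ for all $n$, then $(a_n)$ is a sequence in $A\setminus\{x\}$ converging to $x$, so by Theorem~\ref{thm:exercise0} we get $x\acl{(A\setminus\{x\})}$, which by Definition~\ref{def:accumulationpoint} means $x$ is an accumulation point of $A$. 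This is essentially a one-line argument in each direction.

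I do not anticipate a genuine obstacle here; the corollary is a near-restatement of Theorem~\ref{thm:exercise0}. The only point requiring the slightest care is making sure the condition ``$x\neq a_n$ for every $n\in\N$'' is handled correctly on both sides: in the forward direction it is automatic because the sequence produced by Theorem~\ref{thm:exercise0} lives in $S=A\setminus\{x\}$, and in the reverse direction it is exactly the hypothesis that lets one view $(a_n)$ as a sequence in $A\setminus\{x\}$. So the ``hard part,'' such as it is, is just bookkeeping about the punctured set, and the proof should be no more than a few sentences invoking Theorem~\ref{thm:exercise0} with $S=A\setminus\{x\}$.
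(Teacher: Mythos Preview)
Your proposal is correct and follows essentially the same approach as the paper: the paper's proof simply notes that the statement is a special case of Theorem~\ref{thm:exercise0}, and your application of that theorem with $S=A\setminus\{x\}$ is exactly the way that specialization works out in detail.
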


\begin{proof}
The statement is a special case of Theorem \ref{thm:exercise0}. Points arbitrarily close to a set are limits of sequences contained in the set.
\end{proof}

A change of perspective with Definition \ref{def:pointsaclset}, which relates points to a given set, leads to the following definition relating a given set to other sets which highlight some facet of the given set. In particular, the {\em boundary} and {\em interior} of a set are more classic topological concepts that readily lends themselves to alternative---and perhaps more intuitive---definitions via arbitrarily close. For comparison, see \cite[Definition 2.13, p.65]{AdamsFranzosa} but also the discussions on page 9 where the phrase ``arbitrarily close'' is used but not formally defined, and page 65 regarding ``points that lie close to both the inside and the outside of the set''. 

\begin{definition}\label{def:setsaclset}
Let $B\subseteq\R^m$.
\begin{itemize}
	\item[(i)] The {\em boundary} of $B$, denoted by $\partial B$, is the set of points arbitrarily close to both $B$ and $\R^m\backslash B$.
	\item[(ii)] The {\em interior} of $B$, denoted by $B^o$, is the set of points away from $\R^m\backslash B$.
	\item[(iii)] The {\em exterior} of $B$, denoted by $B^e$, is the set of points away from $B$.
	\item[(iv)] The set of accumulation points of $B$ is denoted by $B'$.
	\item[(v)] The set of isolated points of $B$ is denoted by $I_B$.	
\end{itemize}
\end{definition}
See Figure \ref{fig:setsaclB} for an example of set $B$ in the plane $\R^2$---which is the union of a solid rectangle $R$ and the range of a convergent sequence $(\bfx_n)$---along with its closure $\overline{B}$, interior $B^o$, boundary $\partial B$, and set of accumulation points $B'$. What do the exterior $B^e$ and set of isolated points $I_B$ look like?

The following corollary is an analog of Remark \ref{rmk:neighborhoodcontainment} which follows immediately from of Proposition \ref{prop:pointsaclset} along with Definitions \ref{def:pointsaclset} and \ref{def:setsaclset}. As such, the proof is omitted.

\begin{corollary}\label{cor:interiorexterior}
The following statements hold regarding any set $B\subseteq\R^m$:
\begin{enumerate}
	\item The interior of $B$, $B^o$, is the set of points in $B$ which have a neighborhood contained in $B$.
	\item The exterior of $B$, $B^e$, is the set of points in $\R^m\backslash B$ which have a neighborhood in $\R^m\backslash B$.
\end{enumerate}
\end{corollary}

\begin{figure}
\centering
\begin{tikzpicture} 
\draw (-5,0) node {$B$};
\draw[dashed, fill=blue!15] (-3,-1.41) rectangle (1.41,1.41);		
\draw[semithick]  (-3,-1.41) -- (-3,1.41);			
	\draw[fill=white] (1.41,1.41) circle (0.08cm);
	\draw (1.8,1.41) node {$\bfy$};
	\draw (-0.8,0) node {$R$};
	\draw[fill=black] (-3,1.41) circle (0.08cm);
	\draw[fill=black] (-3,-1.41) circle (0.08cm);
	\draw[fill=white] (1.41,-1.41) circle (0.08cm);
\foreach \Point in {(2,-0.5), (2.58,-0.5), (2.85,-0.5), (3,-0.5),  (3.1,-0.5)}
{
    \node at \Point {\textbullet};
}	
	\draw[fill=white] (4,-0.48) circle (0.08cm);
	\draw (4.4,-0.5) node {$\bfz$};
	\draw (3.6,-0.5) node {$\cdots$};
	\draw (2,-0.8) node {$\bfx_1$};	
	\draw (2.67,-0.8) node {$\bfx_2$};

\draw (-5,-4) node {$\overline{B}$};
\draw[semithick, fill=blue!15] (-3,-5.41) rectangle (1.41,-2.59);				
	\draw[fill=black] (1.41,-2.59) circle (0.08cm);
	\draw (1.8,-2.59) node {$\bfy$};
	\draw (-0.8,-4) node {$\overline{R}$};
	\draw[fill=black] (-3,-2.59) circle (0.08cm);
	\draw[fill=black] (-3,-5.41) circle (0.08cm);
	\draw[fill=black] (1.41,-5.41) circle (0.08cm);
\foreach \Point in {(2,-4.5), (2.58,-4.5), (2.85,-4.5), (3,-4.5),  (3.1,-4.5)}
{
    \node at \Point {\textbullet};
}	
	\draw[fill=black] (4,-4.48) circle (0.08cm);
	\draw (4.4,-4.5) node {$\bfz$};
	\draw (3.6,-4.5) node {$\cdots$};
	\draw (2,-4.8) node {$\bfx_1$};	
	\draw (2.67,-4.8) node {$\bfx_2$};

\draw (-5,-8) node {$B^o$};
\draw[dashed, fill=blue!15] (-3,-9.41) rectangle (1.41,-6.59);					
	\draw[fill=white] (1.41,-6.59) circle (0.08cm);
	\draw (1.8,-6.59) node {$\bfy$};
	\draw (-0.8,-8) node {$R^o$};
	\draw[fill=white] (-3,-6.59) circle (0.08cm);
	\draw[fill=white] (-3,-9.41) circle (0.08cm);
	\draw[fill=white] (1.41,-9.41) circle (0.08cm);

\draw (-5,-12) node {$\partial B$};
\draw[semithick] (-3,-13.41) rectangle (1.41,-10.59);			
	\draw[fill=black] (1.41,-10.59) circle (0.08cm);
	\draw (1.8,-10.59) node {$\bfy$};
	\draw (-0.8,-12) node {$\partial R$};
	\draw[fill=black] (-3,-10.59) circle (0.08cm);
	\draw[fill=black] (-3,-13.41) circle (0.08cm);
	\draw[fill=black] (1.41,-13.41) circle (0.08cm);
\foreach \Point in {(2,-12.5), (2.58,-12.5), (2.85,-12.5), (3,-12.5),  (3.1,-12.5)}
{
    \node at \Point {\textbullet};
}	
	\draw[fill=black] (4,-12.48) circle (0.08cm);
	\draw (4.4,-12.5) node {$\bfz$};
	\draw (3.6,-12.5) node {$\cdots$};
	\draw (2,-12.8) node {$\bfx_1$};	
	\draw (2.67,-12.8) node {$\bfx_2$};
	
\draw (-5,-16) node {$B'$};
\draw[semithick, fill=blue!15] (-3,-17.41) rectangle (1.41,-14.59);				
	\draw[fill=black] (1.41,-14.59) circle (0.08cm);
	\draw (1.8,-14.59) node {$\bfy$};
	\draw (-0.8,-16) node {$R'$};
	\draw[fill=black] (-3,-14.59) circle (0.08cm);
	\draw[fill=black] (-3,-17.41) circle (0.08cm);
	\draw[fill=black] (1.41,-17.41) circle (0.08cm);
	\draw[fill=black] (4,-16.48) circle (0.08cm);
	\draw (4.4,-16.5) node {$\bfz$};
\end{tikzpicture}
\caption{A set $B$ in the plane $\R^2$ along with its closure $\overline{B}$, interior $B^o$, boundary $\partial B$, and accumulation points $B'$. Compare with Figure \ref{fig:pointsaclB}.}
\label{fig:setsaclB}
\end{figure}

\begin{example}\label{eg:setsaclset}
Let's revisit the set 
\begin{align}
	E=[0,1)\cup\{2\}\cup\{3+(1/n):n\in\N\}\subseteq\R
\end{align}
from Figure \ref{fig:pointsaclE} and Example \ref{eg:pointsaclset}. We have:
\begin{itemize}
	\item[(i)] $\partial E = \{0,1,2,3\}\cup\{3+(1/n):n\in\N\}$;
	\item[(ii)] $E^o=(0,1)$;	
	\item[(iii)] $\displaystyle 
	E^e=(-\infty,0)\cup (1,2) 
	\cup(2,3)
	\cup\left[\bigcup_{n=1}^{\infty}\left(3+\frac{1}{n+1},3+\frac{1}{n}\right)\right] 
	\cup(4,\infty)
	$; 
	\item[(iv)] $E' = [0,1]\cup\{3\}$;$\quad$ and 
	\item[(v)] $I_E = \{2\}\cup\{3+(1/n):n\in\N\}$.
\end{itemize}
\end{example}



As with the connection between Definition \ref{def:pointsaclset} and their classic versions in Proposition \ref{prop:pointsaclset}, the sets defined in Definition \ref{def:setsaclset} have their own classic equivalences presented here in Proposition \ref{prop:setsaclset}. Along these lines, the phrase ``$B^o$ is the set of interior points of $B$'' is the same phrase used in the classic definition for the interior of given set, the difference being the definition used for ``interior point''. 

\begin{proposition}\label{prop:setsaclset}
Let $B\subseteq\R$. Then we have: \textnormal{(i)} $\overline{B} = B\cup B'$; \textnormal{(ii)} $I_B=B\backslash B'$; and \textnormal{(iii)} $\partial B = \overline{B}\backslash B^o$.
\end{proposition}

The section concludes with another definition which uses arbitrarily close to identify a way two sets can relate to one another.

\begin{definition}\label{def:dense}
Given two sets $A,B\subseteq\R^m$, we say $A$ is {\em dense with respect to} $B$ if every point in $B$ is arbitrarily close to $A$. In this case, we have $B\subseteq \overline{A}$. If we also have $A\subseteq B$, then we say $A$ is {\em dense in} $B$.
\end{definition}

\begin{remark}\label{rmk:densityresults}
A nice way to interpret the meaning of density as in Definition \ref{def:dense} is as follows: Thinking of $B$ as a set of points we'd like to approximate and $A$ is dense with respect to $B$, then we can use the points in $A$ as approximations for the points in $B$.

We have already statements involving density. Here's a summary involving rational and irrational real numbers:
\begin{enumerate}
	\item $\Q$ is dense in $\R$: Every real number is approximately a rational number. See Theorem \ref{thm:densityofQinR}.
	\item $\R\backslash\Q$ is dense in $\R$: Every real number is approximately an irrational number. See Corollary \ref{cor:densityofirrationals}.
	\item $\Q$ is dense with respect to $\R\backslash\Q$: Every rational number is approximately irrationals. This follows immediately from (ii).
	\item $\R\backslash\Q$ is dense with respect to $\Q$: Every irrational number is approximately rational. This follows immediately from (i).
\end{enumerate}
\end{remark}

The next section explores the ways in which functions {\em preserve closeness}. For instance, what kind of function will take a point arbitrarily close to a subset of the domain and preserve that closeness in their images? This type of question leads to parallel approaches for continuity and limits of functions.

\vs
\section*{Exercises}
\setcounter{theorem}{0}

Exercises are for play: Do scratch work, draw stuff, and make mistakes---make {\em lots} of mistakes---before worrying about writing proofs. {\em Have fun!}

\xca Prove Proposition \ref{prop:pointsaclset}. 

\xca Prove Corollary \ref{cor:openasinteriorpoints}.

\xca Prove Corollary \ref{cor:interiorexterior}.

\xca Prove Proposition \ref{prop:setsaclset}.

\xca Prove the interior of a given set in $\R^m$ is the largest open set contained in the given set in the following sense: Given a set $S\subseteq\R^m$, every open set which is contained in $S$ is also contained in the interior $S^o$. 

\xca Give examples of nonempty sets with the following properties:
\begin{enumerate}
	\item A set $A$ where both $A^o=\varnothing$ and $A^e=\varnothing$.
	\item A set $B$ where $B'=\overline{B}$.
	\item A set $C$ where $C$ is infinite, $C=I_C$, and $C'=\varnothing$.
	\item A set $D$ which is open and yet $\overline{D}\subseteq D$.
\end{enumerate}

\xca Given any set $B\subseteq\R^m$, prove the trio of sets $B^o$, $B^e$, and $\partial B$ are pairwise disjoint and $B^o\cup B^e\cup \partial B =\R^m$.

\chapter[Continuity]{Continuity}
\label{ch:continuity}

Functions transform points, sequences, and sets in all kinds of interesting ways. Our focus will be on functions that map one Euclidean space to another, often the real line to the real line.

Notions such as continuity, limits, convergence, and codas allow us to explore to the structure and behavior of functions. The first section provides formal definitions for functions and their basic structures before delving into the deeper material.

\vs
\section{Functions and images}
\label{sec:functionsandimages}

The following definitions and breakdowns of functions are probably familiar in some ways. They will allow us to carefully set up and prove classic results from calculus.

\begin{definition}\label{def:function}
A {\em function} $f$ is a relation between two nonempty sets $A$ and $B$ where every element of $A$ is associated with exactly one element of $B$. We write $f:A\to B$ and say $f$ is a {\em function from $A$ to $B$} or $f$ {\em maps $A$ to $B$}. The set $A$ is called the {\em domain} of $f$ and $B$ is the {\em codomain} of $f$.
\end{definition}

Along these lines, the {\em range} of a function is a special subset of the codomain. In general, the range is a subset of the codomain and they are not necessarily the same set.

\begin{definition}\label{def:rangeimage}
When $f:A\to B$, the {\em range} of $f$ is the subset of $B$ whose elements are associated with at least one element in $A$. Given $x\in A$, the {\em image} of $x$, often denoted by $f(x)$, is the element of $B$ associated with $x$ by $f$. Given a subset $S$ of the domain $A$, the {\em image} of $S$, often denoted by $f(S)$, is the subset of $B$ whose elements are associated with at least one element in $S$. In particular, $f(A)$ denotes the range of $f$.
\end{definition}

\begin{notation}\label{not:function}
The term {\em image} is used as a catch-all to refer to the {\em outputs} we get when plugging various objects---{\em inputs}---into a function. The following notation and terminology may be used whenever $f:A\to B$.
\begin{enumerate}
\item \underline{Images of points are points}: For $x\in A$ with $f(x)=y$, $x$ is an {\em input} and $y$ is its {\em output} or {\em image} in the range $f(A)$.
\item \underline{Images of sequencess are sequences}: For a sequence $(x_n)$ in the domain $A$, its {\em output} or {\em image} is the sequence $(f(x_n))$ in the range $f(A)$.
\item \underline{Images of sets are sets}: For a subset $S$ of the domain $A$, its {\em image} $f(S)$ is the subset of the range $f(A)$ given by 
\begin{align}
f(S)&=\{y\in B:f(x)=y \textnormal{ for some } x\in S\}\subseteq f(A).
\end{align}
\end{enumerate}
It is not necessarily the case that every point in the codomain is in the range (i.e., not every point in the codomain is necessarily an output). For instance, when $f:\R\to\R$ is given by $f(x)=x^2$, we have codomain $B=\R$ with range $f(\R)=[0,\infty)$. So, $-1\in B$ but there is no input $x\in A=\R$ where $f(x)=-1$. Hence, $-1\notin f(\R)$.

Also, for functions that map one Euclidean space to another and not just the real line to the real line, boldface variables like $\bfx$ and $\bfy$ are used to represent inputs and outputs.
\end{notation}

Let's play with some examples. Try to keep in mind whatever intuition you may have from calculus about limits and continuity.

\begin{remark}\label{rmk:graphsclarification}
The {\em graphs} of functions provided throughout the text---along with their domains and ranges---are designed to help fuel our discussion and drive us towards a formal definition for {\em continuity} (Definition \ref{def:continuity}). 

To be clear, by {\em graph} of a function from the real line to the real line, I mean a single plot that combines the domain and range, technically plotted in the plane. I believe there is value at looking at all three types figures  for a given function, namely its graph, domain, and range.
\end{remark}
 
Also, I'm assuming everyone reading this is familiar with some notion of continuity from calculus. My plan is to use that familiarity to help us construct the definition of continuity in Definition \ref{def:continuity} in a meaningful way.\footnote{After you get to Definition \ref{def:continuity}, I'd love to hear from you. Did my approach work?} It will take a while to set up.

As a first set of examples, consider the trio of functions $f,g,h:[0,4]\to\R$ with common domain $D=[0,4]$ in Examples \ref{eg:linef}, \ref{eg:splitg}, and \ref{eg:splith}. The real number $c=2$ is in the domain $D$ and its outputs $f(2)=1,g(2)=2$, and $h(2)=2$ play a central role for these particular examples. Let's also see how these functions transform their common domain $D=[0,4]$ into their respective ranges. 
{\em What do these functions look like?}

\begin{figure}
\centering
\begin{tikzpicture}
\draw (-2,2) node {graph};
\draw (-2,1.5) node {of $f$};
	\draw (-0.5,2) node {$2$};
	\draw (0,2) node {$-$};
	\draw (-0.5,1) node {$1$};
	\draw (0,1) node {$\bullet$};
	\draw (-0.5,-0.5) node {$0$};		
  \draw[-To] (-1, 0) -- (4.2, 0) node[right] {$x$};
  \draw[-To] (0, -1) -- (0, 4.2) node[above] {$y$};
  \draw[scale=1, domain=0:4, smooth, variable=\x, blue] plot ({\x}, {\x/2});
  \draw[blue] (4,2.5) node {$f$};
  \draw (2,0) node {$\bullet$};
  \draw (4,0) node {$|$};  
  \draw[blue] (2,1) node {$\bullet$};
  \draw (2,-.5) node {$2$};
  \draw (4,-.5) node {$4$};
\draw (-2,-2) node {$D$};
	\draw[-,semithick] (0,-2) -- (4,-2);
	\draw (0,-2) node {$[$};
	\draw (2,-2) node {$\bullet$};
	\draw (4,-2) node {$]$};
	\draw (0,-2.5) node {$0$};
	\draw (2,-2.5) node {$2$};
	\draw (4,-2.5) node {$4$};
\draw (-2,-3.5) node {$f(D)$};	
	\draw[-,semithick] (0,-3.5) -- (2,-3.5);
	\draw (0,-3.5) node {$[$};
	\draw (1,-3.5) node {$\bullet$};
	\draw (2,-3.5) node {$]$};
	\draw (0,-4) node {$0$};
	\draw (1,-4) node {$1$};
	\draw (2,-4) node {$2$};
\end{tikzpicture}
\caption{The graph of the function $f:[0,4]\to\R$ in Example \ref{eg:linef} which transforms the connected domain $D=[0,4]$ into the connected range $f(D)=[0,2]$.}
\label{fig:linef}
\end{figure}

\begin{example}\label{eg:linef}
Figure \ref{fig:linef} features the graph, domain, and range of the function $f:[0,4]\to\R$ given by
\begin{align}
	f(x)&=x/2.
\end{align}	
Note $f(2)=1$ and the range is $f([0,4])=[0,2]$.

Based on my intuition from calculus\footnote{From way back at the start of my senior year of high school in 1996.}, it looks to me like $f$ is continuous at $c=2$. Actually, it looks like $f$ is continuous at every $c\in D=[0,4]$, but for now my focus is on $c=2$ specifically.
\end{example}

\begin{figure}
\centering
\begin{tikzpicture} 
\draw (-2,2) node {graph};
\draw (-2,1.5) node {of $g$};
	\draw (-0.5,3) node {$3$};
	\draw (0,3) node {$-$};	
	\draw (-0.5,2) node {$2$};
	\draw (0,2) node {$\bullet$};
	\draw (-0.5,1) node {$1$};
	\draw (0,1) node {$-$};
	\draw (-0.5,-0.5) node {$0$};		
  \draw[-To] (-1, 0) -- (4.2, 0) node[right] {$x$};
  \draw[-To] (0, -1) -- (0, 4.2) node[above] {$y$};
  \draw[scale=1, domain=0:2, smooth, variable=\x, blue] plot ({\x}, {\x/2});
   \draw[scale=1, domain=2:4, smooth, variable=\x, blue] plot ({\x}, {1+\x/2});
  \draw[blue] (4,2.5) node {$g$};
  \draw (2,0) node {$\bullet$};
  \draw (4,0) node {$|$};  
  \draw[blue] (2,2) node {$\bullet$};
  \draw[blue, fill=white] (2,1) circle (0.08cm);
  \draw (2,-.5) node {$2$};
  \draw (4,-.5) node {$4$};
\draw (-2,-2) node {$D$};
	\draw[-,semithick] (0,-2) -- (4,-2);
	\draw (0,-2) node {$[$};
	\draw (2,-2) node {$\bullet$};
	\draw (4,-2) node {$]$};
	\draw (0,-2.5) node {$0$};
	\draw (2,-2.5) node {$2$};
	\draw (4,-2.5) node {$4$};
\draw (-2,-3.5) node {$g(D)$};	
	\draw[-,semithick] (0,-3.5) -- (1,-3.5);
	\draw[-,semithick] (2,-3.5) -- (3,-3.5);
	\draw (0,-3.5) node {$[$};
	\draw (1,-3.5) node {$)$};
	\draw (2,-3.5) node {$[$};
	\draw (2,-3.5) node {$\bullet$};
	\draw (3,-3.5) node {$]$};
	\draw (0,-4) node {$0$};
	\draw (1,-4) node {$1$};
	\draw (2,-4) node {$2$};
	\draw (3,-4) node {$3$};
\end{tikzpicture}
\caption{The graph of the function $g:[0,4]\to\R$ in Example \ref{eg:splitg} which transforms the connected domain $D=[0,4]$ into the {\em disconnected} range $g([0,4])=[0,1)\cup[2,3]$.}
\label{fig:splitg}
\end{figure}

\begin{example}\label{eg:splitg}
Figure \ref{fig:splitg} features the graph, domain, and range of the function $g:[0,4]\to\R$ given by
\begin{align}
	g(x)&=
	\begin{cases}
	x/2,& \textnormal{ if } 0\leq x< 2,\\
	1+(x/2),& \textnormal{ if } 2\leq x\leq 4.
	\end{cases}
\end{align}	
Note $g(2)=2$ and the range is $g([0,4])=[0,1)\cup[2,3]$.

My intuition tells me $g$ is discontinuous at $c=2$. The graph of $g$ (in blue) has a definitive break, a jump in the height from $2$ to $3$ at $c=2$.

This break and jump can also be seen by comparing the domain $D$ to its image $g(D)$ (the range). The set $D=[0,4]$ is an interval so it is connected by Theorem \ref{thm:realconnected}. On the other hand, its image $g(D)=[0,1)\cup[2,3]$ is a disconnected union of two intervals with a definitive gap. Continuous functions should not create gaps like this, right? But how can we capture that behavior in our definitions?
\end{example}

\begin{figure}
\centering
\begin{tikzpicture} 
\draw (-2,2) node {graph};
\draw (-2,1.5) node {of $h$};
	\draw (-0.5,3) node {$3$};
	\draw (0,3) node {$-$};	
	\draw (-0.5,2) node {$2$};
	\draw (0,2) node {$\bullet$};
	\draw (-0.5,0.5) node {$1/2$};
	\draw (-0.5,-0.5) node {$0$};		
  \draw[-To] (-1, 0) -- (4.2, 0) node[right] {$x$};
  \draw[-To] (0, -1) -- (0, 4.2) node[above] {$y$};
  \draw[scale=1, domain=0:1.77, smooth, variable=\x, blue] plot ({\x}, {1/(2-\x)});
   \draw[scale=1, domain=2:4, smooth, variable=\x, blue] plot ({\x}, {1+\x/2});
  \draw[blue] (4,2.5) node {$h$};
  \draw (2,0) node {$\bullet$};
  \draw (4,0) node {$|$};  
  \draw[blue] (2,2) node {$\bullet$};
  \draw (2,-.5) node {$2$};
  \draw (4,-.5) node {$4$};
\draw (-2,-2) node {$D$};
	\draw[-,semithick] (0,-2) -- (4,-2);
	\draw (0,-2) node {$[$};
	\draw (2,-2) node {$\bullet$};
	\draw (4,-2) node {$]$};
	\draw (0,-2.5) node {$0$};
	\draw (2,-2.5) node {$2$};
	\draw (4,-2.5) node {$4$};
\draw (-2,-3.5) node {$h(D)$};	
	\draw[-To,semithick] (0.5,-3.5) -- (6,-3.5);
	\draw (0.5,-3.5) node {$[$};
	\draw (2,-3.5) node {$\bullet$};
	\draw (0.5,-4) node {$1/2$};
	\draw (2,-4) node {$2$};
	\draw (6,-4) node {$\infty$};
\end{tikzpicture}
\caption{The graph of the function $h:[0,4]\to\R$ in Example \ref{eg:splith} which transforms the connected domain $D=[0,4]$ into the connected range $h([0,4])=[1/2,\infty)$, despite the break apparent in the graph at $c=2$.}
\label{fig:splith}
\end{figure}

\begin{example}\label{eg:splith}
Figure \ref{fig:splith} features the graph, domain, and range of the function $h:[0,4]\to\R$ given by
\begin{align}
	h(x)&=
	\begin{cases}
	\frac{1}{2-x},& \textnormal{ if } 0\leq x< 2,\\
	1+(x/2),& \textnormal{ if } 2\leq x\leq 4.
	\end{cases}
\end{align}	
Note $h(2)=2$ and the range is $h([0,4])=[1/2,\infty)$.

My intuition is telling me $h$ is discontinuous at $c=2$ since, like $g$, the graph of $h$ (in blue) has a definitive break at $c=2$. To the right of $c$, it looks like the heights of $h$ are ``approaching'' $h(2)=2$, but from the right, it looks like the heights of $h$ are ``tending to infinity''.

Unlike $g$, the break in $h$ is {\em not} readily seen  by comparing the domain $D$ to its image $h(D)$ (the range). In this case, both the set $D=[0,4]$ and its image $h(D)=[1/2,\infty)$ are connected intervals. {\em But $h$ is discontinuous at}  $c=2$, right? The graph of $h$ makes it seem that way, but how can we prove it? We still need a formal definition for continuity, and we need this definition to be able to prove things that intuition tells us. Things like, ``continuous functions have no break or jumps'' and ``continuous functions preserve connectedness''.
\end{example}

Even if the separate plots of the domain $D$ and range $h(D)$ are not giving us enough visual information to indicate $h$ is discontinuous, we can adjust our approach and keep playing around. For instance, we can consider {\em all} subsets and sequences in the domain $D$, then explore how $h$ transforms them in the range. Sequences and how they are transformed by functions are explored in Section \ref{sec:preservingcloseness} and throughout the rest of the book.

Continuing with Examples \ref{eg:linef}, \ref{eg:splitg}, and \ref{eg:splith}, some interesting behavior happens with the outputs when the sets of inputs are near $c=2$. See Figure \ref{fig:varioussets}. Investigating the images of various subsets of the domain will help us understand when they are sure to be arbitrarily close to $f(c)=f(2)$.

\begin{figure}
\centering
\begin{tikzpicture} 
\draw (-2,1.5) node {$D$};
	\draw[-,semithick] (0,1.5) -- (4,1.5);
	\draw (0,1.5) node {$[$};
	\draw (2,1.5) node {$\bullet$};
	\draw (4,1.5) node {$]$};
	\draw (0,1) node {$0$};
	\draw (2,1) node {$2$};
	\draw (4,1) node {$4$};
\draw (-2,0) node {$E_1$};
	\draw[-,semithick] (0,0) -- (2,0);
	\draw (0,0) node {$[$};
	\draw (2,0) node {$\bullet$};
	\draw (2,0) node {$)$};
	\draw (0,-0.5) node {$0$};
	\draw (2,-0.5) node {$2$};
\draw (-2,-1.5) node {$E_2$};	
	\draw[-,semithick] (2,-1.5) -- (4,-1.5);
	\draw (2,-1.5) node {$[$};
	\draw (2,-1.5) node {$\bullet$};
	\draw (4,-1.5) node {$]$};
	\draw (2,-2) node {$2$};
	\draw (4,-2) node {$4$};
\draw (-2,-3) node {$E_3$};	
	\draw[-,semithick] (1,-3) -- (3,-3);
	\draw (1.02,-3) node {$($};
	\draw (2,-3) node {$\bullet$};
	\draw (2.98,-3) node {$)$};
	\draw (1.02,-3.5) node {$1$};
	\draw (2,-3.5) node {$2$};
	\draw (2.98,-3.5) node {$3$};
\draw (-2,-4.5) node {$E_4$};	
	\draw[-,semithick] (1.75,-4.5) -- (2,-4.5);
	\draw (1.77,-4.5) node {$($};
	\draw (2,-4.51) node {$\bullet$};
	\draw (2,-4.5) node {$)$};
	\draw (1.5,-5) node {$7/4$};
	\draw (2.2,-5) node {$2$};
\end{tikzpicture}
\caption{Various subsets of the domain $D=[0,4]$ from Example \ref{eg:imagesofvarioussets}. Each of these sets is connected and arbitrarily close to $c=2$.}
\label{fig:varioussets}
\end{figure}

\begin{example}\label{eg:imagesofvarioussets}
Let's revisit the functions $f,g,$ and $h$ from Examples \ref{eg:linef}, \ref{eg:splitg}, and \ref{eg:splith} by restricting our attention to various subsets of the domain $[0,2]$ which are arbitrarily close to $c=2$ and see what we get with their images. To that end, first draw the following sets---each is arbitrarily close to $c=2$---then determine and draw their images under $f,g,$ and $h$:
\begin{multicols}{2}
\begin{itemize}
	\item[(i)] $E_1=[0,2)$
	\item[(ii)] $E_2=[2,4]$	
	\item[(iii)] $E_3=(1,3)$
	\item[(iv)] $E_4=(7/4,2)$
\end{itemize}
\end{multicols}

See Figure \ref{fig:varioussets}. Recall that for $\delta>0$, the $\delta$-neighborhood of $c$ is
\begin{align}
V_\delta(c)=(c-\delta,c+\delta).
\end{align}  
For instance, $E_3$ is a $\delta$-neighborhood of $c=2$ with $\delta=1$. We have
\begin{align}
V_{1}(2)=(2-1,2+1)=(1,3)=E_3.
\end{align}
\end{example}

\begin{figure}
\centering
\begin{tikzpicture} 
\draw (-2,2) node {graph};
\draw (-2,1.5) node {of $f$};
	\draw (-0.5,2) node {$2$};
	\draw (0,2) node {$-$};
	\draw (-0.5,1) node {$1$};
	\draw (0,1) node {$\bullet$};
	\draw (-0.5,-0.5) node {$0$};		
  \draw[-To] (-0.5,0) -- (4.2,0) node[right] {$x$};
  \draw[-To] (0,-0.5) -- (0,4.2) node[above] {$y$};
  \draw[scale=1, domain=0:4, smooth, variable=\x, blue] plot ({\x}, {\x/2});
  \draw[blue] (4,2.5) node {$f$};
  \draw (2,0) node {$\bullet$};
  \draw (4,0) node {$|$};  
  \draw[blue] (2,1) node {$\bullet$};
  \draw (2,-.5) node {$2$};
  \draw (4,-.5) node {$4$};
\draw[-] (-3,-1) -- (7,-1);

\draw (-2,-1.5) node {$E_1$};
	\draw[-,semithick] (0,-1.5) -- (2,-1.5);
	\draw (0,-1.5) node {$[$};
	\draw (2,-1.5) node {$\bullet$};
	\draw (2,-1.5) node {$)$};
	\draw (0,-2) node {$0$};
	\draw (2,-2) node {$2$};
\draw (-2,-3) node {$f(E_1)$};
	\draw[-,semithick] (0,-3) -- (1,-3);
	\draw (0,-3) node {$[$};
	\draw (1,-3) node {$\bullet$};
	\draw (1,-3) node {$)$};
	\draw (0,-3.5) node {$0$};
	\draw (1,-3.5) node {$1$};	
\draw[-] (-3,-4) -- (7,-4);

\draw (-2,-4.5) node {$E_2$};	
	\draw[-,semithick] (2,-4.5) -- (4,-4.5);
	\draw (2,-4.5) node {$[$};
	\draw (2,-4.5) node {$\bullet$};
	\draw (4,-4.5) node {$]$};
	\draw (2,-5) node {$2$};
	\draw (4,-5) node {$4$};
\draw (-2,-6) node {$f(E_2)$};	
	\draw[-,semithick] (1,-6) -- (2,-6);
	\draw (1,-6) node {$[$};
	\draw (1,-6) node {$\bullet$};
	\draw (2,-6) node {$]$};
	\draw (1,-6.5) node {$1$};
	\draw (2,-6.5) node {$2$};
\draw[-] (-3,-7) -- (7,-7);

\draw (-2,-7.5) node {$E_3$};	
	\draw[-,semithick] (1,-7.5) -- (3,-7.5);
	\draw (1.02,-7.5) node {$($};
	\draw (2,-7.5) node {$\bullet$};
	\draw (2.98,-7.5) node {$)$};
	\draw (1.02,-8) node {$1$};
	\draw (2,-8) node {$2$};
	\draw (3,-8) node {$3$};	
\draw (-2,-9) node {$f(E_3)$};	
	\draw[-,semithick] (0.5,-9) -- (1.5,-9);
	\draw (0.52,-9) node {$($};
	\draw (1,-9) node {$\bullet$};
	\draw (1.48,-9) node {$)$};
	\draw (0.3,-9.5) node {$1/2$};
	\draw (1,-9.5) node {$1$};
	\draw (1.7,-9.5) node {$3/2$};
\draw[-] (-3,-10) -- (7,-10);	
	
\draw (-2,-10.5) node {$E_4$};	
	\draw[-,semithick] (1.75,-10.5) -- (2,-10.5);
	\draw (1.77,-10.5) node {$($};
	\draw (2,-10.5) node {$\bullet$};
	\draw (2,-10.5) node {$)$};
	\draw (1.45,-11) node {$7/4$};
	\draw (2.1,-11) node {$2$};
\draw (-2,-12) node {$f(E_4)$};	
	\draw[-,semithick] (0.875,-12) -- (1,-12);
	\draw (0.895,-12) node {$($};
	\draw (1,-12) node {$\bullet$};
	\draw (1,-12) node {$)$};
	\draw (0.5,-12.5) node {$7/8$};
	\draw (1.1,-12.5) node {$1$};
\draw[-] (-3,-13) -- (7,-13);	
\end{tikzpicture}
\caption{Images under $f$ of the subsets of $D=[0,4]$ from Example \ref{eg:imagesofvarioussets}. Each image is connected and arbitrarily close to $f(2)=1$. See Example \ref{eg:linefmultiple}.}
\label{fig:linefmultiple}
\end{figure}

\begin{example}\label{eg:linefmultiple}
For the function $f$ we have the image $f(2)=1$ along with 
\begin{enumerate}
	\item $f(E_1)=f([0,2))=[0,1)$,
	\item $f(E_2)=f([2,4])=[1,2]$,	
	\item $f(E_3)=f((1,3))=(1/2,3/2)$, and 
	\item $f(E_4)=f((7/4,2))=(7/8,1)$.
\end{enumerate}
Figure \ref{fig:linefmultiple} features plots of these images under $f$ along with $f(2)=1$.

So, in the domain, the point $c=2$ is arbitrarily close to each of the $E_k$. The function $f$ maps the point $2$ and the sets $E_k$ to a point $f(2)=1$ and sets $f(E_k)$ in the range in a way that {\em preserves closeness}. That is, at least for $k=1,2,3,4$, we have $f$ ensures
\begin{align}\label{eqn:firstpreservecloseness}
2\acl{E_k} \quad\Longrightarrow\quad f(2)\acl{f(E_k)}.
\end{align}
\end{example}

What about $g$ and $h$?

\begin{figure}
\centering
\begin{tikzpicture} 
\draw (-2,2) node {graph};
\draw (-2,1.5) node {of $g$};
	\draw (-0.5,3) node {$3$};
	\draw (0,3) node {$-$};	
	\draw (-0.5,2) node {$2$};
	\draw (0,2) node {$\bullet$};
	\draw (-0.5,1) node {$1$};
	\draw (0,1) node {$-$};
	\draw (-0.5,-0.5) node {$0$};		
  \draw[-To] (-0.5,0) -- (4.2,0) node[right] {$x$};
  \draw[-To] (0,-0.5) -- (0,4.2) node[above] {$y$};
  \draw[scale=1, domain=0:2, smooth, variable=\x, blue] plot ({\x}, {\x/2});
   \draw[scale=1, domain=2:4, smooth, variable=\x, blue] plot ({\x}, {1+\x/2});
  \draw[blue] (4,2.5) node {$g$};
  \draw (2,0) node {$\bullet$};
  \draw (4,0) node {$|$};  
  \draw[blue] (2,2) node {$\bullet$};
  \draw[blue, fill=white] (2,1) circle (0.08cm);  
  \draw (2,-.5) node {$2$};
  \draw (4,-.5) node {$4$};
\draw[-] (-3,-1) -- (7,-1);  

\draw (-2,-1.5) node {$E_1$};
	\draw[-,semithick] (0,-1.5) -- (2,-1.5);
	\draw (0,-1.5) node {$[$};
	\draw (2,-1.5) node {$\bullet$};
	\draw (2,-1.5) node {$)$};
	\draw (0,-2) node {$0$};
	\draw (2,-2) node {$2$};
\draw (-2,-3) node {$g(E_1)$};
	\draw[-,semithick] (0,-3) -- (1,-3);
	\draw (0,-3) node {$[$};
	\draw (1,-3) node {$)$};
	\draw (2,-3) node {$\bullet$};	
	\draw (0,-3.5) node {$0$};
	\draw (1,-3.5) node {$1$};		
	\draw (2,-3.5) node {$2$};	
\draw[-] (-3,-4) -- (7,-4);

\draw (-2,-4.5) node {$E_2$};	
	\draw[-,semithick] (2,-4.5) -- (4,-4.5);
	\draw (2,-4.5) node {$[$};
	\draw (2,-4.5) node {$\bullet$};
	\draw (4,-4.5) node {$]$};
	\draw (2,-5) node {$2$};
	\draw (4,-5) node {$4$};
\draw (-2,-6) node {$g(E_2)$};	
	\draw[-,semithick] (2,-6) -- (3,-6);
	\draw (2,-6) node {$[$};
	\draw (2,-6) node {$\bullet$};
	\draw (3,-6) node {$]$};
	\draw (2,-6.5) node {$2$};
	\draw (3,-6.5) node {$3$};
\draw[-] (-3,-7) -- (7,-7);

\draw (-2,-7.5) node {$E_3$};	
	\draw[-,semithick] (1,-7.5) -- (3,-7.5);
	\draw (1.02,-7.5) node {$($};
	\draw (2,-7.5) node {$\bullet$};
	\draw (2.98,-7.5) node {$)$};
	\draw (1.02,-8) node {$1$};
	\draw (2,-8) node {$2$};
	\draw (3,-8) node {$3$};	
\draw (-2,-9) node {$g(E_3)$};	
	\draw[-,semithick] (0.5,-9) -- (1,-9);
	\draw (0.52,-9) node {$($};
	\draw (0.98,-9) node {$)$};
	\draw[-,semithick] (2,-9) -- (2.5,-9);	
	\draw (2,-9) node {$\bullet$};
	\draw (2,-9) node {$[$};
	\draw (2.48,-9) node {$)$};
	\draw (0.32,-9.5) node {$1/2$};
	\draw (1.02,-9.5) node {$1$};
	\draw (2,-9.5) node {$2$};
	\draw (2.72,-9.5) node {$5/2$};
\draw[-] (-3,-10) -- (7,-10);	
	
\draw (-2,-10.5) node {$E_4$};	
	\draw[-,semithick] (1.75,-10.5) -- (2,-10.5);
	\draw (1.77,-10.5) node {$($};
	\draw (2,-10.5) node {$\bullet$};
	\draw (2,-10.5) node {$)$};
	\draw (1.45,-11) node {$7/4$};
	\draw (2.1,-11) node {$2$};
\draw (-2,-12) node {$g(E_4)$};	
	\draw[-,semithick] (0.875,-12) -- (1,-12);
	\draw (0.895,-12) node {$($};
	\draw (2,-12) node {$\bullet$};
	\draw (1,-12) node {$)$};
	\draw (0.5,-12.5) node {$7/8$};
	\draw (1.1,-12.5) node {$1$};
	\draw (2,-12.5) node {$2$};
\draw[-] (-3,-13) -- (7,-13);	
\end{tikzpicture}
\caption{Images under $g$ of the subsets of $D=[0,4]$ from Example \ref{eg:imagesofvarioussets}. Two of the images are arbitrarily close to $g(2)=2$, the other two are not. See Example \ref{eg:splitgmultiple}.}
\label{fig:splitgmultiple}
\end{figure}

\begin{example}\label{eg:splitgmultiple}
For the function $g$ we have
\begin{enumerate}
	\item $g(E_1)=g([0,2))=[0,1)$,
	\item $g(E_2)=g([2,4])=[2,3]$,	
	\item $g(E_3)=g((1,3))=(1/2,1)\cup [2,5/2)$, and
	\item $g(E_4)=g((7/4,2))=(7/8,1)$.
\end{enumerate}
See Figure \ref{fig:splitgmultiple} for plots of these images under $g$ along with $g(2)=2$.

What do you notice? Let's investigate the images of the $E_k$ one at a time. Even though $2$ is arbitrarily close to $E_1$ in the domain, in the range $g(2)=2$ is away from $g(E_1)=[0,1)$. That is, 
\begin{align}\label{eqn:splitgE1}
2\acl{E_1} \quad\textnormal{but}\quad g(2)\awf{g(E_1)}.
\end{align}
So, $g$ does not preserve the closeness of $2$ and $E_1$ like $f$ does.

On other hand, $g(2)=2$ is in both $g(E_2)$ and $g(E_3)$, so $g(2)$ is arbitrarily close to both of these images by Lemma \ref{lem:elementacl}. However, in the case of $g(E_3)$, $g$ did not preserve the connectedness of the interval $E_3=(1,3)$ since
$g(E_3)=(1/2,1)\cup [2,5/2)$ is disconnected.

The case for $E_4$ is like $E_1$. We have
\begin{align}\label{eqn:splitgE4}
2\acl{E_4} \quad\textnormal{but}\quad g(2)\awf{g(E_4)}.
\end{align}

We can prove $g(2)\awf{g(E_1)}$ and $g(2)\awf{g(E_4)}$ simultaneously using the definition of {\em away from} (Definition \ref{def:awf}).
\begin{proof}[Proof that $g(2)\awf{g(E_1)}$ and $g(2)\awf{g(E_4)}$]
Every output $g(x)$ in either $g(E_1)$ or $g(E_4)$ satisfies $g(x)<1\leq 2$. Hence, when $x\in E_1$ or $x\in E_4$ we have
\begin{align}
|g(x)-g(2)|&=|g(x)-2|=2-g(x)>1. 
\end{align}
Hence, $g(2)\awf{g(E_1)}$ and $g(2)\awf{g(E_4)}$.
\end{proof}
\end{example}

Now what about $h$?

\begin{figure}
\centering
\begin{tikzpicture} 
\draw (-2,2) node {graph};
\draw (-2,1.5) node {of $h$};
	\draw (-0.5,3) node {$3$};
	\draw (0,3) node {$-$};	
	\draw (-0.5,2) node {$2$};
	\draw (0,2) node {$\bullet$};
	\draw (-0.5,0.5) node {$1/2$};
	\draw (-0.5,-0.5) node {$0$};		
  \draw[-To] (-0.5,0) -- (4.2,0) node[right] {$x$};
  \draw[-To] (0,-0.5) -- (0,4.2) node[above] {$y$};
  \draw[scale=1, domain=0:1.77, smooth, variable=\x, blue] plot ({\x}, {1/(2-\x)});
   \draw[scale=1, domain=2:4, smooth, variable=\x, blue] plot ({\x}, {1+\x/2});
  \draw[blue] (4,2.5) node {$h$};
  \draw (2,0) node {$\bullet$};
  \draw (4,0) node {$|$};  
  \draw[blue] (2,2) node {$\bullet$};
  \draw (2,-.5) node {$2$};
  \draw (4,-.5) node {$4$};
\draw[-] (-3,-1) -- (7,-1);  

\draw (-2,-1.5) node {$E_1$};
	\draw[-,semithick] (0,-1.5) -- (2,-1.5);
	\draw (0,-1.5) node {$[$};
	\draw (2,-1.5) node {$\bullet$};
	\draw (2,-1.5) node {$)$};
	\draw (0,-2) node {$0$};
	\draw (2,-2) node {$2$};
\draw (-2,-3) node {$h(E_1)$};
	\draw[-To,semithick] (0.5,-3) -- (6,-3);
	\draw (0.5,-3) node {$[$};
	\draw (2,-3) node {$\bullet$};
	\draw (0.5,-3.5) node {$1/2$};
	\draw (2,-3.5) node {$2$};
	\draw (6,-3.5) node {$\infty$};	
\draw[-] (-3,-4) -- (7,-4);

\draw (-2,-4.5) node {$E_2$};	
	\draw[-,semithick] (2,-4.5) -- (4,-4.5);
	\draw (2,-4.5) node {$[$};
	\draw (2,-4.5) node {$\bullet$};
	\draw (4,-4.5) node {$]$};
	\draw (2,-5) node {$2$};
	\draw (4,-5) node {$4$};
\draw (-2,-6) node {$h(E_2)$};	
	\draw[-,semithick] (2,-6) -- (3,-6);
	\draw (2,-6) node {$[$};
	\draw (2,-6) node {$\bullet$};
	\draw (3,-6) node {$]$};
	\draw (2,-6.5) node {$2$};	
	\draw (3,-6.5) node {$3$};
\draw[-] (-3,-7) -- (7,-7);

\draw (-2,-7.5) node {$E_3$};	
	\draw[-,semithick] (1,-7.5) -- (3,-7.5);
	\draw (1.02,-7.5) node {$($};
	\draw (2,-7.5) node {$\bullet$};
	\draw (2.98,-7.5) node {$)$};
	\draw (1.02,-8) node {$1$};
	\draw (2,-8) node {$2$};
	\draw (3,-8) node {$3$};	
\draw (-2,-9) node {$h(E_3)$};	
	\draw[-To,semithick] (1,-9) -- (6,-9);
	\draw (1.02,-9) node {$($};
	\draw (2,-9) node {$\bullet$};
	\draw (0.98,-9.5) node {$1$};
	\draw (2,-9.5) node {$2$};
	\draw (6,-9.5) node {$\infty$};
\draw[-] (-3,-10) -- (7,-10);	
	
\draw (-2,-10.5) node {$E_4$};	
	\draw[-,semithick] (1.75,-10.5) -- (2,-10.5);
	\draw (1.77,-10.5) node {$($};
	\draw (2,-10.5) node {$\bullet$};
	\draw (2,-10.5) node {$)$};
	\draw (1.45,-11) node {$7/4$};
	\draw (2.1,-11) node {$2$};
\draw (-2,-12) node {$h(E_4)$};	
	\draw[-To,semithick] (4,-12) -- (6,-12);
	\draw (2,-12) node {$\bullet$};
	\draw (4.02,-12) node {$($};	
	\draw (2,-12.5) node {$2$};
	\draw (4,-12.5) node {$4$};
	\draw (6,-12.5) node {$\infty$};
\draw[-] (-3,-13) -- (7,-13);	
\end{tikzpicture}
\caption{Images under $h$ of the subsets of $D=[0,4]$ from Example \ref{eg:imagesofvarioussets}. Of the images, only $h(E_4)=(4,\infty)$ is away from $h(2)=2$. See Example \ref{eg:splithmultiple}.}
\label{fig:splithmultiple}
\end{figure}

\begin{example}\label{eg:splithmultiple}
Finally, for the function $h$ we have
\begin{enumerate}
	\item $h(E_1)=h([0,2))=[1/2,\infty)$,
	\item $h(E_2)=h([2,4])=[2,3]$,
	\item $h(E_3)=h((1,3))=(1,\infty)$, and 
	\item $h(E_4)=h((7/4,2))=(4,\infty)$.
\end{enumerate}
See Figure \ref{fig:splithmultiple} for plots of these images under $h$ along with $h(2)=2$.

What do you notice this time? It looks like $h$ might preserves the connectedness of all of the $E_k$ intervals since all of the images $h(E_k)$ are intervals, too. (See Theorem \ref{thm:realconnected}.)

But the images $h(E_4)=(4,\infty)$ and $h(2)=2$ (the $\bullet$) in Figure  \ref{fig:splithmultiple} show us $h$ does not preserve the closeness between $c=2$ and $E_4=(7/2,2)$. Thus,
\begin{align}\label{eqn:splithE4}
2\acl{E_4} \quad\textnormal{but}\quad h(2)\awf{h(E_4)}.
\end{align}

Once again, we can prove $h(2)\awf{h(E_4)}$ using the definition of {\em away from} (Definition \ref{def:acl}).
\begin{proof}[Proof that $h(2)\awf{h(E_4)}$]
Every output $h(x)$ in $h(E_4)$ satisfies $h(x)>4>2$. Hence, when $x\in E_4$ we have
\begin{align}
|h(x)-h(2)|&=|h(x)-2|= h(x)-2> 4-2=2.
\end{align}
Therefore, $h(2)\awf{g(E_4)}$.
\end{proof}
\end{example}

So, what are properties functions can have which we can define and work with in order to prove connected sets are mapped to connected sets? Do these properties align with the intuition we gained about continuous functions from calculus? A notion of preserving closeness in terms of arbitrarily close  provides such a property and is the focus of Section \ref{sec:preservingcloseness}.

To recap, in Figure \ref{fig:splithmultiple}, the graph of $h$ captures---to me, anyway---the notion that $h$ does not preserve closeness at $c=2$. However, the figures with separate domains and ranges do not reveal this feature until we consider the relationship between the smaller interval $E_4=(7/4,2)$ and $c=2$ as well as the relationship between their images $h(E_4)=(4,\infty)$ and $h(2)=2$.

If the graphs seem to convey the preservation of connectedness more readily than the separate domains and ranges, why bother with domains and ranges? Keep in mind, a goal is to produce mathematical definitions we can use to prove facts that solidify the intuition coming from graphs and our experiences with calculus. The graphs show the behavior we want to explore while the domains and ranges are leading us to classic definitions and rigorous mathematics. 

Also, graphs are not as easy to draw when our functions transforms points and images in higher dimensions! For instance, the graph of a function from $\R^3$ into $\R^3$ would be 6-dimensional and I have no idea how to plot a 6-dimensional image on 2-dimensional page. But in some cases have managed to create pairs of 2-dimensional figures, one for its domain and another for its range. This is done in Section \ref{sec:segueintolinearalgebra} where higher-dimensional functions are explored in the context of linear algebra.

\vs
\section*{Exercises}
\setcounter{theorem}{0}

Exercises are for play: Do scratch work, draw stuff, and make mistakes---make {\em lots} of mistakes---before worrying about writing proofs. {\em Have fun!}
%
%


\vs
\section{Preserving closeness}
\label{sec:preservingcloseness}

To motivate the definition of {\em preserving closeness} in Definition \ref{def:preservecloseness}, let's recap Section \ref{sec:functionsandimages}: The functions $g$ and $h$ defined in Examples \ref{eg:splitg} and \ref{eg:splith} each take the point $c=2$ and the set $E_4=(7/4,2)$---which are arbitrarily close in the domain---and map them to images that are away from each other in the range.

On the other hand, function $f$ defined in Examples \ref{eg:linef} seems to map points and sets arbitrarily close in the domain to images  arbitrarily close in the range. This has not been proven yet, but the speculation motivates the following definition for the {\em preservation of closeness}.

\begin{definition}\label{def:preservecloseness}
Let $D\subseteq\R^k$, $\bfc\in D$, and $f:D\to\R^m$. We say $f$ \textit{preserves closeness at} $\bfc$ if for every subset $E$ of the domain $D$ we have
\begin{align}
	\bfc \acl E &\quad\Longrightarrow\quad f(\bfc)\acl f(E). 
\end{align}
If $f$ preserves closeness at every point in its domain $D$, we say $f$ \textit{preserves closeness}.
\end{definition}

So, whether $E$ is an interval, the range of a sequence, or something else, if $f$ preserves closeness at $\bfc$, then  anytime $E$ is arbitrarily close to $\bfc$ in the domain we also have $f(E)$ is arbitrarily close to $f(\bfc)$ in the range. 

\begin{example}\label{eg:notpreservingcloseness}
The functions $g$ and $h$ from Examples \ref{eg:splitg} and \ref{eg:splith} do not preserve closeness at $c=2$. Recall that $E_4=(7/4,2)$. We have:
\begin{enumerate}
	\item $2 \acl{E_4}$ but $g(2)\awf{g(E_4)}$; and
	\item $2 \acl{E_4}$ but $h(2)\awf{h(E_4)}$.
\end{enumerate}

\end{example}

What about the case when we believe a function $f$ preserves closeness? It may be more difficult since we would need to prove  for {\em any} point $\bfc$ in the domain and {\em any} subset $E$ of the domain where $\bfc\acl{E}$, we end up with $f(\bfc)\acl{f(E)}$. 

Honestly, I do not have a nice way to do this by  making direct use of Definition \ref{def:preservecloseness}. However, I have multiple ways to do it indirectly, including some classic approaches to {\em continuity}. The first makes use of sequences in both the domain and range, thanks to the fundamental relationship between arbitrarily close and limits of sequences in Theorem \ref{thm:exercise0}. This method also connects the notion of $f$ preserving closeness at $\bfc$ to the informal phrasing you may have  heard in a calculus class: $\bfx$ ``approaches'' $\bfc$ implies 
$f(\bfx)$ ``approaches'' $f(\bfc)$. 

So, how does all this work?

\begin{example}\label{eg:threesequences}
Consider the following trio of sequences $(x_n),(y_n)$, and $(z_n)$ defined for each positive integer $n$ by 
\begin{align}
	x_n&=2+\frac{2}{n},\quad
	y_n=2-\frac{2}{n},\quad\textnormal{and}\quad
	z_n=2+\frac{2(-1)^n}{n}.
\end{align}
See Figure \ref{fig:threesequences}. Note that all of these sequences are contained domain $D=[0,4]$ from Examples \ref{eg:linef}, \ref{eg:splitg}, and \ref{eg:splith}.

\begin{figure}
\centering
\begin{tikzpicture} 
\draw (-2,0) node {$(x_n)$};
\draw (2,0) node {$\circ$};
\draw (2.24,0) node {$...$};
\draw (4,-0.5) node {$4$};
\draw (3,-0.5) node {$3$};
\draw (2,-0.5) node {$2$};
\foreach \Point in {(4,0), (3,0), (2.67,0), (2.5,0)}
{
    \node at \Point {\textbullet};
}
\draw (-2,-1.5) node {$(y_n)$};
\draw (2,-1.5) node {$\circ$};
\draw (1.76,-1.5) node {...};
\draw (2,-2) node {$2$};
\draw (1,-2) node {$1$};
\draw (0,-2) node {$0$};
\foreach \Point in {(0,-1.5), (1,-1.5), (1.33,-1.5), (1.5,-1.5)}
{
    \node at \Point {\textbullet};
}
\draw (-2,-3) node {$(z_n)$};
\draw (2,-3) node {$\circ$};
\draw (1.76,-3) node {...};
\draw (2.24,-3) node {...};
\draw (0,-3.5) node {$0$};
\draw (3,-3.5) node {$3$};
\draw (1.15,-3.52) node {$4/3$};
\draw (2,-3.5) node  {$2$};
\foreach \Point in {(0,-3), (3,-3), (1.33,-3), (2.5, -3)}
{
    \node at \Point {\textbullet};
}
\end{tikzpicture}
\caption{The sequences $(x_n),(y_n)$, and $(z_n)$ from Example \ref{eg:threesequences} are contained in the set $D=[0,4]$ and each converges to 2.}
\label{fig:threesequences}
\end{figure}
\end{example}

Each of the sequences $(x_n),(y_n)$, and $(z_n)$ converges to 2. I'll prove the case for $(z_n)$ but skip the scratch work for the threshold $n_\varepsilon$.

\begin{proof}[Proof that $\lim_{n\to\infty}z_n=2$]
Let $\varepsilon>0$. By the Archimedean Property \ref{thm:archimedeanproperty}, we can choose $n_\varepsilon\in\N$ large enough to ensure
\begin{align}
	n_\varepsilon>\frac{2}{\varepsilon} \qquad\textnormal{and therefore}\qquad \frac{2}{n_\varepsilon}<\varepsilon.
\end{align}
So, $n_\varepsilon$ is a threshold since for all $n\geq n_\varepsilon$ we have
\begin{align}
	|z_n-2|&=\left|2+\frac{2(-1)^n}{n}-2 \right|=\left|\frac{2(-1)^n}{n} \right|=\frac{2}{n}\leq \frac{2}{n_\varepsilon}<\varepsilon.
\end{align}
Hence, $\lim_{n\to\infty}z_n=2$.
\end{proof}

Proofs showing $\lim_{n\to\infty}x_n=2$ and $\lim_{n\to\infty}y_n=2$ are nearly identical to the proof of $\lim_{n\to\infty}z_n=2$. In fact, we can use the same choice for a threshold in each case, namely $n_\varepsilon\in\N$ large enough to ensure $n_\varepsilon>2/\varepsilon$. {\em Try it yourself!} My guess is your scratch work will lead you to the same choice for $n_\varepsilon$. If I'm wrong, please tell me what you did instead! I'm interested in seeing what you come up with.

Let's see how the functions $f,g$, and $h$ from Examples \ref{eg:linef}, \ref{eg:splitg}, and \ref{eg:splith} transform the sequences  
$(x_n),(y_n)$, and $(z_n)$ from Example \ref{eg:threesequences}. 

\begin{figure}
\centering
\begin{tikzpicture} 
\draw (-2,2) node {graph};
\draw (-2,1.5) node {of $f$};
	\draw (-0.5,2) node {$2$};
	\draw (0,2) node {$-$};
	\draw (-0.5,1) node {$1$};
	\draw (0,1) node {$\bullet$};
	\draw (-0.5,-0.5) node {$0$};		
  \draw[-To] (-0.5,0) -- (4.2,0) node[right] {$x$};
  \draw[-To] (0,-0.5) -- (0,4.2) node[above] {$y$};
  \draw[scale=1, domain=0:4, smooth, variable=\x, blue] plot ({\x}, {\x/2});
  \draw[blue] (4,2.5) node {$f$};
  \draw (2,0) node {$\bullet$};
  \draw (4,0) node {$|$};  
  \draw[blue] (2,1) node {$\bullet$};
  \draw (2,-.5) node {$2$};
  \draw (4,-.5) node {$4$};
\draw[-] (-3,-1) -- (7,-1);

\draw (-2,-1.5) node {$(x_n)$};
\draw (2,-1.5) node {$\circ$};
\draw (2.24,-1.5) node {$...$};
\draw (4,-2) node {$4$};
\draw (3,-2) node {$3$};
\draw (2,-2) node {$2$};
\foreach \Point in {(4,-1.5), (3,-1.5), (2.67,-1.5), (2.5,-1.5)}
{
    \node at \Point {\textbullet};
}
\draw (-2,-3) node {$f(x_n)$};
\draw (1,-3) node {$\circ$};
\draw (1.24,-3) node {$...$};
\draw (2,-3.5) node {$2$};
\draw (1,-3.5) node {$1$};
\foreach \Point in {(2,-3), (1.5,-3)}
{
    \node at \Point {\textbullet};
}	
\draw[-] (-3,-4) -- (7,-4);

\draw (-2,-4.5) node {$(y_n)$};
\draw (2,-4.5) node {$\circ$};
\draw (1.76,-4.5) node {...};
\draw (2,-5) node {$2$};
\draw (1,-5) node {$1$};
\draw (0,-5) node {$0$};
\foreach \Point in {(0,-4.5), (1,-4.5), (1.33,-4.5), (1.5,-4.5)}
{
    \node at \Point {\textbullet};
}
\draw (-2,-6) node {$f(y_n)$};
\draw (1,-6) node {$\circ$};
\draw (0.76,-6) node {...};
\draw (1,-6.5) node {$1$};
\draw (0,-6.5) node {$0$};
\foreach \Point in {(0,-6), (0.5,-6)}
{
    \node at \Point {\textbullet};
}
\draw[-] (-3,-7) -- (7,-7);

\draw (-2,-7.5) node {$(z_n)$};
\draw (2,-7.5) node {$\circ$};
\draw (1.76,-7.5) node {...};
\draw (2.24,-7.5) node {...};
\draw (0,-8) node {$0$};
\draw (3,-8) node {$3$};
\draw (1.15,-8.02) node {$4/3$};
\draw (2,-8) node  {$2$};
\foreach \Point in {(0,-7.5), (3,-7.5), (1.33,-7.5), (2.5, -7.5)}
{
    \node at \Point {\textbullet};
}	
\draw (-2,-9) node {$f(z_n)$};
\draw (1,-9) node {$\circ$};
\draw (0.76,-9) node {...};
\draw (1.24,-9) node {...};
\draw (0,-9.5) node {$0$};
\draw (1.7,-9.52) node {$3/2$};
\draw (1,-9.5) node  {$1$};
\foreach \Point in {(0,-9), (1.5,-9)}
{
    \node at \Point {\textbullet};
}
\draw[-] (-3,-10) -- (7,-10);	
\end{tikzpicture}
\caption{The images of the sequences $(x_n),(y_n)$, and $(z_n)$ from Example \ref{eg:threesequences} under the function $f$. See Example \ref{eg:fthreesequences}. The sequences $(f(x_n)),(f(y_n))$, and $(f(z_n))$ each converge to $f(2)=1$. Note $f(2)=1$ is indicated with a $\bullet$ in the graph of $f$ but with a $\circ$ with the sequences $(f(y_n))$ and $(f(z_n))$. For $(f(x_n))$, we happen to have  $f(x_1)=f(4)=2$.}
\label{fig:fthreesequences}
\end{figure}

\begin{example}\label{eg:fthreesequences}
As in Example \ref{eg:linef}, define $f:[0,4]\to\R$ define by
\begin{align}
	f(x) &= x/2,
\end{align}
and define $(x_n),(y_n)$, and $(z_n)$ as in Example \ref{eg:threesequences}. For each $n\in\N$ we have
\begin{align}
	f(x_n)&=1+\frac{1}{n},\quad
	f(y_n)=1-\frac{1}{n},\quad\textnormal{and}\quad
	f(z_n)=1+\frac{1(-1)^n}{n}.
\end{align}
See Figure \ref{fig:fthreesequences}. We have 
\begin{align}
	\lim_{n\to\infty}x_n&=2
	\quad\textnormal{while}\quad
	\lim_{n\to\infty}f(x_n)=1=f(2),\\		
	\lim_{n\to\infty}y_n&=2
	\quad\textnormal{while}\quad
	\lim_{n\to\infty}f(y_n)=1=f(2),\qquad \textnormal{and}\\
	\lim_{n\to\infty}z_n&=2
	\quad\textnormal{while}\quad
	\lim_{n\to\infty}f(z_n)=1=f(2).
\end{align}

In fact, if we wanted to prove these limits are all equal to 1, the same choice for the threshold suffices in all three cases. Namely, any positive integer $n_\varepsilon$ where $n_\varepsilon>1/\varepsilon$ works. 
\end{example}

\begin{figure}
\centering
\begin{tikzpicture} 
\draw (-2,2) node {graph};
\draw (-2,1.5) node {of $g$};
	\draw (-0.5,3) node {$3$};
	\draw (0,3) node {$-$};	
	\draw (-0.5,2) node {$2$};
	\draw (0,2) node {$\bullet$};
	\draw (-0.5,1) node {$1$};
	\draw (0,1) node {$-$};
	\draw (-0.5,-0.5) node {$0$};		
  \draw[-To] (-0.5,0) -- (4.2,0) node[right] {$x$};
  \draw[-To] (0,-0.5) -- (0,4.2) node[above] {$y$};
  \draw[scale=1, domain=0:2, smooth, variable=\x, blue] plot ({\x}, {\x/2});
   \draw[scale=1, domain=2:4, smooth, variable=\x, blue] plot ({\x}, {1+\x/2});
  \draw[blue] (4,2.5) node {$g$};
  \draw (2,0) node {$\bullet$};
  \draw (4,0) node {$|$};  
  \draw[blue] (2,2) node {$\bullet$};
  \draw[blue, fill=white] (2,1) circle (0.08cm);  
  \draw (2,-.5) node {$2$};
  \draw (4,-.5) node {$4$};
\draw[-] (-3,-1) -- (7,-1);

\draw (-2,-1.5) node {$(x_n)$};
\draw (2,-1.5) node {$\circ$};
\draw (2.24,-1.5) node {$...$};
\draw (4,-2) node {$4$};
\draw (3,-2) node {$3$};
\draw (2,-2) node {$2$};
\foreach \Point in {(4,-1.5), (3,-1.5), (2.67,-1.5), (2.5,-1.5)}
{
    \node at \Point {\textbullet};
}
\draw (-2,-3) node {$g(x_n)$};
\draw (2,-3) node {$\circ$};
\draw (2.24,-3) node {$...$};
\draw (3,-3.5) node {$3$};
\draw (2,-3.5) node {$2$};
\foreach \Point in {(3,-3), (2.5,-3)}
{
    \node at \Point {\textbullet};
}	
\draw[-] (-3,-4) -- (7,-4);

\draw (-2,-4.5) node {$(y_n)$};
\draw (2,-4.5) node {$\circ$};
\draw (1.76,-4.5) node {...};
\draw (2,-5) node {$2$};
\draw (1,-5) node {$1$};
\draw (0,-5) node {$0$};
\foreach \Point in {(0,-4.5), (1,-4.5), (1.33,-4.5), (1.5,-4.5)}
{
    \node at \Point {\textbullet};
}
\draw (-2,-6) node {$g(y_n)$};
\draw (1,-6) node {$\circ$};
\draw (0.76,-6) node {...};
\draw (1,-6.5) node {$1$};
\draw (0,-6.5) node {$0$};
\foreach \Point in {(0,-6), (0.5,-6)}
{
    \node at \Point {\textbullet};
}
\draw[-] (-3,-7) -- (7,-7);

\draw (-2,-7.5) node {$(z_n)$};
\draw (2,-7.5) node {$\circ$};
\draw (1.76,-7.5) node {...};
\draw (2.24,-7.5) node {...};
\draw (0,-8) node {$0$};
\draw (3,-8) node {$3$};
\draw (1.15,-8.02) node {$4/3$};
\draw (2,-8) node  {$2$};
\foreach \Point in {(0,-7.5), (3,-7.5), (1.33,-7.5), (2.5, -7.5)}
{
    \node at \Point {\textbullet};
}	
\draw (-2,-9) node {$g(z_n)$};
\draw (1,-9) node {$\circ$};
\draw (0.76,-9) node {...};
\draw (0,-9) node {$\bullet$};
\draw (0,-9.5) node {$0$};
\draw (1,-9.5) node  {$1$};
\draw (2,-9) node {$\circ$};
\draw (2.24,-9) node {...};
\draw (2.5,-9) node {$\bullet$};
\draw (2,-9.5) node {$2$};
\draw (2.7,-9.52) node {$5/2$};
\draw[-] (-3,-10) -- (7,-10);	
\end{tikzpicture}
\caption{The images of the sequences $(x_n),(y_n)$, and $(z_n)$ from Example \ref{eg:threesequences} under the function $g$. See Example \ref{eg:gthreesequences}. The sequences $(g(x_n)),(g(y_n))$, and $(g(z_n))$ exhibit different limiting behavior. Note $g(2)=2$ is indicated with a $\bullet$ in the graph of $g$ but with a $\circ$ with the sequences $(g(x_n)),(g(y_n))$, and $(g(z_n))$.}
\label{fig:gthreesequences}
\end{figure}

\begin{example}\label{eg:gthreesequences}
As in Example \ref{eg:splitg}, define $g:[0,4]\to\R$ define by
\begin{align}
	g(x)&=
	\begin{cases}
	x/2,& \textnormal{ if } 0\leq x< 2,\\
	1+(x/2),& \textnormal{ if } 2\leq x\leq 4,
	\end{cases}
\end{align}
and define $(x_n),(y_n)$, and $(z_n)$ as in Example \ref{eg:threesequences}. For each $n\in\N$ we have
\begin{align}
	g(x_n)&=2+\frac{1}{n},\qquad\\
	g(y_n)&=1-\frac{1}{n},\qquad\textnormal{and}\\
	g(z_n)&=	\begin{cases}
	\displaystyle 1-\frac{1}{n},& \textnormal{ if }n\textnormal{ is odd, }\\
	&\\ 
	\displaystyle 2+\frac{1}{n},& \textnormal{ if }n\textnormal{ is even. }
	\end{cases}
\end{align}
See Figure \ref{fig:gthreesequences}. We have 
\begin{align}
	\lim_{n\to\infty}x_n&=2
	\quad\textnormal{while}\quad
	\lim_{n\to\infty}g(x_n)=2=g(2),\\		
	\lim_{n\to\infty}y_n&=2
	\quad\textnormal{while}\quad
	\lim_{n\to\infty}g(y_n)=1\neq g(2),\qquad \textnormal{and}\\
	\lim_{n\to\infty}z_n&=2
	\quad\textnormal{while}\quad
	\lim_{n\to\infty}g(z_n)\textnormal{ does not exist}.
\end{align}

Therefore, $g$ does not preserve the convergence of sequences. Specifically, $(z_n)$ converges in the domain, but $g(z_n)$ diverges in the range. Also, $(y_n)$ converges to $c=2$ in the domain, but $g(y_n)$ converges to $1$ while $g(c)=g(2)=2$. 
\end{example}

\begin{figure}
\centering
\begin{tikzpicture} 
\draw (-2,2) node {graph};
\draw (-2,1.5) node {of $h$};
	\draw (-0.5,3) node {$3$};
	\draw (0,3) node {$-$};	
	\draw (-0.5,2) node {$2$};
	\draw (0,2) node {$\bullet$};
	\draw (-0.5,0.5) node {$1/2$};
	\draw (-0.5,-0.5) node {$0$};		
  \draw[-To] (-0.5,0) -- (4.2,0) node[right] {$x$};
  \draw[-To] (0,-0.5) -- (0,4.2) node[above] {$y$};
  \draw[scale=1, domain=0:1.77, smooth, variable=\x, blue] plot ({\x}, {1/(2-\x)});
   \draw[scale=1, domain=2:4, smooth, variable=\x, blue] plot ({\x}, {1+\x/2});
  \draw[blue] (4,2.5) node {$h$};
  \draw (2,0) node {$\bullet$};
  \draw (4,0) node {$|$};  
  \draw[blue] (2,2) node {$\bullet$};
  \draw (2,-.5) node {$2$};
  \draw (4,-.5) node {$4$};
\draw[-] (-3,-1) -- (7,-1);  
\draw[-] (-3,-1) -- (7,-1);

\draw (-2,-1.5) node {$(x_n)$};
\draw (2,-1.5) node {$\circ$};
\draw (2.24,-1.5) node {$...$};
\draw (4,-2) node {$4$};
\draw (3,-2) node {$3$};
\draw (2,-2) node {$2$};
\foreach \Point in {(4,-1.5), (3,-1.5), (2.67,-1.5), (2.5,-1.5)}
{
    \node at \Point {\textbullet};
}
\draw (-2,-3) node {$h(x_n)$};
\draw (2,-3) node {$\circ$};
\draw (2.24,-3) node {$...$};
\draw (3,-3.5) node {$3$};
\draw (2,-3.5) node {$2$};
\foreach \Point in {(3,-3), (2.5,-3)}
{
    \node at \Point {\textbullet};
}	
\draw[-] (-3,-4) -- (7,-4);

\draw (-2,-4.5) node {$(y_n)$};
\draw (2,-4.5) node {$\circ$};
\draw (1.76,-4.5) node {...};
\draw (2,-5) node {$2$};
\draw (1,-5) node {$1$};
\draw (0,-5) node {$0$};
\foreach \Point in {(0,-4.5), (1,-4.5), (1.33,-4.5), (1.5,-4.5)}
{
    \node at \Point {\textbullet};
}
\draw (-2,-6) node {$h(y_n)$};
\foreach \Point in {(0.5,-6), (1,-6), (1.5,-6), (2,-6), (2.5,-6), (3,-6), (3.5,-6), (4,-6), (4.5,-6)}
{
    \node at \Point {\textbullet};
}
\draw (5,-6) node {...};
\draw (0.3,-6.5) node {$1/2$};
\draw (1,-6.5) node {$1$};
\draw (2,-6.5) node {$2$};
\draw (3,-6.5) node {$3$};
\draw (4,-6.5) node {$4$};
\draw[-] (-3,-7) -- (7,-7);

\draw (-2,-7.5) node {$(z_n)$};
\draw (2,-7.5) node {$\circ$};
\draw (1.76,-7.5) node {...};
\draw (2.24,-7.5) node {...};
\draw (0,-8) node {$0$};
\draw (3,-8) node {$3$};
\draw (1.15,-8.02) node {$4/3$};
\draw (2,-8) node  {$2$};
\foreach \Point in {(0,-7.5), (3,-7.5), (1.33,-7.5), (2.5, -7.5)}
{
    \node at \Point {\textbullet};
}	
\draw (-2,-9) node {$h(z_n)$};
\foreach \Point in {(0.5,-9), (1.5,-9), (2.5,-9), (3.5,-9), (4.5,-9)}
{
    \node at \Point {\textbullet};
}
\draw (0.3,-9.5) node {$1/2$};
\draw (1.3,-9.5) node {$3/2$};
\draw (2,-9) node {$\circ$};
\draw (2.24,-9) node {...};
\draw (2.5,-9) node {$\bullet$};
\draw (2,-9.5) node {$2$};
\draw (2.7,-9.52) node {$5/2$};
\draw (4.5,-9.5) node {$9/2$};
\draw (5,-9) node {...};
\draw[-] (-3,-10) -- (7,-10);	
\end{tikzpicture}
\caption{The images of the sequences $(x_n),(y_n)$, and $(z_n)$ from Example \ref{eg:threesequences} under the function $h$. See Example \ref{eg:hthreesequences}. The sequences $(h(x_n)),(h(y_n))$, and $(h(z_n))$ exhibit different limiting behavior. Note $h(2)=2$ is indicated with a $\bullet$ in the graph of $h$ but with a $\circ$ with the sequences $(h(x_n)),(h(y_n))$, and $(h(z_n))$.}
\label{fig:hthreesequences}
\end{figure}

\begin{example}\label{eg:hthreesequences}
As in Example \ref{eg:splith}, define $h:[0,4]\to\R$ define by
\begin{align}
	h(x)&=
	\begin{cases}
	\frac{1}{2-x},& \textnormal{ if } 0\leq x< 2,\\
	1+(x/2),& \textnormal{ if } 2\leq x\leq 4.
	\end{cases}
\end{align}
and define $(x_n),(y_n)$, and $(z_n)$ as in Example \ref{eg:threesequences}. For each $n\in\N$ we have
\begin{align}
	h(x_n)&=2+\frac{1}{n},\qquad\\
	h(y_n)&=\frac{n}{2},\qquad\textnormal{and}\\
	h(z_n)&=	\begin{cases}
	\displaystyle \frac{n}{2},& \textnormal{ if }n\textnormal{ is odd, }\\
	&\\ 
	\displaystyle 2+\frac{1}{n},& \textnormal{ if }n\textnormal{ is even. }
	\end{cases}
\end{align}
See Figure \ref{fig:hthreesequences}. We have
\begin{align}
	\lim_{n\to\infty}x_n&=2
	\quad\textnormal{while}\quad
	\lim_{n\to\infty}h(x_n)=2=h(2),\\		
	\lim_{n\to\infty}y_n&=2
	\quad\textnormal{while}\quad
	\lim_{n\to\infty}h(y_n)=\infty\neq h(2),\qquad \textnormal{and}\\
	\lim_{n\to\infty}z_n&=2
	\quad\textnormal{while}\quad
	\lim_{n\to\infty}h(z_n)\textnormal{ does not exist}.
\end{align}

Therefore, $h$ does not preserve the convergence of sequences. Specifically, $(y_n)$ and $(z_n)$ converge in the domain, but $h(y_n)$ and $h(z_n)$ diverge in the range. You might recall that $\lim_{n\to\infty}h(y_n)=\infty$ means $h(y_n)$ diverges in a particular way: The terms $h(y_n)$ get larger and larger without bound in the positive direction as the index $n$ increases.
\end{example}

The following definition provides a formal notion of what it means when a function preserves the convergence---or limits---of sequences. I call it {\em sequential continuity} since it involves converging sequences and it reminds me of how I was taught continuity as a calculus student.

\begin{definition}\label{def:sequentialcontinuity}
Suppose $D\subseteq \R^k, \bfc\in D$, and $f:D\to\R^m$. We say $f$ is \textit{sequentially continuous at} $\bfc$ if 
\begin{align}
(\bfx_n)\subseteq D\,\textnormal{ with }\, \lim_{n\to\infty} \bfx_n=\bfc
\qquad & \implies\qquad
\lim_{n\to\infty} f(\bfx_n)=f(\bfc).
\end{align}
If $f$ is sequentially continuous at every point in the domain $D$, then $f$ is \textit{sequentially continuous}. 
\end{definition}

\begin{remark}\label{rmk:sequentialcontinuitymovelimitinside}
We can think of sequential continuity symbolically as  moving the limit notation from outside $f$ to inside:
\begin{align}
\lim_{n\to\infty} f(\bfx_n)&=f\left(\lim_{n\to\infty} \bfx_n\right)=f(\bfc).
\end{align}
Sequential continuity basically says that either way---whether we plug $(\bfx_n)$ into $f$ first then take the limit of $f(\bfx_n)$, or we take the limit of $(\bfx_n)$ first then plug the limit $\bfc$ into $f$---we end up with the same value $f(\bfc)$. See Figure \ref{fig:continuitycommutativediagram}.
\end{remark}

\begin{figure}
\centering
\begin{tikzpicture} 
\draw (0,3) node {$\bfx_n$};
\draw (0,0) node {$f(\bfx_n)$};
\draw (4.7,3) node {$\displaystyle \lim_{n\to\infty}\bfx_n=\bfc$};
\draw (6.6,0) node {$\displaystyle \lim_{n\to\infty}f(\bfx_n)=f\left(\lim_{n\to\infty}\bfx_n\right)=f(\bfc)$};
	\draw[-To, semithick] (0, 2.5) -- (0, 0.5);
	\draw[-To, semithick] (0.6, 3) -- (3.3, 3);
	\draw[-To, semithick] (0.7, 0) -- (3.3, 0);
	\draw[-To, semithick] (4, 2.5) -- (4, 0.5);
\draw (2,2.5) node {$n\to\infty$};	
\draw (2,-0.5) node {$n\to\infty$};	
\draw (-1, 1.75) node {plug};
\draw (-1, 1.24) node {into $f$};
\draw (5, 1.75) node {plug};
\draw (5, 1.24) node {into $f$};
\end{tikzpicture}
\caption{As in Remark \ref{rmk:sequentialcontinuitymovelimitinside}, this commutative diagram visualizes sequential continuity in that whether we take the limit as $n$ tends to infinity or evaluate $f$ first, we end up with the same result $f(\bfc)$. Also, sequential continuity symbolically amounts to moving the ``$\lim$'' symbol in and out of the function $f$ has no effect on the resulting value.}
\label{fig:continuitycommutativediagram}
\end{figure}

Preserving closeness (Definition \ref{def:preservecloseness}) and sequential continuity (Definition \ref{def:sequentialcontinuity}) are equivalent. They each provide different perspectives on what continuity can mean, and they are also equivalent to the classic definition of continuity (Definition \ref{def:continuity}) developed and explored in Section \ref{sec:continuity} below.

\begin{theorem}\label{thm:preservingclosnesssequentialequivalence}
Suppose $D\subseteq \R^k, \bfc\in D$, and $f:D\to\R^m$. Then the following statements are equivalent:
\begin{enumerate}
	\item $f$ preserves closeness at $\bfc$.
	\item $f$ is sequentially continuous at $\bfc$.
\end{enumerate}
\end{theorem}

\begin{scratch}\label{scr:preservingclosnesssequentialequivalence}
The definitions of preserving closeness (Definition \ref{def:preservecloseness}) and sequential continuity (Definition \ref{def:sequentialcontinuity}) are related to each other through what I consider to be one of the most important theorems in this book, Theorem \ref{thm:exercise0}, which establishes the fundamental connection between the concepts of arbitrarily close (Definition \ref{def:acl}) and limits of sequences (Definition \ref{def:sequentiallimit}). I encourage you to familiarize yourself with the negations of the definitions of preserving closeness (Definition \ref{def:preservecloseness}) and sequential continuity (Definition \ref{def:sequentialcontinuity}).
\end{scratch}

\begin{proof} Let's show both (i)$\implies$(ii) and (ii)$\implies$(i) by contraposition.

\underline{(i)$\implies$(ii)}: Suppose there is a sequence $(\bfx_n)$ of points in $D$ and a point $\bfc$ in $D$ where $\lim_{n\to\infty}\bfx_n=\bfc$ but $\lim_{n\to\infty}f(\bfx_n)\neq f(\bfc)$. Then there must be some $\varepsilon_0>0$ such that no matter which positive integer $n_0$ we consider as a potential threshold, there is a positive integer $q\geq n_0$ where $\bfx_q$ is in $D$ and
\begin{align}
	d_m(f(\bfx_q),f(\bfc))&\geq \varepsilon_0.
\end{align}

We can use the previous statement to construct a suitable subsequence $(\bfx_{n_j})$ of $(\bfx_n)$ whose image stays away from $f(\bfc)$. That is, let $n_1\geq 1$ be a positive integer that satisfies the previous statement. Proceeding inductively, for each positive integer $j$, there is a positive integer $n_j>n_{j-1}$ where $d_m(f(\bfx_{n_j}),f(\bfc))\geq \varepsilon_0$. 

Now, since $\lim_{n\to\infty}\bfx_n=\bfc$ and $(\bfx_{n_j})$ is a subsequence of $(\bfx_n)$, we have that $\lim_{j\to\infty}\bfx_{n_j}=\bfc$ as well. By Theorem \ref{thm:exercise0}, we have $\bfc \acl (\bfx_{n_j})$. But we also have $f(\bfc)\awf (f(\bfx_{n_j}))$. Therefore, $f$ does not preserve closeness at $\bfc$.

\underline{(ii)$\implies$(i)}: Suppose $f$ does not preserve closeness at a point $\bfc$ in $D$. Then there must be some nonempty $E\subseteq D$ where $\bfc \acl E$ but $f(\bfc)\awf f(E)$. So, there is some $\varepsilon_0>0$ such that for every point $\bfx$ in $E$ we have
\begin{align}
	d_m(f(\bfx),f(\bfc))&\geq\varepsilon_0.
\end{align} 
Using Theorem \ref{thm:exercise0}, we can use the previous statement to find a suitable sequence. Since $\bfc \acl E$, there must be some sequence $(\bfx_n)$ of points in $E\subseteq D$ where 
$\lim_{n\to\infty}\bfx_n=\bfc$. However, we have $d_m(f(\bfx_n),f(\bfc))\geq \varepsilon_0$ for every $\bfx_n$, so $\lim_{n\to\infty}f(\bfx_n)\neq f(\bfc)$. Therefore, condition (ii) fails.
\end{proof} 

Consider the following example of a function which maps a disk in the plane $\R^2$ to a subset of the real line $\R$ but does not preserve connectedness. More examples in higher dimensions are explored in the context of linear algebra in Section \ref{sec:segueintolinearalgebra}.

\begin{example}\label{eg:notpreserveconnectedR2}
Consider the function $s:\R^2\to\R$ given by
\begin{align}
s(\bfx)&=
	\begin{cases}
	\|\bfx\|,& \textnormal{ if } 0\leq \|\bfx\| < 1,\\
	2,& \textnormal{ if } \|\bfx\| \geq 1.
	\end{cases}
\end{align}
The function $s$ maps every point in the open unit disk $V_1(\mathbf{0})$ to its length from the origin and maps every point outside of $V_1(\mathbf{0})$ to 2. Note that $s$ is not a linear transformation. 

In Figure \ref{fig:notpreserveconnectedR2} featuring the function $s$, the standard basis vectors $\bfe_1$ and $\bfe_2$ are plotted as points ($\bullet$) instead of arrows. Note that $s(\bfe_1)=s(\bfe_2)=2$.

\begin{figure}
\centering
\begin{tikzpicture}
\draw[fill=blue!15] (-1,0) ellipse (1cm and 1cm);
	\draw (-1,1) node {$\bullet$};
	\draw (-.75,1.25) node {$\bfe_2$};
	\draw (0,0) node {$\bullet$};
	\draw (0.25,-0.25) node {$\bfe_1$};
	\draw (-1,-1.5) node {$V_1(\mathbf{0})\cup C$};
	\draw (1.5,0) node {\Large{$\mapsto$}};
\draw (4,-1.5) node {$s(V_1(\mathbf{0})\cup C)=[0,1)\cup\{2\}$};	
	\draw[-,blue!75,semithick] (3,0) -- (4,0);
	\draw[blue!75] (3,0) node {$[$};
	\draw[blue!75] (4,0) node {$)$};
	\draw (5,-0.02) node {$\bullet$};
	\draw (3,-0.5) node {$0$};	
	\draw (4,-0.5) node {$1$};	
	\draw (5,-0.5) node {$2$};	
\end{tikzpicture}
\caption{The function $s$ from Example \ref{eg:notpreserveconnectedR2} does not preserve closeness since $\bfe_1\acl{V_1(\mathbf{0})}$ in the domain but $s(\bfe_1)=2$ is away from $s(V_1(\mathbf{0}))=[0,1)$ in the range.}
\label{fig:notpreserveconnectedR2}
\end{figure}

The set $C$ denotes the unit circle centered at the origin (the black circle on the left). That is, $C=\{\bfx\in\R^2:\|\bfx\|=1\}$. So, the set $V_1(\mathbf{0})\cup C$ is connected (it's the closed disk $\overline{V_1(\mathbf{0})}$), but its image under $s$ is disconnected since 
\begin{align}
s(V_1(\mathbf{0})\cup C)=[0,1)\cup\{2\}.
\end{align}
In particular, $\bfe_1$ is in $C$ and arbitrarily close to $V_1(\mathbf{0})$ in the domain, but $s(\bfe_1)=2$ is away from $s(V_1(\mathbf{0}))=[0,1)$ in the range. 
\end{example}

Before moving on to the next section, consider what it means for a function to be {\em bounded}. This definition is a generalization of the definition of a bounded set of real numbers (see Definition \ref{def:max}).

\begin{definition}\label{def:boundedfunction}
Let $D\subseteq\R^k$ and $f:D\to\R^m$. The function $f$ is {\em bounded} if its range $f(D)$ is a bounded set. That is, $f$ is bounded if there exists some $b\geq 0$ where
\begin{align}
	\|f(\bfx)\|\leq b \quad\textnormal{for all}\quad\bfx\in D.
\end{align}
Such a nonnegative real number $b$ is called a {\em bound} for $f$ and we say $f$ is {\em bounded} by $b$.
\end{definition}

\begin{example}\label{eg:triobounded}
Which of the functions $f,g$, $h$, and $s$ from Examples \ref{eg:linef}, \ref{eg:splitg}, \ref{eg:splith}, and \ref{eg:notpreserveconnectedR2} are bounded? For every $x\in[0,4]$ we have $|f(x)|\leq 2$ and $|g(x)|\leq 3$, so both $f$ and $g$ are bounded. On the other hand, $h$ is unbounded. The function $s$ is bounded since $|s(\bfx)|\leq 2$ for every $\bfx$ in the domain $V_1(\mathbf{0})\cup C$. 
\end{example}

There are actually lots of interesting results we can prove with the mathematics we've developed so far, but we are also prepared for one of the most important topics in all of mathematics: {\em Continuity}. So, the next section develops the formal ``$\varepsilon$-$\delta$'' definition for continuity (Definition \ref{def:continuity}) and, from there, develops a number of interesting results involving all sorts of material from throughout the book.

\vs
\section*{Exercises}
\setcounter{theorem}{0}

Exercises are for play: Do scratch work, draw stuff, and make mistakes---make {\em lots} of mistakes---before worrying about writing proofs. {\em Have fun!}


\vs
\section{Continuity}
\label{sec:continuity}

This book started off unconventionally with a formal notion for arbitrarily close, and to keep the theme going we'll buck convention again by discussing continuity before limits. The classic ``$\varepsilon$-$\delta$'' definitions of continuity and limits for functions are perhaps the most difficult concepts for new mathematicians to fully understand. (See Definitions \ref{def:continuity} and \ref{def:functionlimit}, respectively.) This perspective is corroborated by \cite{Barnes,Borzellino,Cornu,LarsenSwinyard,Seager} and just about any undergraduate mathematics major (probably). So, this section is designed to face the challenge of understanding continuity and limits for functions by building on the pair of notions arbitrarily close and thresholds. 

Time for the classic definition of continuity in analysis.

\begin{definition}\label{def:continuity}
Let $D\subseteq\R^k$, let $f:D\to \R^m$, and let $\bfc\in D$. We say $f$ is {\em continuous at} $\bfc$ if for every distance $\varepsilon>0$ there is a {\em threshold} $\delta>0$ such that 
\begin{align}
	\bfx\in D \textnormal{ with }d_k(\bfx,\bfc)&<\delta \quad\implies\quad d_m(f(\bfx),f(\bfc))<\varepsilon.
\end{align}
In other words, $f$ is {\em continuous at} $\bfc$ if for every error $\varepsilon>0$ there is some threshold $\delta>0$ where the $\delta$-neighborhood of $\bfc$ in the domain maps into the $\varepsilon$-neighborhood of $f(\bfc)$ in the range:
\begin{align}
	f(V_\delta(\bfc)\cap D)\subseteq V_\varepsilon(f(\bfc)).
\end{align} 
If $f$ is continuous at every point in the domain $D$, we say $f$ is {\em continuous}. If $f$ is not continuous at some point $\bfz$, then we say $f$ is {\em discontinuous at} $\bfz$.
\end{definition}

\begin{remark}\label{rmk:deltathreshold}
The word ``threshold'' appears in the definitions of limit and convergence for sequences (Definition \ref{def:sequentiallimit}) as well as the definition of continuity (Definition \ref{def:continuity}). In both settings, the threshold is in response to a distance (or error)  $\varepsilon>0$ for the range. For limits of sequences, the threshold $n_\varepsilon$ is a positive integer index that depends on $\varepsilon$ (hence the subscript) and how the sequence is defined. For continuity, the threshold $\delta$ is a positive distance for the domain that depends on $\varepsilon$, how the function is defined, {\em and} the point $\bfc$. Since $\delta$ depends on so many things, I decided to leave out subscripts. This is also the conventional notation for $\delta$. Still, it is important to keep in mind the threshold $\delta$ generally depends on the function, $\bfc$, and $\varepsilon$. See Figures \ref{fig:fcontinuous} and \ref{fig:continuous2D}.
\end{remark}

\begin{figure}
\centering
\begin{tikzpicture}
\draw (-2,2) node {graph};
\draw (-2,1.5) node {of $f$};
	\draw (-0.5,2) node {$2$};
	\draw (0,2) node {$-$};
	\draw (-0.6,1.41) node {$\sqrt{2}$};
	\draw (0,1.41) node {$\bullet$};
	\draw (-0.5,-0.5) node {$0$};		
  \draw[-To] (-1, 0) -- (4.2, 0) node[right] {$x$};
  \draw[-To] (0, -1) -- (0, 4.2) node[above] {$y$};
  \draw[scale=1, domain=0:4, smooth, variable=\x] plot ({\x}, {(\x)^(1/2)});
  \draw(4,2.5) node {$f$};
  \draw (2,0) node {$\bullet$};
  \draw (4,0) node {$|$};  
  \draw[blue] (2,1.41) node {$\bullet$};
  \draw (2,-.5) node {$2$};
  \draw (4,-.5) node {$4$};

\draw[dashed,semithick,red] (0,1.81) -- (4,1.81);
\draw[dashed,semithick,red] (0,1.01) -- (4,1.01);
\draw[dashed,semithick,blue] (1.6,0) -- (1.6,1.26);
\draw[dashed,semithick,blue] (2.4,0) -- (2.4,1.55);
\draw[dashed,semithick,blue] (0,1.26) -- (1.6,1.26);
\draw[dashed,semithick,blue] (0,1.55) -- (2.4,1.55);

\draw[red] (-2.5,-2) node {$V_\varepsilon(\sqrt{2})$};
	\draw[-,semithick,red] (1.01,-2) -- (1.81,-2);
	\draw[red] (1.01,-2) node {$($};
	\draw[red] (1.81,-2) node {$)$};
	\draw[red] (0.31,-2.5) node {$\sqrt{2}-\varepsilon$};
	\draw[red] (2.51,-2.5) node {$\sqrt{2}+\varepsilon$};
	\draw (1.41,-2) node {\textbullet};
	\draw (1.41,-2.5) node {$\sqrt{2}$};	

\draw[blue] (-2.5,-3.5) node {$V_\delta(2)\cap D$};
	\draw[-,semithick,blue] (1.6,-3.5) -- (2.4,-3.5);
	\draw[blue] (1.6,-3.5) node {$($};
	\draw[blue] (2.4,-3.5) node {$)$};
	\draw[blue] (1.1,-4) node {$2-\delta$};
	\draw[blue] (2.9,-4) node {$2+\delta$};
	\draw (2,-3.5) node {\textbullet};
	\draw (2,-4) node {$2$};	

\draw[blue] (-3.4,-5) node {$f(V_\delta(2)\cap D)$};
\draw (-1.95,-5) node {$\subseteq$};	
\draw[red] (-1,-4.98) node {$V_\varepsilon(\sqrt{2})$};
	\draw[-,semithick,red] (1.01,-5) -- (1.81,-5);
	\draw[red] (1.01,-5) node {$($};
	\draw[red] (1.81,-5) node {$)$};
	\draw (1.41,-5) node {\textbullet};
	\draw[-,semithick,blue] (1.26,-5) -- (1.55,-5);
	\draw[blue] (1.26,-5) node {$($};
	\draw[blue] (1.55,-5) node {$)$};	
\end{tikzpicture}
\caption{The graph of a function $f:[0,4]\to\R$ given by $f(x)=\sqrt{x}$ to accompany the definition of continuity (Definition \ref{def:continuity}). See and compare with Example \ref{eg:sqrtcontinuous}. Given a distance $\varepsilon>0$ from $f(2)=\sqrt{2}$ in the range, there is a threshold $\delta>0$ providing a distance from $c=2$ in the domain ensuring $f(V_\delta(2)\cap D)\subseteq V_\varepsilon(f(2))$.}
\label{fig:fcontinuous}
\end{figure}

\begin{figure}
\centering
\begin{tikzpicture}
\draw (-4,0) node {Let $\varepsilon>0$.};
	\draw (-1,0) node {$\bullet$};		
	\draw (-1,-0.4) node {$\bfc$};
	\draw (1,0) node {\Large{$\mapsto$}};
\draw[dashed,red,fill=red!15] (5,0) ellipse (2cm and 2cm);
	\draw (5,0) node {$\bullet$};		
	\draw (5,-0.5) node {$f(\bfc)$};
	\draw (5,-2.5) node {$V_\varepsilon(f(\bfc))$};
	\draw[red] (5,0) -- (6.42,1.42);
	\draw[red] (5.9,0.6) node {$\varepsilon$};
	
\draw (-4,-4.5) node {Choose $\delta>0$.};
\draw[dashed,blue,fill=blue!15] (-1,-4.5) ellipse (0.67cm and 0.67cm);	
\draw[dashed,blue,fill=blue!15] (5,-4.5) ellipse (2cm and 0.33cm);
	\draw[blue] (-1,-4.5) -- (-0.53,-4.03);
	\draw[blue] (-0.88,-4.11) node {$\delta$};
	\draw (-1,-4.5) node {$\bullet$};		
	\draw (-1,-4.9) node {$\bfc$};
	\draw (-1,-5.7) node {$V_\delta(\bfc)\cap D$};
	\draw (1,-4.5) node {\Large{$\mapsto$}};
	\draw (5,-4.5) node {$\bullet$};		
	\draw (5.6,-4.5) node {$f(\bfc)$};
	\draw (5,-5.3) node {$f(V_\delta(\mathbf{\bfc})\cap D)$};	
	
	\draw (-2.5,-9) node {Verify $f(V_\delta(\mathbf{\bfc})\cap D)\subseteq V_\varepsilon(f(\bfc)).$};		
\draw[dashed,red,fill=red!15] (5,-9) ellipse (2cm and 2cm);
\draw[dashed,blue,fill=blue!15] (5,-9) ellipse (2cm and 0.33cm);
	\draw (5,-9) node {$\bullet$};		
	\draw (5.6,-9) node {$f(\bfc)$};
	\draw (5,-9.8) node {$f(V_\delta(\mathbf{\bfc})\cap D)$}	;
	\draw (5,-11.5) node {$V_\varepsilon(f(\bfc))$};	
\end{tikzpicture}
\caption{A visualization of the definition of continuity in the context of a function $f$ mapping $\R^2$ to $\R^2$. See Definition \ref{def:continuity}. This figure also happens to serve as a visual summary for proofs showing $f$ is continuous at $\bfc$ as well as an example of the continuity of a linear transformation defined by a matrix. See Section \ref{sec:segueintolinearalgebra}.}
\label{fig:continuous2D}
\end{figure}

There are lots of functions from basic algebra, trigonometry, and calculus that are continuous. The following lemma on a small but important class of continuous functions has a nice and instructive proof.

\begin{lemma}\label{lem:linesarecontinuous}
Suppose $m$ and $b$ are real numbers and let $f:\R\to\R$ be given by $f(x)=mx+b$. Then $f$ is continuous at every real number $c$.
\end{lemma}

\begin{remark}\label{rmk:provingcontinuity}
The scratch work and proofs for showing a function is continuous by directly verifying Definition \ref{def:continuity} holds might look  familiar. The work that comes with directly verifying a sequence converges to its limit as in Definition \ref{def:sequentiallimit} is very similar. Both can involve starting with some arbitrary distance $\varepsilon>0$ and finding a way to respond with a suitable threshold $\delta>0$ or $n_\varepsilon\in\N$, respectively.
\end{remark}

\begin{scratch}\label{scr:linesarecontinuous}
Try starting scratch work with the inequality 
\begin{align}\label{eqn:continuityepsilon}
	d_m(f(\bfx),f(\bfc))<\varepsilon
\end{align}
and somehow find a way to determine a formula for a suitable threshold $\delta$ to give us 
\begin{align}
	\bfx\in D \textnormal{ with }d_k(\bfx,\bfc)&<\delta \quad\implies\quad d_m(f(\bfx),f(\bfc))<\varepsilon.
\end{align}
Once we have a suitable $\delta$, we can reorganize the information into a formal proof. See Figure \ref{fig:continuous2D}.

Taking the function $f$ defined in Lemma \ref{lem:linesarecontinuous} and sticking everything into inequality \eqref{eqn:continuityepsilon} yields
\begin{align}
	d_{\R}(f(x),f(c))=|f(x)-f(c)|=|(mx+b)-(mc+b)|<\varepsilon.
\end{align}
So, let's simplify or rearrange $|f(x)-f(c)|$ and see what we can get.
\begin{align}
	|f(x)-f(c)|&=|(mx+b)-(mc+b)|\\
	&=|mx-mc|\\
	&=|m||x-c| \ldots \label{eqn:xcdifference}
\end{align}
$\ldots$ but what are we looking for? Unlike the scratch work for convergent sequences where a variable $n$ is often readily available, the variable $\delta$ does not explicitly appear in the work we have so far. 

In this case, we are looking for a suitable $\delta$ where 
\begin{align}
	d_{\R}(x,c)=|x-c|<\delta.
\end{align}
So, instead of finding a formula for $\delta$ directly, try finding a formula for the distance $|x-c|$. Note that $|x-c|$ appears in line \eqref{eqn:xcdifference}, so solving for $|x-c|$ in terms of $\varepsilon$ comes out of
\begin{align}
	|f(x)-f(c)|&=|(mx+b)-(mc+b)|=|m||x-c|<\varepsilon.
\end{align}
Dividing the rightmost inequality by $|m|$ yields
\begin{align}
	|x-c|<\frac{\varepsilon}{|m|}.
\end{align}
From here, it's reasonable to try setting 
\begin{align}
	\delta&=\frac{\varepsilon}{|m|}
\end{align}
and see what we get. Careful! This expression for $\delta$ is not defined when $m=0$, so a proof by cases where $m=0$ and $m\neq 0$ is in order.
\end{scratch}

\begin{proof}[Proof of Lemma \ref{lem:linesarecontinuous}]  
Let $m,b,$ and $c$ be real numbers and let $f:\R\to\R$ be given by $f(x)=mx+b$.

\underline{Case (i)}: Suppose $m\neq 0$ and let $\varepsilon>0$. Then $|m|\neq 0$. Define
\begin{align}
	\delta&=\frac{\varepsilon}{|m|}.
\end{align}
(Note that $\delta>0$.) Then for every real number $x$ where 
\begin{align}\label{eqn:linesdelta}
	d_{\R}(x,c)=|x-c|<\delta&=\frac{\varepsilon}{|m|},
\end{align}
we have 
\begin{align}
	|f(x)-f(c)|&=|(mx+b)-(mc+b)|\\
	&=|m||x-c|\\
	&<|m|\frac{\varepsilon}{|m|} \quad\textnormal{(by \eqref{eqn:linesdelta})}\\
	&=\varepsilon.
\end{align}
Therefore, $\delta=\varepsilon/|m|$ is a suitable threshold and $f$ is continuous at $c$ by Definition \ref{def:continuity}.

\underline{Case (ii)}: Suppose $m=0$ and let $\varepsilon>0$. Define $\delta=17>0$. Then for every real number $x$ where 
\begin{align}\label{eqn:linesdeltazero}
	d_{\R}(x,c)=|x-c|<\delta&=17,
\end{align}
we have 
\begin{align}
	|f(x)-f(c)|&=|(mx+b)-(mc+b)|=|0-0|=0<\varepsilon.
\end{align}
Therefore, $\delta=17$ is a suitable threshold and $f$ is continuous at $c$ by Definition \ref{def:continuity}. (Actually, any positive number would suffice for $\delta$.)
\end{proof}

\begin{example}\label{eg:sqrtcontinuous}
The square root function $f:[0,\infty)\to\R$ where $f(x)=\sqrt{x}$ is continuous at every $c\geq 0$. See Figure \ref{fig:fcontinuous}.
\end{example}

\begin{scratch}\label{scr:sqrtcontinuous}
We want the proof to end up with
\begin{align}\label{eqn:sqrtxepsilon}
	d_{\R}(f(x),f(c))&=|\sqrt{x}-\sqrt{c}|<\varepsilon.
\end{align}
To get there, we need a suitable threshold $\delta$ where
\begin{align}
	|x-c|&<\delta\quad\implies\quad |\sqrt{x}-\sqrt{c}|<\varepsilon.
\end{align}	
An idea from calculus or perhaps precalculus can help us here: Conjugates. As long as we consider $c>0$ to ensure our denominators are never zero, since $\sqrt{x}\geq 0$ we have
\begin{align}
	|\sqrt{x}-\sqrt{c}|
	&=|\sqrt{x}-\sqrt{c}|\left(\frac{|\sqrt{x}+\sqrt{c}|}{|\sqrt{x}+\sqrt{c}|}\right)\\
	&=\frac{|x-c|}{\sqrt{x}+\sqrt{c}}\\
	&\leq\frac{|x-c|}{\sqrt{c}}.
\end{align}
From here---and as long as $c>0$---we can ensure 
\begin{align}
	|\sqrt{x}-\sqrt{c}| &\leq\frac{|x-c|}{\sqrt{c}}<\varepsilon
\end{align}
by choosing 
\begin{align}
	|x-c|<\delta=\varepsilon\sqrt{c}.
\end{align}

But what about when $c=0$? In this case, we want to end up with
\begin{align}
	|x-0|=x<\delta \quad\implies\quad 
	|\sqrt{x}-\sqrt{0}|=\sqrt{x}<\varepsilon \label{eqn:sqrtxepsilon}
\end{align}	
Keeping in mind $x\geq 0$ and $0\leq y<z$ implies $y^2<z^2$, squaring both sides of the rightmost inequality in \ref{eqn:sqrtxepsilon} yields
\begin{align}
	|x-c|=x=(\sqrt{x})^2<\varepsilon^2.
\end{align}	
So, let's try using $\delta=\varepsilon^2$ when $c=0$.
\end{scratch}

\begin{proof}[Proof for Example \ref{eg:sqrtcontinuous}] The proof is handled in two cases.

\underline{Case (i)}: Let $\varepsilon>0$, let $c=0$, and define $\delta=\varepsilon^2>0$. Then for every nonnegative real number $x$ where 
\begin{align}\label{eqn:sqrtdelta}
	d_{\R}(x,c)=|x-0|=x<\delta&=\varepsilon^2,
\end{align}
since $0\leq x< \varepsilon^2$ implies $0\leq\sqrt{x}<\sqrt{\varepsilon^2}=\varepsilon$, we have 
\begin{align}
	d_{\R}(f(x),f(c))&=|\sqrt{x}-0|=\sqrt{x}<\sqrt{\varepsilon^2}=\varepsilon.
\end{align}
Therefore, $\delta=\varepsilon^2$ is a suitable threshold and $f$ is continuous at $c=0$ by Definition \ref{def:continuity}.

\underline{Case (ii)}: Let $\varepsilon>0$, let $c>0$, and define $\delta=\varepsilon\sqrt{c}>0$.
Then for every nonnegative real number $x$ where 
\begin{align}\label{eqn:sqrtdeltacpositive}
	d_{\R}(x,c)=|x-c|=x<\delta&=\varepsilon\sqrt{c},
\end{align}
we have
\begin{align}
	|\sqrt{x}-\sqrt{c}|
	&=|\sqrt{x}-\sqrt{c}|\left(\frac{|\sqrt{x}+\sqrt{c}|}{|\sqrt{x}+\sqrt{c}|}\right)\\
	&=\frac{|x-c|}{\sqrt{x}+\sqrt{c}}\\
	&\leq\frac{|x-c|}{\sqrt{c}}\\
	&<\varepsilon.
\end{align}
Therefore, $\delta=\varepsilon\sqrt{c}$ is a suitable threshold and $f$ is continuous at $c>0$ by Definition \ref{def:continuity}.
\end{proof}

\begin{example}\label{eg:squarefunctioncontinuous}
Let $f:\R\to\R$ be given by $f(x)=x^2$. Then $f$ is continuous.
\end{example}

\begin{scratch}
We want to end up with 
\begin{align}\label{eqn:squarefunctionepsilon}
	d_{\R}(f(x),f(c))&=|x^2-c^2|<\varepsilon.
\end{align}
Some algebra yields
\begin{align}\label{eqn:squarefunctionepsilon}
	|x^2-c^2|=|x-c||x+c|.
\end{align}
Substituting $\delta_0$ for $|x-c|$ in inequality \eqref{eqn:squarefunctionepsilon} yields
\begin{align}\label{eqn:squarefunctionepsilon2}
	d_{\R}(f(x),f(c))
	&=|x^2-c^2|
	=|x-c||x+c|
	<\delta_0|x+c|
	<\varepsilon.
\end{align}
So a first, naive choice for a threshold $\delta_0$ might be 
\begin{align}\label{eqn:squarefunctionbaddelta}
	\delta_0=\frac{\varepsilon}{|x+c|}.
\end{align}
However, this $\delta_0$ is not an appropriate choice for a threshold. For instance, if $x=-c$, then this $\delta_0$ would be undefined since the denominator would be zero. 

More importantly, our choice for a threshold must not depend on the variable $x$. A suitable threshold $\delta$ imposes a condition of the value of an input $x$ in terms of how close it should be to $c$ in order to ensure the outputs $f(x)$ and $f(c)$ are within $\varepsilon$ of each other. So, we need to control the size of $|x+c|$ in some way, keeping in mind that we generally want $x$ to be near $c$.

This may seem strange, but to get a handle on both $|x-c|$ and $|x+c|$ we are free to choose any positive real number as a preliminary version of $\delta$ and see what happens. After all, the underlying idea is to find a constraint on how close $x$ needs to be to $c$, not to find a nice formula for $\delta$. 

So, let's see what happens when we take $|x-c|<1$. By the reverse triangle inequality \eqref{eqn:reversetriangleinequality} we have
\begin{align}
	|x|-|c|\leq |x-c|<1.
\end{align}
Therefore, adding $|c|$ yields
\begin{align}
	|x|<|c|+1.
\end{align}
By the (regular) triangle inequality \eqref{eqn:triangleinequality2}, we further have
\begin{align}\label{eqn:preliminarycontrol}
	|x+c|\leq |x|+|c|< 2|c|+1.
\end{align}
So, under the preliminary assumption that $|x-c|<1$, our scratch work looks like
\begin{align}\label{eqn:squarefunctionepsilon3}
	|x^2-c^2| &=|x-c||x+c|<|x-c|(2|c|+1)<\varepsilon.
\end{align}
Hence, choosing a threshold so that 
\begin{align}\label{eqn:squarefunctionepsilon4}
	|x-c|&<\frac{\varepsilon}{2|c|+1}
\end{align}
seems in order. However, inequality \eqref{eqn:squarefunctionepsilon4} is only valid when $|x-c|<1$, so we need to keep this constraint in mind. Hence, a suitable choice for a threshold $\delta$ may well be
\begin{align}
	\delta=\min\left\{1,\frac{\varepsilon}{2|c|+1}\right\}.
\end{align}
On to the proof.
\end{scratch}

\begin{proof}[Proof for Example \ref{eg:squarefunctioncontinuous}]
Suppose $c\in\R$ and let $\varepsilon>0$. Define a candidate for a threshold $\delta$ by 
\begin{align}\label{eqn:squarefunctiondelta}
	\delta=\min\left\{1,\frac{\varepsilon}{2|c|+1}\right\}.
\end{align}
Note that $\delta>0$. Now suppose $|x-c|<\delta$. Since 
\begin{align}
	|x-c|<\delta\leq 1,
\end{align}
by the reverse triangle inequality \eqref{eqn:reversetriangleinequality} we have 
\begin{align}\label{eqn:revtrisquarefunction}
	|x|-|c|\leq |x-c| <1 \implies |x|<|c|+1.
\end{align}
Next, by the triangle inequality and inequality \eqref{eqn:revtrisquarefunction} we have
\begin{align}
	|x+c|\leq |x|+|c|<2|c|+1.
\end{align}
By our choice for $\delta$ \eqref{eqn:squarefunctiondelta} we also have
\begin{align}
	|x-c|<\delta\leq\frac{\varepsilon}{2|c|+1}.
\end{align}
Therefore, if $|x-c|<\delta$, then
\begin{align}
	|x^2-c^2|
	&=|x-c||x+c|
	<\left(\frac{\varepsilon}{2|c|+1}\right)(2|c|+1)
	=\varepsilon.
\end{align}
Hence, $f(x)=x^2$ is continuous on $\R$.
\end{proof}

Note that we still have not addressed the following question: Does the function $f:[0,4]\to\R$ from Example \ref{eg:linef} defined by
\begin{align}
	f(x)&=x/2
\end{align}	
preserve closeness at $c=2$? {\em How do we check?} The following section explores the deep connections between the definitions of preserving closeness (Definition \ref{def:preservecloseness}), sequential continuity (Definition \ref{def:sequentialcontinuity}), and ``$\varepsilon$-$\delta$'' continuity (Definition \ref{def:continuity}).

\vs
\section*{Exercises}
\setcounter{theorem}{0}

Exercises are for play: Do scratch work, draw stuff, and make mistakes---make {\em lots} of mistakes---before worrying about writing proofs. {\em Have fun!}

\vs
\section{Properties of continuous functions}
\label{sec:propertiesofcontinuousfunctions}

A major result in this text is the following fact: The definitions of continuity (Definition \ref{def:continuity}), sequential continuity (Definition \ref{def:sequentialcontinuity}), and preserving closeness (Definition \ref{def:preservecloseness}) are equivalent. 

Each provides its own perspective, its own set of benefits and drawbacks, on the concept of continuity. Each can be used to check whether the others hold.  Ultimately, when I think of the meaning of ``continuity'' in analysis, I have each of these equivalent notions in mind knowing I can run with the one that seems to be the easiest to work with.

\begin{theorem}\label{thm:continuityequivalence}
Suppose $D\subseteq \R^k, \bfc\in D$, and $f:D\to\R^m$. Then the following statements are equivalent forms of the continuity of $f$ at $\bfc$:
\begin{enumerate}
	\item $f$ preserves closeness at $\bfc$ 
(Definition \ref{def:preservecloseness}).
	\item $f$ is sequentially continuous at $\bfc$
(Definition \ref{def:sequentialcontinuity}). 
	\item $f$ is continuous at $\bfc$ (Definition \ref{def:continuity}).
\end{enumerate}
\end{theorem}

\begin{proof} Let's show (i)$\implies$(ii), (ii)$\implies$(iii), and (iii)$\implies$(i), all by contraposition.

\underline{(i)$\implies$(ii)}: Suppose there is a sequence $(\bfx_n)$ of points in $D$ and a point $\bfc$ in $D$ where $\lim_{n\to\infty}\bfx_n=\bfc$ but $\lim_{n\to\infty}f(\bfx_n)\neq f(\bfc)$. Then there must be some $\varepsilon_0>0$ such that no matter which positive integer $N$ we consider, there is a positive integer $q\geq N$ where $\bfx_q$ is in $D$ and $d_m(f(\bfx_q),f(\bfc))\geq \varepsilon_0$.

We can use the previous statement to construct a suitable subsequence $(\bfx_{n_j})$ of $(\bfx_n)$ whose image stays away from $f(\bfc)$. That is, let $n_1\geq 1$ be a positive integer that satisfies the previous statement. Proceeding inductively, for each positive integer $j$, there is a positive integer $n_j>n_{j-1}$ where $d_m(f(\bfx_{n_j}),f(\bfc))\geq \varepsilon_0$. 

Now, since $\lim_{n\to\infty}\bfx_n=\bfc$ and $(\bfx_{n_j})$ is a subsequence of $(\bfx_n)$, we have that $\lim_{j\to\infty}\bfx_{n_j}=\bfc$ as well. By Theorem \ref{thm:exercise0}, we have $\bfc \acl (\bfx_{n_j})$. But we also have $f(\bfc)\awf (f(\bfx_{n_j}))$. Therefore, $f$ does not preserve closeness at $\bfc$. (See Definition \ref{def:preservecloseness}.)

\underline{(ii)$\implies$(iii)}: Suppose $f$ is not continuous at a point $\bfc$ in $D$. Then there must be some $\varepsilon_0>0$ such that no matter which value we take for $\delta>0$, there is a point $\bfx_\delta$ in $D$ with $d_k(\bfx_\delta,\bfc)<\delta$ and $d_m(f(\bfx_\delta),f(\bfc))\geq \varepsilon_0$. 

Much as in the proof of Theorem \ref{thm:exercise0}, we can use the previous statement to construct a suitable sequence.  For each positive integer $n$, there must be a point $\bfx_n$ in $D$ where 
$d_k(\bfx_n,\bfc)<1/n$ and $d_m(f(\bfx_n),f(\bfc))\geq \varepsilon_0$. (Note that $1/n$ plays the role of $\delta$ here). Hence, $(\bfx_n)$ is a sequence of points in $D$ where $\lim_{n\to\infty}\bfx_n=\bfc$ but $\lim_{n\to\infty}f(\bfx_n)\neq f(\bfc)$. Therefore, $f$ is not sequentially continuous at $\bfc$. (See Definition \ref{def:sequentialcontinuity}.)

\underline{(iii)$\implies$(i)}: Finally, suppose $f$ does not preserve closeness at a point $\bfc$ in $D$. Then there must be some $E\subseteq D$ where $\bfc \acl E$ but $f(\bfc)\awf f(E)$. So, there is some $\varepsilon_0>0$ such that for every point $\bfx$ in $E$ we have $d_m(f(\bfx),f(\bfc))\geq\varepsilon_0$. 

Now, let $\delta>0$. Since $\bfc\acl E$, there is a point $\bfy_\delta$ in $E$ where we have both $d_k(\bfy_\delta,\bfc)<\delta$ and $d_m(f(\bfy_\delta),f(\bfc))\geq\varepsilon_0$. Therefore, $f$ is not continuous at $\bfc$. (See Definition \ref{def:continuity}.)
\end{proof} 

Now that we've established Theorem \ref{thm:continuityequivalence}, we can prove some results reminiscent of classic theorems on continuity through the lenses of the preservation of closeness, sequential continuity, and $\varepsilon$-$\delta$ continuity.

\begin{example}\label{eg:halvingpreservescloseness}
Let's finally get to something that seemed to be true back in Section \ref{sec:functionsandimages}: The function $f:[0,4]\to\R$ given by $f(x)=x/2$ in Example \ref{eg:linef} preserves closeness at $c=2$. 

Since $f$ here is a special case of the function $f$ in Lemma \ref{lem:linesarecontinuous} where $m=1/2$, $b=0$, and $c=2$, we can choose $\delta$  as it described in the scratch work and proof for Lemma \ref{lem:linesarecontinuous}. Therefore, $\delta=2\varepsilon$ is a suitable threshold for the continuity of the function given by $f(x)=x/2$ at $c=2$. So by the equivalence of continuity and the preservation of closeness (Theorem \ref{thm:continuityequivalence}), we have $f(x)=x/2$ preserves closeness at $c=2$.
\end{example}

The following theorem summarizes some key algebraic properties of continuous functions. These should all be familiar from calculus as well as the algebraic limit theorems for sequences (Theorems \ref{thm:algebraiclimitseuclidean} and \ref{thm:algebraiclimitsreal}). In fact, the proof of Theorem \ref{thm:algebracontinuity} follows from combining Theorems \ref{thm:algebraiclimitseuclidean} and \ref{thm:algebraiclimitsreal} with the equivalent forms of continuity in Theorem \ref{thm:continuityequivalence}

Additionally, and alternatively, the proof of each statement in Theorem \ref{thm:algebracontinuity} is a fundamental exercise in analysis for exploring $\varepsilon$-$\delta$ continuity.

\begin{theorem}\label{thm:algebracontinuity} Suppose $D\subseteq \R^k, \bfc\in D$, and $f,g:D\to\R^m$. If $f$ and $g$ are continuous at $\bfc$, then
\begin{itemize}
	\item[(i)] $\alpha f$ is continuous at $\bfc$ for every $\alpha\in\R$; $\quad$ and 
	\item[(ii)] $f+g$ is continuous at $\bfc$.
\end{itemize}
If we further assume $k=m=1$, then with $D\subseteq\R$ and $c\in D$ we have
\begin{itemize}
	\item[(iii)] $fg$ is continuous at $c$; $\quad$ and 
	\item[(iv)] $f/g$ is continuous at $c$, provided $g(c)\neq 0$.
\end{itemize}
\end{theorem}

As a first idea from calculus which be proven as a result of Theorem \ref{thm:algebracontinuity}, do you recall hearing that polynomials are continuous? 

\begin{definition}\label{def:polynomial}
A {\em polynomial} is a function $p:\R\to\R$ defined by 
\begin{align}
	p(x)&=a_kx^k+a_{k-1}x^{k-1}+\cdots+a_1x+a_0
\end{align}
where $a_k,a_{k-1},\ldots,a_1,a_0\in\R$ are constants called the {\em coefficients} of $p$.
\end{definition}

\begin{corollary}\label{cor:polynomialscontinuous}
Polynomials are continuous on the real line $\R$.
\end{corollary}

Many more properties follow from continuity, especially in terms of preserving nice properties in the domain as a continuous function transforms points, sequences, and sets into their images in the range. For instance, continuous functions preserve connectedess.

\begin{theorem}\label{thm:preserveconnectedness}
If $f:D\to\R^m$ is continuous and $D$ is connected, then the range $f(D)$ is connected. 
\end{theorem}

\begin{proof}
Suppose $f:D\to\R^m$ is continuous and $D$ is connected. By Theorem \ref{thm:continuityequivalence}, $f$ preserves closeness at every $\bfc$ in the domain $D$. 

Now, suppose $f(D)=A\cup B$ where $A$ and $B$ are nonempty. Let $f^{-1}(A)$ and $f^{-1}(B)$ be the sets of points in the domain $D$ that map into $A$ and $B$, respectively. That is, let
\begin{align}
	f^{-1}(A)&=\{\bfx\in D:f(\bfx)\in A\}
	\quad\textnormal{and}\\
	f^{-1}(B)&=\{\bfx\in D:f(\bfx)\in B\}.
\end{align}
Since $f(D)=A\cup B$ and both $A$ and $B$ are nonempty, we have 
\begin{align}
	D=f^{-1}(A)\cup f^{-1}(B)
\end{align}
where both $f^{-1}(A)$ and $f^{-1}(B)$ are nonempty. Since $D$ is connected, by Definition \ref{def:connected} and without loss of generality,  there is a point $\bfx\in f^{-1}(A)$ where $\bfx\acl{(f^{-1}(B))}$. By the definition of $f^{-1}(A)$, we have $f(\bfx)\in A$. By the preservation of closeness (Definition \ref{def:preservecloseness}) and the definition of $f^{-1}(B)$, we have 
\begin{align}
	\bfx\acl{(f^{-1}(B))} \qquad\implies\qquad 
	f(\bfx)\acl{B}.
\end{align}

Hence, by the definition of connected (Definition \ref{def:connected}), the range $f(D)$ is connected.
\end{proof}

A corollary of Theorem \ref{thm:preserveconnectedness} is the {\em Intermediate Value Theorem}. (Cf. \cite[Theorem 4.5.1, p.136]{Abbott}.) Its proof is left as an exercise.

\begin{corollary}[Intermediate Value Theorem]\label{cor:intermediatevaluetheorem}
Suppose $f:[a,b]\to\R$ preserves closeness. If $\ell$ is a real number satisfying $f(a)<\ell<f(b)$ or  $f(a)>\ell>f(b)$, then there is some $c\in (a,b)$ such that $f(c)=\ell$.
\end{corollary}

Continuous functions preserve compactness, too.

\begin{theorem}\label{thm:preservecompactness}
If $f:K\to\R^m$ is continuous and $K$ is compact, then the range $f(K)$ is compact. 
\end{theorem}

\begin{proof}
Suppose $f:K\to\R^m$ is continuous and $K$ is compact. By Theorem \ref{thm:continuityequivalence}, $f$ is sequentially continuous (Definition \ref{def:sequentialcontinuity}). By the Heine-Borel Theorem \ref{thm:heineborel}, $K$ is sequentially compact (Definition \ref{def:sequentialcontinuity}).

So, let $(\bfy_n)$ be a sequence of points in the range $f(K)$. Then for each index $n\in\N$, there is a point $\bfx_n$ in the domain $K$ where $f(\bfx_n)=\bfy_n$ and so $(\bfy_n)=(f(\bfx_n))$. Since $K$ is sequentially compact, there is a convergent subsequence $(\bfx_{n_k})$ of the sequence $(\bfx_n)$ where 
\begin{align}
	\lim_{k\to\infty}\bfx_{n_k}=\bfc \quad\textnormal{and}\quad \bfc\in K.
\end{align}
Since $f$ is sequentially continuous at every point in the domain $K$, we have 
\begin{align}
	\lim_{k\to\infty}\bfx_{n_k}=\bfc\in K \quad\implies\quad \lim_{k\to\infty}f(\bfx_{n_k})=f(\bfc)\in f(K).
\end{align}
Therefore, $(f(\bfx_{n_k}))$ is a subsequence of $(\bfy_n)=(f(\bfx_n))$ that converges to $f(\bfc)$, and $f(\bfc)$ is in $f(K)$. Therefore, $f(K)$ is sequentially compact and also compact by the Heine-Borel Theorem \ref{thm:heineborel}.
\end{proof}

Theorem \ref{thm:preservecompactness} yields the Extreme Value Theorem as a corollary.

\begin{corollary}[Extreme Value Theorem]\label{cor:extremevaluetheorem} If $K$ is a  compact subset of $\R^k$ and $f:K\to\R$ is continuous, then the range $f(K)$ attains its minimum and maximum values.
\end{corollary}

\begin{proof}
Suppose $f:K\to\R$ is continuous and $K$ is compact. By Theorem \ref{thm:preservecompactness}, $f(K)$ is compact as well. By the  Heine-Borel Theorem \ref{thm:heineborel} (or just Lemmas \ref{lem:compactimpliesbounded} and \ref{lem:compactimpliesclosed}), $f(K)$ is closed and bounded. As a bounded set of real numbers, both $\sup{f(K)}$ and $\inf{f(K)}$ exist. By the definitions of supremum and infimum (Definition \ref{def:supremumacl}), we have both
\begin{align}
	\sup{f(K)}\acl f(K)\quad\textnormal{and}\quad
	\inf{f(K)}\acl f(K).
\end{align}
Since $f(K)$ is closed, by Definition \ref{def:closureclosed} we have both
\begin{align}
	\sup{f(K)}\in f(K)\quad\textnormal{and}\quad
	\inf{f(K)}\in f(K).
\end{align}
Therefore, both $\max{f(K)}$ and $\min{f(K)}$ exist.
\end{proof}

The contrapositions of the implications in Theorem \ref{thm:continuityequivalence} are also useful for showing a function is discontinuous at $\bfc$ or fails to preserve closeness at $\bfc$.

\begin{corollary}[Discontinuity Criteria]\label{cor:discontinuitycriteria}
Suppose $D\subseteq \R^k, f:D\to\R^m$, and $\bfc\in D$. 
\begin{enumerate}
	\item If there is a set $E\subseteq D$ where $\bfc\acl{E}$ but $f(\bfc)\awf{f(E)}$, then $f$ is discontinuous at $\bfc$.
	\item If there is a sequence $(\bfx_n)$ of points in $D$ and a point $\bfc$ in $D$ where $\lim_{n\to\infty}\bfx_n=\bfc$ but $\lim_{n\to\infty}f(\bfx_n)\neq f(\bfc)$, then $f$ is discontinuous at $\bfc$ and $f$ does not preserve closeness at $\bfc$.
	\item If there is a set $E\subseteq D$ where $E$ is connected but $f(E)$ is disconnected, then $f$ is discontinuous.
	\item If there is a set $K\subseteq D$ where $K$ is compact but $f(E)$ is not compact, then $f$ is discontinuous.
\end{enumerate}
\end{corollary}

The following section explores the continuity of linear transformations between Euclidean spaces. In particular, linear transformations provide a nice setting for considering how continous functions behave in higher dimensions.

\vs
\section*{Exercises}
\setcounter{theorem}{0}

Exercises are for play: Do scratch work, draw stuff, and make mistakes---make {\em lots} of mistakes---before worrying about writing proofs. {\em Have fun!}

\xca Given a function $f:\R\to\R$, a {\em level set} is set of the form
 \begin{align}
 	L_k&=\{x\in\R:f(x)=k\}
\end{align} 
for some fixed real number $k$. Prove that if $f$ is continuous, then its level sets are closed.

\xca This exercise is about proving all polynomials are continuous on $\R$ using the $\varepsilon$-$\delta$ definition of continuity (Definition \ref{def:continuity}). This is not strictly necessary since we can leverage Lemma \ref{lem:linesarecontinuous} for parts \textbf{(a)} and \textbf{(b)} while rest stems from the equivalent forms of continuity provided by Theorem \ref{thm:continuityequivalence} and other results. They make for an interesting string of exercises nonetheless. 
\begin{itemize}
	\item[{\bf (a)}] Prove that if $f:\R\to\R$ is constant (so $f(x)=k$ for some $k\in\R$), then $f$ is continuous at every $c\in\R$.
	\item[{\bf (b)}] Prove that if $g:\R\to\R$ is given by $g(x)=mx+b$ where $m,b\in\R$ with $m\neq 0$, then $g$ is continuous at every $c\in\R$.	 
	\item[{\bf (c)}] Prove sums of continuous functions from $\R$ to $\R$ are continuous.
	\item[{\bf (d)}] Prove that if $n\in\N$ with $n\geq 2$ and $h:\R\to\R$ with $h(x)=x^n$, then $h$ is continuous at every $c\in\R$. Be sure to show both scratch work and the proof. HINTS: $c=0$ can be a special case, and $a^n-b^n=(a-b)(a^{n-1}+a^{n-2}b+\cdots+ab^{n-2}+b^{n-1})$.
	\item[{\bf (e)}] Use an induction argument along with the results of parts \textbf{(b)}, \textbf{(c)}, and \textbf{(d)} to prove polynomials are continuous on $\R$. Keep in mind that an arbitrary polynomial is given by
	\[
	p(x)=a_0+a_1x+a_2x^2+\cdots +a_nx^n.
	\]
\end{itemize}

\vs
\section{Segue into linear algebra}
\label{sec:segueintolinearalgebra}

What do continuous functions look like in higher dimensions? The graph of a function between Euclidean spaces---which combines the domain and range into a single figure---is generally not easy to construct. So instead of graphs, it will help to consider domains and ranges separately.

The context of {\em linear transformations} from one Euclidean space to another given by {\em matrix-vector multiplication} provides an especially nice setting to explore the behavior of some continuous functions whose graphs are difficult or seemingly impossible to envision.

In this section I assume you have taken a course in linear algebra and are familiar with topics and terminology such as linear transformations, matrix-vector multiplication, and eigenvalues. So, if this doesn't describe you, you may want to skip this section. 

\begin{example}\label{eg:basiclineartransformation}
For starters, consider the $2\times 2$ matrix 
\begin{align}
M=
\left[
	\begin{array}{rc}
   	3 & 0 \\
 	0 & 1/2
	\end{array}
\right]
\end{align}
and the function $t:\R^2\to\R^2$ given by 
\begin{align}
t(\bfx)=M\bfx, \quad\textnormal{where}\quad
\bfx=\left[
	\begin{array}{c}
   	x_1 \\
 	x_2 
	\end{array}
	\right]
\end{align}
and $M\bfx$ is matrix-vector multiplication defined by 
\begin{align}
M\bfx &=
\left[
	\begin{array}{rc}
   	3 & 0 \\
 	0 & 1/2
	\end{array}
\right]
\left[
	\begin{array}{r}
   	x_1 \\
 	x_2 
	\end{array}
\right]
=x_1\left[
	\begin{array}{r}
   	3 \\
 	0 
	\end{array}
\right]+
x_2\left[
	\begin{array}{c}
   	0 \\
 	1/2 
	\end{array}
\right]
=\left[
	\begin{array}{c}
   	3x_1 \\
 	x_2/2 
	\end{array}
\right].
\end{align}

Thus, $t$ acts on every vector (so every point) in $\R^2$ by tripling the first coordinate and cutting the second coordinate in half. In Figure \ref{fig:neighborhoodandimage}, $t$ transforms $V_1(\mathbf{0})$ (the open disk centered at the origin 
$\mathbf{0}$  with radius $1$) into $t(V_1(\mathbf{0}))$ by stretching the disk horizontally by a scale of 3 and shrinking the disk vertically by a scale of $1/2$. As a result, the image $t(V_1(\mathbf{0}))$ is an ellipse with a semimajor axis of length 3 and semiminor axis of length $1/2$. In fact, $t$ happens to transform every disk to an ellipse, but proving this is beyond the scope of this section.
\end{example}

\begin{figure}
\centering
\begin{tikzpicture}
\draw[dashed,blue,fill=blue!15] (-1,0) ellipse (1cm and 1cm);
	\draw (-1,0) node {$\bullet$};		
	\draw (-1,-0.5) node {$\mathbf{0}$};
	\draw (-1,-1.5) node {$V_1(\mathbf{0})$};
	\draw (1,0) node {\Large{$\mapsto$}};
\draw[dashed,blue,fill=blue!15] (5,0) ellipse (3cm and 0.5cm);
	\draw (5,0) node {$\bullet$};
	\draw (6,0) node {$t(\mathbf{0})=\mathbf{0}$};
	\draw (5,-1.5) node {$t(V_1(\mathbf{0}))$};
\end{tikzpicture}
\caption{The set $V_1(\mathbf{0})$ in the domain is the $1$-neighborhood of the origin. In other words, $V_1(\mathbf{0})$ is the open unit disk. Its image  $t(V_1(\mathbf{0}))$ under the linear transformation $t$ in Example \ref{eg:basiclineartransformation} is an ellipse. Note that this figure is not the graph of $t$!}
\label{fig:neighborhoodandimage}
\end{figure}

In general, functions like $t$ defined by matrix-vector multiplication are {\em linear transformations} (see Definition \ref{def:lineartransformation}). Loosely speaking, a function is a linear transformation if it distributes across addition and scalars factor out. All linear transformations scale, stretch, or shrink input vectors in ways that can be quantified and controlled to our advantage.

\begin{definition}\label{def:lineartransformation}
A function $f:\R^k\to\R^m$ is a {\em linear transformation} if for every $\bfx,\bfy\in\R^k$ and every $\alpha\in\R$ (that is, any {\em scalar}), we have
\begin{align}
	f(\bfx+\bfy)&=f(\bfx)+f(\bfy)\qquad \textnormal{and}\\
	f(\alpha\bfx)&=\alpha f(\bfx).
\end{align}
\end{definition}

A theorem from linear algebra tells us linear transformations from one Euclidean space to another are completely determined by matrix-vector multiplication (Theorem \ref{thm:lineartransformationmatrix}). Its proof makes use of the {\em vector space} structures inherent to Euclidean spaces and is omitted.

\begin{theorem}\label{thm:lineartransformationmatrix}
For every linear transformation $f:\R^k\to\R^m$, there is a $m\times k$ matrix $A$ with real-valued entries where $f(\bfx)=A\bfx$. Also, for every $m\times k$ matrix $B$ with real-valued entries, the function $g:\R^k\to\R^m$ defined by $g(\bfx)=B\bfx$ is a linear transformation.
\end{theorem}

Note how $t$ preserves the connectedness of the unit disk $V_1(\mathbf{0})$ by transforming it into the ellipse $t(V_1(\mathbf{0}))$, which is connected as well. In fact, much more is true.

\begin{theorem}\label{thm:lineartransformationscontinuous}
Suppose $f:\R^k\to\R^m$ is a linear transformation defined by $f(\bfx)=A\bfx$ where $A$ is a $m\times k$ matrix with real-valued entries. Then $f$ is continuous.
\end{theorem}

The remainder of this section is dedicated to building up a proof of the continuity of linear transformations from one Euclidean space to another (Theorem \ref{thm:lineartransformationscontinuous}) by  taking advantage of linear algebra. Feel free to skip ahead if you would to see the proof right away.

\begin{scratch}
If we try to prove Theorem \ref{thm:lineartransformationscontinuous} by directly showing $f$ is continuous via Definition \ref{def:continuity}, the scratch work would entail finding a suitable threshold $\delta>0$ in terms of an error $\varepsilon>0$ where 
\begin{align}
	\|\bfx-\bfc\|_m <\delta \implies \|f(\bfx)-f(\bfc)\|_m=\|A\bfx-A\bfc\|_m<\varepsilon.
\end{align}

As a linear transformation, the linearity of the function $f$ gets us part of the way to defining a suitable threshold $\delta$.  By Theorem \ref{thm:lineartransformationmatrix} we have
\begin{align}\label{eqn:lineartransformationscratch}
	\|f(\bfx)-f(\bfc)\|_m=\|A\bfx-A\bfc\|_m=\|A(\bfx-\bfc)\|_m.
\end{align}
So, understanding how $A$ acts on $\bfx-\bfc$ might get us somewhere. Let's pause the scratch work for a moment to get a better handle on the linear algebra.
\end{scratch}

To set the stage, for points (or {\em vectors}) $\bfx,\bfc\in\R^k$ we have
\begin{align}
\bfx
&=\left[
	\begin{array}{c}
	x_1\\
	x_2\\
	\vdots\\
	x_k
\end{array}
\right]
\quad \textnormal{and}\quad
\bfc
=\left[
	\begin{array}{c}
	c_1\\
	c_2\\
	\vdots\\
	c_k
\end{array}
\right].
\end{align}
We also have the {\em standard basis vectors} of $\R^k$ denoted by $\bfe_1,\bfe_2,\ldots,\bfe_k$ where
\begin{align}
\bfe_1
=\left[
	\begin{array}{c}
	1\\
	0\\
	\vdots\\
	0
\end{array}
\right],\quad
\bfe_2
=\left[
	\begin{array}{c}
	0\\
	1\\
	\vdots\\
	0
\end{array}
\right],
\quad \textnormal{and}\quad
\bfe_k
=\left[
	\begin{array}{c}
	0\\
	0\\
	\vdots\\
	1
\end{array}
\right].
\end{align}
That is, for each $j=1,\ldots,k$, the $j$th {\em standard basis vector} $\bfe_j$ has 1 for its $j$th component and 0 otherwise. 

Since Euclidean spaces are vector spaces where addition and scalar multiplication are both defined work well together, we immediately have the following fact: For every $\bfx\in\R^k$ we have
\begin{align}
\bfx
&=\left[
	\begin{array}{c}
	x_1\\
	x_2\\
	\vdots\\
	x_k
\end{array}
\right]
=x_1\left[
	\begin{array}{c}
	1\\
	0\\
	\vdots\\
	0
\end{array}
\right]+x_2
=\left[
	\begin{array}{c}
	0\\
	1\\
	\vdots\\
	0
\end{array}
\right]
+\cdots+
x_k
\left[
	\begin{array}{c}
	0\\
	0\\
	\vdots\\
	1
\end{array}
\right]
=\sum_{j=1}^k x_j\bfe_j.
\end{align}
Therefore, we also have 
\begin{align}\label{eqn:vectordifference}
\bfx-\bfc
&=\left[
	\begin{array}{c}
	x_1-c_1\\
	x_2-c_2\\
	\vdots\\
	x_k-c_k
\end{array}
\right]
=\sum_{j=1}^k(x_j-c_j)\bfe_j.
\end{align}

Now, let's get back to some scratch work for proving Theorem \ref{thm:lineartransformationscontinuous}.

\begin{scratch}
Applying the linearity of matrix-vector multiplication described in Theorem \ref{thm:lineartransformationmatrix} multiple times allows us to merge portions of equations \eqref{eqn:lineartransformationscratch} and \eqref{eqn:vectordifference} to get
\begin{align}\label{eqn:applyinglinearity}
	f(\bfx)-f(\bfc)&=A\bfx-A\bfc=A(\bfx-\bfc)=\sum_{j=1}^k(x_j-c_j)A\bfe_j.
\end{align}
Taking norms and applying various portions of Corollary \ref{cor:distancesaremetrics}, including the triangle inequality \eqref{eqn:triangleinequality3}, gives us 
\begin{align}
	\|f(\bfx)-f(\bfc)\|_m
	&=\|A\bfx-A\bfc\|_m\\
	&=\|A(\bfx-\bfc)\|_m\\
	&=\left\|\sum_{j=1}^k(x_j-c_j)A\bfe_j\right\|_m\\
	&\leq\sum_{j=1}^k|x_j-c_j|\|A\bfe_j\|_m.
\end{align}
Now what? Let's pause the scratch work one more time. 
\end{scratch}

What are the $A\bfe_j$ and their norms $\|A\bfe_j\|_m$ for $j=1,\ldots,k$? These objects are actually visual in nature and stem from some nifty perspectives on linear algebra. To see them in action, let's revisit the $2\times 2$ matrix $M$ from Example \ref{eg:basiclineartransformation}.

\begin{example}\label{eg:basiclineartransformationrevisit}
Once again, consider
\begin{align}
M=
\left[
	\begin{array}{rc}
   	3 & 0 \\
 	0 & 1/2
	\end{array}
\right]
\end{align}
The transformation $t(\bfx)=M\bfx$ acts on the standard basis vectors for $\R^2$ by identifying the corresponding column of $M$. See Figure \ref{fig:neighborhoodandimage2} and note:
\begin{align}
M\bfe_1 
&=
1\left[
	\begin{array}{r}
   	3 \\
 	0 
	\end{array}
\right]+
0\left[
	\begin{array}{c}
   	0 \\
 	1/2 
	\end{array}
\right]
=\left[
	\begin{array}{c}
   	3 \\
 	0 
	\end{array}
\right]\quad\textnormal{and}\\
M\bfe_2 
&=
0\left[
	\begin{array}{r}
   	3 \\
 	0 
	\end{array}
\right]+
1\left[
	\begin{array}{c}
   	0 \\
 	1/2 
	\end{array}
\right]
=\left[
	\begin{array}{c}
   	0 \\
 	1/2
	\end{array}
\right].
\end{align}
Actually, in more general settings, multiplication of an $m\times k$ matrix $A$ with a standard basis vector $\bfe_j$ amounts to identifying the product $A\bfe_j$ as the $j$th column of A! 

Furthermore,
\begin{align}
	\|M\bfe_1\|_2&=\sqrt{3^2+0^2}=3\quad \textnormal{and}\\
	\|M\bfe_2\|_2&=\sqrt{0^2+(1/2)^2}=1/2.
\end{align} 
So, in general, we can identify the largest $\|A\bfe_j\|_m$ {\em just by finding the largest column of} $A$.
\end{example}
 
\begin{figure}
\centering
\begin{tikzpicture}
\draw[dashed,blue,fill=blue!15] (-1,0) ellipse (1cm and 1cm);
	\draw[line width=1.5pt,red,-stealth](-1,0)--(-1,1);	
	\draw (-1.25,0.5) node {$\bfe_2$};	
	\draw[line width=1.5pt,-stealth](-1,0)--(0,0);
	\draw (-0.5,-0.25) node {$\bfe_1$};
	\draw (-1,-1.5) node {$V_1(\mathbf{0})$};
	\draw (1,0) node {\Large{$\mapsto$}};
\draw[dashed,blue,fill=blue!15] (5,0) ellipse (3cm and 0.5cm);
	\draw[line width=1.5pt,red,-stealth](5,0)--(5,0.5);	
	\draw (4.5,0.25) node {$t(\bfe_2)$};			
	\draw[line width=1.5pt,-stealth](5,0)--(8,0);
	\draw (6,-0.25) node {$t(\bfe_1)$};	
	\draw (5,-1.5) node {$t(V_1(\mathbf{0}))$};
\end{tikzpicture}
\caption{The linear transformation $t$ from Example \ref{eg:basiclineartransformation} transforms the basis vector $\bfe_1$ (in black) into its image $t(\bfe_1)=M\bfe_1$ by stretching by a factor of $3$. The basis vector $\bfe_2$ (in red) is shrunk by a factor of $1/2$.}
\label{fig:neighborhoodandimage2}
\end{figure}

Back to the scratch work for proving Theorem \ref{thm:lineartransformationscontinuous}.

\begin{scratch}
Just a reminder, so far we have 
\begin{align}
	\|f(\bfx)-f(\bfc)\|_m&=\|A(\bfx-\bfc)\|_m \leq\sum_{j=1}^k|x_j-c_j|\|A\bfe_j\|_m.
\end{align}
In light of Example \ref{eg:basiclineartransformationrevisit}, we can identify the largest norm among the images of the standard basis vectors by looking at the columns of $A$ and defining 
\begin{align}
	q&=\max\{\|A\bfe_j\|_m:j=1,\ldots,k\}.
\end{align}
So, this brings us to 
\begin{align}\label{eqn:lineartransformationwithlargestcolumn}
	\|f(\bfx)-f(\bfc)\|_m&\leq\sum_{j=1}^k|x_j-c_j|\|A\bfe_j\|_m \leq q\sum_{j=1}^k|x_j-c_j|.
\end{align}
Keep in mind, our goal is to end up with something like
\begin{align}
	\|f(\bfx)-f(\bfc)\|_m\leq q\sum_{j=1}^k|x_j -c_j| \leq\varepsilon.
\end{align}
So how big are the values of $|x_j-c_j|$?  Since the distance
\begin{align}
d_k(\bfx,\bfc)=\|\bfx-\bfc\|_k
\end{align} 
appears along with the threshold $\delta$ in the definition of continuity (Definition \ref{def:continuity}), it's sensible to wonder about the relationship between $\|\bfx-\bfc\|_k$ and the values of $|x_j-c_j|$.

Time for a little lemma.
\end{scratch}

\begin{lemma}\label{lem:standardmetricversustaxicab}
Suppose $\delta>0$ and let $\bfu\in\R^k$ where
\begin{align}
\bfu
&=\left[
	\begin{array}{c}
	u_1\\
	u_2\\
	\vdots\\
	u_k
\end{array}
\right].
\end{align}
Then $\|\bfu\|_k<\delta$ implies $|u_j|<\delta$ for all $j=1,\ldots,k$. 
\end{lemma}

Equivalently, Lemma \ref{lem:standardmetricversustaxicab} can be visually interpreted to say that every $\delta$-neighborhood of the origin in $\R^k$ is contained in the open $k$-cube of side-length $2\delta$ centered at the origin. See Figure \ref{fig:diskinabox}.

\begin{figure}
\centering
\begin{tikzpicture}
\draw[dashed,black,fill=black!10] (-2,-2) rectangle (2,2);	
\draw[dashed,blue,fill=blue!15] (0,0) ellipse (2cm and 2cm);
	\draw (0,0) node {\textbullet};
	\draw (0,-0.4) node {$\mathbf{0}$};
	\draw (0,-1.5) node {$V_\delta(\mathbf{0})$};	
	\draw[line width=1.5pt,-stealth](0,0)--(1,1.73);
	\draw (0.65,0.6) node {$\bfu$};
	\draw[dashed,black] (0,0) rectangle (1,1.73);
	\draw (0.7,-0.3) node {$u_1$};
	\draw (-0.3,0.86) node {$u_2$};		
	\draw[blue] (0,0) -- (-2,0);
	\draw[blue] (-1,-0.4) node {$\delta$};
\end{tikzpicture}
\caption{In the plane $\R^2$, any point $\bfu$ within a positive distance $\delta$ of the origin $\mathbf{0}$ has entries $u_1$ and $u_2$ whose lengths (absolute values) are strictly less than $\delta$. See Lemma \ref{lem:standardmetricversustaxicab}. Similarly, every $\delta$-neighborhood in $\R^2$ is contained in an open square of side-length $2\delta$ with the same center.}
\label{fig:diskinabox}
\end{figure}

A proof of Lemma \ref{lem:standardmetricversustaxicab} follows quickly from contraposition. 

\begin{proof}
Suppose $\delta>0,\bfu\in\R^k$, and $|u_{j_0}|\geq \delta$ for some $j_0\in\{1,\ldots,k\}$. Since the square root function is increasing, we have 
\begin{align}
	\delta\leq |u_{j_0}|=\sqrt{u_{j_0}^2}\leq \sqrt{u_1^2+u_2^2+\cdots+u_k^2}=\|\bfu\|_k.
\end{align}
\end{proof}

One last time, let's get back to scratch work for proving Theorem \ref{thm:lineartransformationscontinuous}.

\begin{scratch}
Repeating line \eqref{eqn:lineartransformationwithlargestcolumn}, so far we have
\begin{align}\label{eqn:lineartransformationwithlargestcolumnrepeat}
	\|f(\bfx)-f(\bfc)\|_m&\leq\sum_{j=1}^k|x_j-c_j|\|A\bfe_j\|_m \leq q\sum_{j=1}^k|x_j-c_j|
\end{align}
where $q=\max\{\|A\bfe_j\|_m:j=1,\ldots,k\}$.
Substituting $\bfx-\bfc$ for $\bfu$ in Lemma \ref{lem:standardmetricversustaxicab} yields
\begin{align}\label{eqn:diskinaboxdelta}
	\|\bfx-\bfc\|_k<\delta \implies |x_j-c_j|<\delta \textnormal{ for all } j=1,\ldots,k.
\end{align}
Therefore, by \eqref{eqn:lineartransformationwithlargestcolumnrepeat} we have
\begin{align}\label{eqn:lineartransformationwithdelta}
	\|f(\bfx)-f(\bfc)\|_m&\leq\sum_{j=1}^k|x_j-c_j|\|A\bfe_j\|_m \leq q\sum_{j=1}^k|x_j-c_j| <qk\delta.
\end{align}
So, it seems like taking
\begin{align}
	\delta=\frac{\varepsilon}{qk} 
\end{align}
may prove to be a good choice for a threshold $\delta$, as long as $q$ is not zero.
\end{scratch}

Finally, let's prove linear transformations between Euclidean spaces are continuous (Theorem \ref{thm:lineartransformationscontinuous}).

\begin{proof}
Suppose $f:\R^k\to\R^m$ is defined by $f(\bfx)=A\bfx$ where $A$ is a $m\times k$ matrix with real-valued entries.  Let $\varepsilon>0$ and define $q$ to be the length of the largest column vector in $A$. That is,
\begin{align}
	q&=\max\{\|A\bfe_j\|_m:j=1,\ldots,k\}.
\end{align}
So by the linearity of the function $f$ (Theorem \ref{thm:lineartransformationmatrix} tells us $f$ is a linear transformation) and the repeated application of various portions of Corollary \ref{cor:distancesaremetrics}, including the triangle inequality \eqref{eqn:triangleinequality3}, for every $\bfx$ and every $\bfc$ in $\R^k$ we have
\begin{align}
	\|f(\bfx)-f(\bfc)\|_m
	&=\|A\bfx-A\bfc\|_m\\
	&=\|A(\bfx-\bfc)\|_m\\
	&=\left\|\sum_{j=1}^k(x_j-c_j)A\bfe_j\right\|_m\\
	&\leq\sum_{j=1}^k|x_j-c_j|\|A\bfe_j\|_m\\
	&\leq q\sum_{j=1}^k|x_j-c_j|.\label{eqn:finallineartransformationcontinuous}
\end{align}
From here, there are two cases to consider.

\underline{Case (i)}: Suppose $q=0$, which means $A$ is the zero matrix and $f$ is the constant function that sends every vector in $\R^k$ to the origin $\mathbf{0}$ in $\R^m$. Choose $\delta$ to be any positive real number, say $\delta=3140>0$. Then for every $\bfx$ and every $\bfc$ in $\R^k$ we have
\begin{align}
	\|f(\bfx)-f(\bfc)\|_m \leq q\sum_{j=1}^k|x_j-c_j|=0<\varepsilon.
\end{align}
Therefore, $f=\mathbf{0}$ is continuous at every $\bfc$ in $\R^k$.

\underline{Case (ii)}: Suppose $q>0$ and define 
\begin{align}
	\delta=\frac{\varepsilon}{qk}.
\end{align}
(Note $\delta>0$.) Fix $\bfc$ in $\R^k$. Then for every $\bfx$ in $\R^k$ where
\begin{align}
	\|\bfx-\bfc\|_k<\delta=\frac{\varepsilon}{qk},
\end{align}
we have by Lemma \ref{lem:standardmetricversustaxicab} that
\begin{align}
	|x_j-c_j|<\delta \quad\textnormal{for all}\quad j=1,\ldots,k. 
\end{align}
Hence, referring back to \eqref{eqn:finallineartransformationcontinuous}, we have $\|\bfx-\bfc\|_k<\delta$ implies
\begin{align}
	\|f(\bfx)-f(\bfc)\|_m \leq q\sum_{j=1}^k|x_j-c_j|<qk\delta=qk\left(\frac{\varepsilon}{qk}\right)=\varepsilon.
\end{align}
Therefore, $f$ is continuous at every $\bfc$ in $\R^k$.
\end{proof}

\vs
\section*{Exercises}
\setcounter{theorem}{0}

Exercises are for play: Do scratch work, draw stuff, and make mistakes---make {\em lots} of mistakes---before worrying about writing proofs. {\em Have fun!}



%
%
%


\bibliographystyle{amsplain}


\end{document}